\newcommand*{\mailto}[1]{\href{mailto:#1}{\nolinkurl{#1}}}
\newcommand{\arxiv}[1]{\href{http://arxiv.org/abs/#1}{arXiv:#1}}
\newtheorem{theorem}{Theorem}[section]
\newtheorem{definition}[theorem]{Definition}
\newtheorem{proposition}[theorem]{Proposition}
\newtheorem{lemma}[theorem]{Lemma}
\newtheorem{example}[theorem]{Example}
\newtheorem{corollary}[theorem]{Corollary}
\newtheorem{remark}[theorem]{Remark}
\newtheorem{hypot}[theorem]{Hypothesis}
\def\R{\mathbb R}
\def\C{\mathbb C}
\def\N{\mathbb N}
\def\I{\mathrm i}
\def\E{\mathrm e}
\def\gD{\mathfrak{D}}
\def\ga{\mathfrak{a}}
\def\cI{\mathcal{I}}
\def\cA{\mathcal{A}}
\def\cF{\mathcal{F}}
\def\cN{\mathcal{N}}
\def\cH{\mathcal{H}}
\newcommand{\be}{\begin{equation}}
\newcommand{\ee}{\end{equation}}
\def\im{\mathrm{Im}\, }
\def\re{\mathrm{Re}\, }
\def\ti{\tilde}
\def\col{\mathrm{col}}
\def\ess{\mathrm{ess}}
\def\sgn{\mathrm{sgn}\, }
\def\loc{\mathrm{loc}}
\def\dom{\mathrm{dom}}
\def\Span{\mathrm{span}}
\numberwithin{equation}{section}
\begin{document}

\title[The similarity problem for indefinite spectral problems]{The similarity problem for indefinite Sturm-Liouville operators \\ and the HELP inequality}

\author[A.~Kostenko]{Aleksey Kostenko}
\address{Faculty of Mathematics\\ University of Vienna\\
Nordbergstrasse 15\\ 1090 Wien\\ Austria}

\email{\mailto{duzer80@gmail.com; Oleksiy.Kostenko@univie.ac.at}}

\thanks{{\it The research was funded by the Austrian Science Fund (FWF): M1309--N13}}

\keywords{indefinite Sturm--Liouville problem, the similarity problem, HELP inequality, regularly varying functions, Fokker--Plank equation}
\subjclass[2010]{Primary 34B24; Secondary 26D10, 34L10, 47A10,47A75}

\begin{abstract}
 We study two problems. The first one is the similarity problem for the indefinite Sturm--Liouville operator
\[
A=-(\sgn\, x)\frac{d}{wdx}\frac{d}{rdx}
\]
acting in $L^2_{ w}(-b,b)$. It is assumed that $w,r\in L^1_{\loc}(-b,b)$ are even and positive a.e. on $(-b,b)$.

The second object is the so-called HELP inequality
\[
\left(\int_{0}^b\frac{1}{\tilde{r}}|f'|\, dx\right)^2 \le K^2 \int_{0}^b|f|^2\tilde{w}\,dx\int_{0}^b\Big|\frac{1}{\tilde{w}}\big(\frac{1}{\tilde{r}}f'\big)'\Big|^2\tilde{w}\, dx,
\]
where the coefficients $\tilde{w},\tilde{r}\in L^1_{\loc}[0,b)$ are positive a.e. on $(0,b)$.

Both problems are well understood when the corresponding Sturm--Liouville differential expression is regular. The main objective of the present paper is to give criteria for both the validity of the HELP inequality and the similarity to a self-adjoint operator in the singular case. Namely, we establish new criteria formulated in terms of the behavior of the corresponding Weyl--Titchmarsh $m$-functions at $0$ and at $\infty$. As a biproduct of this result we show that both problems are closely connected. Namely, the operator $A$ is similar to a self-adjoint one precisely if the HELP inequality with $\tilde{w}=r$ and $\tilde{r}=w$ is valid.

Next we characterize the behavior of $m$-functions in terms of coefficients and then these results enable us to reformulate the obtained criteria in terms of coefficients. Finally, we apply these results for the study of the  two--way diffusion equation, also known as the time-independent Fokker--Plank equation.
\end{abstract}

\maketitle

\tableofcontents

\section{Introduction}\label{sec:intro}

Consider the following indefinite spectral problem
\be\label{eq:i_sp}
-(\frac{1}{r(x)}y')'+q(x) y=\lambda\, (\sgn\, x )w(x) y,\quad x\in (-b,b).
\ee 
It is assumed that $w,r,q\in L^1_{\loc}(-b,b)$ and $w(x),r(x)>0$ a.e. on $(-b,b)$, $0<b\le +\infty$. In the limit circle case at $b$ or $-b$ self-adjoint boundary conditions at the corresponding end are assumed. 

In contrast to the classical case when the weight function is definite, i.e., the weight does not change sign on $(-b,b)$, 
the operator $A$ naturally associated with \eqref{eq:i_sp} 
\be\label{eq:op_A}
A=\frac{(\sgn x)}{w(x)}\Big(-\frac{d}{dx}\frac{\, d}{r(x)dx}+q(x)\Big) 
\ee
is not self-adjoint in $L^2_w(-b,b)$. However, it is known that under certain positivity type assumptions on $q$ (for example, if $q\ge 0$ a.e. on $(-b,b)$), the spectrum of this problem is real and accumulates at both $+\infty$ and $-\infty$ (see, e.g., \cite{Kar2, KarKos}). Then the following problem arises: {\em what kind of basis properties do the (generalized) eigenfunctions of \eqref{eq:i_sp} have?} There are two essentially different cases: 
\begin{itemize}
\item[1)] the problem \eqref{eq:i_sp} is regular, i.e., $b<\infty$ and $w,r,q\in L^1(-b,b)$,
\item[2)] the problem \eqref{eq:i_sp} is singular. 
\end{itemize}

The first case is widely studied and in the case of even coefficients $w,r$ the spectral properties of $A$ are well understood. More precisely, the fist results on eigenvalues and completeness properties of eigenfunctions for regular problems were obtained by Hilbert, B\^ocher, and Richardson over a century ago and then by Kamke in 1930s (for further details and references we refer to recent papers \cite{BF2}, \cite{Kar2}). 

Motivated by various problems arising in physics, scattering and transport theory \cite{Be81, bp, fk, gmp, Kar1, cvdm}, the problem of whether or not the eigenfunctions of \eqref{eq:i_sp} form a Riesz basis of $L^2_{w}(-b,b)$
 attracted a lot of attention since the mid of seventies of the last century (see e.g. \cite{Be79, Be85, BF, BF2, CFK, CurLan89, CN95, FSh2, F96, F98, FN98, Kar2, KarKos, KKM_09, KM_08, AK06, AK11, AK_12, Par03, Pa2, Pya89, Vol_96} and references therein). The first general sufficient condition for the Riesz basis property was obtained by Beals in \cite{Be85} and later this condition has been extended and generalized by many authors (for a survey we refer to the recent papers \cite{BF2, CFK}, see also \cite{BF, CurLan89, F96}).  

In spite of a considerable activity in the study of the Riesz basis property of eigenfunctions of \eqref{eq:i_sp}, the existence of problems \eqref{eq:i_sp} which do not have the Riesz basis property was established only in 1996 by H. Volkmer \cite{Vol_96}. More precisely, Volkmer \cite{Vol_96} observed  that the inequality
\be\label{eq:iHELP+}
\Big( \int_0^b \frac{1}{w}|f'|dx\Big)^2\le K^2 \int_0^b |f|^2dx\int_0^b\Big|\big(\frac{1}{w}f'\big)'\Big|^2\, dx,\quad (f\in\dom(A_+)),
\ee
is valid, i.e., there is $K>0$ such that \eqref{eq:iHELP+} holds for all $f\in\dom(A_+)$, if the eigenfunctions of \eqref{eq:i_sp} with $q=\bold{0}$, $r=\bold{1}$ and $w\in L^\infty(0,b)$ form a Riesz basis of $L^2_{w}(-b,b)$. Here 
\[
\dom(A_+):=\{f\in L^2(0,b): \, f, \frac{1}{w}f'\in AC[0,b], \ (\frac{1}{w}f')(b)=0,\ (\frac{1}{w}f')'\in L^2(0,b)\}.
\]

 Noting that there are weights such that \eqref{eq:iHELP+} is not valid (moreover, using a Baire category argument, it was noticed in \cite{Vol_96} that \eqref{eq:iHELP+} is not valid in general), Volkmer gave a positive answer to the existence problem. Explicit examples of weights were given later by Fleige \cite{F98},   Abasheeva and Pyatkov \cite{abpyat} (see also \cite{BF2}). 
 
A significant progress in the study of the Riesz basis property for \eqref{eq:i_sp} was made by Parfenov \cite{Par03, Pa2}. Namely, using Pyatkov's interpolation criterion \cite{Pya89}, Parfenov found a necessary and sufficient condition for the Riesz basis property under the assumptions that $q=\bold{0}$, $r=\bold{1}$, and $w\in L^1(-b,b)$ is even \cite[Theorem 6]{Par03}: {\em the eigenfunctions of \eqref{eq:i_sp} form a Riesz basis of $L^2_w(-b,b)$ if and only if the function $W(x)=\int_0^x w\, dt$ is positively increasing at $0$} (for the definition see Appendix \ref{ap:osv}). Notice that the problem on the Riesz basis property for \eqref{eq:i_sp} is still open if the assumption that $w,r$ are even is dropped. The most recent results can be found in \cite{BF2, CFK} (see also references therein). 
 
If we consider the singular problem \eqref{eq:i_sp}, then the situation becomes more complicated. First of all, the spectrum of \eqref{eq:i_sp} is not necessarily discrete and hence one needs to consider the problem of similarity to a self-adjoint operator instead of the Riesz basis property. It was noticed in \cite{CurLan89} that in the case $q\ge 0$ and $0\notin \sigma_{\rm ess}(A)$ the situation is similar to the regular case. Namely, if the operator $A$ is $J$-nonnegative then it admits a spectral function (a family of $J$-orthogonal projections), which might be unbounded only at $0$ and at $\infty$ (see \cite{Lan82}). In this case the corresponding point is called singular. Otherwise, it is called regular.

It turns out that the problem of the regularity at $0$ is much more subtle. Namely, first results for the case $0\in\sigma_{\rm ess}(A)$ were obtained only in the mid of 1990s (see \cite{CN95, FN98, FSh2}) and to the best of our knowledge the similarity of $A$ to a self-adjoint operator was established for several particular classes of operators (see \cite{KKM_09, KM_08}). Moreover, the existence of operators $A$ with the singular critical point $0$ was established in \cite{KarKos} (see also \cite[\S 5]{KKM_09}). 

Now let us return to the inequality \eqref{eq:iHELP+}. This inequality is a particular case of the Hardy--Littlewood--Polya--Everitt (HELP) inequality. Namely, the famous Hardy--Littlewood inequality \cite[Chapter VII]{hlp} is a special case of \eqref{eq:iHELP+} with $K=2$, $b=+\infty$, $w=\bold{1}$. 
In the seminal paper \cite{ev72}, W.N. Everitt considered the following integral inequality
\be\label{eq:iHELP}
\Big( \int_0^b (\frac{1}{r}|f'|+q|f|^2)dx\Big)^2\le K^2 \int_0^b |f|^2w\, dx\int_0^b \Big|\frac{1}{w}\big(-(\frac{1}{r}f')'+qf\big)\Big|^2w\, dx,\quad (f\in\gD_{\max}).
\ee  Here $K$ is a positive constant; the coefficients $w,r,q\in L^1_{\loc}[0,b)$ are real valued and $w,r$ are assumed to be positive on $[0,b)$; $\gD_{\max}$ is the maximal linear manifold of functions for which both integrals on the right-hand side of \eqref{eq:iHELP} are finite. 

In \cite{ev72}, Everitt connected the above inequality with the Weyl--Titchmarsh $m$-function of the Sturm--Liouville differential equation
\be\label{eq:iSL}
-(\frac{1}{r(x)}f')'+q(x) f=\lambda\, w(x) y, \quad x\in[0,b).
\ee 
Under the assumptions $b=+\infty$, $w\equiv 1$ on $\R_+$ and $\eqref{eq:iSL}$ is regular at $x=0$ and  strong limit point at $+\infty$, Everitt obtained beautifull necessary and sufficient condition for the validity of the HELP inequality in terms of the  $m$-function associated with \eqref{eq:iSL} (see Theorem \ref{th:ev71} below). Moreover, the best possible value of $K$ and all cases of equality in \eqref{eq:iHELP} are indicated in terms of $m$. The proof in \cite{ev72} follows the line of one of the Hardy--Littlewood proofs \cite{hlp} and of course the analysis of \cite{ev72} extends to a wider setting: for the case of nonconstant  $w$ see \cite{ee82}, the case of a regular endpoint $b$ or, more general, the limit circle case at $b$ is addressed in \cite{Ben87} and \cite{ee91}. Note also that Evans and Zettl \cite{ez78} found a general operator theoretic approach to \eqref{eq:iHELP} (see also \cite{ee82}), which allows to study the inequalities of the type \eqref{eq:iHELP} for other differential and difference operators, operators on trees etc. For further information on HELP type inequalities we refer to \cite{Ben87, be, br08, ee82, ee91} (see also references therein).     

Again, the HELP inequality \eqref{eq:iHELP} is well understood in the regular case. Namely, in \cite{Ben87}, Bennewitz gave a necessary and sufficient condition for the validity of \eqref{eq:iHELP} in terms of coefficients (see Theorem \ref{Bennewitz_HELP}). In particular, {\em the inequality \eqref{eq:iHELP+} is valid if and only if the function $W(.)$ is positively increasing at $0$}. Bennewitz's proof is based on a thorough analysis of the asymptotic behavior of the corresponding Weyl--Titchmarsh $m$-function at $\infty$ (see Section \ref{sec:help} for details). It is interesting to note that the class of weights such that \eqref{eq:iHELP+} is valid coincides with the class of even weights $w$ such that \eqref{eq:i_sp} with $r=\bold{1}$ and $q=\bold{0}$ has a Riesz basis property. Thus, in the regular case, Volkmer's condition is not only necessary but is also sufficient for the Riesz basis property of eigenfunctions of \eqref{eq:i_sp}.  
However, this fact was noticed only recently (see \cite{BF2}). 

As for the HELP inequality in the case of a singular end-point $b$, there are only a few particular results (see \cite{ev72}, \cite{ee82}, \cite{ee91}, \cite{br08} and also references therein). The main difficulty in this case is the local behavior of the Weyl--Titchmarsh function at $0$. 

Our main focus is on the problem \eqref{eq:i_sp} and the inequality \eqref{eq:iHELP} in the singular case. Namely, in the case $q=\bold{0}$, we present criteria for both the validity of \eqref{eq:iHELP} and the similarity of the operator $A$ given by \eqref{eq:op_A} to a self-adjoint one. Note that these criteria can be extended to the case of a non-zero potential $q$ under certain positivity type assumptions. Our main tool is the Weyl--Titchmarsh $m$-function (see Section \ref{ss:a2} for definitions) and the analysis is based on the study of the asymptotic behavior of $m(.)$ at zero and at infinity. Let us mention that the behavior of the $m$-function at $\infty$ is widely studied in the literature (see \cite{Ben89} and also Section \ref{ss:a01}). However, the behavior of $m(.)$ at  finite real points has been investigated only for particular classes of Sturm--Liouville operators (decaying potentials, periodic and quasi-periodic coefficients etc.). It is a surprising fact that for "polar" Sturm--Liouville operators ($q=\bold{0}$) the behavior of the $m$-function at $0$ can be characterized by means of behavior of coefficients $w$, $r$ at the singular end.

Let us describe the content of the paper. In Section \ref{sec:II}, we recall the notion of the Weyl--Titchmarsh $m$-function and describe its main properties. In Subsection \ref{ss:a01}, we review basic results on high-energy asymptotic behavior of the $m$-function. New results are presented in Subsection \ref{ss:a02}. There we describe in terms of coefficients $w$ and $r$ the asymptotic behavior of the $m$-function at $0$. In particular, we show that in the case $q=\bold{0}$ and $w,r\notin L^1(0,b)$, {\em the Neumann $m$-function satisfies}
\be\label{eq:I_06}
\sup_{y\in(0,1)}\frac{\re m(\I y)}{\im m(\I y)}<\infty\qquad \Big(\sup_{y\in(0,1)}\frac{\im m(\I y)}{\re m(\I y)}<\infty\Big)
\ee   
{\em precisely if the function $R\circ W^{-1}$ ($W\circ R^{-1}$) is positively increasing at $\infty$}. Here $W(x)=\int_0^x w\, dt$,  $R(x)=\int_0^xr\, dt$, and $W^{-1}$, $R^{-1}$ denote the corresponding inverse functions. Moreover, Kasahara \cite{kas} showed that it is possible to obtain one term asymptotic formula for $m$ at $0$ if $R\circ W^{-1}$ is a regularly varying function at $\infty$ (see Theorem \ref{th:ben_B}). 

Section \ref{sec:help} is devoted to the HELP inequality \eqref{eq:iHELP}. Firstly, we establish a new criterion for the validity of \eqref{eq:iHELP} in terms of the $m$-coefficient (see Theorem \ref{th:ak_crit+}): {\em if $q=\bold{0}$, then the inequality \eqref{eq:iHELP} is valid if and only if }
\be\label{eq:I_07}
\sup_{y>0}\frac{\re m(\I y)}{\im m(\I y)}<\infty. 
\ee
In the regular case, this criterion was established in \cite{AK_12} and in contrast to the classical Everitt criterion, this result shows that it suffices to know the behavior of $m(.)$ only along the imaginary semi-axis. Note also that \eqref{eq:I_07} is necessary for the validity of \eqref{eq:iHELP} with $q\neq 0$ without any positivity type assumptions. 

Next, combining \eqref{eq:I_07} with the results from Section \ref{sec:II}, we arrive at the following characterization of coefficient $w$ and $r$, for which \eqref{eq:iHELP} is valid (see Theorem \ref{th:HELP_wr}): {\em  if $q=\bold{0}$ and $w,r\notin L^1(0,b)$, then the HELP inequality \eqref{eq:iHELP} is valid precisely if the function  $R\circ W^{-1}$ is positively increasing at both $0$ and $\infty$}. Also, using the connection between positively increasing functions and regularly varying functions, we obtain simple necessary and sufficient conditions in terms of coefficients. Moreover, in Subsection \ref{ss:III.3} we discuss the case of a non-zero $q$ and using the Liouville transformation we establish a criterion for the validity of \eqref{eq:iHELP} under the assumption that the $m$-function belongs to the Krein--Stieltjes class $(S)$ (see Theorem \ref{th:help_q}).

The indefinite spectral problem \eqref{eq:i_sp} is considered in Section \ref{sec:sim}. Our central result,  Theorem \ref{th:simcr_m}, states that in the case of even coefficients $w,r,q$ {\em the $J$-nonnegative operator $A$ is similar to a self-adjoint one if and only if}
\be\label{eq:I_08}
\sup_{y>0}\frac{\im m_+(\I y)}{\re m_+(\I y)}<\infty. 
\ee
Here $m_+$ is the Neumann $m$-function associated with \eqref{eq:i_sp} on $(0,b)$ (for further details see Subsection \ref{ss:2.01}). Let us note that \eqref{eq:I_08} is necessary even without the $J$-positivity assumption \cite{KarKos} since it is necessary for the linear resolvent growth condition (cf. \eqref{lrg}). Let us also mention that  several necessary and sufficient conditions formulated in terms of $m$-functions have been obtained in \cite{Kar2, KarKos, KKM_09, KM_08}. 

The proof of sufficiency of \eqref{eq:I_08} is based on the Veseli\'c--Akopjan similarity criterion \cite{Ves72, Akop_80} (see Theorem \ref{th:veselic}) and is given in Section \ref{ss:proof}. 

Let us emphasize that condition \eqref{eq:I_08} enables us to improve and to extend a number of known results to the case of a singular end-point $b$. Namely: 

(i) Combining Theorem \ref{th:simcr_m} with the results from Section \ref{sec:II} we obtain the following criterion (see Theorem \ref{th:sim_wr}): {\em if $q=\bold{0}$ and $w,r\notin L^1(-b,b)$ are even, then the operator $A$ is similar to a self-adjoint one precisely if the function $W\circ R^{-1}$ is positively increasing at both $0$ and $\infty$}. In the case of a regular endpoint $b$, this result was established by Parfenov \cite{Par03} using a different approach based on Pyatkov's interpolation criterion \cite{Pya89}. 
In the case when $b$ is singular, the similarity of $A$ was established under a very restrictive assumption on the behavior of $w$ at $\infty$ (see Remark \ref{rem:4.10}). However, the connection between positively increasing and regularly varying functions enables us to obtain simple necessary and sufficient conditions, which substantially improve all previous results. Moreover, using necessary conditions, we obtain a class of $J$-positive operators $A$ with the singular critical point $0$ (see Section \ref{ss:0crit}). Note that, all known examples of Sturm--Liouville operators with the singular critical point $0$ are $J$-nonnegative, that is, $0\in \sigma_p(A)$ (cf. \cite{Kar2, KarKos, KKM_09}).

(ii) Since in the case of even coefficients $w,r,q$ condition \eqref{eq:I_08} holds if $A$ satisfies the linear resolvent growth condition (see \cite{KarKos}), we immediately conclude that the similarity of $A$ to a self-adjoint operator is equivalent to the linear resolvent growth condition (see Theorem \ref{th:lrg=sim}). Moreover, using the connection between \eqref{eq:I_07} and \eqref{eq:I_08} (cf. Lemma \ref{lem:m=1/m}), we show that the similarity of $A$ is further equivalent to the validity of a certain HELP inequality \eqref{eq:iHELP}. 

(iii) Using the Liouville transformation, we can extend the above results to the case of a non-zero potential $q$ (see Lemma \ref{lem:sim_q}). However, in this case the similarity depends not only on $w$ and $r$ but also on $q$ since the solution of \eqref{eq:i_sp} with $\lambda=0$ now play a role. Also this shows that in this case the similarity depends not only on a behavior of coefficients at $0$ and  $b$, but also on a local behavior of coefficients on $(-b,b)$. This fact was observed in \cite[\S 5]{KKM_09}. Moreover, Lemma \ref{lem:sim_q} allows us to obtain simple necessary and sufficient conditions. For instance we investigate the similarity of $A$ under the assumption that there are $l\ge -\frac{1}{2}$ and $x_0>0$ such that for $x>x_0$
\be
q(x)=\frac{l(l+1)}{x^2}+\tilde{q}(x),\quad \int_{x_0}^\infty x|\ti{q}(x)|dx<\infty.
\ee 
Note that the case $l=0$ and $w=r=\bold{1}$ was studied in \cite[\S 4]{KKM_09}. However, our approach allows to treat the similarity for general weights and an arbitrary $l\ge -\frac{1}{2}$ (see Lemma \ref{cor:4.13} for the case $w=r=\bold{1}$ and also the proof of Lemma \ref{lem:6.5} for the case $w=x$).

In the final Section \ref{sec:FP} we investigate the well-posedness of boundary value problems for the two-way diffusion equation, also known as the stationary Fokker--Plank equation      
\be\label{eq:I_10}
(\sgn x)w(x)u_t=(\frac{1}{r(x)}u_x)_x-q(x)u,\quad x\in(-b,b),\quad 0<t\le t_0\le \infty.
\ee
Due to the sign change in the left-hand side, this parabolic equation is of "forward--backward" type.  Equation \eqref{eq:I_10} arises in kinetic theory and in the theory of stochastic processes and have a long history \cite{bg}, \cite{Be81}, \cite{bp}, \cite{fk}, \cite{gmp}, \cite{pag1}, \cite{pag2} (see also references therein).  Separation of variables in \eqref{eq:I_10} leads to the spectral problem \eqref{eq:i_sp}
and the well-posedness issue is closely connected with the similarity problem for the operator $A$ (cf. \cite{Be81}, \cite{Be85}, \cite{Kar1}, \cite{Pyat}, \cite{cvdm} and also Theorem \ref{th:karabash} below). The similarity results from Section \ref{sec:sim} allows us to obtain a number of new sufficient conditions for the existence and uniqueness of solutions to boundary value problems for \eqref{eq:I_10} (cf. e.g. Theorem \ref{th:FP}). Let us mention that these conditions substantially extend all previous known conditions (for a comprehensive survey and previous results we refer to \cite{Kar1}).

Appendix \ref{ap:osv} contains necessary definitions and facts on positively increasing functions as well as on Karamata's theory of regularly varying functions. In Appendix \ref{ap:LT}, we present the Liouville transformation which establishes a connection between spectral problems \eqref{eq:i_sp} with $q=\bold{0}$ and a non-zero $q$, under a certain positivity type assumption.

{\bf Notation.} $L^1(a,b)$ and $AC[a,b]$ are the sets of Lebesgue integrable and absolutely continuous functions on a compact interval $[a,b]$; if $w\in L^1_{\loc}(a,b)$ is positive, then $L^2_{w}(a,b)$ stands for the Hilbert space of equivalence classes with the norm $\|f\|=\big(\int_{(a,b)}|f|^2\, w(x)dx\big)^{1/2}$; $L^2(a,b):=L^2_{w}(a,b)$ if $w\equiv 1$.

$\N,\R,\C$ have the standard meanings; $\C_+$ is the open upper half-plane, $\C_+=\{z\in \C:\, \im z>0\}$; $\bar{z}$ is the complex conjugate of $z\in\C$; $\R_+:=[0,+\infty)$ and $\I \R_+=\{\I y:\ y\in\R_+\}$. Also we shall use the following notation $\cI=(-b,b)$, $\cI_+=(0,b)$, $\cI_-=(-b,0)$. 

Prime $'$ denotes the derivative, $'\equiv \frac{d}{dx}$; the subscript $u_x$ denotes the partial derivative, $u_x=\frac{\partial u}{\partial x}$.

The notation '$(x\in X)$' is to be read as 'for all $x$ from the set $X$'.


\section{Asymptotic behavior of $m$-functions}\label{sec:II}

\subsection{The Weyl--Titchmarsh $m$-function}\label{ss:a2}
Let the functions $r,w\in L^1_{\loc}[0,b)$, $0<b\le +\infty$, be positive on $[0,b)$. Consider the Sturm--Liouville spectral problem
\begin{align}\label{eq:a01}
&-\big(\frac{1}{r(x)}y'\big)'=\lambda\, w(x)y,\quad x\in (0,b);\\
&\qquad (\frac{1}{r}y')(0)=0,\quad \lim_{x\to b}(\frac{1}{r}y')(x)=0.\label{eq:a02}
\end{align}
If the endpoint $b$ is singular and in the limit point case, that is,  either $w\notin L^1(0,b)$ or $R(x)=\int_0^x r\,dt \notin L^2_w(0,b)$, then the second condition in \eqref{eq:a02} is obsolete and can be dropped. 

Let $c(x,\lambda)$ and $s(x,\lambda)$ be the system of fundamental solutions of \eqref{eq:a01} satisfying 
\be
c(0,\lambda)=(\frac{1}{r}s')(0,\lambda)=1,\quad (\frac{1}{r}c')(0,\lambda)=s(0,\lambda)=0.
\ee
For $\lambda\in\C\setminus\R$ let also $\psi(x,\lambda)$ be {\em the Weyl solution} of \eqref{eq:a01}: 
\be\label{eq:bc_b}
\begin{cases}
\lim_{x\to b}(\frac{1}{r}\psi')(x,\lambda)=0, &  \text{limit circle case at}\ $b$ ,\\
\psi(x,\lambda)\in L^2_w(0,b),&   \text{limit point case at}\ $b$ .
\end{cases}
\ee
{\em The Weyl--Titchmarsh $m$-function} corresponding to the Neumann boundary conditions is then given by
\be\label{eq:a03}
m(\lambda)=-\frac{\psi(0,\lambda)}{(r^{-1}\psi')(0,\lambda)}=\lim_{x\to b}\frac{(r^{-1}s')(x,\lambda)}{(r^{-1}c')(x,\lambda)},\qquad (\lambda\notin\R),
\ee 
or equivalently
\be\label{eq:psi2}
\psi(x,\lambda)=s(x,\lambda)-m(\lambda)c(x,\lambda)\quad \text{satisfies \eqref{eq:bc_b}}.
\ee

Firstly, it is possible (see for details \cite[\S 2]{Ben89}) to assign $m$-functions with all its usual properties to systems of equations on $(0,b)$ defined by 
\be\label{eq:a04}
\begin{cases}
u_1(x) = & u_1(0)+\int_{[0,x)}u_2(t)dR(t),\\
u_2(x)= & u_2(0)-\lambda\int_{[0,x)}u_1(t)dW(t),
\end{cases}
\ee
where $R,W$ are  increasing left-continuous functions  of locally bounded variation on $(0,b)$ normalized by $W(0)=R(0)=0$ and the integrals are interpreted as Lebesgue--Stieltjes integrals. We shall always assume (for details we refer to \cite[\S 2]{Ben89}) the following
\begin{hypot}\label{hyp:RW}
$R$ and $W$ have no discontinuities in common. 
\end{hypot}

Fix a fundamental solution $U(x,\lambda)=\begin{pmatrix} c & s\\ c^{[1]} & s^{[1]}\end{pmatrix}$ of \eqref{eq:a04} satisfying the standard initial condition at $x=0$, $U(0,\lambda)=\begin{pmatrix} 1 & 0\\ 0 &1\end{pmatrix}$. 
Then define the solution $\Psi=\begin{pmatrix} \psi \\ \psi^{[1]} \end{pmatrix}$ such that
\be\label{eq:a05}
\Psi(x,\lambda):=U(x,\lambda)\begin{pmatrix} -m(\lambda) \\ 1 \end{pmatrix},\quad \begin{cases}
\lim_{x\to b}\psi^{[1]}(x,\lambda)=0, &  \text{l. c. case at}\ $b$ ,\\
\psi\in L^2((0,b);dW),&   \text{l. p. case at}\ $b$ .
\end{cases}
\ee
The function $m$ is called {\em the $m$-function} of \eqref{eq:a04} subject to the Neumann boundary conditions. 
Notice that in the case $dR(x)=r(x)dx$ and $dW(x)=w(x)dx$ with positive $r,w\in L^1_{\loc}[0,b)$, the $m$-functions \eqref{eq:a03} and \eqref{eq:a05} coincide. 

Further, applying the Lagrange formula, we get
\be\label{eq:a06}
\int_0^b|\psi(x,\lambda)|^2dW(x)=\int_0^b|s(x,\lambda)-m(\lambda)c(x,\lambda)|^2dW(x)=\frac{\im m(\lambda)}{\im \lambda},\quad (\lambda\notin\R).
\ee
Equality \eqref{eq:a06} means that $m$ is a Herglotz function. Moreover, the function $m$ 
admits the representation
\be\label{eq:m_repr}
m(\lambda)=C+\int_{\R_+}\frac{d\tau(s)}{s-\lambda},\quad \lambda\notin\R_+, \quad C\ge 0,
\ee
where the positive measure $d\tau$, called the spectral measure,  satisfies
\be\label{eq:m_repr2}
\int_{\R_+}\frac{d\tau(s)}{1+s}<\infty. 
\ee
In particular, \eqref{eq:m_repr} means that $m$ belongs to the Krein--Stieltjes class $(S)$ (see \cite{KK1}).

Notice also that in the limit circle case equation \eqref{eq:a06} defines {\em the Weyl circle} $C_{\rho}(z_0)=\{z\in\C:\ |z-z_0|=\rho\}$ {\em at} $\lambda$. The center and the radius of  $C_{\rho}(z_0)$ are given by 
\[
z_0(\lambda)=\frac{(s\overline{c}^{[1]} - s^{[1]}\overline{c})(b,\lambda)}{(c\overline{c}^{[1]} - c^{[1]}\overline{c})(b,\lambda)},\qquad \rho(\lambda)=\big(2|\im \lambda| \int_0^b|c(x,\lambda)|^2dW(x)\big)^{-1}.
\]  


\begin{lemma}\label{lem:m=1/m}
Assume that $W$ and $R$ are left-continuous nondecreasing functions on $[0,b)$ satisfying Hypothesis \ref{hyp:RW}. Let also $m(\cdot)$ be the Neumann $m$-function \eqref{eq:a05} associated with the problem \eqref{eq:a04} and let $\ti{m}(\cdot)$ be the $m$-function for the system
\be\label{eq:a04B}
\begin{cases}
u_1(x) = & u_1(0)+\int_{[0,x)}u_2(t)dW(t),\\
u_2(x)= & u_2(0)-\lambda\int_{[0,x)}u_1(t)dR(t),
\end{cases}
\ee
 subject to the Dirichlet boundary conditions, that is, 
\be
\Psi(x,\lambda):=U(x,\lambda)\begin{pmatrix} 1 \\ m(\lambda) \end{pmatrix},\quad
\begin{cases}
\lim_{x\to b}\psi(x,\lambda)=0, &  \text{\rm l. c. case at}\ $b$ ,\\
\psi\in L^2((0,b);dR),&   \text{\rm l. p. case at}\ $b$ .
\end{cases}
\ee
Then 
\be\label{eq:m=1/m}
m(\lambda)=-\frac{1}{\lambda\ti{m}(\lambda)},\qquad (\lambda\notin\R_+).
\ee
\end{lemma}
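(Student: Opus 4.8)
The plan is to exploit the symmetry between the two canonical systems \eqref{eq:a04} and \eqref{eq:a04B}, which differ only by interchanging the roles of $dR$ and $dW$. The key observation is that if $\begin{pmatrix} u_1 \\ u_2\end{pmatrix}$ solves \eqref{eq:a04}, then the pair $\begin{pmatrix} v_1 \\ v_2 \end{pmatrix} := \begin{pmatrix} u_2 \\ -\lambda u_1\end{pmatrix}$ satisfies a system in which $dR$ and $dW$ have been swapped, up to a factor of $\lambda$. Concretely, from the first line of \eqref{eq:a04} one gets $v_1(x) = u_2(x) = u_2(0) - \lambda\int_{[0,x)} u_1\, dW = v_1(0) + \int_{[0,x)} v_2\, dW$, and from the second line $v_2(x) = -\lambda u_1(x) = -\lambda u_1(0) - \lambda\int_{[0,x)} u_2\, dR = v_2(0) - \lambda \int_{[0,x)} v_1\, dR$. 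Thus $\begin{pmatrix} v_1 \\ v_2 \end{pmatrix}$ solves exactly \eqref{eq:a04B}. This is the whole structural content; the remaining steps are bookkeeping on initial conditions and boundary conditions.

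First I would track what this transformation does to the fundamental solution. If $U(x,\lambda) = \begin{pmatrix} c & s \\ c^{[1]} & s^{[1]}\end{pmatrix}$ is the fundamental solution of \eqref{eq:a04} with $U(0,\lambda) = I$, then applying the map column-wise and renormalizing shows that the fundamental solution $\ti U$ of \eqref{eq:a04B} with $\ti U(0,\lambda) = I$ is obtained from $U$ by the appropriate rescaling: the columns of $U$ get sent to $\begin{pmatrix} c^{[1]} \\ -\lambda c\end{pmatrix}$ and $\begin{pmatrix} s^{[1]} \\ -\lambda s \end{pmatrix}$, so that $\ti c = -\frac{1}{\lambda}s^{[1]}$, $\ti c^{[1]} = s$, $\ti s = \frac{1}{\lambda}c^{[1]}$ wait — one checks the normalization carefully and finds $\ti c = s^{[1]}/s^{[1]}(0)$-type expressions; the cleanest route is just to verify directly that $\begin{pmatrix} s^{[1]} & -c^{[1]} \\ -\lambda s & \lambda c\end{pmatrix}$ solves \eqref{eq:a04B} with value $I$ at $x=0$ (using $U(0,\lambda)=I$), hence equals $\ti U(x,\lambda)$.

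Next I would transport the Weyl solution. By \eqref{eq:a05}, the Weyl solution of \eqref{eq:a04} is $\Psi = U\begin{pmatrix}-m \\ 1\end{pmatrix}$, i.e. $\psi = -m c + s$, $\psi^{[1]} = -m c^{[1]} + s^{[1]}$. Under the transformation $v_1 = \psi^{[1]}$, $v_2 = -\lambda\psi$, so the transformed Weyl solution has first component $-mc^{[1]} + s^{[1]}$. Now one asks which combination $\ti U\begin{pmatrix} 1 \\ \ti m\end{pmatrix}$ this is, reading off $\ti m$: writing $\ti\Psi = \ti U\begin{pmatrix}1 \\ \ti m\end{pmatrix}$ and matching against $\begin{pmatrix}\psi^{[1]} \\ -\lambda\psi\end{pmatrix}$ up to a scalar multiple gives $\ti m(\lambda) = -\frac{1}{\lambda m(\lambda)}$ after solving the two linear relations. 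The crucial point to check here — and the step I expect to require the most care — is that the transformed solution satisfies the correct boundary condition at $b$: in the limit point case one must verify that $\psi^{[1]} \in L^2((0,b);dR)$ follows from $\psi \in L^2((0,b);dW)$ (this is where Hypothesis \ref{hyp:RW} and the defining property \eqref{eq:bc_b}/\eqref{eq:a05} of the Weyl solution, together with an integration-by-parts / Lagrange-identity argument as in \eqref{eq:a06}, come in), and in the limit circle case that $\lim_{x\to b}\psi(x,\lambda) = 0$ translates into $\lim_{x\to b}\psi^{[1]}(x,\lambda)\cdot(\text{appropriate factor}) = 0$ matching the Dirichlet-type condition imposed in the statement. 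Once the boundary behavior is matched, \eqref{eq:m=1/m} is forced by the linear-algebra identification, and uniqueness of the $m$-function (guaranteed by the limit point/limit circle dichotomy) closes the argument.
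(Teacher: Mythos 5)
Your structural observation is exactly the paper's proof: the paper's entire argument is the one\mbox{-}line remark that $\col(u_1,u_2)$ solves \eqref{eq:a04} precisely if $\col(-u_2/\lambda,u_1)$ solves \eqref{eq:a04B}, and your map $(u_1,u_2)\mapsto(u_2,-\lambda u_1)$ is the same one up to the harmless scalar $-\lambda$. Your insistence on verifying the boundary matching at $b$ (in particular that the transformed Weyl solution is again \emph{the} Weyl solution in the limit point case) is a legitimate point which the paper suppresses entirely.

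However, the final bookkeeping as you wrote it does not produce \eqref{eq:m=1/m}. Two concrete issues. First, the matrix $\begin{pmatrix} s^{[1]} & -c^{[1]} \\ -\lambda s & \lambda c\end{pmatrix}$ equals $\mathrm{diag}(1,\lambda)$ at $x=0$, not the identity; the correctly normalized fundamental matrix of \eqref{eq:a04B} is $\ti U=\begin{pmatrix} s^{[1]} & -c^{[1]}/\lambda \\ -\lambda s & c\end{pmatrix}$ (note $\det\ti U=s^{[1]}c-sc^{[1]}=1$, as it must be). Second, and more importantly, with this $\ti U$ the transformed Weyl solution satisfies $(\psi^{[1]},-\lambda\psi)^\top=\ti U\begin{pmatrix}1\\ \lambda m\end{pmatrix}$ exactly, with no free scalar left (the first entries already agree), so matching it against $\ti U\begin{pmatrix}1\\ \ti m\end{pmatrix}$ yields $\ti m=\lambda m$, \emph{not} $\ti m=-1/(\lambda m)$. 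To arrive at \eqref{eq:m=1/m} one must read off $\ti m$ with the Neumann-type normalization at $0$, i.e. $\ti\Psi=\ti U\begin{pmatrix}-\ti m\\ 1\end{pmatrix}$, equivalently $\ti m=-\ti\psi(0)/\ti\psi^{[1]}(0)$; then proportionality of $\begin{pmatrix}-\ti m\\ 1\end{pmatrix}$ and $\begin{pmatrix}1\\ \lambda m\end{pmatrix}$ gives $\lambda m=-1/\ti m$, which is \eqref{eq:m=1/m}. A sanity check: for $R=W=x$ on $\R_+$ one has $m(\lambda)=(-\lambda)^{-1/2}$ and $-1/(\lambda m)=(-\lambda)^{-1/2}$, consistent with \eqref{eq:m=1/m}, whereas the normalization $\ti U\begin{pmatrix}1\\ \ti m\end{pmatrix}$ would give $\ti m=-(-\lambda)^{1/2}$ and $-1/(\lambda\ti m)=-(-\lambda)^{-3/2}\neq m$. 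In fairness, the displayed normalization in the lemma statement itself is inconsistent with \eqref{eq:m=1/m} in exactly this way (and is how the lemma is subsequently used, e.g.\ in Corollary \ref{cor:2.1}, where $\ti m$ is fed into Lemma \ref{lem:Bennewitz} as a Neumann $m$-function), so part of the confusion is inherited; but as written, your ``solving the two linear relations'' step asserts a conclusion that the relations you actually set up do not yield.
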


\begin{proof}
It suffices to notice that $\col(u_1,u_2)$ solves \eqref{eq:a04} precisely if $\col(-u_2/\lambda,u_1)$ solves \eqref{eq:a04B}.
\end{proof}

\begin{example}\label{ex:a01}
Let $dR(x)=a\delta(x)$ and $dW(x)=dx$, where $a>0$ and $\delta(.)$ is the Dirac $\delta$-function, i.e., $R(x)=a\chi_{(0,1]}(x)$. Then \eqref{eq:a04} becomes
\[
\begin{cases}
u_1(x) = & u_1(0)+au_2(0)\chi_{(0,1]}(x)\\
u_2(x)= & u_2(0)-\lambda (u_1(0)+a u_2(0))x
\end{cases},\quad x\in(0,1).
\]
Therefore, 
\[
U(x,\lambda)=\begin{pmatrix} 1 & a\chi_{(0,1]}(x)\\ -\lambda x & 1-\lambda ax\end{pmatrix}
\]
and hence the $m$-function is given by
\[
m(\lambda)=a-\frac{1}{\lambda},\quad (\lambda\neq 0).
\]
The Weyl circle at $\lambda\in\C_+$ has its center at $z(\lambda)=a+\frac{\I}{2\im \lambda}$ and radius $\rho(\lambda)=\frac{1}{2\im \lambda}$. Note that a real point $x=a$ belongs to this circle for every $\lambda \in \C_+$. The latter is possible only in some degenerate cases. In particular, for systems \eqref{eq:a04} the following is true: {\em if $W=x$ and the Weyl circle of \eqref{eq:a04B}  at some $\lambda\in \C_+$ contains a real point $a\in\R$, then $dR=a\delta$}.   
\end{example}

\subsection{Asymptotic behavior of $m$ at $\infty$}\label{ss:a01}
Seems the first results on the asymptotic behavior of $m(\cdot)$ at large $\lambda$ were obtained by V.~A.~Marchenko, M.~G.~Krein, and I.~S.~Kac in 1950-s. 
It was observed by I.S. Kac \cite{K71, K73, KK2} that the behavior of $m$ at large $\lambda$ is determined by the behavior of the functions $w$ and $r$ at $x=0$. Y. Kasahara \cite{kas} improved these results and then applied them for the study of limit theorems for generalized diffusion processes. The most complete results on high-energy asymptotics are contained in the excellent survey \cite{Ben89} by Bennewitz. 
 
 Before formulate the next result we need the following definition.

 \begin{definition}\label{def:ginv}
  {\em The generalized inverse} $f^{-1}$ of a nondecreasing and left continuous function $f:(0,b)\to\R_+$  is defined on the convex hull of $f(0,b)$ by $f^{-1}(x) = \inf\{y:\ f(y)\ge x\}$.
 \end{definition}

\begin{lemma}[\cite{Ben87}]\label{lem:Bennewitz}
Let $m$ be the m-function defined by \eqref{eq:a05}.
Then
\be\label{eq:m_ass}
|m(\lambda)|=O\big(\im m(\lambda)\big) \quad \text{as}\quad |\lambda| \to +\infty
\ee
 in any nonreal sector (a sector non intersecting the real axis) if  $R\circ W^{-1}$ is positively increasing at $0$.
 
 Conversely, if $R\circ W^{-1}$ is not positively increasing, then 
 \be
 \sup_{y>1}\frac{\re \, m(\I y)}{\im m(\I y)}=+\infty.
 \ee
%
\end{lemma}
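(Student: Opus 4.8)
The plan is to exploit the integral representation \eqref{eq:m_repr}--\eqref{eq:m_repr2} of $m$ as a function in the Krein--Stieltjes class $(S)$ together with the classical correspondence (Krein's string theory, as in \cite{Ben89}) between the spectral measure $d\tau$ and the \emph{principal string} $M := R \circ W^{-1}$. Recall that along the imaginary axis one has the elementary two-sided estimates, for $\lambda = \I y$ with $y>0$,
\[
\im m(\I y) = \int_{\R_+} \frac{y\, d\tau(s)}{s^2 + y^2} \asymp \frac{1}{y}\,\tau\big([0,y)\big) + y \int_{[y,\infty)} \frac{d\tau(s)}{s^2},
\qquad
\re m(\I y) = C + \int_{\R_+} \frac{s\, d\tau(s)}{s^2 + y^2}.
\]
So the quotient $\re m(\I y)/\im m(\I y)$ being bounded is, up to these harmless comparisons, equivalent to a doubling-type (quasi-regularity) property of the measure $d\tau$. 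The first key step is therefore to translate "$M = R\circ W^{-1}$ is positively increasing at $0$" (Definition in Appendix \ref{ap:osv}) into the statement that the spectral distribution function $\tau([0,\lambda))$ is, conversely, controlled by $M$; the bridge is Kasahara's / Kac's Tauberian theorem relating the high-energy behavior of $m$ (equivalently $d\tau$ near $\infty$) to $M$ near $0$, stated in the excerpt as Theorem \ref{th:ben_B}. Concretely, $M(t) \sim c\,\ell(1/t) t^\alpha$ with $\ell$ slowly varying forces $\tau([0,\lambda)) \asymp \lambda^{?}\cdot(\dots)$, and positive increase of $M$ (which is the condition \emph{without} requiring regular variation) gives the corresponding two-sided control of $\tau$ on dyadic blocks.

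Second step: given that control, estimate $\re m/\im m$. Split both integrals at $s = y$. On $[0,y)$ the contribution to $\re m$ is $\int_0^y s/(s^2+y^2)\,d\tau \le \tfrac1y \tau([0,y))$, which is dominated by the $\tfrac1y\tau([0,y))$ piece of $\im m$; on $[y,\infty)$ the contribution to $\re m$ is $\int_y^\infty s/(s^2+y^2)\,d\tau$, and here one uses the doubling/positive-increase property to sum the dyadic tails $\int_{2^k y}^{2^{k+1}y}$ and compare them against the tail piece $y\int_{[y,\infty)} d\tau/s^2$ of $\im m$; the geometric decay in $k$ coming from the $s^{-1}$ versus $s^{-2}$ mismatch, combined with at-most-polynomial growth of $\tau$ from positive increase of $M$, makes the series converge uniformly in $y$. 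The constant $C\ge 0$ in \eqref{eq:m_repr} is harmless since $\im m(\I y)\to\infty$ as $y\to\infty$ in the limit-point, non-$L^1$ situation (and if $\im m$ stays bounded the statement is vacuous or trivial). This yields $\sup_{y>1}\re m(\I y)/\im m(\I y) < \infty$, hence \eqref{eq:m_ass} in nonreal sectors after the standard sectoral comparison $|m(\lambda)| \le C_\theta\,\im m(\lambda)$ valid for Herglotz functions once it holds on the ray (using that $m$ maps $\C_+$ to $\C_+$ and the Herglotz representation controls the whole sector from the imaginary axis — this is the argument already invoked in \cite{Ben87}).

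For the converse, I would argue by contraposition: if $M = R\circ W^{-1}$ is \emph{not} positively increasing at $0$, then by the characterization in Appendix \ref{ap:osv} there is a sequence $t_n \downarrow 0$ along which $M(\lambda t_n)/M(t_n)$ fails to stay bounded below for some fixed $\lambda\in(0,1)$, i.e. $M$ drops arbitrarily sharply. Via the Tauberian correspondence this produces a sequence $y_n \to \infty$ at which the spectral measure $d\tau$ is "almost entirely concentrated below $y_n$" in the sense that $\tau([0,y_n))$ dominates $y_n^2 \int_{[y_n,\infty)} d\tau/s^2$ by an arbitrarily large factor; then the $[0,y_n)$ part of $\re m(\I y_n)$, which is $\asymp \int_0^{y_n} s\,d\tau/(s^2+y_n^2)$ and is comparable from below to $\tfrac{1}{y_n}$ times the mass near $y_n$, swamps $\im m(\I y_n)$, giving $\re m(\I y_n)/\im m(\I y_n)\to\infty$.

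The main obstacle I anticipate is the \emph{Tauberian translation} in both directions between the positive-increase property of $M$ near $0$ and the dyadic-block control of the spectral distribution $\tau$ near $\infty$ — in particular making it work under mere positive increase rather than full regular variation (Kasahara's theorem as quoted gives one-term asymptotics only in the regularly varying case, so one needs the sharper non-regularly-varying Tauberian statement, presumably from \cite{Ben89}). Everything after that — the splitting at $s=y$, the geometric summation of tails, the sectoral upgrade — is routine. I would organize the write-up as: (1) reduce to the imaginary ray by Herglotz sector estimates; (2) quote the $M \leftrightarrow d\tau$ Tauberian dictionary from \cite{Ben89}; (3) do the two dyadic estimates for sufficiency; (4) do the contrapositive concentration argument for necessity.
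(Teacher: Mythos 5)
The paper does not actually prove this lemma from first principles (it is quoted from \cite{Ben87} for the sufficiency and from \cite{AK_12} for the converse; see Remark \ref{rem:3.2}), but its proof of the exact analogue at $\lambda\to 0$, Theorem \ref{th:m_inf}, shows the intended mechanism: rescale the coefficients by $R_s(t)=(R\circ W^{-1})(st)/(R\circ W^{-1})(s)$, extract a Helly limit $R_\infty$, show via Lemma \ref{lem:a02} that suitably normalized values $m(\rho_k\mu)/f(\rho_k)$ lie asymptotically in the Weyl disk of the limit system, and then use the degeneracy observation of Example \ref{ex:a01} (a Weyl disk containing a real point forces $R_\infty=a\chi_{(0,1]}$) together with submultiplicativity of $S$. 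Your route --- the Herglotz representation \eqref{eq:m_repr}, the spectral measure $d\tau$, splitting the integrals at $s=y$ and summing dyadic tails --- is genuinely different, and the purely analytic part of it (the two-sided bound for $\im m(\I y)$, the geometric decay from the $s^{-1}$ versus $s^{-2}$ mismatch) would go through \emph{if} you had the input you call the Tauberian dictionary.

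That dictionary is exactly the gap, and it is not a technicality. You need a two-sided comparison, uniform over dyadic blocks, between $\tau([0,y))$ and the behavior of $R\circ W^{-1}$ near $0$, under the sole hypothesis of positive increase. The only Tauberian statements available in this setting (Theorem \ref{th:ben}, Kasahara, Bennewitz's Theorem 4.1) require \emph{regular variation} and produce one-term asymptotics; under mere positive increase there is no pointwise correspondence between $\tau$ and $R\circ W^{-1}$ in either direction, which is precisely why Bennewitz's argument (and the paper's proof of Theorem \ref{th:m_inf}) works on the coefficient side with a compactness argument rather than on the spectral-measure side. The converse direction of your sketch suffers from the same missing step in the harder (Tauberian) direction: from the failure of positive increase of $R\circ W^{-1}$ you cannot extract the claimed concentration of $\tau$ below $y_n$ without a Tauberian theorem you have not supplied. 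A smaller but genuine slip: for the Neumann $m$-function here one has $m(\I y)\to C\ge 0$ as $y\to+\infty$ (by \eqref{eq:m_repr} and dominated convergence), not $\im m(\I y)\to\infty$; the constant $C$ is harmless only because $C>0$ forces $R$ to carry a point mass at $0$, so that $R\circ W^{-1}$ is constant near $0$ and hence not positively increasing --- this case must be disposed of explicitly rather than by the reason you give.
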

\begin{remark}\label{rem:3.2}
The first part of Lemma \ref{lem:Bennewitz} was obtained by C. Bennewitz in \cite{Ben87} (see Lemma on p. 344). It is formulated for regular problems, i.e., under the assumptions $b<\infty$ and $w,r\in L^1(0,b)$. However, the result remains valid in the general case (see concluding remarks  at the end of \cite{Ben87}). 

The second part follows from Theorems 3.2 and 3.3 in \cite{AK_12}.
\end{remark}
  
  \begin{corollary}\label{cor:2.1}
  Let $m$ be the m-function defined by \eqref{eq:a05}. Then 
  \be\label{eq:}
 \sup_{y>1}\frac{\im \, m(\I y)}{\re m(\I y)}<+\infty
 \ee
 if and only if the function $W\circ R^{-1}$ is positively increasing at $0$. 
  \end{corollary}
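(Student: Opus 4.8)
The plan is to deduce Corollary~\ref{cor:2.1} from Lemma~\ref{lem:Bennewitz} by an appeal to the duality between the systems \eqref{eq:a04} and \eqref{eq:a04B} established in Lemma~\ref{lem:m=1/m}. The key observation is that the quantity $\im m(\I y)/\re m(\I y)$ for the Neumann $m$-function of \eqref{eq:a04} (with data $R,W$) is, up to harmless factors, the reciprocal of the quantity $\re\ti{m}/\im\ti{m}$ for an appropriate $m$-function of the swapped system \eqref{eq:a04B} (with data $W,R$). So I would first set up the correct correspondence: let $m$ be the Neumann $m$-function of \eqref{eq:a04}, and let $\ti m$ be the Dirichlet $m$-function of the swapped system \eqref{eq:a04B} as in Lemma~\ref{lem:m=1/m}, so that $m(\lambda)=-1/(\lambda\ti m(\lambda))$ for $\lambda\notin\R_+$. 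Evaluating at $\lambda=\I y$ with $y>0$ and writing $\ti m(\I y)=u+\I v$ (with $v=\im\ti m(\I y)>0$ since $\ti m$ is Herglotz), a short computation gives $m(\I y)=-1/(\I y(u+\I v))=(v-\I u)/(y(u^2+v^2))$, hence $\im m(\I y)/\re m(\I y)=-u/v=-\re\ti m(\I y)/\im\ti m(\I y)$.

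Next I would handle the sign issue: since $m$ is itself Herglotz, $\im m(\I y)>0$ and $\re m(\I y)$ can have either sign, so the ratio $\im m(\I y)/\re m(\I y)$ being bounded above is really the statement that $\re m(\I y)$ is bounded below by a positive multiple of $\im m(\I y)$ on $y>1$, away from where $\re m$ vanishes or is negative; but by the identity above this is equivalent to $-\re\ti m(\I y)\le$ const $\cdot\,\im\ti m(\I y)$, i.e.\ to $\re\ti m(\I y)\ge -C\,\im\ti m(\I y)$. Since $\re\ti m(\I y)$ and $\im\ti m(\I y)>0$, and what we actually want to match is $\sup_{y>1}\re\ti m(\I y)/\im\ti m(\I y)<\infty$, I should double-check orientation: the cleanest route is to apply Lemma~\ref{lem:m=1/m} in the form that makes $\ti m$ the \emph{Neumann} $m$-function of a system with the roles of $R$ and $W$ interchanged, so that Lemma~\ref{lem:Bennewitz} applies directly to $\ti m$ with $W\circ R^{-1}$ in place of $R\circ W^{-1}$. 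Concretely: $\sup_{y>1}\im m(\I y)/\re m(\I y)<\infty$ $\iff$ $|\ti m(\I y)|=O(\im\ti m(\I y))$ along the imaginary axis (using $|\ti m|^2=u^2+v^2$ and that $\im\ti m/|\ti m|$ bounded below is equivalent to $|\re\ti m|/\im\ti m$ bounded above, together with the converse direction of Lemma~\ref{lem:Bennewitz} which rules out the unbounded case precisely when $W\circ R^{-1}$ fails to be positively increasing). By the two halves of Lemma~\ref{lem:Bennewitz} applied to $\ti m$, this holds if and only if $W\circ R^{-1}$ is positively increasing at $0$.

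The main obstacle, as I see it, is bookkeeping rather than substance: matching the \emph{one-sided along the imaginary axis} statements correctly. Lemma~\ref{lem:Bennewitz} gives a sufficient condition ($R\circ W^{-1}$ positively increasing $\Rightarrow$ $|m|=O(\im m)$ in any nonreal sector) and a converse ($R\circ W^{-1}$ \emph{not} positively increasing $\Rightarrow$ $\sup_{y>1}\re m(\I y)/\im m(\I y)=+\infty$). For the corollary I need an \emph{iff} purely along the imaginary axis, so I must (a) observe $|m(\I y)|=O(\im m(\I y))$ is equivalent to $\sup_{y>1}|\re m(\I y)|/\im m(\I y)<\infty$, which is in turn equivalent, for a Herglotz $m$, to $\sup_{y>1}\re m(\I y)/\im m(\I y)<\infty$ together with a lower bound that is automatic from boundedness of the modulus ratio; and (b) reconcile the "$\re/\im$" form of the converse with the "$\im/\re$" form in the corollary via the reciprocal identity $\im m(\I y)/\re m(\I y)=-\re\ti m(\I y)/\im\ti m(\I y)$. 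Once these elementary equivalences are pinned down and Lemma~\ref{lem:m=1/m} is invoked with the roles of $R$ and $W$ swapped, both implications of Corollary~\ref{cor:2.1} fall out immediately from the corresponding implications of Lemma~\ref{lem:Bennewitz}.
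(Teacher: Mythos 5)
Your overall route is exactly the paper's: pass from $m$ to the $m$-function $\ti{m}$ of the swapped system \eqref{eq:a04B} via Lemma \ref{lem:m=1/m}, and then apply both halves of Lemma \ref{lem:Bennewitz} to that system, for which $R\circ W^{-1}$ is replaced by $W\circ R^{-1}$. However, your explicit computation contains a sign error which then forces the (unnecessary and partly incorrect) detour in your second paragraph. Writing $\ti{m}(\I y)=u+\I v$, one has
\[
m(\I y)=-\frac{1}{\I y(u+\I v)}=-\frac{1}{-yv+\I yu}=\frac{v+\I u}{y(u^2+v^2)},
\]
not $(v-\I u)/(y(u^2+v^2))$, and hence
\[
\frac{\im m(\I y)}{\re m(\I y)}=\frac{u}{v}=+\frac{\re \ti{m}(\I y)}{\im \ti{m}(\I y)},
\]
with a plus sign. (Sanity check: with your sign the left-hand side would be negative, since $u,v>0$ for an $m$-function with the Stieltjes representation \eqref{eq:m_repr}, contradicting $\im m(\I y)>0$.)

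With the correct sign, the whole ``sign issue'' discussion collapses: the condition $\re\ti{m}(\I y)\ge -C\,\im\ti{m}(\I y)$ you extract there is vacuous (it holds automatically because $\re\ti{m}(\I y)>0$) and is not what is needed. What the correct identity gives immediately is
\[
\sup_{y>1}\frac{\im m(\I y)}{\re m(\I y)}<\infty \quad\Longleftrightarrow\quad \sup_{y>1}\frac{\re \ti{m}(\I y)}{\im \ti{m}(\I y)}<\infty,
\]
after which the two halves of Lemma \ref{lem:Bennewitz}, applied to \eqref{eq:a04B}, yield the equivalence with $W\circ R^{-1}$ being positively increasing at $0$; your remark that $|\ti{m}(\I y)|=O(\im\ti{m}(\I y))$ is equivalent to boundedness of $\re\ti{m}(\I y)/\im\ti{m}(\I y)$ (using $\re\ti{m}>0$) is the right way to reconcile the two formulations in that lemma. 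So the argument is the paper's and is salvageable, but as written the central identity is wrong and a portion of the reasoning built on it proves nothing.
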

  
  \begin{proof}
  Consider the problem \eqref{eq:a04B} and let $\tilde{m}$ be the corresponding $m$-function.
  Then, by Lemma \ref{lem:m=1/m}, we get
  \[
  \frac{\im m(\I y)}{\re m(\I y)}=\frac{\im\frac{1}{\I y\, \ti{m}(\I y)}}{\re\frac{1}{\I y\, \ti{m}(\I y)}}=\frac{\re \ti{m}(\I y)}{\im \ti{m}(\I y)},\quad y\neq 0.
  \]
  Applying Lemma \ref{lem:Bennewitz} to the problem \eqref{eq:a04B} we complete the proof.
  \end{proof}
  

\begin{definition}\label{def:f} 
Let {\em the function $f$} be the generalized inverse of 
\be\label{eq:Ff}
F(x)=\frac{1}{x(W\circ R^{-1})(x)}.
\ee
\end{definition}

 Note that $f(y)\to 0$ as $y\to +\infty$ and, moreover, $yf(y)\to 0$ as $y\to \infty$ since $xF(x)\to \infty$ as $x\to 0$.
 
As it was noticed by Kasahara \cite{kas} and later by Bennewitz \cite{Ben89}, regularly varying functions (see Appendix \ref{ap:osv}) play an important role in the study of the asymptotic behavior of $m$-functions. 
The following result is a particular case of Theorem~4.1 from \cite{Ben89} (see also \cite{K71}, \cite{K73}, \cite[Theorem 2]{kas}).

\begin{theorem}[\cite{Ben89, K71, kas}]\label{th:ben}
Assume that the function $R\circ W^{-1}$ is regularly varying at $0$ with index $\alpha\in (0,\infty)$. Then the Neumann $m$-function \eqref{eq:a05} satisfies
\be\label{eq:m_asymp}
m(\mu\rho)= \frac{K_\nu}{(-\mu)^\nu}f(\rho)(1+o(1)) \quad \text{as}\quad \rho\to\infty,
\ee
where
\be\label{eq:nuC}
\nu=\frac{\alpha}{1+\alpha},\quad K_\nu=\frac{\nu^{1-\nu}\Gamma(\nu)}{(1-\nu)^\nu\Gamma(1-\nu)}.
\ee
The estimate holds uniformly for $\mu$ in any compact set of $\C_+$. Here $\Gamma$ is the classical gamma function.

If $R\circ W^{-1}$ varies slowly (rapidly) at $0$, then the asymptotic formula \eqref{eq:m_asymp}--\eqref{eq:nuC} remains true with $\nu = 1$ ($\nu=0$) and $K_\nu=1$.
\end{theorem}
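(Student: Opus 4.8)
The plan is to prove the asymptotic formula by a scaling (renormalisation) argument: regular variation of $R\circ W^{-1}$ at $0$ is exactly the condition under which the rescaled systems \eqref{eq:a04} converge to a \emph{homogeneous} (power–law) model system, and the asymptotics of $m$ is then read off from the explicitly computable $m$–function of the model. Throughout we may assume $\lambda$ non‑real, since both sides are holomorphic there and the claimed estimate is uniform on compacts of $\C_+$.

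First I would reparametrise \eqref{eq:a04} by the mass variable $u=W(x)$. Because $R$ and $W$ have no common discontinuities (Hypothesis \ref{hyp:RW}), this turns \eqref{eq:a04} into the equivalent system with $dW$ replaced by Lebesgue measure $du$ and $dR$ replaced by $dg$, where $g:=R\circ W^{-1}$; the Neumann condition $u_2(0)=0$ and the limit‑point/limit‑circle alternative at the right endpoint are preserved, hence $m$ is unchanged. So it suffices to treat a Krein‑type string with Lebesgue mass and length measure $dg$, $g$ regularly varying at $0$ of index $\alpha\in(0,\infty)$. Now the scaling: since \eqref{eq:a04} is linear and $\lambda$ couples only to the mass, replacing the mass coordinate $u$ by $c_\rho u$ and the length $g$ by $g(c_\rho\cdot)/d_\rho$ changes $\lambda$ into $\mu:=\lambda c_\rho d_\rho$ and multiplies the Neumann $m$–function by $1/d_\rho$. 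I choose $c_\rho,d_\rho$ by $d_\rho=g(c_\rho)$ and $c_\rho d_\rho=1/\rho$; solving these (and using $W\circ R^{-1}=g^{-1}$, $g\circ g^{-1}\sim\mathrm{id}$) gives precisely $d_\rho\sim f(\rho)$ and $c_\rho=g^{-1}(f(\rho))\to 0$, with $f$ as in Definition \ref{def:f}. With these choices
$m(\mu\rho)=f(\rho)\,\tilde m_\rho(\mu)\,(1+o(1))$, where $\tilde m_\rho$ is the Neumann $m$–function of the string on $\big(0,c_\rho^{-1}W(b)\big)$ with Lebesgue mass and length $d\big(g(c_\rho\cdot)/g(c_\rho)\big)$.

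By Karamata's uniform convergence theorem (Appendix \ref{ap:osv}), $g(c_\rho v)/g(c_\rho)\to v^\alpha$ locally uniformly on $(0,\infty)$ and $c_\rho^{-1}W(b)\to\infty$, so the rescaled string data converge to those of the homogeneous string on $(0,\infty)$ with mass $du$ and length $d(v^\alpha)$. For the homogeneous string the equation reduces to a Bessel equation, its Weyl solution is a Macdonald function, and the Neumann $m$–function is computed explicitly: homogeneity forces $m_\infty(\mu)=K(-\mu)^{-\nu}$ with $\nu=\alpha/(1+\alpha)\in(0,1)$, and matching the expansions of $I_{\pm\nu}$ at the origin in the connection formula yields $K=K_\nu$ as in \eqref{eq:nuC}. (This computation is routine but somewhat involved; one may instead invoke the classical Kac–Krein formula for the Bessel string.) It then remains to justify $\tilde m_\rho\to m_\infty$: the $\tilde m_\rho$ are Herglotz functions, so by a normal‑families argument (Vitali's theorem) it suffices to show they are locally bounded on $\C_+$ and that every subsequential limit equals $m_\infty$. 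The identification is continuity of the Krein correspondence under the convergence of the string data. Local boundedness — equivalently, non‑degeneracy of the limit, i.e. no escape of mass or length to the endpoints — is where Lemma \ref{lem:Bennewitz} enters: $g$ is positively increasing at $0$ (being regularly varying of positive index), this property survives the rescaling with uniform constants, and the resulting bound $|\tilde m_\rho(\mu)|=O(\im\tilde m_\rho(\mu))$ on non‑real sectors is uniform in $\rho$. Vitali then upgrades the convergence to locally uniform on $\C_+$, which, since $m_\infty$ is bounded away from $0$ there, gives $m(\mu\rho)=\frac{K_\nu}{(-\mu)^\nu}f(\rho)(1+o(1))$ uniformly for $\mu$ in compact subsets of $\C_+$. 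The slowly/rapidly varying cases ($\nu=1$, resp. $\nu=0$) follow the same pattern with a degenerate model string (a point mass in the length, resp. in the mass), cf. Example \ref{ex:a01}.

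I expect the main obstacle to be the last step: making $\tilde m_\rho\to m_\infty$ rigorous and, in particular, excluding loss of mass or of length at the two moving endpoints of the rescaled strings — a point made more delicate by the fact that $W$ and $R$ individually need not be regularly varying, only the composition $R\circ W^{-1}$. The quantitative content of Lemma \ref{lem:Bennewitz}, \emph{uniform in the scaling parameter}, is exactly what controls this, and extracting that uniformity requires re‑entering the mechanism of that lemma rather than using it as a black box.
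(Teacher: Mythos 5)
Your proposal follows essentially the route the paper itself takes: the paper does not prove Theorem \ref{th:ben} (it is quoted from Bennewitz and Kasahara), but it proves the companion result at $\lambda=0$, Theorem \ref{th:ben_B}, by exactly your scheme --- reduction to Lebesgue mass via Lemma \ref{lem:change}, rescaling of mass and length so that regular variation produces the homogeneous string $W=x$, $R=x^\alpha$ in the limit, convergence of fundamental solutions (Lemma \ref{lem:a01}), the Weyl-disc containment of Lemma \ref{lem:a02}, and the explicit Bessel/Macdonald computation giving $m_\infty(\lambda)=K_\nu(-\lambda)^{-\nu}$. Your scaling relations ($c_\rho d_\rho=1/\rho$, $d_\rho=g(c_\rho)=f(\rho)$, $c_\rho\to0$) and the value $\nu=\alpha/(1+\alpha)$ are correct.

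The one place where you diverge, and where you locate the ``main obstacle,'' is the final convergence $\tilde m_\rho\to m_\infty$: you propose Vitali plus a version of Lemma \ref{lem:Bennewitz} made uniform in the scaling parameter to get local boundedness and exclude escape of mass or length. This detour is unnecessary. The Weyl-disc containment you have already set up does the whole job: for each fixed truncation length $c$, $\tilde m_\rho(\mu)$ lies (asymptotically) in the Weyl disc at $\mu$ of the truncated rescaled problem on $(0,c)$; by Lemma \ref{lem:a01} these discs converge to the Weyl disc of the limit problem \eqref{eq:a11} on $(0,c)$, which is a fixed bounded disc in $\C_+$ --- this already gives local boundedness with no input from Lemma \ref{lem:Bennewitz}; and since the limit string $W=x$, $R=x^\alpha$ is limit point at $\infty$, these nested discs shrink to the single point $m_\infty(\mu)$ as $c\to\infty$, which forces $\tilde m_\rho(\mu)\to m_\infty(\mu)$, locally uniformly in $\mu$ by the local uniformity in Lemma \ref{lem:a01}. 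This is precisely how the paper closes the proof of Theorem \ref{th:ben_B} (``since $c>0$ is arbitrary and the limit equation is limit point at $\infty$\,''), so the difficulty you anticipate does not arise. The degenerate cases $\nu\in\{0,1\}$ do require separate care (the limit object is a point mass and the limit-point argument degenerates), and there your sketch, like the paper's statement, leans on the literature.
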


 \subsection{Asymptotic behavior of $m$ at $0$}\label{ss:a02}
 In contrast to the  high energy asymptotic behavior of $m$, the asymptotic behavior of $m$-functions at finite real points is insufficiently studied. To the best of our knowledge, there are only a few results in this direction. Of course, for regular problems the answer is simple. Namely, for regular problems the $m$-function is meromorphic in $\C$ and hence either $\lambda=x_0$ is a pole of $m(\cdot)$ (and hence $x_0$ is an eigenvalue), or it is a regular point (and hence $x_0$ is in the resolvent set). If the problem is singular, then a singular continuos spectrum may appear and hence the behavior of $m$ at $x_0$ might be very nontrivial. However, it is a surprising fact that for "polar" Sturm--Liouville operators $-\frac{d}{wdx}\frac{d}{rdx}$ the behavior of $m$ at $\lambda=0$ can be characterized in terms of the behavior of its coefficients $w$ and $r$ at a singular end. Seems the first results in this direction were obtained by I.~S.~Kac and M.~G.~Krein \cite{KK58} and then later by Y. Kasahara \cite{kas}. 


We begin with the following simple result.
 \begin{lemma}\label{lem:b<inf}
 Let $b\le +\infty$ and let $W$ and $R$ be left-continuous nondecreasing functions on $[0,b)$ satisfying Hypothesis \ref{hyp:RW}. Let also $m$ be the Neumann $m$-function \eqref{eq:a05} of the system \eqref{eq:a04}. 
 \begin{itemize}
\item[(i)] If $W(b-)<\infty$,
then
 \be\label{eq:a12}
 m(\lambda)=-\frac{a}{\lambda}+\tilde{m}(\lambda),
 \ee
 where $\tilde{m}\in (S)$ and, moreover,
 \be\label{eq:a13}
 \lim_{y\downarrow 0}y\, \tilde{m}(\I y)=0,\qquad \lim_{y\downarrow 0}\frac{\re m(\I y)}{\im \, m(\I y)}=0.
 \ee
 \item[(ii)] If $R(b-)<\infty$ and $W(b-)=\infty$, then
 \be\label{eq:a14}
 m(\lambda)=a+\tilde{m}(\lambda), \quad a>0,
 \ee
 where $\tilde{m}\in (S^{-1})$ and
 \be\label{eq:a23}
 \lim_{y\downarrow 0}y\, \tilde{m}(\I y)=0,\qquad \lim_{y\downarrow 0}\frac{\re m(\I y)}{\im \, m(\I y)}=+\infty.
 \ee
 \end{itemize}
 \end{lemma}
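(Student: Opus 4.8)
The plan is to read off the behaviour of $m$ near the origin directly from the Herglotz--Stieltjes representation \eqref{eq:m_repr}, after first deciding whether the spectral measure $\tau$ charges the point $0$; this in turn is decided by whether the constant solution of \eqref{eq:a04} at $\lambda=0$ (recall $c(x,0)\equiv1$, $c^{[1]}(x,0)\equiv0$, $s(x,0)=R(x)$) is the relevant Weyl solution. For part (i): since $W(b-)<\infty$, the constant solution $c(\cdot,0)\equiv\mathbf 1$ lies in $L^2((0,b);dW)$ and, in the limit circle case, satisfies the boundary condition $\psi^{[1]}(b-)=0$; either way it is the Weyl solution \eqref{eq:a05} at $\lambda=0$, and since $c^{[1]}(0,0)=0$ this says precisely that $m$ has a pole at $\lambda=0$, which is simple because $m\in(S)$ is Herglotz. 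Splitting it off from \eqref{eq:m_repr} gives $m(\lambda)=-a/\lambda+\tilde m(\lambda)$ with $a=\tau(\{0\})>0$ and $\tilde m(\lambda)=C+\int_{(0,\infty)}d\tau(s)/(s-\lambda)\in(S)$, whose spectral measure no longer charges $0$. Then \eqref{eq:a13} reduces to the elementary fact $\lim_{y\downarrow0}y\,\tilde m(\I y)=0$, valid for any $(S)$-function whose spectral measure has no atom at $0$ (a dominated-convergence estimate of the Cauchy transform, splitting the integral near $s=0$ from the remainder and using \eqref{eq:m_repr2}); indeed $m(\I y)=\I a/y+\tilde m(\I y)$ forces $\im m(\I y)\ge a/y>0$ and $0\le\re m(\I y)=\re\tilde m(\I y)\le|\tilde m(\I y)|$, so $0\le\re m(\I y)/\im m(\I y)\le a^{-1}|y\,\tilde m(\I y)|\to0$.

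For part (ii), $W(b-)=\infty$ gives $\mathbf 1\notin L^2((0,b);dW)$; since any eigenfunction at $\lambda=0$ would be a solution of \eqref{eq:a04} at $\lambda=0$ with $\psi^{[1]}(0)=0$, hence a multiple of $\mathbf 1$, it follows that $\lambda=0$ is not an eigenvalue, $\tau(\{0\})=0$, and $a:=m(0-)=C+\int_{(0,\infty)}d\tau(s)/s$ lies in $(0,+\infty]$, positivity being automatic since $m>0$ on $(-\infty,0)$ by \eqref{eq:m_repr}. The crucial point is $a<\infty$, and here $R(b-)<\infty$ enters. For real $\lambda=-\kappa^2<0$ take the Weyl solution $\psi_\kappa$, which may be chosen positive, normalised by $\psi_\kappa^{[1]}(0)=-1$, so that $m(-\kappa^2)=\psi_\kappa(0)$. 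From $d\psi_\kappa^{[1]}=\kappa^2\psi_\kappa\,dW\ge0$ the quasi-derivative $\psi_\kappa^{[1]}$ is nondecreasing, and it must be $\le0$ on $(0,b)$: otherwise $\psi_\kappa$ would be bounded below by a positive constant near $b$, contradicting $\psi_\kappa\in L^2((0,b);dW)$ with $W(b-)=\infty$. Hence $\psi_\kappa$ is nonincreasing with $\psi_\kappa\to0$ at $b$ (again by $W(b-)=\infty$), while $\psi_\kappa^{[1]}\in[-1,0]$ gives $\psi_\kappa(x)\ge\psi_\kappa(0)-R(x)\ge\psi_\kappa(0)-R(b-)$; letting $x\to b$ yields $m(-\kappa^2)=\psi_\kappa(0)\le R(b-)$ for every $\kappa>0$, whence $a\le R(b-)<\infty$. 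Now write $\tilde m(\lambda)=m(\lambda)-a=\lambda\int_{(0,\infty)}d\tau(s)/(s(s-\lambda))=\lambda\,g(\lambda)$, where $g(\lambda)=\int_{(0,\infty)}(s^{-1}d\tau(s))/(s-\lambda)$; since $\int_{(0,\infty)}d\tau(s)/(s(1+s))<\infty$ (using $a<\infty$), $g\in(S)$, and therefore $\tilde m=\lambda g\in(S^{-1})$. Finally \eqref{eq:a23} follows as in part (i): writing $g(\I y)=g_1(y)+\I g_2(y)$ with $g_1,g_2>0$, dominated convergence on the measure $s^{-1}d\tau$ gives $\re m(\I y)=a-y\,g_2(y)\to a>0$ while $\im m(\I y)=y\,g_1(y)\to0^+$, so $\re m(\I y)/\im m(\I y)\to+\infty$, and $y\,\tilde m(\I y)=\I y^2g(\I y)\to0$.

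The step I expect to be the main obstacle is the finiteness $a=m(0-)<\infty$ in part (ii): one has to convert $R(b-)<\infty$ (together with $W(b-)=\infty$) into integrability of $d\tau(s)/s$ near $s=0$, and the cleanest route seems to be the monotonicity/comparison argument for $\psi_\kappa$ above rather than any direct estimate of $\tau$. A secondary, purely bookkeeping, matter is carrying the "constant solution" and "recessive solution" computations over from the differential case $dR=r\,dx$, $dW=w\,dx$ to the general Stieltjes system \eqref{eq:a04} under Hypothesis \ref{hyp:RW}, which is routine.
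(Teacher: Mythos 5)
Your proof is correct. Part (i) coincides with the paper's argument: $W(b-)<\infty$ makes $\mathbf{1}$ a Neumann eigenfunction at $\lambda=0$, so $\tau(\{0\})=a>0$, and both limits in \eqref{eq:a13} follow from the representation \eqref{eq:m_repr}; you merely supply the dominated-convergence details that the paper leaves implicit. Part (ii) is where you genuinely diverge. The paper disposes of it in two lines via the duality of Lemma \ref{lem:m=1/m}: $R(b-)<\infty$ makes $0$ an eigenvalue of the swapped system \eqref{eq:a04B}, so the dual $m$-function has a pole at $0$, and the relation $m(\lambda)=-1/(\lambda\tilde m(\lambda))$ converts that pole into the finite positive limit $a=m(0-)$. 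You instead prove $m(0-)\le R(b-)<\infty$ directly, by a monotonicity argument on the positive Weyl solution at $\lambda=-\kappa^2<0$: the quasi-derivative $\psi_\kappa^{[1]}$ is nondecreasing and is forced into $[-1,0]$ by $W(b-)=\infty$, whence $\psi_\kappa(0)\le R(b-)$. This is longer but self-contained --- it does not lean on Lemma \ref{lem:m=1/m}, whose normalization conventions are delicate in the Stieltjes setting --- and it yields the explicit bound $a\le R(b-)$, which the duality argument does not give. The one point you take on faith is the positivity of the Weyl solution at negative real $\lambda$ for the nonnegative string; this is standard, but deserves a sentence (e.g.\ positivity of the Green's function below the spectrum, or a disconjugacy argument showing that a sign change of $\psi_\kappa$ would force $\psi_\kappa\notin L^2((0,b);dW)$ when $W(b-)=\infty$). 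The remaining identifications ($\tilde m=\lambda g$ with $g\in(S)$, hence $\tilde m\in(S^{-1})$, and the limits \eqref{eq:a23}) are routine and correct.
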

 
 \begin{proof}
(i) If $W$ is bounded on $[0,b)$, then  $\lambda=0$ is the eigenvalue of the problem \eqref{eq:a04} subject to the Neumann boundary conditions. Hence we get
 \[
 \lim_{y\downarrow 0}y\, m(\I y)=\I a>0,
 \]
and therefore $m$ admits the representation \eqref{eq:a12}--\eqref{eq:a13}. 

(ii)  Since $\lambda=0$ is the eigenvalue of the problem \eqref{eq:a04B} if $R(b-)<\infty$, using \eqref{eq:m=1/m} we get
 \[
 \lim_{y\downarrow 0}y\,\ti{m}(\I y) =\lim_{y\downarrow 0}y\, \frac{-1}{\I y\, m(\I y)}=\I \ti{a},\quad \ti {a}>0,
 \]
and hence $m$ admits the representation \eqref{eq:a14}--\eqref{eq:a23}. 
 \end{proof}
 
%
 
 From now on we shall assume that $W$ and $R$ are unbounded on $[0,b)$. Therefore, \eqref{eq:a04} is in the limit point case. 
 
 \begin{theorem}\label{th:m_inf}
  Let  $W$ and $R$ be left-continuous nondecreasing and unbounded functions on $[0,b)$, satisfying Hypothesis \ref{hyp:RW}. Let also $m(.)$ be the Neumann $m$-function \eqref{eq:a05} corresponding to \eqref{eq:a04}. Then 
 \be
 |\re m(\lambda)|=O(\im \, m(\lambda)),\quad |\lambda|\to 0,
 \ee
 in any nonreal sector if there is $t\in (0,1)$ such that
 \be
 S_\infty(t):=\limsup_{x\to +\infty}\frac{(R\circ W^{-1})(xt)}{(R\circ W^{-1})(x)}\neq 1,
 \ee
 or equivalently, if the function $R\circ W^{-1}$ is positively increasing at $\infty$.
 
 Conversely, if $S_\infty\equiv 1$ on $(0,1)$, or equivalently, $R\circ W^{-1}$ is not positively increasing at $\infty$, then 
 \be
 \sup_{y\in(0,1)}\frac{|\re m(\I y)|}{\im m(\I y)}=+\infty.
 \ee 
 \end{theorem}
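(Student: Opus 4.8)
\emph{Sketch of the approach.} The statement is the counterpart, at the singular endpoint $b$ and for $\lambda\to0$, of the high-energy result in Lemma~\ref{lem:Bennewitz}, so the plan is to adapt Bennewitz's Weyl-disk argument but localized near $b$. The first step is to express the two quantities in the ratio as quadratic forms of the Weyl solution $\psi(\cdot,\lambda)$ from \eqref{eq:a05}. Integrating $d\big(\psi\,\overline{\psi^{[1]}}\big)=|\psi^{[1]}|^2\,dR-\bar\lambda\,|\psi|^2\,dW$ over $[0,X)$, using $\psi(0,\lambda)=-m(\lambda)$ and $\psi^{[1]}(0,\lambda)=1$, and letting $X\to b$ (the boundary term at $b$ vanishing, the problem being limit point there), one obtains
\[
m(\lambda)=\int_0^b|\psi^{[1]}(x,\lambda)|^2\,dR(x)-\bar\lambda\,\frac{\im m(\lambda)}{\im\lambda},\qquad \im m(\lambda)=\im\lambda\int_0^b|\psi(x,\lambda)|^2\,dW(x),
\]
the second being just \eqref{eq:a06}. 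In particular $\re m(\I y)=\int_0^b|\psi^{[1]}(\cdot,\I y)|^2\,dR$, so that
\[
\frac{\re m(\I y)}{\im m(\I y)}=\frac1y\cdot\frac{\int_0^b|\psi^{[1]}(\cdot,\I y)|^2\,dR}{\int_0^b|\psi(\cdot,\I y)|^2\,dW},
\]
both integrals being strictly positive since $W,R$ are unbounded; moreover the first identity gives $|m(\lambda)|\le\int_0^b|\psi^{[1]}(\cdot,\lambda)|^2\,dR+C_\Sigma\,\im m(\lambda)$ on any nonreal sector $\Sigma$ (with $C_\Sigma=\sup_{\lambda\in\Sigma}|\lambda|/|\im\lambda|$), which reduces the sector version of the $O$-statement to an estimate for $\int_0^b|\psi^{[1]}(\cdot,\lambda)|^2\,dR$ of the same kind as on the axis.

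Next I would estimate this ratio by splitting $(0,b)$ at a scale $X=X(y)$ determined by the balance $y\,R(X)\,W(X)\approx1$; since $W,R$ are unbounded, $X(y)\to b$ as $y\downarrow0$, and $X$ depends only on $|\lambda|$, which is precisely what lets one pass from the imaginary axis to a sector. On $(0,X(y))$ the fundamental solutions $c,s,c^{[1]},s^{[1]}$ of \eqref{eq:a04} have not yet ``turned'' and admit elementary two-sided bounds in terms of $R$ and $W$ on $(0,X(y))$, while the contribution of the tail $(X(y),b)$ is controlled through the Weyl-circle formalism recorded in Subsection~\ref{ss:a2}. Substituting these estimates into the Weyl-circle centre/radius formulas --- equivalently, into the two quadratic forms above --- one finds that $\re m(\I y)/\im m(\I y)$ is governed by the size of the increments of $R\circ W^{-1}$ near the (large) argument $W(X(y))$ relative to $R\circ W^{-1}$ itself; and, following Bennewitz's analysis, this ratio stays bounded as $y\downarrow0$ exactly when $R\circ W^{-1}$ is positively increasing at $+\infty$, i.e. $S_\infty(t)\neq1$ for some $t\in(0,1)$ (see Appendix~\ref{ap:osv} for the equivalence of the two formulations).

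For the converse I would argue directly: if $S_\infty\equiv1$ on $(0,1)$, then $R\circ W^{-1}$ is ``ratio-flat'' along a sequence of arguments tending to $+\infty$, and choosing $y_n\downarrow0$ with $y_n\,R(X_n)\,W(X_n)\approx1$ and $W(X_n)$ running through that sequence, the lower estimates above force the Weyl disk at $\I y_n$ to become strongly eccentric towards the real axis, whence $\re m(\I y_n)/\im m(\I y_n)\to+\infty$ and $\sup_{y\in(0,1)}|\re m(\I y)|/\im m(\I y)=+\infty$. Alternatively one may repackage part of the argument through Lemma~\ref{lem:m=1/m}, replacing $m$ by the Dirichlet $m$-function $\ti m$ of the interchanged system \eqref{eq:a04B} and using $\re m(\I y)/\im m(\I y)=\im\ti m(\I y)/\re\ti m(\I y)$ as in the proof of Corollary~\ref{cor:2.1}; this exchanges the roles of $W$ and $R$ but the localization $X(y)\to b$ still has to be carried out.

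The step I expect to be the main obstacle is this quantitative localization in the general Lebesgue--Stieltjes setting: the choice of $X(y)$ and the two-sided estimates for $c,s,c^{[1]},s^{[1]}$ and for the Weyl-disk data, uniformly in small $y$, under Hypothesis~\ref{hyp:RW} alone and with $W,R$ merely left-continuous and nondecreasing, together with the matching lower bounds needed for the converse. This is exactly the point at which ``positively increasing'', rather than some weaker regularity property of $R\circ W^{-1}$, is forced upon one; when in addition $R\circ W^{-1}$ is regularly varying at $+\infty$, Kasahara's Tauberian theorem (Theorem~\ref{th:ben_B}) even yields the one-term asymptotics of $m$ at $0$ and, a fortiori, the bound.
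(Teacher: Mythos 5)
Your opening reductions are sound: the Lagrange-type identity giving $\re m(\I y)=\int_0^b|\psi^{[1]}(\cdot,\I y)|^2\,dR$ together with \eqref{eq:a06}, and the remark that $\re m(\lambda)=\int_0^b|\psi^{[1]}(\cdot,\lambda)|^2\,dR-\tfrac{\re\lambda}{\im\lambda}\,\im m(\lambda)$ reduces the sector statement to bounding $\int_0^b|\psi^{[1]}|^2\,dR$ against $\im m$. But from that point on the proposal is a plan rather than a proof, and you say so yourself: the entire content of the theorem sits in the step you defer, namely the choice of the turning scale $X(y)$, the two-sided bounds for $c,s,c^{[1]},s^{[1]}$ on $(0,X(y))$ and for the Weyl-disk data on the tail $(X(y),b)$, uniformly as $y\downarrow 0$ for merely left-continuous nondecreasing $W,R$, and above all the derivation that boundedness of the resulting ratio is \emph{equivalent} to $R\circ W^{-1}$ being positively increasing at $\infty$ --- both the upper bound when $S_\infty(t)\neq 1$ for some $t$ and the matching lower bound, along a suitable sequence $y_n\downarrow 0$, when $S_\infty\equiv 1$. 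Writing that ``following Bennewitz's analysis'' the answer comes out is an appeal to an argument that is not actually reproduced and whose adaptation to the singular endpoint is precisely what the theorem claims; as it stands there is no proof of either implication.

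For comparison, the paper sidesteps all quantitative solution estimates by a compactness argument. After normalizing to $W(x)=x$ via Lemma \ref{lem:change}, it sets $s_k=1/(\rho_k f(\rho_k))$ and $m_k(\mu)=m(\rho_k\mu)/f(\rho_k)$, applies Helly's selection theorem to the rescalings $R_s(t)=(R\circ W^{-1})(st)/(R\circ W^{-1})(s)$, uses the continuous dependence of solutions on the coefficients (Lemma \ref{lem:a01}) to place $m_k(\mu)$ asymptotically in the Weyl disk of the limit system \eqref{eq:a11} (Lemma \ref{lem:a02}), and then exploits the degeneracy observation of Example \ref{ex:a01}: a Weyl disk of such a system containing a real point forces $R_\infty=a\chi_{(0,1]}$. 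Since $S_\infty\ge R_\infty$ and $R_\infty(1)=1$, this gives $S_\infty\ge 1$ near $1$, and submultiplicativity of $S_\infty$ yields $S_\infty\equiv 1$ on $(0,1)$, which is the contrapositive of the first assertion. If you want to keep your direct route you must actually carry out the two-sided estimates and the lower bound for the converse; otherwise the soft Helly--Weyl-disk argument is the one that closes the proof.
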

 

Before start the proof we need some preparatory lemmas.

\begin{lemma}\label{lem:change}
Let  $W$ and $R$ be left-continuous nondecreasing and unbounded functions on $[0,b)$, satisfying Hypothesis \ref{hyp:RW}. 
Let $\tilde{R}:=R\circ W^{-1}$ and $\tilde{m}$ be the $m$-function of the system
\be\label{eq:a04C}
\begin{cases}
u_1(\xi) = & u_1(0)+\int_{[0,\xi)}u_2(t)d\ti{R}(t),\\
u_2(\xi)= & u_2(0)-\lambda\int_{[0,\xi)}u_1(t)dt,
\end{cases} \quad \xi \in\R_+,
\ee
corresponding to the Neumann boundary condition at $x=0$.  
Then 
\be
\ti{m}(\lambda)=m(\lambda),\quad (\lambda\notin\R_+),
\ee
where $m(.)$ is the $m$-function of the problem \eqref{eq:a04}.
\end{lemma}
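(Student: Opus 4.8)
The plan is to exhibit an explicit change of variables that transforms the system \eqref{eq:a04} into the system \eqref{eq:a04C} while preserving the Neumann $m$-function. First I would set $\xi = W(x)$, i.e.\ substitute the ``mass'' coordinate associated with $W$ in place of the original variable $x$. Since $W$ is left-continuous, nondecreasing and unbounded on $[0,b)$, the map $x\mapsto W(x)$ sends $[0,b)$ onto a right ray; its generalized inverse $W^{-1}$ (in the sense of Definition \ref{def:ginv}) recovers $x$ from $\xi$ up to the intervals of constancy, and on those intervals the relevant integrands vanish in the Lebesgue--Stieltjes sense, so the change of variables in the integrals in \eqref{eq:a04} is legitimate. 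Concretely, writing $\widetilde u_1(\xi) = u_1(W^{-1}(\xi))$ and $\widetilde u_2(\xi) = u_2(W^{-1}(\xi))$, the substitution rule for Lebesgue--Stieltjes integrals gives $\int_{[0,x)} u_2\,dR = \int_{[0,W(x))} \widetilde u_2\, d(R\circ W^{-1}) = \int_{[0,\xi)}\widetilde u_2\, d\widetilde R$ and $\int_{[0,x)} u_1\,dW = \int_{[0,\xi)}\widetilde u_1\,dt$, which is exactly the system \eqref{eq:a04C}.

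Next I would record that this change of variables respects Hypothesis \ref{hyp:RW}: since $R$ and $W$ have no common discontinuities, $\widetilde R = R\circ W^{-1}$ has no discontinuity at a point $\xi$ where $t\mapsto t$ (the new ``$W$'') would have one — there are none — so \eqref{eq:a04C} is an admissible system and its Neumann $m$-function $\widetilde m$ is well defined by \eqref{eq:a05}. The fundamental solution $U$ normalized at $x=0$ transforms into the fundamental solution of \eqref{eq:a04C} normalized at $\xi = 0$ (both initial conditions are $W(0)=0$ and the identity matrix), and the Weyl solution is characterized by square-integrability: $\psi\in L^2((0,b);dW)$ if and only if $\widetilde\psi\in L^2(\R_+; d\xi)$, because the change of variables is an isometry between these two $L^2$-spaces. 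Hence the Weyl solution, and therefore the coefficient $-m(\lambda)$ multiplying $c$ in \eqref{eq:a05}, is the same for both systems, giving $\widetilde m(\lambda) = m(\lambda)$ for $\lambda\notin\R$; by analytic continuation (both sides are Herglotz functions analytic off $\R_+$) the identity extends to all $\lambda\notin\R_+$.

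The main obstacle I anticipate is purely the careful bookkeeping at the intervals where $W$ is constant (so that $W^{-1}$ jumps) and at the jumps of $W$ (so that $W^{-1}$ is constant): one must check that the Lebesgue--Stieltjes change-of-variables formula applies verbatim and that no mass of $dR$ is created or destroyed — this is where Hypothesis \ref{hyp:RW} is used, since a jump of $W$ at which $R$ were also discontinuous would spoil the correspondence. Everything else is a routine verification that the two solution spaces and boundary behaviors at $b$ match under $\xi = W(x)$. I would therefore phrase the argument so that the substitution is stated once cleanly, the three checks (initial condition, system form, limit-point boundary condition) are each one line, and Hypothesis \ref{hyp:RW} is invoked exactly where the pushforward measure $d\widetilde R$ is identified.
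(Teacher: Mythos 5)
Your approach is the same as the paper's: the paper's entire proof is the observation that the substitution $\xi=W(x)$ transforms \eqref{eq:a04} into \eqref{eq:a04C} when $W$ is absolutely continuous and strictly increasing, followed by a reference to Kac--Krein \cite[\S 12]{KK2}, \cite{KK58} for the general case. You are filling in precisely the details the paper delegates to those references, and your plan correctly locates where the work lies (intervals of constancy of $W$, jumps of $W$, and the role of Hypothesis \ref{hyp:RW}).

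One concrete warning about the step you state as a formula: the naive pullback $\widetilde u_i(\xi)=u_i(W^{-1}(\xi))$ is \emph{not} a solution of \eqref{eq:a04C} when $W$ has a jump at some $x_0$. On the gap interval $\big(W(x_0),W(x_0+)\big]$ one has $W^{-1}(\xi)\equiv x_0$, so $u_2\circ W^{-1}$ is constant there, whereas the second equation of \eqref{eq:a04C} forces $\widetilde u_2$ to be affine in $\xi$ on that interval (with slope $-\lambda\,\widetilde u_1$, and $\widetilde u_1$ is indeed constant there since $d\widetilde R$ carries no mass on the gap). The standard fix, as in the Kac--Krein string construction, is to define $\widetilde u_2$ on each gap by linear interpolation between $u_2(x_0)$ and $u_2(x_0+)=u_2(x_0)-\lambda u_1(x_0)\big(W(x_0+)-W(x_0)\big)$; the endpoint values match, so the fundamental matrix at the points of the range of $W$ is unchanged, the $L^2$ identification you describe still holds (the integrand $|\widetilde\psi|^2$ on a gap is bounded between the squared moduli at the endpoints, which does not affect membership in $L^2$), and the Weyl coefficient is therefore preserved. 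With that adjustment, and with Hypothesis \ref{hyp:RW} used exactly where you invoke it (so that no mass of $dR$ sits at a jump of $W$ and the pushforward $d\widetilde R$ is well defined), your argument goes through and reproduces the statement.
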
 

\begin{proof}
If $W$ is absolutely continuous and strictly increasing, then the change of independent variable $\xi=W(x)$ in \eqref{eq:a04} transforms \eqref{eq:a04} into \eqref{eq:a04C}. However, the statement remains true in the general case and for further details we refer to \cite[\S 12]{KK2} and \cite{KK58}, where the case $R(x)=x$ was treated.
\end{proof}

We also need the following result (see \cite[Corollary 2.2]{Ben89}).

\begin{lemma}[\cite{Ben89}]\label{lem:a01}
 Let $W_k=W_\infty=x$ for all $k\in \N$. Suppose also that $R_k$ converges to $R_\infty$ pointwise and locally boundedly. Then the solutions $U_k$ converge pointwise and locally boundedly to the solution $U_\infty$. The convergence  is locally uniform in $\lambda$.
 \end{lemma}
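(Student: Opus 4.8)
The plan is to prove Theorem~\ref{th:m_inf} by reducing it to the corresponding high-energy result, Lemma~\ref{lem:Bennewitz}, via two transformations already available in the excerpt. The key observation is that Theorem~\ref{th:m_inf} concerns the behavior of $m$ as $|\lambda|\to 0$, whereas Lemma~\ref{lem:Bennewitz} concerns $|\lambda|\to\infty$; so the natural strategy is to manufacture an auxiliary $m$-function whose behavior at $\infty$ encodes the behavior of $m$ at $0$, and whose governing measures are related to $R,W$ by the substitution that turns ``positively increasing at $\infty$'' into ``positively increasing at $0$.''

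First I would apply Lemma~\ref{lem:change} to replace the pair $(R,W)$ by $(\tilde R, x) = (R\circ W^{-1},\,x)$ without changing $m$; this is harmless since $\re m/\im m$ is unaffected, and it lets me work with the normalized weight $dW=dx$. Next, the real content is a scaling/inversion argument. The hypothesis involves $S_\infty(t)=\limsup_{x\to+\infty}\tilde R(xt)/\tilde R(x)$, i.e.\ the behavior of $\tilde R$ at $\infty$; I would introduce the rescaled functions $\tilde R_\rho(\xi):=\tilde R(\rho\xi)/\tilde R(\rho)$ and study the associated $m$-functions $m_\rho$. As $\rho\to\infty$, the condition that $\tilde R$ be positively increasing at $\infty$ is exactly the condition that the rescaled family $\tilde R_\rho$ stays bounded away from the degenerate limits that would force $\re m/\im m$ to blow up. Here is where Lemma~\ref{lem:a01} enters decisively: with $W_k=W_\infty=x$ fixed, the locally bounded convergence of $R_k\to R_\infty$ yields locally uniform convergence of the solutions $U_k$, hence of the $m$-functions, in $\lambda$. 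This converts statements about the limiting behavior of $m(\lambda)$ as $\lambda\to 0$ into statements about a limit of the rescaled problems, which are then governed by Lemma~\ref{lem:Bennewitz} applied at a fixed nonreal $\lambda$.

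Concretely, I expect the scaling to take the form $m(\lambda)=\tilde R(\rho)\,m_\rho(\lambda\,\rho\,\tilde R(\rho))$ for a suitable choice of $\rho=\rho(\lambda)$ tied to $f$ as in Definition~\ref{def:f}, so that as $\lambda\to 0$ along a nonreal ray the argument $\lambda\,\rho\,\tilde R(\rho)$ stays in a compact subset of $\C_+$ while $\tilde R_\rho$ converges. The positive-increase hypothesis on $\tilde R=R\circ W^{-1}$ at $\infty$ guarantees that every subsequential limit $R_\infty$ of $\tilde R_\rho$ is nondegenerate and itself positively increasing at $0$, whence Lemma~\ref{lem:Bennewitz} gives $|m_\rho(\lambda)|=O(\im m_\rho(\lambda))$ uniformly, and passing to the limit via Lemma~\ref{lem:a01} yields $|\re m(\lambda)|=O(\im m(\lambda))$ as $|\lambda|\to 0$ in any nonreal sector. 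For the converse, when $S_\infty\equiv 1$ on $(0,1)$ the function $\tilde R$ is slowly varying in the relevant sense, the rescaled limits degenerate, and I would extract along a sequence $y_n\downarrow 0$ a rescaled problem whose $m$-function has $\re/\im$ unbounded, using the second (converse) half of Lemma~\ref{lem:Bennewitz} together with the convergence from Lemma~\ref{lem:a01}.

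The main obstacle I anticipate is making the scaling correspondence between ``small $\lambda$ for $m$'' and ``large $\lambda$ for the rescaled $m_\rho$'' fully rigorous, in particular choosing $\rho(\lambda)$ so that the rescaled spectral parameter remains in a fixed compact nonreal set and controlling the family $\{\tilde R_\rho\}$ uniformly over a whole nonreal sector rather than just along $\I\R_+$. The equivalence ``$S_\infty(t)\neq 1$ for some $t\in(0,1)$'' $\iff$ ``$R\circ W^{-1}$ positively increasing at $\infty$'' I would take from the definitions in Appendix~\ref{ap:osv}; the compactness needed to upgrade subsequential limits to a uniform $O$-bound is where Lemma~\ref{lem:a01}'s locally uniform dependence on $\lambda$ does the essential work, and I expect the bulk of the technical effort to lie in verifying its hypotheses (local boundedness of the rescaled $\tilde R_\rho$) precisely under the positive-increase assumption.
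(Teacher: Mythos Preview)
Your proposal does not address the stated lemma at all. Lemma~\ref{lem:a01} is a continuous-dependence result: given $W_k=W_\infty=x$ and $R_k\to R_\infty$ pointwise and locally boundedly, one must show that the fundamental solutions $U_k(x,\lambda)$ of the systems \eqref{eq:a04} with data $(R_k,x)$ converge to $U_\infty(x,\lambda)$, locally uniformly in $\lambda$. This is a statement about convergence of solutions of linear integral equations under convergence of their coefficients; its proof (see \cite[Corollary~2.2]{Ben89}) proceeds by iterating the Volterra integral system, obtaining uniform bounds from the local boundedness of $R_k$, and passing to the limit termwise or via a Gr\"onwall-type estimate.

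What you have written instead is a sketch of how to \emph{use} Lemma~\ref{lem:a01} to prove Theorem~\ref{th:m_inf}. That is a different task entirely: you treat the lemma as a black box (``Here is where Lemma~\ref{lem:a01} enters decisively'') and discuss rescaling, compactness of $\tilde R_\rho$, and the positive-increase hypothesis on $R\circ W^{-1}$---none of which is relevant to establishing the lemma itself. The lemma has no positive-increase hypothesis, no $m$-functions in its statement, and no asymptotics in $\lambda$; it is a pure ODE/integral-equation stability result. You have confused the statement to be proved with the theorem in whose proof it is invoked.
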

 
Let $f$ be defined as the generalized inverse of the function $F:\R_+\to\R_+$ given by \eqref{eq:Ff}. Notice that $f(y)\uparrow +\infty$ as $y\downarrow 0$ since $F(x)\downarrow 0$ as $x\uparrow +\infty$. However, $yf(y)\downarrow 0$ as $y\downarrow 0$ since $xF(x)\downarrow 0$ as $x\uparrow +\infty$. Observe also that the function $\frac{1}{yf(y)}$ is the inverse of $\frac{1}{x(R\circ W^{-1})(x)}$.

 Consider the function
 \be\label{eq:2.30}
 R_s(t):=\frac{(R\circ W^{-1})(st)}{(R\circ W^{-1})(s)}.
 \ee
 For each $s\in (0,W(b))$ the function $R_s$ maps $(0,1)$ into $[0,1]$. Moreover, $R_s$ is increasing on $(0,1)$. By the second Helly theorem, every sequence has a subsequence $s_{k}$ such that $R_k:=R_{s_{k}}$ converges pointwise and boundedly to some increasing function $R_\infty:[0,1]\to [0,1]$.

 Next, let $s_k$ be such that $s_k\uparrow+\infty$ and $R_k$ converges pointwise and boundedly to some increasing function $R_\infty:[0,1]\to [0,1]$. Define the sequence $\rho_k$ as follows 
 \[
 s_k:=\frac{1}{\rho_kf(\rho_k)},\quad k\in\N.
 \]
  Note that $\rho_k\downarrow 0 $ since $s_k\uparrow +\infty$. 
  
  \begin{lemma}\label{lem:a02}
  Let $m$ be the $m$-function defined by \eqref{eq:a05}. Then $\frac{m(\rho_k\mu)}{f(\rho_k)}$ is asymptotically in the Weyl circle at $\lambda=\mu$ of the problem 
  \be\label{eq:a11}
  \begin{cases}
u_1(x) = & u_1(0)+\int_{[0,x)}u_2(t)dR_\infty(t),\\
u_2(x)= & u_2(0)-\lambda\int_{[0,x)}u_1(t)dt,
\end{cases}\qquad x\in(0,1).
  \ee  
  The latter holds uniformly for $\mu$ in any compact set in $\C_+$.  
  \end{lemma}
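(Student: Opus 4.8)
The plan is to realize the rescaled $m$-function $m(\rho_k\mu)/f(\rho_k)$ as the limit of $m$-functions of a suitable sequence of systems of the type \eqref{eq:a11}, and then use the stability result Lemma \ref{lem:a01} to identify the limit. First I would apply Lemma \ref{lem:change}: replacing the pair $(W,R)$ by $(\mathrm{id},\tilde R)$ with $\tilde R=R\circ W^{-1}$ leaves $m$ unchanged, so without loss of generality we may work with the system \eqref{eq:a04C}, i.e. $dW(x)=dx$, $dR=d\tilde R$. Then I would perform the Kac--Kasahara type rescaling: set $s=1/(\rho f(\rho))$, substitute $x\mapsto sx$ in the independent variable and $\lambda\mapsto\mu\rho$ in the spectral parameter, and track how the two equations in \eqref{eq:a04C} transform. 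The key bookkeeping is that under $x\mapsto sx$ the measure $d\tilde R$ becomes (after dividing by the normalizing factor $\tilde R(s)$) exactly $dR_s$ as in \eqref{eq:2.30}, while the factor $\lambda\,dx=\mu\rho\,s\,dx$ produces the constant $\mu\rho s$ multiplying $dx$; the miracle, which I would verify by a short computation, is that the definition of $f$ via $F(x)=1/(x(W\circ R^{-1})(x))$ is engineered precisely so that $\rho s \cdot \tilde R(s)$ and the scaling of the first component $u_1$ by $\tilde R(s)$ (equivalently $1/f(\rho)$) conspire to turn the rescaled system into one of the form \eqref{eq:a11} with $R_\infty$ replaced by $R_k$ and an extra $\mu$-dependent constant that is absorbed into the $u_2$ equation. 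In other words, after the substitution the $m$-function of \eqref{eq:a04C} evaluated at $\rho_k\mu$, divided by $f(\rho_k)$, is (up to the standard two-by-two normalization and the sign conventions of \eqref{eq:a05}) the $m$-function of the system \eqref{eq:a11} but with $R_\infty$ replaced by $R_k$.

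Next I would invoke Lemma \ref{lem:a01} with $W_k=W_\infty=x$ and the $R_k$ chosen above, which by hypothesis converge pointwise and boundedly to $R_\infty$: the fundamental solutions $U_k$ converge locally uniformly (in $x$ and in $\lambda$) to the fundamental solution $U_\infty$ of \eqref{eq:a11}. Convergence of the fundamental solutions forces convergence of the Weyl circles, since the centre $z_0(\lambda)$ and radius $\rho(\lambda)$ of the Weyl circle at $\lambda$ are continuous functions of the entries of $U$ evaluated at (a point approaching) the right endpoint — here I must be slightly careful, because the endpoint $b=\infty$ is singular for the limiting problem, so "asymptotically in the Weyl circle" should be read in the sense that for every compact $K\subset\C_+$ and every $\varepsilon>0$ there is a compact interval $[0,x_0]\subset(0,1)$ on whose right endpoint the Weyl disk of the truncated problem has radius $<\varepsilon$, and then the rescaled $m(\rho_k\mu)/f(\rho_k)$ lies in that disk for $k$ large. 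This is exactly the content of the phrase "asymptotically in the Weyl circle", and it is how Bennewitz formulates the analogous high-energy statement (compare Theorem \ref{th:ben}).

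The main obstacle, and the step I would spend the most care on, is the exact algebra of the rescaling: showing that the particular normalization $f=F^{-1}$ with $F(x)=1/(x(W\circ R^{-1})(x))$ is the unique choice that makes the rescaled system converge to \eqref{eq:a11} with a finite nonzero limiting measure $R_\infty$ and with the $\mu$-dependence entering only through the factor multiplying $dx$ in the second equation. Concretely, one needs $\rho_k s_k \to$ (something that, combined with $R_k\to R_\infty$, gives a nondegenerate limit), and the identity $\rho f(\rho)s=1$ together with $s = 1/(\rho f(\rho))$ and the relation between $f$ and $R\circ W^{-1}$ must be unwound so that the coefficient in front of $u_1$ in the $u_2$-equation of the rescaled system tends to $\mu$ (not $0$ and not $\infty$). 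I expect this to reduce, after the dust settles, to the observation already recorded before Lemma \ref{lem:a02} that $1/(yf(y))$ is the inverse of $x\mapsto 1/(x(R\circ W^{-1})(x))$, i.e. $s_k\,(R\circ W^{-1})(s_k) = 1/\rho_k$, which is precisely what cancels the stray factors. Once this identity is in hand the rest is the routine diagonal-sequence and Weyl-disk-nesting argument sketched above, together with the locally uniform dependence on $\mu$ inherited from Lemma \ref{lem:a01}.
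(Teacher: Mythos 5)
Your proposal is correct and follows essentially the same route as the paper: reduce to the system \eqref{eq:a04C} via Lemma \ref{lem:change}, rescale $x\mapsto s_kx$, $\lambda\mapsto\rho_k\mu$ using the identity $s_k\,(R\circ W^{-1})(s_k)=1/\rho_k$ (equivalently $\tilde R(s_k)=f(\rho_k)$) to recognize the rescaled system as \eqref{eq:a11} with $R_\infty$ replaced by $R_k$, verify via the Lagrange-type integral inequality that $m(\rho_k\mu)/f(\rho_k)$ lies in the Weyl disk of the $k$-th truncated system, and pass to the limit by Lemma \ref{lem:a01}. The only cosmetic difference is that the paper records the rescaled fundamental solutions and the bound $\int_0^1|s_k-m_kc_k|^2\,dt\le \im m_k(\mu)/\im\mu$ explicitly, whereas you describe the same computation in outline; your extra care about the meaning of ``asymptotically in the Weyl circle'' via nested truncated disks is consistent with the paper's (terser) argument.
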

  \begin{proof}
  By Lemma \ref{lem:change}, we can consider the system \eqref{eq:a04C} instead of \eqref{eq:a04}. 
  Set $\tilde{R}_k(t):=\frac{\tilde{R}(ts_k)}{f(\rho_k)}$ and consider the corresponding system 
  \be
  \begin{cases}
u_{1,k}(x) = & u_{1,k}(0)+\int_{[0,x)}u_{2,k}(t)d\tilde{R}_k(t),\\
u_{2,k}(x)= & u_{2,k}(0)-\lambda\int_{[0,x)}u_{1,k}(t)dt,
\end{cases}\qquad x\in(0,1).
  \ee  
  Then it is straightforward to check that the system of fundamental solutions is given by
  \be
  c_{k}(t,\mu)=c(ts_k,\rho_k\mu),\quad c_{k}^{[1]}(t,\mu)=f(\rho_k)c_k^{[1]}(ts_k,\rho_k\mu),
  \ee
  and 
  \be
  s_{k}(t,\mu)=\frac{1}{f(\rho_k)}s(ts_k,\rho_k\mu),\quad s_{k}^{[1]}(t,\mu)=s_k^{[1]}(ts_k,\rho_k\mu).
  \ee
Therefore, setting $m_k(\mu):=\frac{m(\rho_k\mu)}{f(\rho_k)}$, we get
\be
\int_0^1|s_k(t,\mu)-m_k(\mu)c_k(t,\mu)|^2dt=\int_0^{s_k}\frac{|s(x,\rho_k\mu)-m(\rho_k\mu)c(x,\rho_k\mu)|^2}{f(\rho_k)^2s_k}dx\le \frac{\im m_k(\mu)}{\im \mu}.
\ee
This inequality yields that $m_k(\mu)$ is in the Weyl circle of the $k$-th system at $\lambda=\mu$. Applying Lemma \ref{lem:a01}, we complete the proof. 
%
  \end{proof}
 
  Now we are in position to prove Theorem \ref{th:m_inf}.
 
  \begin{proof}[Proof of Theorem \ref{th:m_inf}]
  Assume the converse, i.e., there is a sequence $\{\rho_k\}_1^\infty$ and $\theta\in (0,\pi)$ such that $\rho_k\to 0$ and $\im m(\rho_k\E^{\I\theta})/m(\rho_k\E^{\I\theta})\to 0$. Set $s_k:=\frac{1}{\rho_kf(\rho_k)}$ and define the function $\tilde{R}_k:(0,1)\to [0,1]$. Then by the Helly theorem one may choose a subsequence of $s_k$ such that $\tilde{R}_k$ converges, pointwise and boundedly, as $k\to +\infty$ along this subsequence. Without loss of generality we can assume that $\tilde{R}_k$ converges to $R_\infty$ as $k\to\infty$. 
  
Further, by Lemma \ref{lem:a02}, $\frac{m(\rho_k\E^{\I\theta})}{f(\rho_k)}$ is asymptotically in the Weyl circle of \eqref{eq:a11}.  On the other hand, $m(\rho_k\E^{\I\theta})$ is asymptotically real and hence the Weyl circle of \eqref{eq:a11} contains a real point. However (see Example \ref{eq:a01}), the latter is possible precisely if $R_\infty(x)=a\chi_{(0,1]}(x)$. By construction, $S_\infty(x)\ge R_\infty(x)$ and hence $S_\infty$ also has a jump at $x=0$. Noting that $S_\infty$ is submultiplicative, i.e., $S_\infty(t_1t_2)\le S_\infty(t_1)S_\infty(t_2)$, and $S_\infty:(0,1)\to [0,1]$, we finally conclude $S_\infty\equiv 1$ on $(0,1)$.     
  \end{proof}
  
  \begin{corollary}\label{cor:2.2}
  Let  $W$ and  $R$ satisfy the assumptions of Theorem \ref{th:m_inf}. Let also $m$ be the Neumann  $m$-function defined by \eqref{eq:a05}. Then 
  \be\label{eq:2.36}
 \sup_{y\in(0,1)}\frac{\im \, m(\I y)}{\re m(\I y)}<+\infty
 \ee
 if and only if the function $W\circ R^{-1}$ is positively increasing at $\infty$. 
  \end{corollary}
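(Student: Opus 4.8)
The plan is to deduce Corollary~\ref{cor:2.2} from Theorem~\ref{th:m_inf} by exactly the same device used to derive Corollary~\ref{cor:2.1} from Lemma~\ref{lem:Bennewitz}, namely the ``swap $W$ and $R$'' duality of Lemma~\ref{lem:m=1/m}. First I would consider the auxiliary system~\eqref{eq:a04B}, i.e. the system obtained from~\eqref{eq:a04} by interchanging the roles of $dW$ and $dR$, and let $\ti m$ denote its Neumann $m$-function. Since $W$ and $R$ are both unbounded (so \eqref{eq:a04B} is again in the limit point case at $b$) and satisfy Hypothesis~\ref{hyp:RW}, Theorem~\ref{th:m_inf} applies verbatim to~\eqref{eq:a04B}: $|\re\ti m(\lambda)|=O(\im\ti m(\lambda))$ as $|\lambda|\to 0$ in any nonreal sector precisely when $R\circ W^{-1}$ with the roles of $R,W$ swapped---that is $W\circ R^{-1}$---is positively increasing at $\infty$, and otherwise $\sup_{y\in(0,1)}|\re\ti m(\I y)|/\im\ti m(\I y)=+\infty$.

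Next I would transfer this to $m$ itself. By Lemma~\ref{lem:m=1/m}, $m(\lambda)=-1/(\lambda\,\ti m(\lambda))$ for $\lambda\notin\R_+$. Evaluating along the imaginary semi-axis $\lambda=\I y$, $y>0$, a short computation (the one already displayed in the proof of Corollary~\ref{cor:2.1}) gives
\be
\frac{\im m(\I y)}{\re m(\I y)}=\frac{\re\ti m(\I y)}{\im\ti m(\I y)},\qquad y>0.
\ee
Hence $\sup_{y\in(0,1)}\frac{\im m(\I y)}{\re m(\I y)}<\infty$ is equivalent to $\sup_{y\in(0,1)}\frac{\re\ti m(\I y)}{\im\ti m(\I y)}<\infty$. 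Here one should note that $\re\ti m(\I y)>0$ for small $y$ (this is built into the two alternatives of Theorem~\ref{th:m_inf}: either the ratio $|\re\ti m|/\im\ti m$ is bounded near $0$, in which case the boundedness of $\sup_{y\in(0,1)}\re\ti m(\I y)/\im\ti m(\I y)$ is what matters, or it is unbounded), so the identity above is meaningful and the finiteness of the left-hand supremum is genuinely equivalent to the positive increase of $W\circ R^{-1}$ at $\infty$ by the displayed form of Theorem~\ref{th:m_inf} applied to~\eqref{eq:a04B}.

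Putting the two steps together: $\sup_{y\in(0,1)}\frac{\im m(\I y)}{\re m(\I y)}<\infty$ $\iff$ $\sup_{y\in(0,1)}\frac{\re\ti m(\I y)}{\im\ti m(\I y)}<\infty$ $\iff$ $W\circ R^{-1}$ is positively increasing at $\infty$, which is the claim. I do not anticipate a serious obstacle here: the argument is a one-line reduction to Theorem~\ref{th:m_inf} via Lemma~\ref{lem:m=1/m}, mirroring Corollary~\ref{cor:2.1}. The only point needing a word of care is making sure the ``converse'' half of Theorem~\ref{th:m_inf} is used in the contrapositive correctly---i.e. that $W\circ R^{-1}$ not positively increasing at $\infty$ forces $\sup_{y\in(0,1)}\re\ti m(\I y)/\im\ti m(\I y)=+\infty$ and hence, through the identity, $\sup_{y\in(0,1)}\im m(\I y)/\re m(\I y)=+\infty$---but this is immediate from the statement as written.
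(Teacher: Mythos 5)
Your argument is correct and is exactly what the paper intends: the paper omits the proof of Corollary \ref{cor:2.2}, stating only that it is ``similar to the proof of Corollary \ref{cor:2.1}'', and your reduction --- apply Theorem \ref{th:m_inf} to the dual system \eqref{eq:a04B} (which is again limit point since $W$ and $R$ are unbounded) and transfer the conclusion through the identity $\im m(\I y)/\re m(\I y)=\re\ti m(\I y)/\im\ti m(\I y)$ supplied by Lemma \ref{lem:m=1/m} --- is precisely that omitted argument. No gaps.
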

  
  The proof is similar to the proof of Corollary \ref{cor:2.1} and we omit it.
  
  Next we present the following analog of Theorem \ref{th:ben}.
  
  \begin{theorem}\label{th:ben_B}
Let $W$ and $R$ satisfy the assumptions of Theorem \ref{th:m_inf}. Assume that the function $R\circ W^{-1}$ is regularly varying at $\infty$ with index $\alpha\in (0,\infty)$. Then any $m$-function corresponding to the Neumann condition at $x=0$ satisfies
\be\label{eq:m_asymp2}
m(\mu\rho)= \frac{K_\nu}{(-\mu)^\nu}f(\rho)(1+o(1)) \quad \text{as}\quad \rho\to0,
\ee
where
\be\label{eq:nuC2}
\nu=\frac{1}{1+\alpha},\quad K_\nu=\frac{\nu^{1-\nu}\Gamma(\nu)}{(1-\nu)^\nu\Gamma(1-\nu)}.
\ee
The estimate holds uniformly for $\mu$ in any compact set of $\C_+$. 
%
\end{theorem}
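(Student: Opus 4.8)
The idea is to run exactly the rescaling machinery behind the proof of Theorem~\ref{th:m_inf} --- Lemmas~\ref{lem:change}, \ref{lem:a01} and \ref{lem:a02} --- but now to exploit the fact that $R\circ W^{-1}$ is \emph{genuinely} regularly varying (not merely positively increasing) in order to pin down the limiting problem and compute its $m$-function. By Lemma~\ref{lem:change} I would first assume $W(x)=x$ and replace $R$ by $\tilde R:=R\circ W^{-1}$, so that the system is \eqref{eq:a04C} on $\R_+$ and $\tilde R$ is regularly varying at $\infty$ with index $\alpha$. For $\rho>0$ set $s=s(\rho):=\tfrac1{\rho f(\rho)}$; since $f(\rho)\uparrow+\infty$ and $\rho f(\rho)\downarrow 0$ as $\rho\downarrow 0$, we have $s(\rho)\uparrow+\infty$. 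Recalling (the remark preceding Lemma~\ref{lem:a02}) that $y\mapsto\tfrac1{yf(y)}$ is the inverse of $x\mapsto\tfrac1{x\tilde R(x)}$, we get $\tfrac1{s\tilde R(s)}=\rho(1+o(1))$, and since $f(\rho)=\tfrac1{\rho s}$ this yields the basic asymptotic $\tilde R(s)=f(\rho)(1+o(1))$ as $\rho\downarrow 0$.

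Consequently the rescaled coefficients $\tilde R_\rho(t):=\tilde R(ts)/f(\rho)=\dfrac{\tilde R(ts)}{\tilde R(s)}\cdot\dfrac{\tilde R(s)}{f(\rho)}$ converge, as $\rho\downarrow 0$, to $t^\alpha$ for each $t\in(0,\infty)$: the first factor tends to $t^\alpha$ because $\tilde R$ is regularly varying at $\infty$ with index $\alpha$ (Appendix~\ref{ap:osv}), and the second tends to $1$ by the previous step; being nondecreasing with continuous limit, the convergence is locally uniform on $(0,\infty)$. Now, just as in the proof of Lemma~\ref{lem:a02}, for any sequence $\rho_k\downarrow 0$ the normalized function $m_{\rho_k}(\mu):=m(\rho_k\mu)/f(\rho_k)$ satisfies, for every fixed $T>0$,
\[
\int_0^T\bigl|s_k(t,\mu)-m_{\rho_k}(\mu)\,c_k(t,\mu)\bigr|^2\,dt\ \le\ \frac{\im m_{\rho_k}(\mu)}{\im\mu},
\]
where $c_k,s_k$ are the fundamental solutions of \eqref{eq:a04C} with $\tilde R$ replaced by $\tilde R_{\rho_k}$; that is, $m_{\rho_k}(\mu)$ lies asymptotically in the Weyl disk on $(0,T)$ of that system. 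By Lemma~\ref{lem:a01} these systems converge (solutions, locally uniformly in $\mu$) to the limiting \emph{self-similar} system
\[
u_1(x)=u_1(0)+\int_{[0,x)}u_2(t)\,d(t^\alpha),\qquad u_2(x)=u_2(0)-\lambda\int_{[0,x)}u_1(t)\,dt,\qquad x\in(0,\infty),
\]
so their Weyl disks on $(0,T)$ converge, uniformly for $\mu$ in compacta of $\C_+$, to that of the self-similar system. Since $\int^\infty dt=+\infty$, the self-similar system is in the limit point case at $\infty$, so its Weyl disks on $(0,T)$ shrink, as $T\to+\infty$, to the single point $m_\infty(\mu)$ (its Neumann $m$-function); a standard diagonal argument then upgrades this to $m_{\rho_k}(\mu)\to m_\infty(\mu)$, uniformly on compacta of $\C_+$, and since $\rho_k$ was arbitrary, $m(\rho\mu)/f(\rho)\to m_\infty(\mu)$ as $\rho\downarrow 0$.

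It then remains to evaluate $m_\infty$. Differentiating the self-similar system gives $u_1''-\tfrac{\alpha-1}{x}u_1'+\lambda\alpha x^{\alpha-1}u_1=0$, which by the standard substitutions (write $u_1=x^{\alpha/2}v$, then pass to the variable $x^{(\alpha+1)/2}$) becomes a Bessel --- for $\lambda\notin\R_+$, a modified Bessel --- equation; choosing the solution that lies in $L^2(0,\infty;dt)$ near $\infty$ and reading off $\psi(0)$ and $\psi^{[1]}(0)=\lim_{x\to 0}u_1'(x)/(\alpha x^{\alpha-1})$ from its expansion at $x=0$ yields, after collecting the Gamma factors, $m_\infty(\mu)=K_\nu/(-\mu)^\nu$ with $\nu$ and $K_\nu$ as in \eqref{eq:nuC2}. (As a cross-check, the self-similar system obeys an exact scaling relation $m_\infty(\mu\rho)=f(\rho)\,m_\infty(\mu)$, which together with Theorem~\ref{th:ben} again identifies the constant.) Combining everything, $m(\mu\rho)=m_\rho(\mu)f(\rho)=\dfrac{K_\nu}{(-\mu)^\nu}f(\rho)(1+o(1))$ as $\rho\downarrow 0$, uniformly for $\mu$ in compacta of $\C_+$, which is \eqref{eq:m_asymp2}.

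The main obstacle, precisely as in Theorem~\ref{th:m_inf}, is the passage from the Weyl-disk statement of (the $(0,T)$-version of) Lemma~\ref{lem:a02} to genuine convergence of $m_\rho(\mu)$: one must make sure the limiting problem ``sees'' the whole half-line $(0,\infty)$ and is limit point there, so that its Weyl disks degenerate to a single point, and one must keep everything uniform in $\mu$ on compact subsets of $\C_+$, which ultimately rests on the local-in-$\lambda$ uniformity built into Lemma~\ref{lem:a01}. The remaining ingredients --- the regular-variation bookkeeping giving $\tilde R_\rho\to t^\alpha$, and the explicit Bessel evaluation of $m_\infty$ --- are routine.
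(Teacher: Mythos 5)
Your proposal is correct and follows essentially the same route as the paper: reduce to $W(x)=x$ via Lemma \ref{lem:change}, use regular variation to get $\tilde R(ts)/f(\rho)\to t^\alpha$, run the Weyl-disk argument of Lemma \ref{lem:a02} on $(0,T)$ together with Lemma \ref{lem:a01}, and invoke the limit point property of the self-similar system at $\infty$ to shrink the disks to the single point $m_\infty(\mu)=K_\nu(-\mu)^{-\nu}$. The only differences are that you spell out the normalization bookkeeping $\tilde R(s)=f(\rho)(1+o(1))$ and sketch the Bessel evaluation of $m_\infty$, whereas the paper delegates that explicit formula to the references \cite{Ben89}, \cite{kas}, \cite{AK06}.
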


\begin{proof}
According to Lemma \ref{lem:change}, it suffices to prove the claim in the case $W(x)=x$. Consider the function $R_s$ defined by \eqref{eq:2.30}. Note that $R_s\to t^{\alpha}$ as $s\to +\infty$ pointwise and locally boundedly on $\R_+$. Then consider the the system \eqref{eq:a11} on the interval  $(0,c)$ with $c>0$. Arguing as in the proof of Lemma \ref{lem:a02}, we can show that $m_s(\mu)$ is asymptotically in the Weyl disc for the system \eqref{eq:a04} on $(0,c)$ with $W=x$ and $R=R_\infty=x^\alpha$. Since $c>0$ is arbitrary and the limit equation is limit point at $\infty$, we conclude that $m_s$ converges to the unique $m$-function of the limit system considered on $\R_+$. Finally it suffices to mention that (see \cite[Lemma 4.6]{Ben89}, \cite[Example 1]{kas}, \cite{AK06})
\be
m_\infty(\lambda)=K_\nu (-\lambda)^{-\nu},\quad \lambda\notin\R_+.
\ee
\end{proof}

\begin{remark}
Let us mention that Theorem \ref{th:ben_B}  can be deduced from \cite[Theorem 2]{kas}, where \eqref{eq:m_asymp2} was established for $\mu=-1$. Moreover, Theorem \ref{th:ben_B} can be extended to the cases $\alpha\in \{0,\infty\}$ (see, e.g., \cite{kas}).
\end{remark}

  %
%

\section{The HELP inequality}\label{sec:help}

\subsection{Everitt's criterion}
Throughout this section we shall assume that $r,w\in L^1_{\loc}(0,b)$ are positive a.e. on $(0,b)$. 
Consider the following inequality
\be\label{eq:help}
\Big(\int_0^b\frac{1}{r}|f'|^2dx\Big)^2\le K^2\int_0^b|f|^2\, wdx\, \int_0^b\frac{1}{w}\big|\big(\frac{1}{r}f'\big)'\big|^2\, dx,\qquad (f\in\dom(A_+)),
\ee
where 
\be\label{eq:dom+}
\dom(A_+)=\{f\in L^2_w(0,b):\ f,r^{-1}f'\in AC_{\loc}[0,b),\ w^{-1}(r^{-1}f')'\in L^2_w(0,b)\}.
\ee
If the endpoint $x=b$ is regular or the corresponding differential expression is in the limit circle case at $b$, then we shall also assume that functions from $\dom(A_+)$ satisfy the following boundary condition at the right endpoint: $\lim_{x\uparrow b} (r^{-1}f')(x)=0$. 

\begin{definition}
The inequality \eqref{eq:help} is said to be valid if there is $K>0$ such that \eqref{eq:help} holds for all $f\in\dom(A_+)$.
\end{definition}

Firstly, let us remark that in the particular case $b=+\infty$, $w=r\equiv 1$, and $K=4$, the inequality \eqref{eq:help} is the classical Hardy--Littlewood inequality \cite{hlp}. 

Secondly, notice that the left-hand side in \eqref{eq:help} is finite for all $f\in\dom(A_+)$. Indeed, integrating by parts, we get
\[
\int_0^b\frac{1}{r}|f'|^2dx=\lim_{x\uparrow b}\Big((\frac{1}{r}f')\bar{f}\, \big|_{0}^{x} - \int_0^x \frac{1}{w}(\frac{1}{r}f')'\bar{f}\, wdt\Big).
\]
Clearly, in the regular case the right-hand side is always finite. In the singular case, it suffices to notice that $\lim_{x\uparrow b}(\frac{1}{r}f')(x)\bar{f}(x)=0$ for all $f\in\dom(A_+)$. Indeed, in the limit circle case at $b$,  $\lim_{x\uparrow b}f(x)$ exists and is finite for all $f$ from the maximal domain (cf.  \cite[Lemma 2.1]{ee91}) and, moreover, $\lim_{x\uparrow b}(\frac{1}{r}f')(x)=0$ for all $f\in \dom(A_+)$. In the limit point case at $b$, the result follows from \cite[Corollary on p. 199]{kalf74}.  

The following criterion for the validity of \eqref{eq:help} was found by Everitt \cite{ev72} (see also \cite{ee82}, where the regular case was treated).  

\begin{theorem}[Everitt]\label{th:ev71}
Let $m$ be the $m$-function defined by \eqref{eq:a03}. The inequality \eqref{eq:help} is valid if and only if there is $\theta\in(0,\frac{\pi}{2})$ such that
\be\label{eq:ev71}
-\im (\lambda^2\, m(\lambda))\ge 0,\quad (\lambda \in \Gamma_{\theta}),
\ee
where $\Gamma_{\theta}:=\{z\in\C_+: \frac{\re z}{|z|}\in [-\cos\theta,\cos\theta]\}$.

Moreover, the best possible $K$ in \eqref{eq:help} is given by 
\[
K=\frac{1}{\cos \theta_0},\quad \theta_0:=\inf \big\{\theta\in \big(0,\frac{\pi}{2}\big]:\ \eqref{eq:ev71}\, \text{is satisfied}\big\}.
\] 
\end{theorem}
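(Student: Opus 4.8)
The plan is to follow the classical Everitt–Hardy–Littlewood strategy of reducing the integral inequality to a statement about the Weyl–Titchmarsh $m$-function, which encodes the boundary behavior of solutions at the singular endpoint. The first step is to observe that for $f\in\dom(A_+)$ we may set $g := w^{-1}(r^{-1}f')' \in L^2_w(0,b)$, so that $f$ solves the inhomogeneous equation $-(r^{-1}f')' = -wg$. Since the problem is limit point/limit circle at $b$ with the boundary condition $(r^{-1}f')(b)=0$ imposed in the limit circle case, such an $f$ is, up to the homogeneous solution, determined by $g$. The natural parametrization is via the resolvent-type representation: fixing $\lambda\in\C_+$, one writes $f$ in terms of the fundamental system $c(\cdot,\lambda)$, $s(\cdot,\lambda)$ and the Weyl solution $\psi(\cdot,\lambda)=s-m(\lambda)c$. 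The key algebraic identity, obtained by Lagrange's formula (Green's identity) exactly as in \eqref{eq:a06}, expresses each of the three quadratic forms appearing in \eqref{eq:help} — namely $\int_0^b r^{-1}|f'|^2$, $\int_0^b |f|^2w$, and $\int_0^b |g|^2 w$ — as a sesquilinear combination of the values $f(0)$, $(r^{-1}f')(0)$ (or rather the analogous endpoint data), with coefficients built from $m(\lambda)$, $\lambda$, and $\bar\lambda$.

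Next I would carry out the substitution that linearizes the problem: take $f$ to be (asymptotically, as an element of the Weyl disc) a multiple of $\psi(\cdot,\lambda)$, so that the three integrals become, up to positive factors, $\re(\lambda m(\lambda))$ or $|m(\lambda)|^2\im\lambda$ type expressions. Carrying out the Lagrange-identity bookkeeping, one finds that \eqref{eq:help} for this one-parameter family of trial functions is equivalent to the numerical inequality $-\im(\lambda^2 m(\lambda))\ge 0$ along the relevant rays, with the constant $K$ governed by how wide a sector $\Gamma_\theta$ this holds on. The necessity direction is then immediate: if \eqref{eq:ev71} fails on every sector, one produces, by taking $f=\re(\psi(\cdot,\lambda))$ or a suitable real combination for $\lambda$ near the offending ray, a sequence of admissible trial functions violating \eqref{eq:help} with any fixed $K$. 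For sufficiency, one must show that the worst case is attained precisely on this family — this is the Hardy–Littlewood idea that the extremal functions for the inequality are exactly the real or imaginary parts of Weyl solutions, and it rests on a convexity/variational argument: any $f\in\dom(A_+)$ can be approximated (using that $r,w\in L^1_{\loc}$, density of nice functions, and the boundary behavior established right before the theorem statement, i.e. $\lim_{x\uparrow b}(r^{-1}f')\bar f=0$) by finite combinations corresponding to a spectral-measure decomposition of $m$, and the quadratic inequality is stable under such combinations once it holds pointwise in $\lambda$.

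The main obstacle, and the step requiring the most care, is the sufficiency direction in the singular case: one needs to justify that the finiteness of all three integrals (which by the remarks preceding the theorem is automatic on $\dom(A_+)$) together with \eqref{eq:ev71} forces \eqref{eq:help}, without any compactness of the interval. The classical route is to write, for $\im\lambda>0$,
\[
\int_0^b\Big|\frac{1}{r}f'\Big|\!{}^2\,? \quad\text{—better:}\quad \int_0^b\frac{1}{r}|f'|^2\,dx = \re\!\Big(\bar\lambda\!\int_0^b|f - \text{(boundary term)}|^2 w\,dx\Big) + \cdots,
\]
and to combine the three identities into a single quadratic form in the complex parameter $\lambda$ whose nonnegativity is exactly \eqref{eq:ev71}; then optimize over $\lambda\in\Gamma_\theta$. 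Concretely one sets $\lambda = t\E^{\I\varphi}$, lets $t\to\infty$ and $t\to 0$ to control the two endpoints, and uses that $f$ and $g$ have no low/high-frequency obstruction because they lie in $L^2_w$. The delicate point is the interchange of limits $x\uparrow b$ with the $\lambda$-integration and the vanishing of all boundary terms at $b$; this is handled exactly by the limit-point/limit-circle dichotomy and the references (\cite[Corollary on p. 199]{kalf74} in the limit point case, \cite[Lemma 2.1]{ee91} in the limit circle case) already invoked above. I would also record the identification of the optimal $K$: tracking the factor $\cos\theta$ through the quadratic form shows $K=1/\cos\theta_0$ and that equality in \eqref{eq:help} forces $f$ to be a Weyl solution at the extremal ray, which is the cases-of-equality statement. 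For the present paper, since Theorem~\ref{th:ev71} is quoted from \cite{ev72} (and \cite{ee82} for the regular case), the proof is by reference; the role of stating it here is to set up \eqref{eq:ev71} for comparison with the sharper condition \eqref{eq:I_07} proved later.
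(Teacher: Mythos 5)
The paper does not prove this theorem: it is quoted verbatim from Everitt \cite{ev72} (with \cite{ee82} for nonconstant $w$), and your closing sentence correctly identifies that the ``proof'' here is by reference, which is all the paper does. So as a match to the paper's treatment you are fine.

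Since you nonetheless sketch the classical argument, one remark on where that sketch would fail if pressed into service as an actual proof. The necessity half is sound and is essentially the computation the paper itself performs later (in the proof of Theorem \ref{th:ak_crit+}): substitute the Weyl solution $\psi(\cdot,\lambda)$ into \eqref{eq:help}, evaluate the three integrals by the Lagrange identity, and let $\lambda$ approach an offending ray. The gap is in sufficiency. The inequality \eqref{eq:help} is not bilinear in $f$, so your appeal to ``a convexity/variational argument'' in which the inequality is ``stable under such combinations once it holds pointwise in $\lambda$'' does not go through: verifying a quadratic-over-product inequality on a spanning family of trial functions does not imply it on the whole of $\dom(A_+)$, and this is precisely the entire difficulty of the theorem. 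Everitt's actual sufficiency argument avoids any approximation of $f$ by Weyl solutions; following Hardy--Littlewood, it establishes for \emph{every} $f$ in the maximal domain and every $\lambda=\rho\E^{\I\varphi}$ in the sector an exact completed-square identity of the shape
\[
\int_0^b\Big(\tfrac{1}{r}|f'|^2+q|f|^2\Big)dx+\re(\lambda)\int_0^b|f|^2w\,dx+\re(\lambda^{-1})\int_0^b|\ell[f]|^2w\,dx
=-\re\Big(\lambda^{-1}\int_0^b|\ell[f]-\lambda f|^2w\,dx\Big)+B(f,\lambda),
\]
where the boundary term $B(f,\lambda)$ at $x=0$ is controlled by the Weyl-disc geometry, i.e.\ by the sign of $\im(\lambda^2 m(\lambda))$; condition \eqref{eq:ev71} makes the right-hand side nonpositive for $\varphi$ in the admissible range, and optimizing over $\rho>0$ turns the resulting arithmetic-mean bound into the geometric-mean form \eqref{eq:help} with $K=1/\cos\theta$. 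That identity, not a density argument, is the missing engine of the proof; your half-finished display suggests you sensed this but did not supply it. The best-constant claim and the cases of equality then drop out of tracking when the completed square vanishes, as you indicate.
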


It is a nontrivial task to apply Everitt's criterion and to obtain conditions for the validity of the HELP inequality \eqref{eq:help} in terms of coefficients. However, in the regular case, Bennewitz \cite{Ben87} found a necessary and sufficient condition for \eqref{eq:help} to be valid. 

\begin{theorem}[Bennewitz]\label{Bennewitz_HELP}
Assume that the end-point $b$ is regular. Then the inequality \eqref{eq:help} is valid if and only if the function $R\circ W^{-1}$ is positively increasing at $x=0$.
\end{theorem}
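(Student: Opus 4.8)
The plan is to combine Everitt's criterion (Theorem~\ref{th:ev71}) with the high-energy asymptotics of the $m$-function (Lemma~\ref{lem:Bennewitz}) to characterise validity of the HELP inequality~\eqref{eq:help} when the endpoint $b$ is regular. Recall that since $b$ is regular, $R\circ W^{-1}$ need only be examined at $x=0$: the full strength of the result is the equivalence between~\eqref{eq:ev71} holding on some sector $\Gamma_\theta$ and the condition $|m(\lambda)|=O(\im m(\lambda))$ as $|\lambda|\to\infty$ in any nonreal sector. Since $m$ is meromorphic in $\C$ in the regular case, the only issue is the behaviour near $\lambda=\infty$, which is exactly what Lemma~\ref{lem:Bennewitz} ties to the positive increase of $R\circ W^{-1}$ at $0$.

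First I would prove the implication: if $R\circ W^{-1}$ is positively increasing at $0$, then~\eqref{eq:help} is valid. By Lemma~\ref{lem:Bennewitz}, $|m(\lambda)|=O(\im m(\lambda))$ uniformly in any nonreal sector as $|\lambda|\to\infty$. Writing $\lambda=|\lambda|\E^{\I\varphi}$ with $\varphi\in(0,\pi)$ and $m(\lambda)=|m(\lambda)|\E^{\I\psi(\lambda)}$ with $\psi\in(0,\pi)$ (since $m$ is Herglotz), the estimate $|m|\le C\im m$ means precisely that $\sin\psi(\lambda)\ge 1/C$, i.e.\ $\psi(\lambda)$ is bounded away from $0$ and $\pi$ on the sector. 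Now $-\im(\lambda^2 m(\lambda))=-|\lambda|^2|m(\lambda)|\sin(2\varphi+\psi)$, so~\eqref{eq:ev71} holds on $\Gamma_\theta=\{\varphi\in[\tfrac\pi2-\theta',\tfrac\pi2+\theta']\}$ precisely when $2\varphi+\psi\in[\pi,2\pi]\pmod{2\pi}$ throughout that range of $\varphi$. Choosing $\theta$ small enough that $2\varphi$ stays in a small neighbourhood of $\pi$, and using that $\psi$ is bounded away from $0$ and $\pi$ together with the high-energy behaviour, one forces $-\im(\lambda^2 m)\ge 0$ on a sufficiently thin sector $\Gamma_\theta$ for all $\lambda$ of large modulus; the behaviour on the compact part of $\Gamma_\theta$ away from the poles of $m$ is handled by shrinking $\theta$ further (the poles of $m$ are real, hence off $\C_+$, and there are finitely many in any bounded region). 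Then Theorem~\ref{th:ev71} gives validity.

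For the converse I would argue contrapositively: if $R\circ W^{-1}$ is not positively increasing at $0$, then by the second part of Lemma~\ref{lem:Bennewitz} we have $\sup_{y>1}\re m(\I y)/\im m(\I y)=+\infty$. Along the imaginary axis $\lambda=\I y$ one computes $-\im(\lambda^2 m(\lambda))=-\im(-y^2 m(\I y))=y^2\im m(\I y)>0$, so~\eqref{eq:ev71} is automatically satisfied on the imaginary axis itself and I must look at nearby rays. Take $\lambda=y\E^{\I\varphi}$ with $\varphi$ slightly less than $\pi/2$; then $-\im(\lambda^2 m)=-y^2\re(\E^{2\I\varphi})\im m - y^2\im(\E^{2\I\varphi})\re m$, and since $\im(\E^{2\I\varphi})=\sin 2\varphi>0$ for $\varphi<\pi/2$ while $\re m(\I y)$ can be made arbitrarily large relative to $\im m(\I y)$ along a sequence $y_k\to\infty$, the term $-y^2\sin(2\varphi)\re m$ dominates and is negative, violating~\eqref{eq:ev71} on every sector $\Gamma_\theta$ with $\theta>0$. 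More carefully, one uses that $m$ extends analytically near $\I y_k$ (no real poles near $\I y_k$) so that $m(y_k\E^{\I\varphi})\approx m(\I y_k)$ for $\varphi$ close to $\pi/2$; this makes the dominant-term estimate rigorous and shows~\eqref{eq:ev71} fails on $\Gamma_\theta$ for every $\theta>0$. By Theorem~\ref{th:ev71}, \eqref{eq:help} is not valid.

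The main obstacle, and the place where care is needed, is the passage from the one-dimensional information along $\I\R_+$ provided by Lemma~\ref{lem:Bennewitz} (the $\limsup$ statement) to the full sectorial statement~\eqref{eq:ev71} required by Everitt's criterion — that is, controlling $\arg m(\lambda)$ on a whole two-dimensional sector rather than on a single ray, and simultaneously ruling out bad behaviour on the bounded part of the sector caused by real eigenvalues (poles of $m$). The first point is exactly where the first half of Lemma~\ref{lem:Bennewitz} is essential, since it is genuinely sectorial; the second point is where regularity of $b$ enters, guaranteeing $m$ is meromorphic with only real poles, so a sufficiently thin sector $\Gamma_\theta$ meets no poles. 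A clean way to organise the argument is to reduce~\eqref{eq:ev71} to the condition $\arg m(\lambda)\in[\pi-2\arg\lambda,\,2\pi-2\arg\lambda]$ and observe that as $\theta\downarrow 0$ this interval fattens to almost all of $(0,\pi)$, so the sectorial bound on $\arg m$ from Lemma~\ref{lem:Bennewitz} suffices; conversely the failure of that bound forces $\arg m(\lambda)\to 0$ along a sequence, which is incompatible with~\eqref{eq:ev71} for any fixed $\theta$.
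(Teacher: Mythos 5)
Your overall route --- Everitt's criterion (Theorem~\ref{th:ev71}) combined with the high-energy dichotomy of Lemma~\ref{lem:Bennewitz} --- is precisely the route the paper attributes to Bennewitz, and both implications are structured correctly; but there is one genuine gap in the sufficiency half, at $\lambda=0$. Everitt's criterion requires $-\im(\lambda^2 m(\lambda))\ge 0$ on \emph{all} of $\Gamma_\theta$, and every $\Gamma_\theta$ contains points arbitrarily close to the origin. In the regular case with $q\equiv 0$ the constant function is a Neumann eigenfunction, so $\lambda=0$ \emph{is} a pole of $m$ lying on the boundary of every sector; your prescription ``handle the compact part of $\Gamma_\theta$ away from the poles by shrinking $\theta$'' cannot reach a neighbourhood of the origin, since no thinning of the sector separates it from $0$, and the continuity/compactness argument breaks down where $|m|\to\infty$. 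The fix is Lemma~\ref{lem:b<inf}(i): $m(\lambda)=-a/\lambda+\tilde m(\lambda)$ with $a=1/W(b)>0$ and $\lambda\tilde m(\lambda)\to 0$, whence $-\im(\lambda^2 m(\lambda))=a\,\im\lambda+o(|\lambda|)>0$ on $\Gamma_\theta$ for $|\lambda|$ small, since $\im\lambda\ge|\lambda|\sin\theta$ there. The intermediate compact annulus should then be handled by the strict inequality $-\im\big((\I y)^2m(\I y)\big)=y^2\im m(\I y)>0$ on the imaginary segment plus continuity (not by ``pole avoidance'': the positive real poles never meet $\Gamma_\theta$ anyway, so avoiding them is not the issue).

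Two smaller points. In the converse, the step ``$m(y_k\E^{\I\varphi})\approx m(\I y_k)$ for $\varphi$ close to $\pi/2$'' needs a quantitative form, because the angular deviation $\pi/2-\varphi_k$ must simultaneously be small enough that $\arg m$ barely moves and large compared with $\arg m(\I y_k)$, so that $2\varphi_k+\arg m(\lambda_k)<\pi$. The Herglotz perturbation bounds in the paper's proof of Theorem~\ref{th:ak_crit+} (estimates \eqref{eq:5.07}--\eqref{eq:5.08}) supply exactly this: with $\lambda_k=(k_j+\I)y_j$ and $k_j$ of the order of $\big(\im m(\I y_j)/\re m(\I y_j)\big)^{1/2}$ both requirements are met. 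Finally, note that the paper offers a shortcut that avoids all sectorial bookkeeping: by Theorem~\ref{th:ak_crit+}, \eqref{eq:help} is valid iff $\sup_{y>0}\re m(\I y)/\im m(\I y)<\infty$; Lemma~\ref{lem:b<inf}(i) makes this ratio tend to $0$ as $y\downarrow 0$ in the regular case, and Lemma~\ref{lem:Bennewitz} governs $y\to\infty$, which is exactly how the paper derives Theorem~\ref{th:HELP_wr}(i).
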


Let us note that the proof of Theorem \ref{Bennewitz_HELP} is based on Lemma \ref{lem:Bennewitz}.

\subsection{Yet another criterion}
Everitt's criterion for the validity of \eqref{eq:help} requires the knowledge of asymptotic behavior of the corresponding $m$-function $m$ at least in some sector of $\C_+$, which contains the imaginary semi-axis $\I\R_+$. Our main aim is to show that it suffices to know only the behavior of $m$ along the ray $\I\R_+$.
 \begin{theorem}\label{th:ak_crit+}
 Let $m$ be the $m$-function defined by \eqref{eq:weyl_s}. Then the inequality \eqref{eq:help} is valid if and only if
 \be\label{eq:crit+}
 \sup_{y>0}\frac{\re m(\I y)}{\im m(\I y)}<\infty.
 \ee
 \end{theorem}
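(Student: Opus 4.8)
The plan is to show equivalence between the two criteria in Theorem~\ref{th:ak_crit+} and Everitt's criterion (Theorem~\ref{th:ev71}) by a careful analysis of the Herglotz function $m$ along the imaginary axis. Since $m\in(S)$ by \eqref{eq:m_repr}--\eqref{eq:m_repr2}, the function $g(\lambda):=\lambda\,m(\lambda)$ is again a Herglotz function (this is the content of Lemma~\ref{lem:m=1/m}-type manipulations: $g$ maps $\C_+$ to $\C_+$ because $\im(\lambda m(\lambda))=\re\lambda\,\im m(\lambda)+\im\lambda\,\re m(\lambda)\ge 0$ when $\re\lambda,\re m\ge0$, and more carefully using the integral representation). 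The key observation is that \eqref{eq:crit+} is precisely the statement that $\arg m(\I y)$ stays bounded away from $0$ uniformly in $y>0$, i.e.\ there is $\delta>0$ with $\arg m(\I y)\ge\delta$ for all $y>0$; equivalently $\im m(\I y)\ge c\,|m(\I y)|$ for some $c>0$.

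First I would record the elementary reformulation: \eqref{eq:crit+} holds iff there exists $\theta\in(0,\tfrac{\pi}{2})$ such that $\arg m(\I y)\in[\theta,\pi)$ for all $y>0$ (the upper bound $\pi$ is automatic since $m$ is Herglotz, and near $y\to0$ and $y\to\infty$ one uses that $\re m$ cannot dominate). Then I would invoke a Phragmén--Lindel\"of / sectorial argument for the Herglotz function $\lambda\mapsto\lambda^2 m(\lambda)$ or, following Everitt's original line, examine $h(\lambda):=-\I\lambda\,\sqrt{m(\lambda)}$ (a suitable branch): the condition $-\im(\lambda^2 m(\lambda))\ge0$ on a sector $\Gamma_\theta$ says exactly that $\arg(\lambda^2 m(\lambda))\in[\pi,2\pi]$ there, i.e.\ $2\arg\lambda+\arg m(\lambda)\ge\pi$. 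On the ray $\lambda=\I y$ this reads $\pi+\arg m(\I y)\ge\pi$, i.e.\ $\arg m(\I y)\ge0$, which is trivially true — so the real content of Everitt's criterion on a full sector is equivalent to the pointwise axis estimate \eqref{eq:crit+} once one propagates the inequality off the axis using that $\lambda^2 m(\lambda)$ is, up to the factor $\lambda^2$, a function whose argument is controlled by Herglotz monotonicity. The forward implication (Everitt $\Rightarrow$ \eqref{eq:crit+}) is immediate by restricting \eqref{eq:ev71} to $\lambda=\I y$ and combining with $\im m>0$: if the sup in \eqref{eq:crit+} were infinite, $\arg m(\I y_n)\to0$ along some sequence, and then $\arg(\lambda^2 m(\lambda))$ at $\lambda=\I y_n$ tends to $\pi$ from above or below, but a more careful look at nearby rays (where $2\arg\lambda<\pi$) violates \eqref{eq:ev71}.

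The harder direction is \eqref{eq:crit+} $\Rightarrow$ Everitt, i.e.\ producing a genuine sector $\Gamma_\theta$ on which $-\im(\lambda^2 m(\lambda))\ge0$. Here the main obstacle is that a bound only on the imaginary axis does not a priori control $m$ on nearby rays; the resolution is to use that $\lambda\,m(\lambda)$ belongs to $(S)$ as well (by Lemma~\ref{lem:m=1/m}, $\lambda m(\lambda)=-1/\tilde m(\lambda)$ with $\tilde m\in(S)$, so $-\lambda m$ is again Herglotz and $(S)$-like), hence $\arg(\lambda m(\lambda))$ is a decreasing function of $\im\lambda$ along vertical lines and monotone in a way that lets one transfer the axis estimate to a sector. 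Concretely: \eqref{eq:crit+} gives $\arg m(\I y)\le\pi-\theta'$ for some $\theta'>0$; since $\arg m$ is harmonic-type controlled, on the sector $|\arg\lambda-\tfrac{\pi}{2}|\le\theta'/2$ one gets $\arg m(\lambda)\le\pi-\theta'/2$, whence $\arg(\lambda^2 m(\lambda))=2\arg\lambda+\arg m(\lambda)\le 2(\tfrac{\pi}{2}+\tfrac{\theta'}{4})+\pi-\tfrac{\theta'}{2}=2\pi$ and $\ge 2(\tfrac{\pi}{2}-\tfrac{\theta'}{4})+0=\pi-\tfrac{\theta'}{2}$, so with a slightly smaller sector $\Gamma_\theta$, $\theta=\theta'/4$ say, the argument of $\lambda^2 m(\lambda)$ lies in $[\pi,2\pi]$, which is exactly \eqref{eq:ev71}. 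The delicate points requiring care are (a) justifying that $\arg m(\lambda)$ does not jump as $\lambda$ moves from the axis into the sector — this uses continuity of $m$ off $\R$ plus the Herglotz maximum-principle bound $0<\arg m(\lambda)<\pi$ globally and a normal-families/subharmonicity argument for $\arg m$; and (b) handling the two extreme regimes $|\lambda|\to0$ and $|\lambda|\to\infty$ uniformly, which is where Lemma~\ref{lem:b<inf}, Lemma~\ref{lem:Bennewitz}, and Theorem~\ref{th:m_inf} enter to guarantee the estimates are uniform and not merely pointwise. I expect step (a) — upgrading a ray estimate to a sector estimate for the argument of a Herglotz function — to be the technical heart of the proof, and I would model it on the corresponding regular-case argument in \cite{AK_12}.
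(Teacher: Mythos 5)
Your overall strategy --- proving that \eqref{eq:crit+} is equivalent to Everitt's sectorial condition \eqref{eq:ev71} by transferring estimates on $\arg m$ between the imaginary axis and nearby rays --- is essentially the mechanism the paper uses for the sufficiency direction, and your instinct that the ray-to-sector transfer is the technical heart is right. But as written the key step fails. The condition \eqref{eq:crit+} is a \emph{lower} bound on $\arg m(\I y)$: since $m\in(S)$ one has $\re m(\I y)\ge 0$, so $\re m(\I y)\le C\,\im m(\I y)$ means $\arg m(\I y)\ge\mathrm{arccot}\,C=:\delta>0$, while the upper bound $\arg m(\I y)\le\pi/2$ is automatic and carries no information. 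What has to be propagated into a sector $\{|\arg\lambda-\tfrac{\pi}{2}|\le\epsilon\}$ is this lower bound, because for $\arg\lambda=\tfrac{\pi}{2}-\epsilon$ the requirement $\arg(\lambda^2m(\lambda))\ge\pi$ reads $\arg m(\lambda)\ge2\epsilon$. In your displayed arithmetic you propagate an upper bound instead and then use $\arg m(\lambda)\ge0$, arriving only at $\arg(\lambda^2m(\lambda))\ge\pi-\theta'/2$, which still permits values in $(\pi-\theta'/2,\pi)$ where $-\im(\lambda^2m(\lambda))<0$; so \eqref{eq:ev71} is not established. The tools you invoke for the transfer (Phragm\'en--Lindel\"of, monotonicity of $\arg(\lambda m(\lambda))$ along vertical lines, subharmonicity of $\arg m$) are also not what does the job. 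The paper's mechanism is an elementary, uniform estimate on the Stieltjes representation \eqref{eq:m_repr}: for $\lambda=(k+\I)y$ with $0<k\le1$ one has $|\im m(\lambda)-\im m(\I y)|\le 2k\,\im m(\lambda)$ and $|\re m(\lambda)-\re m(\I y)|\le 3k\,\im m(\lambda)$, with constants depending only on $k$ and not on $y$; in particular no appeal to Lemma \ref{lem:b<inf}, Lemma \ref{lem:Bennewitz} or Theorem \ref{th:m_inf} is needed (and would risk circularity, since those results are combined with the present theorem to get Theorem \ref{th:HELP_wr}). The paper runs this estimate in the contrapositive direction: if \eqref{eq:help} fails, Everitt's criterion produces $\lambda_j=(k_j+\I)y_j$ with $k_j\to0$ and $\arg m(\lambda_j)\to0$, and the estimate forces $\arg m(\I y_j)\to0$, contradicting \eqref{eq:crit+}.

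A further simplification you missed: for the necessity of \eqref{eq:crit+} you do not need Everitt's criterion at all. The Weyl solution $\psi(\cdot,\I y)$ lies in $\dom(A_+)$, and a direct computation gives $\int_0^b|\psi|^2w\,dx=\im m(\I y)/y$, $\int_0^b w^{-1}\big|(r^{-1}\psi')'\big|^2dx=y\,\im m(\I y)$ and $\int_0^b r^{-1}|\psi'|^2dx=\re m(\I y)$; substituting into \eqref{eq:help} yields $\re m(\I y)\le K\,\im m(\I y)$ at once. This is how the paper argues, and it shows in addition that it suffices to test \eqref{eq:help} on the Weyl solutions $\psi(\cdot,\I y)$, $y>0$.
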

 
 Before proving Theorem \ref{th:ak_crit+} we need the following result.
 \begin{lemma}\label{lem:m_assympt}
Assume that \eqref{eq:help} is not valid. Then there is a sequence $\{\lambda_j\}\subset\C_+$ such that 
\be\label{eq:lam_01}
\lambda_j=(k_j+\I)y_j,\quad k_j\to +0, \, \, \text{either} \, \, y_j\to+\infty\, \, \text{or}\, \, y_j\to +0,
\ee
and $\arg m(\lambda_j)=o(1)$ as $ j\to+\infty$, i.e.
\be\label{eq:lam_02}
\frac{\im m(\lambda_j)}{\re m(\lambda_j)}=o(1),\quad j\to\infty.
\ee
\end{lemma}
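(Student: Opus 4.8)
The plan is to argue by contradiction and extract from the failure of \eqref{eq:help} a sequence along which the $m$-function becomes asymptotically real, using Everitt's criterion (Theorem~\ref{th:ev71}) together with a compactness/rescaling argument. Suppose \eqref{eq:help} is not valid. By Theorem~\ref{th:ev71}, for every $\theta\in(0,\frac{\pi}{2})$ the inequality $-\im(\lambda^2 m(\lambda))\ge 0$ fails somewhere on the sector $\Gamma_\theta$; equivalently, there is a sequence $\{\mu_j\}\subset\C_+$ with $\re\mu_j/|\mu_j|\to 0$ (so $\mu_j$ approaches the imaginary axis in argument) along which $-\im(\mu_j^2 m(\mu_j))<0$. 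Writing $\mu_j=(k_j+\I)y_j$ with $k_j\to 0$, the first thing to do is to promote this to control over the \emph{argument} of $m(\mu_j)$: since $m$ is a Herglotz function, $\arg m(\mu_j)\in(0,\pi)$, and the condition $-\im(\mu_j^2 m(\mu_j))\ge 0$ translates, after computing $\arg(\mu_j^2)=2\arg\mu_j=\pi-2k_j+o(k_j)$, into a statement that $\arg m(\mu_j)$ must be bounded away from $0$; its failure therefore forces $\arg m(\mu_j)\to 0$ (at least along a subsequence), which is exactly \eqref{eq:lam_02}.

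Second, I must rule out the possibility that the bad sequence $\{y_j\}$ stays in a compact subset of $(0,\infty)$. This is where the Herglotz/Krein--Stieltjes structure enters: on any compact subset $K$ of $\C_+$ bounded away from $\R_+$, the function $\lambda\mapsto\arg m(\lambda)$ is continuous and, by the integral representation \eqref{eq:m_repr}--\eqref{eq:m_repr2} with $C\ge 0$ and $d\tau$ supported on $\R_+$, strictly positive --- indeed $\im m(\lambda)=\im\lambda\int_{\R_+}\frac{d\tau(s)}{|s-\lambda|^2}>0$ while $\re m(\lambda)=C+\int_{\R_+}\frac{(s-\re\lambda)\,d\tau(s)}{|s-\lambda|^2}$, and one checks $\arg m$ cannot tend to $0$ as $\lambda$ ranges over a fixed compact set unless $m$ degenerates. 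Hence if $\arg m(\mu_j)\to 0$ we must have $y_j\to\infty$ or $y_j\to 0$, and passing to a further subsequence we may assume one of these two alternatives holds; replacing $\mu_j$ by $\lambda_j:=(k_j+\I)y_j$ gives precisely \eqref{eq:lam_01}.

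The main obstacle, and the step deserving most care, is the passage from the \emph{a priori} information that Everitt's condition fails on \emph{every} sector to the extraction of a \emph{single} sequence that both approaches $\I\R_+$ in argument ($k_j\to 0$) and realizes $\arg m\to 0$. A naive diagonal argument over a sequence $\theta_n\downarrow 0$ produces, for each $n$, a point $\mu^{(n)}$ in $\Gamma_{\theta_n}$ violating \eqref{eq:ev71}; one then sets $\lambda_n:=\mu^{(n)}$ and must verify that $k_n:=|\re\lambda_n|/|\im\lambda_n|\to 0$, which follows because $\mu^{(n)}\in\Gamma_{\theta_n}$ forces $|\re\mu^{(n)}|/|\mu^{(n)}|\le\cos\theta_n\to 0$ --- wait, that is $\to 0$ only if $\theta_n\to\frac\pi2$, so in fact one wants $\theta_n\uparrow\frac\pi2$, i.e.\ the \emph{widest} sectors, not the narrowest; the correct reading of Theorem~\ref{th:ev71} is that $\theta_0=\frac\pi2$ when \eqref{eq:help} fails, so \eqref{eq:ev71} fails on $\Gamma_\theta$ for all $\theta<\frac\pi2$, and choosing $\theta_n\uparrow\frac\pi2$ yields violating points with $|\re\mu^{(n)}|/|\mu^{(n)}|\le\cos\theta_n\to 0$, hence $k_n\to 0$. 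With that correction, $\arg m(\lambda_n)\to 0$ follows from the violation of \eqref{eq:ev71} as in the first paragraph, and the dichotomy $y_n\to\infty$ or $y_n\to 0$ follows from the second paragraph. Passing once more to a subsequence to fix the alternative completes the proof.
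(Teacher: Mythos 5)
Your overall route is the paper's: invoke Everitt's criterion with sectors $\Gamma_{\theta_n}$, $\theta_n\uparrow\frac{\pi}{2}$, to produce violating points $\mu_n\in\Gamma_{\theta_n}$, read off $\arg m(\mu_n)\to 0$ from the sign of $\im (\mu_n^2 m(\mu_n))$, and use the Herglotz property to force $|\mu_n|\to 0$ or $\infty$. However, there is a genuine gap in the first step. Writing $\mu=\rho\E^{\I\theta}$ and $m(\mu)=|m(\mu)|\E^{\I\theta_m}$, one has $\im (\mu^2m(\mu))=\rho^2|m(\mu)|\sin(2\theta+\theta_m)$, and your translation ``failure of \eqref{eq:ev71} at $\mu$ forces $\arg m(\mu)<\pi-2\arg\mu$'' is only correct when $\arg\mu<\frac{\pi}{2}$. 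For a violating point with $\arg\mu=\frac{\pi}{2}+\delta$, $\delta>0$, the sum $2\theta+\theta_m$ lies in $(\pi+2\delta,\,2\pi+2\delta)$, and $\sin(2\theta+\theta_m)>0$ forces $\theta_m>\pi-2\delta$, i.e.\ $\arg m(\mu)\to\pi$ rather than $0$; this alternative is compatible with everything you have written and would destroy both \eqref{eq:lam_02} and the assertion $k_j\to+0$ (your $k_j$ would be negative). The paper closes exactly this hole before extracting the sequence: by the Stieltjes representation \eqref{eq:m_repr} one has $\re m(\lambda)=C+\int_{\R_+}(s-\re \lambda)|s-\lambda|^{-2}\,d\tau(s)>0$ whenever $\re \lambda\le 0$, so $\theta_m\in(0,\frac{\pi}{2})$ there and $\im (\lambda^2m(\lambda))<0$ for $\arg\lambda\in[\frac{\pi}{2},\frac{3\pi}{4})$; hence for $\theta_n$ close to $\frac{\pi}{2}$ the violating points of $\Gamma_{\theta_n}$ must lie in the open right half-plane, which gives $k_j>0$ and selects the correct branch in your computation. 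With that one observation inserted, the remainder of your argument (the dichotomy $y_j\to 0$ or $y_j\to+\infty$ via continuity and strict positivity of $\im m$ on compact subsets of $\C_+$) coincides with the paper's concluding step.
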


\begin{proof}
Let $\lambda=\rho\E^{\I\theta}\in\C_+$. Denote also $m(\lambda)=|m(\lambda)|\E^{\I \theta_m}$. Note that $\theta_m\in(0,\pi)$ if $\lambda\in\C_+$ since $m$ is Herglotz. Then
\begin{align*}
\im (\lambda^2 m(\lambda))
=\rho^2|m|\sin( 2\theta+\theta_m).
\end{align*}
The integral representation \eqref{eq:m_repr} implies that $\re m_+(\lambda)>0$ if $\re\lambda\le 0$, i.e., $\theta\in [\frac{\pi}{2},\pi]$. Thus we conclude $\im(\lambda^2m_+(\lambda))<0$ if $\theta\in [\frac{\pi}{2},\frac{3\pi}{4})$. Therefore, if \eqref{eq:help} is not valid, then, by Theorem \ref{th:ev71}, there are sequences $\{\theta_j\}_1^\infty \subset (0,\frac{\pi}{2})$ and $\{\rho_j\}_1^\infty\subset \R_+$ such that $\theta_j\uparrow \frac{\pi}{2}$ and $\im (\lambda_j^2m(\lambda_j))>0$, where $\lambda_j:=\rho_j\E^{\I \theta_j}$, $(j\in\N)$. The latter means that $2\theta_j+(\theta_m)_j<\pi$, where $(\theta_m)_j:=\arg m(\rho_j\E^{\I\theta_j})\in (0,\pi)$. Therefore, $(\theta_m)_j\downarrow 0$ as $j\to \infty$. 

To complete the proof it remains to note that $\lambda_j$ can accumulate only at $0$ or at $\infty$ since $m$ is Herglotz. 
\end{proof}

 \begin{proof}[Proof of Theorem \ref{th:ak_crit+}]
 {\em Necessity.} Assume that  \eqref{eq:help} is valid. Firstly, note that the Weyl solution $\psi(x,\lambda)$ defined by \eqref{eq:psi2} belongs to $\dom(A_+)$. Using \eqref{eq:a06} and \eqref{eq:psi2} we get
 \begin{eqnarray*}
 & \int_0^b |\psi(x,\I y)|^2\, wdx=\frac{1}{y}\im m(\I y),\\
 & \int_0^b \frac{1}{w}\big|\big(\frac{1}{r}\psi'(x,\I y)\big)'\big|^2dx=y^2\int_0^b |\psi(x,\I y)|^2\, wdx=y\im m(\I y),\\
 &\int_0^b \frac{1}{r}|\psi'(x,\I y)|^2dx=\psi(x,\I y)\big(r^{-1}\psi'(x,-\I y)\big)|_{x=0}^b-\I y\int_0^b|\psi(x,\I y)|^2\ wdx\\
 &=m(\I y)-\I \im m(\I y)=\re m(\I y).
 \end{eqnarray*}
 Therefore, substituting $\psi(x,\I y)$ into \eqref{eq:help}, we arrive at
 \[
 \re m(\I y) \le K\im  m(\I y),\quad (y>0).
 \]
 
 {\em Sufficiency.} Assume the converse, i.e., \eqref{eq:help} is not valid. Then,  by Lemma \ref{lem:m_assympt}, there is a sequence $\{\lambda_j\}\subset \C_+$ with the properties \eqref{eq:lam_01}--\eqref{eq:lam_02}.
  
  Using \eqref{eq:m_repr}, observe that for $\lambda_j=x_j+\I y_j=(k_j+\I)y_j$
\[
\im m(\lambda_j)-\im m(\I y_j)=\int_{\R_+}\frac{2sx_j - x_j^2}{s^2+y_j^2}\frac{y_j}{(s-x_j)^2+y_j^2}d\tau(s).
\]
Since
\[
\frac{|2sx_j-x_j^2|}{s^2+y_j^2}\le \frac{2sx_j+x_j^2}{s^2+y_j^2}\le \frac{x_j}{y_j}+\frac{x_j^2}{y_j^2}=k_j+k_j^2\le 2k_j,\quad (k_j\le1),
\]
we get
\be\label{eq:5.07}
\big|\im m(\lambda_j)-\im m(\I y_j)\big|\le 2k_j\im m(\lambda_j),\quad (k_j\le1).
\ee
Further,  
\[ 
\re m(\lambda_j)-\re m(\I y_j)=
\int_{\R_+}\Big(\frac{s-x_j}{(s-x_j)^2+y_j^2}-\frac{s}{s^2+y_j^2}\Big)d\tau(s).
\]
Note that
\[
\Big|\frac{s-x_j}{(s-x_j)^2+y_j^2}-\frac{s}{s^2+y_j^2}\Big|\le \frac{s^2 x_j +sx_j^2 +x_jy_j^2}{(s^2+y_j^2)((s-x_j)^2+y_j^2)}\le (2k_j+k_j^2)\frac{y_j}{(s-x_j)^2+y_j^2}.
\]
Thus,  we get
\be\label{eq:5.08}
\big|\re m(\lambda_j)-\re m(\I y_j)\big|\le 
3k_j \im m(\lambda_j),\quad (k_j\le1).
\ee
Therefore, combining \eqref{eq:5.07}, \eqref{eq:5.08} with \eqref{eq:lam_02} and noting that $k_j\downarrow 0$, we obtain
\[
\im m(\I y_j)=o\big(\re m(\I y_j)\big),\quad j\to\infty.
\]
Therefore, \eqref{eq:crit+} is not satisfied. 
The proof is completed.
 \end{proof}
 \begin{remark}
 According to the proof of necessity of \eqref{eq:crit+} for the validity of \eqref{eq:help}, Theorem \ref{th:ak_crit+} means that it suffices to check \eqref{eq:help} on the Weyl solutions corresponding to imaginary $\lambda=\I y$, $(y>0)$.  That is, {\em \eqref{eq:help} is valid if and only if there is $K>0$ such that \eqref{eq:help} holds true for all $f=\psi(x,\I y)$, $y>0$}.
 \end{remark}
 
 Now combining Theorem \ref{th:ak_crit+} with the results from Sections \ref{ss:a01}--\ref{ss:a02}, we arrive at the following characterization of weights $w$ and $r$ for which the HELP inequality is valid.
 
\begin{theorem}\label{th:HELP_wr}
Assume that $w,r\in L^1_{\loc}[0,b)$ are positive a.e. and let $W,R$ be the corresponding distribution functions, $W=\int_0^x wdt$, $R=\int_0^x rdt$. Assume also that the end-point $x=b$ is singular. Then:
\begin{itemize}
\item[(i)] If $w\in L^1(0,b)$, then the inequality \eqref{eq:help} is valid if and only if $R\circ W^{-1}$ is positively increasing at $0$.
\item[(ii)] If $r\in L^1(0,b)$ and $w\notin L^1(0,b)$, then the inequality \eqref{eq:help} is not valid.
\item[(iii)] If  $w,r\notin L^1(0,b)$, then the inequality \eqref{eq:help} is valid if and only if the function $R\circ W^{-1}$ is positively increasing at both $0$ and $\infty$. 
\end{itemize}
\end{theorem}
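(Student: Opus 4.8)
The plan is to deduce everything from Theorem~\ref{th:ak_crit+}, which reduces the validity of \eqref{eq:help} to the single condition $\sup_{y>0}\re m(\I y)/\im m(\I y)<\infty$, and then to control this supremum separately near $y=\infty$ and near $y=0$. Since $(0,\infty)=(0,1]\cup[1,\infty)$, the supremum over $(0,\infty)$ is finite precisely when both $\sup_{y\ge1}\re m(\I y)/\im m(\I y)<\infty$ and $\sup_{0<y\le1}\re m(\I y)/\im m(\I y)<\infty$. I will also use that $m\in(S)$: from \eqref{eq:m_repr}, $m(\I y)=C+\int_{\R_+}\frac{s+\I y}{s^2+y^2}\,d\tau(s)$ with $C\ge0$, so $\re m(\I y)\ge0$ for every $y>0$, and (since $w>0$ a.e.\ forces $\tau\not\equiv0$) $\im m(\I y)>0$; thus $\re m/\im m$ is a continuous nonnegative function on $(0,\infty)$.

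The large-$y$ part is the same in all three cases. By Lemma~\ref{lem:Bennewitz} (its two halves combined), $\sup_{y>1}\re m(\I y)/\im m(\I y)<\infty$ holds if and only if $R\circ W^{-1}$ is positively increasing at $0$; note that this lemma does not require $W$ or $R$ to be unbounded, so it is available throughout.

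For the small-$y$ part I split according to integrability. If $w\in L^1(0,b)$, i.e.\ $W(b-)<\infty$, then Lemma~\ref{lem:b<inf}(i) gives $\re m(\I y)/\im m(\I y)\to0$ as $y\downarrow0$, so by continuity $\sup_{0<y\le1}\re m(\I y)/\im m(\I y)<\infty$ unconditionally; combined with the previous paragraph and Theorem~\ref{th:ak_crit+}, this is (i). If $r\in L^1(0,b)$ but $w\notin L^1(0,b)$, i.e.\ $R(b-)<\infty$ and $W(b-)=\infty$, then Lemma~\ref{lem:b<inf}(ii) gives $\re m(\I y)/\im m(\I y)\to+\infty$ as $y\downarrow0$, whence $\sup_{y>0}\re m(\I y)/\im m(\I y)=+\infty$ and \eqref{eq:help} fails by Theorem~\ref{th:ak_crit+}; this is (ii). If $w,r\notin L^1(0,b)$, i.e.\ $W(b-)=R(b-)=\infty$, then $W$ and $R$ are unbounded and Theorem~\ref{th:m_inf} applies; since $\re m\ge0$ on $\I\R_+$ (so that $|\re m|=\re m$ there), its two halves give that $\sup_{0<y<1}\re m(\I y)/\im m(\I y)<\infty$ if and only if $R\circ W^{-1}$ is positively increasing at $\infty$. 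Intersecting this with the large-$y$ condition and applying Theorem~\ref{th:ak_crit+} yields (iii).

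The argument is essentially a bookkeeping of the earlier asymptotic results; the only delicate points are (a) observing that $\re m$ is nonnegative on the imaginary semi-axis, which is what lets the two-sided estimate of Theorem~\ref{th:m_inf} be used as the one-sided bound the HELP criterion demands (in particular for the converse implication in case (iii)), and (b) correctly matching the integrability hypotheses on $w,r$ with the boundedness hypotheses on $W,R$ in Lemma~\ref{lem:b<inf} and Theorem~\ref{th:m_inf}. The substantive ingredient — the description of the behaviour of $m$ at $0$ in terms of $R\circ W^{-1}$ at $\infty$ — is Theorem~\ref{th:m_inf}, already established.
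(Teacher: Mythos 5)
Your proof is correct and takes essentially the same route as the paper's, which simply combines Theorem \ref{th:ak_crit+} with Lemma \ref{lem:b<inf} for (i)--(ii) and with Lemma \ref{lem:Bennewitz} and Theorem \ref{th:m_inf} for (iii). Your write-up is in fact slightly more careful than the paper's one-line argument: you make explicit that Lemma \ref{lem:Bennewitz} is also needed for the large-$y$ half of case (i), and that the nonnegativity of $\re m$ on $\I\R_+$ (from $m\in(S)$) is what turns the two-sided estimate of Theorem \ref{th:m_inf} into the one-sided bound demanded by Theorem \ref{th:ak_crit+}.
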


\begin{proof}
Combining Theorem \ref{th:ak_crit+} with Lemma \ref{lem:b<inf} we prove (i) and (ii). (iii) follows by combing Theorem \ref{th:ak_crit+} with Lemma \ref{lem:Bennewitz} and Theorem \ref{th:m_inf}.
\end{proof}

\begin{corollary}\label{cor:3.1}
Assume that $w,r\notin L^1(0,b)$ are positive a.e. and let $W,R$ be the corresponding distribution functions. 
\begin{itemize}
\item[(i)] if $R\circ W^{-1}$  varies slowly either at $0$ or at $\infty$, then the HELP inequality \eqref{eq:help} is not valid.
\item[(ii)] if $R\circ W^{-1}$ is a regularly varying function at $0$ ($\infty$) with index $\alpha>0$, then the constant $K$  in \eqref{eq:help} satisfies
\[
K\ge \frac{1}{\cos\theta_0},\quad \theta_0=\frac{\pi}{2+\alpha}.
\]
\end{itemize}
\end{corollary}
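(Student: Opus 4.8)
The plan is to obtain (i) directly from the coefficient characterisation of Theorem~\ref{th:HELP_wr}(iii), and to extract the lower bound in (ii) by feeding the one--term $m$-asymptotics of Theorem~\ref{th:ben} (regular variation at $0$) or Theorem~\ref{th:ben_B} (regular variation at $\infty$) into Everitt's criterion, Theorem~\ref{th:ev71}. For (i): a slowly varying function $g$ --- at $0$ or at $\infty$ --- satisfies $g(tx)/g(x)\to1$ for each fixed $t>0$, so the relevant limit superior ($S_0(t)$ as $x\to0$, resp.\ $S_\infty(t)$ as $x\to+\infty$, in the notation of Theorem~\ref{th:m_inf} and Appendix~\ref{ap:osv}) is identically $1$ on $(0,1)$; this says precisely that $R\circ W^{-1}$ is \emph{not} positively increasing at the endpoint in question. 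Since $w,r\notin L^1(0,b)$ forces $b$ to be singular, we are in case (iii) of Theorem~\ref{th:HELP_wr}, where validity of \eqref{eq:help} is equivalent to positive increase of $R\circ W^{-1}$ at \emph{both} $0$ and $\infty$; as one of the two fails, \eqref{eq:help} cannot be valid. This is a one--line deduction.

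For (ii), assume first that $R\circ W^{-1}$ is regularly varying at $0$ with index $\alpha>0$. Theorem~\ref{th:ben} gives $m(\rho\mu)=K_\nu(-\mu)^{-\nu}f(\rho)(1+o(1))$ as $\rho\to+\infty$, uniformly for $\mu$ in compacta of $\C_+$, with $\nu=\alpha/(1+\alpha)\in(0,1)$. Since $K_\nu f(\rho)>0$, writing $\lambda=\rho\,\E^{\I\phi}$ with $\phi\in(0,\pi)$ and using the principal branch ($\arg(-\lambda)^{-\nu}=\nu(\pi-\phi)$) I find
\[
\arg m(\lambda)=\nu(\pi-\phi)+o(1),\qquad \arg\big(\lambda^2m(\lambda)\big)=(2-\nu)\phi+\nu\pi+o(1)\quad(\rho\to+\infty).
\]
Hence, for large $\rho$, the inequality $-\im(\lambda^2m(\lambda))\ge0$ holds iff $(2-\nu)\phi+\nu\pi\ge\pi$, i.e.\ iff $\phi\ge\theta_0:=\frac{(1-\nu)\pi}{2-\nu}$, and substituting $\nu=\alpha/(1+\alpha)$ collapses this to $\theta_0=\pi/(2+\alpha)$. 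Consequently, for every $\theta\in(0,\theta_0)$ the sector $\Gamma_\theta=\{z:\arg z\in[\theta,\pi-\theta]\}$ contains points $\lambda$ of arbitrarily large modulus with $\arg\lambda\in(\theta,\theta_0)$ at which $-\im(\lambda^2m(\lambda))<0$, so \eqref{eq:ev71} is violated on $\Gamma_\theta$. By Theorem~\ref{th:ev71} the optimal angle $\theta_0^\ast=\inf\{\theta\in(0,\tfrac\pi2]:\eqref{eq:ev71}\text{ holds on }\Gamma_\theta\}$ therefore satisfies $\theta_0^\ast\ge\theta_0$, and since $\theta\mapsto(\cos\theta)^{-1}$ is increasing on $[0,\tfrac\pi2)$ the best --- hence any admissible --- constant obeys $K=(\cos\theta_0^\ast)^{-1}\ge(\cos\theta_0)^{-1}=1/\cos(\pi/(2+\alpha))$. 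The case of regular variation at $\infty$ is entirely analogous, now using the asymptotics of Theorem~\ref{th:ben_B}, valid as $\rho\to0$ (with $\nu=1/(1+\alpha)$): this time it is near $\lambda=0$, not near $\infty$, that \eqref{eq:ev71} fails for small $\theta$, and the same argument count reproduces the claimed $\theta_0$.

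The one step deserving genuine care --- which I would write out --- is the passage from the asymptotic identity for $\arg(\lambda^2m(\lambda))$ to the honest assertion that \eqref{eq:ev71} is violated at large $|\lambda|$ for every $\phi\in(\theta,\theta_0)$. Fixing a compact interval $[\theta_1,\theta_2]\subset(\theta,\theta_0)$, the quantity $(2-\nu)\phi+\nu\pi$ stays in a compact subinterval of $(\nu\pi,\pi)$, so $\sin\big((2-\nu)\phi+\nu\pi\big)$ is bounded below by a positive constant there; the $o(1)$ in Theorems~\ref{th:ben} and \ref{th:ben_B} is uniform in $\phi$ on such an interval, so it cannot flip this strict sign, and $-\im(\lambda^2m(\lambda))<0$ for all sufficiently large $\rho$ and all $\phi\in[\theta_1,\theta_2]$. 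Everything else --- the reduction of (i) to Theorem~\ref{th:HELP_wr}, the identification $\Gamma_\theta=\{\arg z\in[\theta,\pi-\theta]\}$, and the monotonicity of $\sec$ --- is routine bookkeeping.
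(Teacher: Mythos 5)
Your part (i) and the regular-variation-at-$0$ half of part (ii) coincide with the paper's proof: the paper likewise disposes of (i) in one line via Theorem \ref{th:HELP_wr}(iii) and the fact that slowly varying functions are not positively increasing, and for (ii) it inserts the asymptotics of Theorem \ref{th:ben} into Everitt's criterion, computing $\im(\lambda_\theta^2 m(\lambda_\theta))=K_\nu\rho^2 f(\rho)\sin\bigl((2-\nu)\theta+\nu\pi\bigr)(1+o(1))$ and reading off the threshold angle $\theta_0=(1-\nu)\pi/(2-\nu)=\pi/(2+\alpha)$ for $\nu=\alpha/(1+\alpha)$. Your version of this computation (via $\arg(\lambda^2m)$) is the same argument, and your explicit remark on the uniformity of the $o(1)$ on compact subintervals of angles is a point the paper glosses over; that part is fine.

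There is, however, a concrete gap in your treatment of the regular-variation-at-$\infty$ case, which the paper's own proof does not write out either. You invoke Theorem \ref{th:ben_B} with $\nu=1/(1+\alpha)$ and assert that ``the same argument count reproduces the claimed $\theta_0$.'' It does not: substituting $\nu=1/(1+\alpha)$ into your own threshold formula gives
\[
\theta_0=\frac{(1-\nu)\pi}{2-\nu}=\frac{\alpha\pi}{1+2\alpha},
\]
which equals $\pi/(2+\alpha)$ only when $\alpha=1$. So either the exponent you take from Theorem \ref{th:ben_B} is not the one that actually governs the $\lambda\to0$ asymptotics here (note that the limit system in that theorem's proof is the homogeneous one with $W=x$, $R=x^\alpha$, whose Neumann $m$-function is $K_\nu(-\lambda)^{-\nu}$ with $\nu=\alpha/(1+\alpha)$, which would indeed return $\theta_0=\pi/(2+\alpha)$), or the bound you obtain in the $\infty$-case is $1/\cos\bigl(\alpha\pi/(1+2\alpha)\bigr)$ rather than the one stated. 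As written, your final sentence for this case asserts an arithmetic identity that is false for the value of $\nu$ you quote, so this half of (ii) is not proved; you must either justify which index actually enters the $\rho\to0$ asymptotics or record the (different) angle that your computation produces.
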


\begin{proof}
(i) Since a slowly varying function is not positively increasing  (see Appendix \ref{ap:osv}), Theorem \ref{th:HELP_wr}(iii) proves the claim.

(ii) Let $R\circ W^{-1}$ be regularly varying at $0$ with index $\alpha$. Then for each $\theta\in (0,\pi)$, by Theorem \ref{th:ben},
\[
m(\rho\E^{\I \theta})=K_\nu\E^{\I(\pi-\theta)\nu}f(\rho)(1+o(1)),\quad \rho\to \infty.
\]
Therefore, for $\lambda_\theta=\rho\E^{\I\theta}$ we obtain
\[
\im (\lambda_\theta^2 m(\lambda_\theta))=K_\nu \rho^2f(\rho)\sin (2\theta+\pi\nu-\theta\nu)(1+o(1)),\quad \rho\to\infty.
\]
Since for $\theta\in (0,\pi)$, 
\[
\sin (2\theta+\pi\nu-\theta\nu)\le 0 \quad \Leftrightarrow \quad \theta \in [\frac{\pi}{2+\alpha},\pi),
\]
Theorem \ref{th:ev71} completes the proof.
\end{proof}

\begin{corollary}
Assume that $w=\bold{ 1}$, $r\in L^1_{\loc}(\R_+)$ and $r\notin L^1(\R_+)$ is positive a.e. on $\R_+$.
\begin{itemize}
\item[(i)] if there is $x_0>0$ such that $r(x)=\frac{l(x)}{x}$ for $x\ge x_0$, where the function $l\in L^1_{\loc}(\R_+)$ is positive on $(x_0,+\infty)$  and  slowly varying at $\infty$, then the HELP inequality \eqref{eq:help} is not valid.
\item[(ii)] if $r$ is a regularly varying function both at $0$ and $\infty$ with indexes $\alpha_0,\alpha_\infty>-1$, respectively, then the HELP inequality \eqref{eq:help} is valid and the constant $K$  in \eqref{eq:help} satisfies
\[
K\ge \max\{\frac{1}{\cos\theta_0},\frac{1}{\cos\theta_\infty}\},\quad \theta_i=\frac{\pi}{3+\alpha_i},\quad i\in\{0,\infty\}.
\]
\end{itemize}
\end{corollary}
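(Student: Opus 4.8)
The plan is to deduce this corollary directly from Corollary \ref{cor:3.1} by specializing to $w=\bold{1}$ and translating the hypotheses on $r$ into hypotheses on the function $R\circ W^{-1}$. Since $w\equiv 1$ we have $W(x)=x$, so $W^{-1}(x)=x$ and therefore $R\circ W^{-1}=R$, where $R(x)=\int_0^x r\,dt$. Thus every statement about $R\circ W^{-1}$ in Corollary \ref{cor:3.1} becomes a statement about the primitive $R$ of $r$, and the whole task reduces to relating the regular-variation / slow-variation behavior of $r$ to that of its primitive $R$. Both parts of the hypothesis $w,r\notin L^1(0,b)$ needed in Corollary \ref{cor:3.1} are in force: $r\notin L^1(\R_+)$ by assumption, and $w=\bold{1}\notin L^1(\R_+)$ since $b=+\infty$.

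For part (i), I would argue as follows. If $r(x)=l(x)/x$ for $x\ge x_0$ with $l$ slowly varying at $\infty$, then by Karamata's theory (the integration theorem for regularly varying functions, see Appendix \ref{ap:osv}) the primitive $R(x)=\int_0^x r\,dt$ satisfies $R(x)\sim \int_{x_0}^x l(t)/t\,dt$ up to a bounded additive term, and $\int_{x_0}^x l(t)\,dt/t$ is again slowly varying at $\infty$ — indeed it is the classical example of a slowly varying function with $R(x)/l(x)\to\infty$. Hence $R=R\circ W^{-1}$ varies slowly at $\infty$, and Corollary \ref{cor:3.1}(i) immediately gives that the HELP inequality \eqref{eq:help} is not valid. (One should note here that the behavior of $r$ near $0$ is irrelevant for this conclusion, since it is enough for $R\circ W^{-1}$ to be slowly varying at one of the two endpoints.)

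For part (ii), if $r$ is regularly varying at $0$ with index $\alpha_0>-1$ and at $\infty$ with index $\alpha_\infty>-1$, then Karamata's integration theorem gives that $R(x)=\int_0^x r\,dt$ is regularly varying at $0$ with index $\alpha_0+1>0$ and regularly varying at $\infty$ with index $\alpha_\infty+1>0$; the condition $\alpha_i>-1$ is exactly what guarantees both that $r$ is locally integrable near $0$ and that the resulting index $\alpha_i+1$ of $R$ is strictly positive, so that $R$ is in particular positively increasing at both $0$ and $\infty$. By Theorem \ref{th:HELP_wr}(iii) the inequality \eqref{eq:help} is valid, and applying Corollary \ref{cor:3.1}(ii) at each endpoint with $\alpha$ replaced by $\alpha_0+1$ (resp. $\alpha_\infty+1$) yields $K\ge 1/\cos\theta_i$ with $\theta_i=\pi/(2+(\alpha_i+1))=\pi/(3+\alpha_i)$; taking the maximum over the two endpoints gives the stated bound. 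The only mildly delicate point — and the step I expect to require the most care — is the precise invocation of Karamata's theorem to pass from $r$ to its primitive $R$ near the endpoint $0$, where one must be sure that $r\in L^1_{\loc}$ near $0$ (guaranteed by $\alpha_0>-1$) so that $R$ is well-defined and that the index shifts by exactly $+1$; everything else is a routine substitution of $\alpha_i+1$ for $\alpha$ in the already-established Corollary \ref{cor:3.1}.
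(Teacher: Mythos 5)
Your proposal is correct and follows essentially the same route as the paper: reduce to Corollary \ref{cor:3.1} via $W(x)=x$, so $R\circ W^{-1}=R$, and use Karamata's integration theorem (Theorem \ref{th:karamata}) to show that $R$ inherits slow variation at $\infty$ in case (i) and regular variation with indices $1+\alpha_0$, $1+\alpha_\infty$ in case (ii), whence $\theta_i=\pi/(2+(1+\alpha_i))=\pi/(3+\alpha_i)$. The only detail worth making explicit in (i) is that $r\notin L^1(\R_+)$ forces $\int_{x_0}^x l(t)t^{-1}\,dt\to\infty$, so the additive constant $R(x_0)$ is indeed negligible and $R$ itself is slowly varying — the paper states this step, and your phrase ``up to a bounded additive term'' covers it only once that divergence is noted.
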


\begin{proof}
(i) Note that for $x\ge x_0$
\[
R(x)=\int_0^x r(t)dt=R(x_0)+\int_{x_0}^x \frac{l(t)}{t}dt=R(x_0)+\tilde{R}(x).
\]
Since $r\notin L^1(\R_+)$, we get $\tilde{R}\to \infty$ as $x\to \infty$. Moreover, 
by Theorem \ref{th:karamata}(ii), $\tilde{R}$ is slowly varying at $\infty$.
Corollary \ref{cor:3.1}(i) proves the claim.

(ii) Since $r$ is regularly varying at $\infty$ with index $\alpha_\infty>-1$, 
by Theorem \ref{th:karamata} (formula \eqref{eq:a.4}), we get
\[
R(x)=\int_0^x r(t)dt=C+\int_1^\infty r(t)dt\sim \frac{x}{1+\alpha_\infty}r(x),\quad x\to \infty.
\]
Therefore, $R$ is regularly varying at $\infty$ with index $1+\alpha_\infty$. 

Similarly, since $r$ is regularly varying at $0$ with index $\alpha$, the function $r(1/x)$ varies regularly at $\infty$ with index $-\alpha_0$. Therefore, as $x\to 0$, we obtain by formula \eqref{eq:a.5}
\[
R(x)=\int_0^x r(t)dt=\int_{1/x}^\infty \frac{r(1/t)}{t^2}dt\sim \frac{x}{1+\alpha_0}r(x),
\]
and hence $R$ varies regularly at $0$ with index $1+\alpha_0$. Corollary \ref{cor:3.1} completes the proof.
\end{proof}

\begin{remark}
Let us show that the connection between the HELP inequality \eqref{eq:help} and the properties of functions $w$ and $r$ can be observed in a straightforward manner. Note that in the case of a regular end point $b$ this connection was first observed by Abasheeva and Pyatkov in \cite{abpyat}, where they generalized Fleige's example \cite{F98}. 

Assume for simplicity $w\equiv 1$ on $(0,b)$.  Let $a_n, b_n\in (0,b)$ satisfy $0<a_n<b_n<b$. Define the function $f_n:(0,b)\to\R_+$ as follows
\be\label{eq:f_n}
f_n(x):=\int_x^b r(t)h_n(t)dt,\qquad h_n(x):=\begin{cases}
1, & x\in(0,a_n)\\
\frac{x-b_n}{a_n-b_n}, & x\in(a_n,b_n)\\
0, & x>b_n
\end{cases}.
\ee
Clearly, $f\in L^2(0,b)$ and $f\in \dom(A_+)$ since 
$
\frac{1}{r}f_n'=-h_n$ and $\big(\frac{1}{r}f_n'\big)'=\frac{1}{b_n-a_n}\chi_{(a_n,b_n)}$. Let us denote 
\be
A_n:=R(a_n)=\int_0^{a_n}r(x)dx,\quad B_n:=R(b_n)=\int_0^{b_n}r(x)dx.
\ee
Thus we get
\[
I_2(n):=\int_0^b\big|\big(\frac{1}{r}f_n'\big)'\big|^2dx=\frac{1}{b_n-a_n},
\]
\[
I_1(n):=\int_0^b\frac{1}{r}|f_n'|^2dx=\int_0^br|h_n|^2dx\ge \int_0^{a_n}r(x)dx=A_n,
\]
and moreover 
\[
I_0(n):=\int_0^b|f_n|^2dx\le (b_n-a_n)(B_n-A_n)^2+a_nB_n^2.
\]
Plugging this into \eqref{eq:help}, we arrive at the following estimate for the constant $K$:
\[
\frac{(b_n-a_n)A_n^2}{(b_n-a_n)(B_n-A_n)^2+a_nB_n^2}\le K,
\]
and hence
\be\label{eq:K_n}
\frac{1}{K}\le (\frac{B_n}{A_n}-1)^2+\frac{a_n}{b_n-a_n}(\frac{A_n}{B_n})^2,\quad (n\in\N).
\ee
Therefore, we conclude that {\em 
if there are sequences $\{a_n\}_1^\infty,\{b_n\}_1^\infty\subset (0,b)$ such that
\be\label{eq:nec_n}
a_n<b_n,\quad \frac{a_n}{b_n}\to 0,\quad \text{and}\quad \frac{R(a_n)}{R(b_n)}\to 1, 
\ee
then \eqref{eq:help} is not valid}.

However, by Lemma \ref{lem:osv}, the latter means that the property of the function $R$ to be  positively increasing at $0$ and at $\infty$ (of course, if $b=+\infty$) is necessary for the HELP inequality \eqref{eq:help}. On the other hand, Theorem \ref{th:HELP_wr} states that the family of test functions \eqref{eq:f_n} is sufficient to check the validity of the HELP inequality \eqref{eq:help}. 
\end{remark}

\subsection{The general case: concluding remarks}\label{ss:III.3}

Consider the general Sturm--Liouville expression $\ell=\frac{1}{w}\big(-\frac{d}{dx}\frac{d}{rdx}+q\big)$ and the corresponding  HELP inequality
\be\label{eq:help_gen}
\Big(\int_0^b\big(\frac{1}{r}|f'|^2+q|f|^2\big)dx\Big)^2\le K^2\int_0^b|f|^2\, w\, dx\, \int_0^b\big|-\big(\frac{1}{r}f'\big)'+qf\big|^2\, w\, dx,
\ee
where $f\in \dom(A_+)$ and 
\be\label{eq:dom_gen}
\dom(A_+)=\{f\in L^2_w(0,b):\ f,r^{-1}f'\in AC_{\loc}[0,b),\ \ell[f]\in L^2_w(0,b)\}.
\ee

\begin{remark}\label{rem:3.11}
As it was mentioned, the left-hand side in \eqref{eq:help_gen} is finite for all $f\in\dom(A_+)$ if $q=\bold{0}$. However, for nonzero $q$ it might happen that the left-hand side is infinite for some $f\in \dom(A_+)$ even if the minimal operator associated with $\ell$ is lower semibounded (see, e.g. \cite{kalf74}). Therefore, in what follows we either assume that {\em $\ell$ is strong limit point at $b$}, that is $\lim_{x\to b}(\frac{1}{r}f')(x)\bar{f}(x)=0$ for all $f\in\dom(A_+)$ and hence $\lim_{x\to b}\int_0^x\big(\frac{1}{r}|f'|^2+q|f|^2\big)dt<\infty$ (see \cite{ev76}) or we shall understood the left-hand side in \eqref{eq:help_gen} as {\em the generalized Dirichlet form} $D[f]$ (see \cite{Lan05}):
\be\label{eq:dform}
D[f]:=\int_0^b \ell[f]\bar{f}\, w\, dx-(\frac{1}{r}f')(0)\bar{f}(0),
\ee 
which is clearly finite for all $f\in\dom(A_+)$. In this case, \eqref{eq:help_gen} reads as follows
\be\label{eq:help_mod}
(D[f])^2\le K^2\, \|f\|^2_{L^2_w}\, \|\ell[f]\|^2_{L^2_w},\quad (f\in \dom(A_+)).
\ee
The analogue of Everitt's criterion for the validity of \eqref{eq:help_mod} was established in \cite{Lan05}.
\end{remark}

 Assume that the $m$-function associated with $\ell$ and the Neumann boundary condition at $x=0$ belongs to the Krein--Stieltjes class $(S)$. 
Firstly, notice that Theorem \ref{th:ak_crit+} remains valid in this case (in the sense described in Remark \ref{rem:3.11}, cf. \cite[Theorem 6.1]{ee82}, \cite[Theorem 3.1]{Lan05}). Moreover, if the endpoint $b$ is regular, then  Bennewitz's Theorem remains true after a minor modification: {\em the inequality \eqref{eq:help_gen} is valid if and only if the function $R\circ W^{-1}$ is positively increasing at $0$ and $\lambda=0$ is a pole of $m$ (or equivalently, $0$ is the eigenvalue of the Neumann problem)}. 

However, if the endpoint $b$ is singular, then Theorem \ref{th:HELP_wr} is no longer true. Namely, since the behavior of the $m$-function at infinity depends on the behavior of $R\circ W^{-1}$ at $0$ (cf. \cite{Ben89}), the condition {\em $R\circ W^{-1}$ is positively increasing at $0$} is necessary for the validity of \eqref{eq:help}. However, the condition {\em $R\circ W^{-1}$ is positively increasing at $\infty$} is neither necessary nor sufficient since the behavior of $m(\lambda)$ at $0$ depends not only on the behavior of the potential $q$ at a singular end, but also on its local behavior on $(0,b)$. 

To demonstrate this let us consider a particular case of \eqref{eq:help_gen} assuming $r=\bold{1}$ and $q\ge 0$. 
Since $q$ is nonnegative, the fundamental solutions $c(x,0)$ and $s(x,0)$ of $-y''+qy=0$ are positive on $\R_+$. Let us transform the operator $-\frac{d^2}{dx^2}+q(x)$ to a Krein string operator by using the Liouville transformation from Appendix \ref{ap:LT}. 

Namely, define the following functions $w:[0,B)\to \R_+$
\be\label{eq:3.16}
\ti{w}(\xi)=w(x)c^4(x,0),\quad \xi(x)=\int_0^x\frac{dt}{c^2(t,0)},\quad B=\lim_{x\to+\infty}\xi(x).
\ee
Denote by $x[\xi]$ the inverse of $\xi(\cdot)$. Then 
\be\label{eq:3.17}
\ti{W}(\xi)=\int_0^\xi \ti{w}(\mu)d\mu=\int_0^{x} c^2(t,0)w(t)dt,\quad x\in\R_+.
\ee

Next observe that by Proposition \ref{prop:LT} and Theorem \ref{th:ev71} the HELP inequality \eqref{eq:help_gen} is valid precisely if the following HELP inequality is valid with the same constant $K$ 
\be\label{eq:help_wLT}
\Big(\int_0^B|f'|^2\, dx\Big)^2\le K^2\int_0^B|f|^2\, \ti{w}\, dx\, \int_0^B\frac{1}{\ti{w}}\big|f''|^2\, dx.
\ee
Note that the end-point $B$ might be regular. Namely, this is the case if $B<\infty$ and $W(B)<\infty$, or equivalently, $\frac{1}{c(.,0)}\in L^2(\R_+)$ and $c(.,0)\in L^2_w(\R_+)$.

Now, applying Theorem \ref{th:HELP_wr} and Theorem \ref{Bennewitz_HELP}, we arrive at the following 
\begin{theorem}\label{th:help_q}
Let $r=\bold{1}$ and $q\in L^1_{\loc}(\R_+)$ be nonnegative on $\R_+$. Let also $c(x,0)$ be the solution of $-y''+q(x)y=0$ such that $c(0,0)=1$ and $c'(0,0)=0$. 
Then:
\begin{itemize}
\item[(i)] if $c(.,0)\in L^2_w(\R_+)$, then the HELP inequality \eqref{eq:help_gen} is valid if and only if $W^{-1}$ is positively increasing at $0$ ,
\item[(ii)] if $\frac{1}{c(.,0)}\in L^2(\R_+)$ and $c(.,0)\notin L^2_w(\R_+)$, then the HELP inequality \eqref{eq:help_gen} is not valid,
\item[(iii)] if $\frac{1}{c(.,0)}\notin L^2(\R_+)$ and $c(.,0)\notin L^2_w(\R_+)$, then 
 the HELP inequality \eqref{eq:help_gen} is valid if and only if the function $\ti{W}^{-1}(\xi)$ given by \eqref{eq:3.16}, \eqref{eq:3.17} is positively increasing at infinity.
\end{itemize}
\end{theorem}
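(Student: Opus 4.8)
The reduction assembled immediately before the statement already does most of the work: by the Liouville transformation of Appendix~\ref{ap:LT} (Proposition~\ref{prop:LT}) and Everitt's criterion (Theorem~\ref{th:ev71}), the inequality \eqref{eq:help_gen} is valid, with the same best constant $K$, precisely when the transformed inequality \eqref{eq:help_wLT} is valid. Since \eqref{eq:help_wLT} is a HELP inequality of the pure type \eqref{eq:help} on $(0,B)$ with $\ti r\equiv\mathbf 1$ and weight $\ti w$, it is governed by Theorem~\ref{th:HELP_wr} if $B$ is a singular endpoint and by Theorem~\ref{Bennewitz_HELP} if $B$ is regular. The plan is therefore: first classify the endpoint $B$; then quote the corresponding criterion for \eqref{eq:help_wLT}; then rewrite it in terms of $w$ and $q$.

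For the classification, note that $\ti R(\xi)=\int_0^\xi\ti r\,d\mu=\xi$ and $\ti W(\xi)=\int_0^\xi\ti w\,d\mu=\int_0^{x[\xi]}c^2(t,0)w(t)\,dt$, so that $B<\infty\iff 1/c(\cdot,0)\in L^2(\R_+)$, $\ti r\in L^1(0,B)\iff B<\infty$, and $\ti w\in L^1(0,B)\iff c(\cdot,0)\in L^2_w(\R_+)$. Hence the hypotheses of (i), (ii), (iii) translate, respectively, into $\ti w\in L^1(0,B)$; into $\ti r\in L^1(0,B)$ with $\ti w\notin L^1(0,B)$ (so $B<\infty$); and into $\ti w,\ti r\notin L^1(0,B)$ (so $B=\infty$). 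In case (i) the endpoint $B$ is regular when $B<\infty$ and singular (with $\ti w\in L^1$, $\ti r\notin L^1$) when $B=\infty$; using Theorem~\ref{Bennewitz_HELP} in the first subcase and Theorem~\ref{th:HELP_wr}(i) in the second, \eqref{eq:help_wLT} holds iff $\ti R\circ\ti W^{-1}=\ti W^{-1}$ is positively increasing at $0$. Case (ii) is exactly Theorem~\ref{th:HELP_wr}(ii), so \eqref{eq:help_wLT}, hence \eqref{eq:help_gen}, fails. Case (iii) is Theorem~\ref{th:HELP_wr}(iii): \eqref{eq:help_wLT} holds iff $\ti W^{-1}$ is positively increasing at both $0$ and $\infty$, the part at $\infty$ being the asserted condition.

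It remains to identify ``$\ti W^{-1}$ positively increasing at $0$'' with ``$W^{-1}$ positively increasing at $0$''. Put $V(x):=\int_0^x c^2(t,0)w(t)\,dt=(\ti W\circ\xi)(x)$, so that $\ti W^{-1}=\xi\circ V^{-1}$. Since $q\in L^1_{\loc}$, $q\ge 0$, the solution $c(\cdot,0)$ is positive, nondecreasing, and $c(x,0)\to 1$ as $x\downarrow 0$; hence $c^{\pm2}(t,0)$ stays, for $t\in[0,\delta]$, in an interval shrinking to $\{1\}$ as $\delta\downarrow 0$, and the two ratios $V(x)/W(x)$ and $\xi(x)/x$ --- each squeezed between the infimum and the supremum of that factor over $[0,x]$ --- tend to $1$ as $x\downarrow 0$. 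Thus $\xi$ is asymptotic to the identity at $0$, so $\ti W^{-1}\sim V^{-1}$ there, and $V\sim W$ near $0$ gives, at the level of generalized inverses, $W^{-1}\!\big(\tfrac{u}{1+\varepsilon}\big)\le V^{-1}(u)\le W^{-1}\!\big(\tfrac{u}{1-\varepsilon}\big)$ for small $u$. Both comparisons leave the property ``positively increasing at $0$'' intact --- immediately for asymptotic equivalence, and for the two-sided bound by unwinding the definition in Appendix~\ref{ap:osv} (a function positively increasing at $0$ satisfies $g(\mu v)\le(1-\delta)g(v)$ near $0$ for some $\mu\in(0,1)$, hence for every smaller $\mu$ by monotonicity, which absorbs the factors $1\pm\varepsilon$). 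Chaining these yields the desired equivalence, turning the case-(i) criterion into the stated one and identifying the $0$-component of the case-(iii) criterion with it. The endpoint bookkeeping and the $L^2$-equivalences are routine; the one delicate point, and the one I expect to be the main obstacle, is exactly this last stability statement: that passing through the Liouville substitution near the regular endpoint $x=0$ does not affect whether the relevant function is positively increasing there.
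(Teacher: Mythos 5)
Your proposal is correct and follows essentially the same route as the paper, whose entire proof consists of the Liouville reduction to \eqref{eq:help_wLT} followed by the citation of Theorems \ref{Bennewitz_HELP} and \ref{th:HELP_wr}; your contribution is to make the endpoint classification ($B<\infty\iff 1/c(\cdot,0)\in L^2$, $\ti w\in L^1(0,B)\iff c(\cdot,0)\in L^2_w$) and the stability of ``positively increasing at $0$'' under the substitution $\xi\sim x$, $\ti W\sim W$ explicit, and both steps are handled correctly (the latter exactly as the paper does implicitly in Lemma \ref{lem:sim_q}(ii)). One point deserves emphasis: in case (iii) your chain of reasoning yields ``valid iff $\ti W^{-1}$ is positively increasing at both $0$ and $\infty$,'' and the component at $0$ (equivalently, $W^{-1}$ positively increasing at $0$) is \emph{not} automatic --- e.g.\ $q=\bold 0$ and $W(x)=\E^{-1/x}$ near $0$ gives $W^{-1}$ slowly varying at $0$ --- so it cannot be dropped from the criterion. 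The statement of Theorem \ref{th:help_q}(iii) as printed omits it (compare the parallel Lemma \ref{lem:sim_q}(iii), which correctly requires both endpoints); your derivation is the right one, but you should say explicitly that you are proving the criterion with the $0$-condition included rather than silently ``identifying'' it away, since otherwise the conclusion you reach does not literally match the asserted statement.
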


Next let us give a simple proof of one result of W.N. Everitt \cite[\S 15]{ev72}. Consider the inequality
\be\label{eq:help_wr=1}
\Big(\int_0^{+\infty}\big(|f'|^2+q|f|^2\big)dx\Big)^2\le K^2\int_0^{+\infty}|f|^2\, dx\, \int_0^{+\infty}\big|-f''+qf\big|^2\, dx,\quad (f\in \dom(A_+)).
\ee

Note that $W(x)=R(x)=(R\circ W^{-1})(x)=x$ in this case and hence $R\circ W^{-1}$ is positively increasing at both $0$ and $\infty$.

\begin{corollary}[\cite{ev72}]\label{cor:3.11}
Let $q\in L^1_{\loc}(\R_+)$ be nonnegative, $q\ge 0$. Then the HELP inequality \eqref{eq:help_wr=1} is valid if and only if $q\equiv 0$.
\end{corollary}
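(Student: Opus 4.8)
The plan is to invoke Theorem~\ref{th:help_q}, which applies here since $r=\bold{1}$, $w\equiv\bold{1}$, and $q\ge 0$. The first step is to identify which of the three cases (i)--(iii) is relevant. Since $w\equiv 1$ we have $W(x)=x$, which is unbounded, and the key dichotomy is whether $c(\cdot,0)\in L^2(\R_+)$ and whether $\frac1{c(\cdot,0)}\in L^2(\R_+)$, where $c(x,0)$ solves $-y''+qy=0$ with $c(0,0)=1$, $c'(0,0)=0$. Because $q\ge 0$, the solution $c(\cdot,0)$ is convex and increasing (its derivative $c'(x,0)=\int_0^x q(t)c(t,0)\,dt\ge 0$ is nondecreasing), hence $c(x,0)\ge 1$ for all $x$; consequently $c(\cdot,0)\notin L^2_w(\R_+)=L^2(\R_+)$ always. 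So only cases (ii) and (iii) can occur, and both reduce to analyzing $\frac1{c(\cdot,0)}$.

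The next step is to split into the two remaining subcases. If $\frac1{c(\cdot,0)}\in L^2(\R_+)$, then case (ii) applies and the HELP inequality fails; since $c(x,0)\ge 1$ this forces $c(x,0)\to\infty$, which happens precisely when $q\not\equiv 0$ (if $q\equiv 0$ then $c(x,0)\equiv 1$ and $\frac1{c(\cdot,0)}\equiv 1\notin L^2(\R_+)$, so we are not in this case). If instead $\frac1{c(\cdot,0)}\notin L^2(\R_+)$, case (iii) applies: the inequality is valid if and only if $\ti W^{-1}$ is positively increasing at $\infty$, where by \eqref{eq:3.17} we have $\ti W(\xi)=\int_0^{x[\xi]}c^2(t,0)\,dt$ and $\xi(x)=\int_0^x c^{-2}(t,0)\,dt$. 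The crux is then to show that when $q\not\equiv 0$ this positive-increase condition fails, and that when $q\equiv 0$ it holds trivially (since then $\ti W(\xi)=\xi$, so $\ti W^{-1}(\xi)=\xi$ is regularly varying of index $1$ at $\infty$, hence positively increasing, recovering the validity of \eqref{eq:help_wr=1} with $q\equiv 0$).

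For the hard direction, suppose $q\not\equiv 0$ but $\frac1{c(\cdot,0)}\notin L^2(\R_+)$, so $B=\lim_{x\to\infty}\xi(x)=\infty$. I would estimate $\ti W$ near $\xi=\infty$ by exploiting the monotone growth of $c(\cdot,0)$. Writing $c(x):=c(x,0)$, convexity and $c\ge 1$ give, for $x$ large, $\xi(x)=\int_0^x c^{-2}\le x$ while $\ti W(\xi(x))=\int_0^x c^2\ge x$, and more importantly one can show that the ratio $\ti W(\xi)/\xi$ is itself unbounded or that $\ti W$ fails to satisfy the doubling-type lower bound characterizing positive increase (Lemma~\ref{lem:osv} in Appendix~\ref{ap:osv}). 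Concretely, since $c$ is nondecreasing, for any $x$ and the corresponding $\xi=\xi(x)$ one has $\ti W(\xi)\ge c^2(x)\cdot\xi$ for an appropriate comparison point, and also $\ti W(\xi/2)\le c^2(x/2)\cdot(\xi/2)$; positive increase of $\ti W^{-1}$ at $\infty$ is equivalent to $\limsup_{\xi\to\infty}\ti W(t\xi)/\ti W(\xi)<1$ for some $t\in(0,1)$ being ruled out — i.e. it fails as soon as $c^2(x)/c^2(x/2)\to\infty$ along a sequence, which is forced once $c$ grows faster than any power (and if $c$ grows like a power, $\ti W$ varies regularly with positive index, so $\ti W^{-1}$ varies slowly, hence is NOT positively increasing, by Corollary~\ref{cor:3.1}-type reasoning).

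\medskip

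I expect the main obstacle to be precisely this last quantitative comparison: one must rule out positive increase of $\ti W^{-1}$ at $\infty$ for \emph{every} nonzero $q\ge 0$, covering uniformly the regimes where $c(\cdot,0)$ grows polynomially, exponentially, or in between. The cleanest route is probably to observe that positive increase of $\ti W^{-1}$ at $\infty$ is equivalent to positive increase at $\infty$ of the original weight-type function associated to the string, and then to translate back: in the $w=r=\bold{1}$ normalization the relevant function before the Liouville transformation is $R\circ W^{-1}(x)=x$, and the transformation by a nonconstant $c$ necessarily destroys the positive-increase property because $\int_0^x c^2$ and $\int_0^x c^{-2}$ cannot both grow comparably to a common regularly varying function unless $c$ is (asymptotically) constant. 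Making ``cannot both grow comparably'' precise — e.g. via the Cauchy--Schwarz relation $x^2\le(\int_0^x c^2)(\int_0^x c^{-2})$ with equality characterization forcing $c$ constant — is where the real work lies, but it is elementary once set up correctly.
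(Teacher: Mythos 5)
Your setup matches the paper's: both proofs run through Theorem~\ref{th:help_q}, dispose of $q\equiv 0$ via Hardy--Littlewood, and observe that $c(\cdot,0)\ge 1$ rules out case (i). But there is a genuine gap at the point you yourself flag as ``where the real work lies.'' The missing idea is that for $q\ge 0$ with $q\not\equiv 0$ the case (iii) you are trying to analyze is \emph{vacuous}. From \eqref{eq:3.40}, $c'(x,0)=\int_0^x q(t)c(t,0)\,dt\ge \int_0^{x_1}q(t)\,dt=:\delta>0$ for all $x\ge x_1$, where $x_1$ is chosen so that $q$ is not a.e.\ zero on $(0,x_1)$. Hence $c(x,0)\ge \delta(x-x_1)$ for large $x$, so $\frac{1}{c(\cdot,0)}\in L^2(\R_+)$ whenever $q\not\equiv 0$. (The paper reaches the same linear lower bound by comparison with compactly supported potentials.) Thus case (ii) of Theorem~\ref{th:help_q} always applies for $q\not\equiv 0$ and the inequality fails; no analysis of $\ti{W}^{-1}$ at infinity is ever needed.

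Beyond being unnecessary, your sketch of the ``hard direction'' contains an error that would sink it if the case did occur: you assert that if $c$ grows like a power then $\ti{W}$ varies regularly with positive index and therefore $\ti{W}^{-1}$ varies \emph{slowly}, hence is not positively increasing. That is false --- the generalized inverse of a function regularly varying at $\infty$ with index $\rho>0$ is regularly varying with index $1/\rho>0$, which \emph{is} positively increasing (Remark~\ref{rem:3.2}(iii) in Appendix~\ref{ap:osv}). Likewise, the Cauchy--Schwarz observation $x^2\le\bigl(\int_0^x c^2\bigr)\bigl(\int_0^x c^{-2}\bigr)$ with its equality case does not rule out positive increase: strict inequality, even with an unbounded ratio, is compatible with $\ti{W}^{-1}$ being positively increasing. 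So the proposal as written does not close the argument; the one-line growth estimate on $c'$ above is what is needed instead.
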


\begin{proof}
In the case $q=0$, \eqref{eq:help_wr=1} is the classical Hardy--Littlewood inequality, which is valid with $K=2$.

Assume now that $q> 0$ on a set of a positive Lebesgue measure $E\subseteq \R_+$, $|E|>0$. Consider the solution $c(x,0)$ of $-y''+q(x)y=0$. Note that $c(x,0)$ and its derivative $c'(x,0)$ satisfy
\be\label{eq:3.40}
c(x,0)=1+\int_0^x(x-t)q(t)c(t,0)dt,\quad c'(x,0)=\int_0^x q(t)c(t,0)\, dt.
\ee
Since $q\ge 0$ on $\R_+$, it follows from \eqref{eq:3.40} that $c(.,0)$ is positive and nondecreasing on $\R_+$. 

Let us show that there are $C>0$ and $x_0>0$ such that $c(x,0)\ge Cx$ for all $x>x_0$. Firstly, notice that it suffices to prove this claim for compactly supported potentials. Indeed, if $q\ge \tilde{q}$ on $\R_+$, then $c(x,0)\ge \tilde{c}(x,0)$ on $\R_+$.
So, assume that $q$ has a finite support. Then equation $-y''+q(x)y=0$ has two linearly independent solutions $y_1, y_2$ such that
\[
y_1(x)= 1,\ \quad y_2(x)= x,\ \quad x>b.
\]
However, \eqref{eq:3.40} implies  
\[
c'(x,0)=\int_0^b q(t)c(t,0)\,dt=C,\quad x>b,
\]
and hence $c(x,0)=Cx+C_0$ if $x>b$. 

The latter immediately implies that $\frac{1}{c(.,0)}\in L^2(\R_+)$ if $q\neq 0$ and hence, by Theorem \ref{th:help_q}(ii), the inequality \eqref{eq:help_wr=1} is not valid. 
\end{proof}

\begin{remark}
(i) Firstly, let us mention that Corollary \ref{cor:3.11} in not true for nonconstant $w$ and $r$.

(ii) Using the asymptotic of the $m$-function at $0$ obtained in \cite[Lemma 4.1]{KKM_09} (see also formula (4.16) in \cite{KKM_09}), we can deduce from Theorem \ref{th:ev71}: {\em if $q\in L^1_{\loc}(\R_+)$ (not necessarily nonnegative) satisfies}
\be
\int_{\R_+}(1+x)|q(x)|dx<\infty,
\ee
 {\em then the HELP inequality \eqref{eq:help_wr=1} is valid precisely if either  $c(.,0)$ or $s(.,0)$ is bounded on $\R_+$}. 
\end{remark}

\section{The similarity problem for $J$-nonnegative operators}\label{sec:sim}
    The main objective of this section is the similarity of the operator
  \be\label{eq:de}
A=\frac{(\sgn\, x)}{w(x)}\big(-(\frac{1}{r(x)}f')'+q(x)f\big),
\ee  
   acting in the Hilbert space $L^2_w(-b,b)$ to a self-adjoint operator. Namely, the operator $A$ (see below for the precise definition) is non-self-adjoint in $L^2_w(\cI)$. Moreover, it is a rank $2$ non-self-adjoint extension of a symmetric operator $A_{\min}$ (see \eqref{eq:LA} below). Under some additional assumptions the spectrum of $A$ is real (cf., e.g., \cite{KM_08, Kar2, KarKos}).  
   The central result of this section is the similarity criterion in the case of even coefficients $w, r, q$. Moreover, we shall show that this problem is closely connected with the HELP inequality \eqref{eq:help}.

\subsection{Differential operators}\label{ss:2.01}
  Consider the following differential expressions
\be\label{eq:de}
\mathfrak{a}[f]:=\frac{(\sgn\, x)}{w(x)}\big(-(\frac{1}{r(x)}f')'+q(x)f\big),\qquad \ell[f]:=\frac{1}{w(x)}\big(-(\frac{1}{r(x)}f')'+q(x)f\big).
\ee
\begin{hypot}\label{hyp:01}
 $q\in L^1_{\loc}(\cI)$ is real and $r,w\in L^1_{\loc}(\cI)$ are positive a.e. on $\cI$. 
\end{hypot}
Assuming that the coefficients satisfy Hypothesis \ref{hyp:01}, one associates with \eqref{eq:de} the following operators
\be\label{eq:oper}
Af=\ga[f],\quad f\in \dom(A);\qquad Lf=\ell[f],\quad f\in \dom(L),
\ee
where
\be\label{eq:dom_a}
\dom(A)=\dom(L)=\{f\in L^2_w(\cI):\ f,\frac{1}{r}f'\in AC_{\loc}(\cI),\  \ell[f]\in L^2_w(\cI)\},
\ee
Moreover, we assume the following hypothesis:
\begin{hypot}\label{hyp:02}
The operator $L$ associated with $\ell$ in $L^2_w(\cI)$ is nonnegative and self-adjoint, $L=L^*\ge 0$.
\end{hypot}  

 
Consider also the minimal and maximal domains
\be\label{eq:dom_min}
\mathfrak{D}_{\min}=\{f\in \dom(L):\ f(0)=(\frac{1}{r}f')(0)=0\},
\ee
and
\be\label{eq:dom_max}
\mathfrak{D}_{\max}=\{f\in L^2_w(\cI):\ f, \frac{1}{r}f'\in AC_{\loc}(\cI\setminus\{0\}),\  \ell[f]\in L^2\}.
\ee
Define the operators
\be\label{eq:LA}
L_{\min}f=\ell[f], \quad A_{\min}f=\ga[f],\quad \dom(L_{\min})=\dom(A_{\min})=\mathfrak{D}_{\min},
\ee
and
\[
A_{\max}f=\ga[f], \quad L_{\max}f=\ell[f],\quad \dom(L_{\max})=\dom(A_{\max})=\mathfrak{D}_{\max}.
\]
Note that the operators $L_{\min}$ and $A_{\min}$ are symmetric, $n_\pm(L_{\min})=n_\pm(A_{\min})=2$, and
\[
L_{\min}^*=L_{\max},\qquad A_{\min}^*=A_{\max}.
\]
Moreover, 
\[
A=JL,\quad A_{\min}=JL_{\min},\quad A_{\max}=JL_{\max},
\]
where $J:f(x)\to (\sgn\, x) f(x)$. Note that $J=J^*=J^{-1}$ in $L^2_w(\cI)$.

As in Section \ref{ss:a2}, let $m_+$ and $m_-$ be the $m$-functions associated with the differential expression $\ell$ on $\cI_+=(0,b)$ and $\cI_-=(-b,0)$, respectively. Namely, 
\be\label{eq:weyl_s}
 \psi_\pm(x,\lambda)=s(x,\lambda)\mp m_\pm(\lambda)c(x,\lambda),\quad \psi_\pm\in L^2_w(\cI_\pm),\qquad (\lambda\in\C_+).
\ee

Further, note that the deficiency subspaces of $L_{\min}$ and $A_{\min}$ are given by
\be\label{eq:defect_L}
           \cN_\lambda(L_{\min})=\Span\{\psi_+(x,\lambda)\chi_+(x),\psi_-(x,\lambda)\chi_-(x)\}
\ee
 and
\be\label{eq:defect_A}
\cN_\lambda(A_{\min})=\Span\{\psi_+(x,\lambda)\chi_+(x),\psi_-(x,-\lambda)\chi_-(x)\},\quad (\lambda\in\C\setminus\R),
\ee
and by the von Neumann formula the maximal domain admits the representation
\be\label{eq:neum}
\gD_{\max}=\gD_{\min}+\cN_{\lambda}+\cN_{\overline{\lambda}},\quad (\lambda\in\C_+).
\ee

Note that the nonnegativity assumption in Hypothesis \ref{hyp:02} can be explicitly characterized in terms of $m$-functions $m_+$ and $m_-$. 

\begin{lemma}[\cite{KM_08}]\label{lem:4.3}
Let the operator $L$ be given by \eqref{eq:oper}, \eqref{eq:dom_a}. Assume also that $L=L^*$ and $m_+$, $m_-$ be the $m$-functions defined by \eqref{eq:weyl_s}. Then the operator $L$ is nonnegative in $L^2_w(\cI)$ if and only if 

\be\label{eq:positive}
-\frac{1}{m_+} - \frac{1}{m_-}\in (S^{-1}).
\ee

In particular, if $w,r,q$ are even, then $L$ is nonnegative if and only if $m_+\in (S)$.
\end{lemma}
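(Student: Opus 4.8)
The plan is to establish the equivalence \eqref{eq:positive} by characterizing the nonnegativity of $L$ via the nonnegativity of the associated quadratic form, and then reading this off from the boundary data encoded in $m_+$ and $m_-$. First I would recall that, since $L=L^*\ge 0$ is a rank-two self-adjoint extension of $L_{\min}$ (which has deficiency indices $(2,2)$ and whose defect spaces are spanned by the Weyl solutions $\psi_\pm$ as in \eqref{eq:defect_L}), the operator $L$ is a restriction of $L_{\max}$. Taking $\lambda=-\gamma<0$ with $\gamma$ large, I would use the von Neumann decomposition \eqref{eq:neum} together with the formula \eqref{eq:a06} for $\int|\psi_\pm|^2dW$ to compute the quadratic form $\langle Lf,f\rangle$ on the two-dimensional defect space, obtaining an expression whose sign is governed by the matrix-valued function built from $m_+(-\gamma)$, $m_-(-\gamma)$ and their derivatives.

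The key technical step is to pass from ``$L\ge 0$'' to a statement about the function $\gamma\mapsto -1/m_+(-\gamma)-1/m_-(-\gamma)$ for $\gamma>0$. The natural route is: $L$ nonnegative $\iff$ the Friedrichs extension and $L$ coincide in the relevant sense $\iff$ the boundary condition defining $\dom(L)$ is compatible with nonnegativity $\iff$ the quantity $-1/m_+ - 1/m_-$, which is the reciprocal-type combination arising when one glues the two half-line problems at $x=0$ with matching conditions $f(0^-)=f(0^+)$ and $(r^{-1}f')(0^-)=(r^{-1}f')(0^+)$, is a Herglotz function that is moreover nonnegative on $(-\infty,0)$ and whose analytic structure places it in the inverse Stieltjes class $(S^{-1})$. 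Concretely, one checks that $0$ is not below the bottom of the spectrum precisely when this function has no pole on the negative axis and the right sign of the residue/limit at $0$ and at $-\infty$; I would invoke the known correspondence (e.g. from \cite{KK1}) between functions in $(S)$, $(S^{-1})$ and the location of spectrum relative to the origin. This is essentially the content of \cite{KM_08}, so the proof can legitimately be short, citing Lemma~\ref{lem:4.3} as quoted.

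For the ``in particular'' clause, assume $w,r,q$ even. Then reflection $x\mapsto -x$ is a unitary on $L^2_w(\cI)$ intertwining the half-line problems on $\cI_+$ and $\cI_-$, so the two Weyl solutions are mirror images and hence $m_-(\lambda)=m_+(\lambda)$ for all $\lambda\notin\R$. Therefore $-1/m_+-1/m_-=-2/m_+$, and $-2/m_+\in(S^{-1})$ if and only if $m_+\in(S)$ by the elementary fact (cf. Lemma~\ref{lem:m=1/m} and the discussion of the classes $(S)$, $(S^{-1})$ after \eqref{eq:m_repr2}) that $m\mapsto -1/m$ maps $(S)$ bijectively onto $(S^{-1})$, together with the obvious invariance of these classes under multiplication by a positive constant.

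The main obstacle I anticipate is making the first step fully rigorous: correctly computing the quadratic form of $L$ on the defect space and matching the resulting $2\times 2$ sign condition to the single scalar condition \eqref{eq:positive}. This requires care with the Lagrange identity at the singular point $x=0$ (where only one-sided boundary values exist) and with the precise normalization of $m_\pm$ in \eqref{eq:weyl_s}; one must verify that the gluing conditions defining $\dom(L)\subset\dom(L_{\max})$ are exactly continuity of $f$ and of $r^{-1}f'$ at $0$, since a different self-adjoint extension would give a different combination of $m_+$, $m_-$. Since all of this is carried out in \cite{KM_08}, in the paper itself it is reasonable to present Lemma~\ref{lem:4.3} with a brief indication and a reference rather than a complete independent derivation.
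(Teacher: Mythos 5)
The paper offers no proof of Lemma~\ref{lem:4.3} at all: it is imported verbatim from \cite{KM_08} as a known result, so there is nothing to compare your argument against line by line. Measured on its own terms, your proposal is sound and in fact does more than the paper. Your treatment of the ``in particular'' clause is complete and correct: evenness of $w,r,q$ gives $c(-x,\lambda)=c(x,\lambda)$, $s(-x,\lambda)=-s(x,\lambda)$, hence $s+m_+c\in L^2_w(\cI_-)$ and $m_-=m_+$ (the paper itself uses exactly this identity in the proof of Theorem~\ref{th:simcr_m}); then $-1/m_+-1/m_-=-2/m_+\in(S^{-1})$ iff $m_+/2\in(S)$ iff $m_+\in(S)$, since $(S)$ is a cone and $F\in(S^{-1})$ means by definition $-1/F\in(S)$. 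One small misdirection: the reciprocal correspondence between $(S)$ and $(S^{-1})$ is not the content of Lemma~\ref{lem:m=1/m} (which relates $m$ to the Dirichlet $m$-function of the \emph{coefficient-swapped} system via $m=-1/(\lambda\tilde m)$); the fact you need is the definitional one from \cite{KK1}, so cite that instead. For the general equivalence your sketch --- identify $\dom(L)$ with the gluing conditions $f(0^-)=f(0^+)$, $(r^{-1}f')(0^-)=(r^{-1}f')(0^+)$, recognize $-1/m_+-1/m_-$ as the scalar Weyl function arising in Krein's formula relative to the Dirichlet decoupling, and translate nonnegativity into membership in $(S^{-1})$ via the Kac--Krein correspondence --- is the standard boundary-triplet route and is indeed how \cite{KM_08} proceeds; deferring the details to that reference is exactly what the paper does, so no gap is created. (Only a cosmetic slip: early on you write ``since $L=L^*\ge0$'' where you mean only $L=L^*$, as nonnegativity is what is being characterized.)
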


Here $(S)$ and $(S^{-1})$ are the Krein--Stieltjes classes (for definitions and properties see \cite{KK1}).

Consider the following extension $A_c$ of the operator $A_{\min}$ 
\begin{eqnarray} \label{eq:Ac}
\dom(A_c)= \Big\{f\in \dom(A_{\min}^*) : \
\begin{array}{c}f(+0)=f(-0)\\
(\frac{1}{r}f')(+0)=   c (\frac{1}{r}f')(-0) 
\end{array}\Big\}.
\end{eqnarray}
Note that the operator $A$ defined by
\eqref{eq:oper} coincides with $A_1$.

We need the following result (see \cite{KarKos} and \cite[Proposition 3.3]{KKM_09}).

\begin{proposition}[\cite{KKM_09}] \label{prop:res}
\begin{itemize}
%
\item[ (i)] $A_c=A_c^*$ if and
only if $c=-1$.
\item [(ii)] 
\[ 
\sigma (A_c) \setminus \R = \{z \in \C_+ \cup \C_- :   c m_+ (z) +m_- (-z)= 0\}.
\]
\item [(iii)] If $ z \in
\rho(A_c)\setminus\R $, then for all $f\in L^2_w(\cI)$,
\begin{equation}\label{e III_01}
(A_c-z )^{-1}f =(A_0 - z )^{-1} f + \
\frac{\mathcal{F}_+ (f,z)-\cF_-(f, z)} { c m_+ (z) + m_-
(-z) } \ \left( \, c \psi_+(\cdot,
z) + \psi_-(\cdot, z) \, \right),
\end{equation}
where
\be\label{eq:f_pm}
\mathcal{F}_\pm(f, z):=
\int_{\cI_\pm}f(x)\psi_\pm(x, \pm z)\,w(x) dx.
\ee
\item[(iv)] If Hypothesis \ref{hyp:02} is satisfied, then the spectrum of $A=A_{1}$ is real, $\sigma(A)\subseteq\R$. 
\end{itemize}
\end{proposition}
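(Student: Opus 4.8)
The plan is to derive everything from the Lagrange (Green) identity for $\ga$ and the induced boundary form at $x=0$, so that (i)--(iii) become linear algebra in the four boundary functionals $f(\pm0),(r^{-1}f')(\pm0)$, while (iv) follows from (ii) together with the $J$-nonnegativity of $A$. First I would record that, since $\ga=(\sgn x)\ell$ and $w,r\in L^1_{\loc}(\cI)$, the point $x=0$ is a regular endpoint of $\ell$, so the one-sided limits $f(\pm0)$, $(r^{-1}f')(\pm0)$ exist for every $f\in\gD_{\max}$; integrating by parts twice on $\cI_+$ and on $\cI_-$ (the extra minus sign on $\cI_-$ coming from $\ga=-\ell$ there) yields, for $f,g\in\gD_{\max}$,
\[
\langle A_{\max}f,g\rangle-\langle f,A_{\max}g\rangle=[f,g]_{b}+[f,g]_{-b}-[f,g]_{0+}-[f,g]_{0-},\qquad [f,g]_x:=f(x)\overline{(r^{-1}g')(x)}-(r^{-1}f')(x)\overline{g(x)}.
\]
The terms $[f,g]_{\pm b}$ vanish on $\gD_{\max}$ by the limit-point hypothesis (resp.\ the fixed self-adjoint boundary condition) at $\pm b$, so only the values at $0\pm$ survive; since $\dim(\gD_{\max}/\gD_{\min})=n_+(A_{\min})+n_-(A_{\min})=4$ and $A_{\min}$ is the restriction killing these four functionals, the latter are independent on $\gD_{\max}$.

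\emph{Part (i).} Substituting $f(+0)=f(-0)=:f_1$, $(r^{-1}f')(\pm0)=:f_2^{\pm}$ with $f_2^+=c f_2^-$ (and similarly for $g$) into the boundary form gives, for $f,g\in\dom(A_c)$,
\[
\langle A_cf,g\rangle-\langle f,A_cg\rangle=-(c+1)\bigl(f_1\overline{g_2^{-}}-f_2^{-}\overline{g_1}\bigr).
\]
Since $f_1,f_2^-,g_1,g_2^-$ are still free on $\dom(A_c)$, this vanishes identically iff $c=-1$; hence $A_c$ is symmetric precisely when $c=-1$, and for $c\ne-1$ it is not even symmetric, so $A_c\ne A_c^*$. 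Finally $A_{-1}$ is a symmetric operator with $A_{\min}\subseteq A_{-1}\subseteq A_{\max}=A_{\min}^*$ and $\dim(\dom A_{-1}/\gD_{\min})=2=n_\pm(A_{\min})$, so $A_{-1}=A_{-1}^*$.

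\emph{Parts (ii) and (iii).} Fix $z\in\C\setminus\R$. If $f\in\dom(A_c)$ and $\ga[f]=zf$, then $\ell[f]=zf$ on $\cI_+$ and $\ell[f]=-zf$ on $\cI_-$, and $L^2_w$-membership together with the boundary behaviour at $b$ (resp.\ $-b$) force $f|_{\cI_+}=\alpha\,\psi_+(\cdot,z)$, $f|_{\cI_-}=\beta\,\psi_-(\cdot,-z)$. Using $\psi_\pm(0,\lambda)=\mp m_\pm(\lambda)$ and $(r^{-1}\psi_\pm')(0,\lambda)=1$, the two transmission conditions read $-\alpha m_+(z)=\beta m_-(-z)$ and $\alpha=c\beta$; eliminating $\alpha$ shows $\ker(A_c-z)\ne\{0\}$ iff $c\,m_+(z)+m_-(-z)=0$. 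When $c\,m_+(z)+m_-(-z)\ne0$ I would verify \eqref{e III_01} by a direct computation with $A_0$ the self-adjoint decoupled operator obtained by imposing $(r^{-1}f')(0\pm)=0$ on each half-line: its resolvent is the block Green operator built from $c(\cdot,\pm z)$ and $\psi_\pm(\cdot,\pm z)$, so (up to a fixed Wronskian normalisation) $[(A_0-z)^{-1}f](0\pm)=\cF_\pm(f,z)$ while $(r^{-1}((A_0-z)^{-1}f)')(0\pm)=0$. Adding the correction $\beta\bigl(c\,\psi_+(\cdot,z)\chi_+ +\psi_-(\cdot,-z)\chi_-\bigr)$ and imposing the transmission conditions of $\dom(A_c)$ forces $\beta=(\cF_+(f,z)-\cF_-(f,z))/(c\,m_+(z)+m_-(-z))$, the derivative condition being automatic because $(r^{-1}\psi_\pm')(0,\cdot)=1$; since $(\ga-z)$ annihilates $\psi_+(\cdot,z)$ on $\cI_+$ and $\psi_-(\cdot,-z)$ on $\cI_-$, applying $A_c-z$ to the resulting function returns $f$. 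Hence $c\,m_+(z)+m_-(-z)\ne0\Rightarrow z\in\rho(A_c)$, which together with the kernel computation gives (ii) and the explicit formula (iii). The fussiest part of the whole argument is the bookkeeping of signs and Wronskian constants in the Green operator for $A_0$ (and making the vanishing of $[f,g]_{\pm b}$ and the existence of the boundary limits fully rigorous in the limit-circle/singular cases); modulo that, (i)--(iii) are pure linear algebra.

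\emph{Part (iv).} Assume Hypothesis \ref{hyp:02} and let $z\in\C\setminus\R$ lie in $\sigma(A)=\sigma(A_1)$. By (ii) (with $c=1$), $m_+(z)+m_-(-z)=0$, so by the kernel computation there is a nonzero $f\in\dom(A)=\dom(L)$ with $Af=zf$; since $A=JL$ on $\dom(L)$ this means $Lf=z\,Jf$, whence
\[
0\le\langle Lf,f\rangle=z\,\langle Jf,f\rangle=z\,[f,f],\qquad [f,f]:=\langle Jf,f\rangle\in\R .
\]
As $z\notin\R$ this forces $[f,f]=0$, hence $\langle Lf,f\rangle=0$; since $L=L^*\ge0$ we obtain $L^{1/2}f=0$, so $Lf=0$ and then $Af=JLf=0=zf$, i.e.\ $f=0$ (as $z\ne0$), a contradiction. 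Therefore $\sigma(A)\cap(\C\setminus\R)=\varnothing$, which is (iv).
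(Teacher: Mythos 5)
The paper does not prove this proposition at all: it is imported verbatim from \cite{KarKos} and \cite[Proposition 3.3]{KKM_09}, so there is no in-paper argument to compare against. Your reconstruction via the boundary form at $x=0$, the kernel computation in the defect elements $\psi_+(\cdot,z)\chi_+$, $\psi_-(\cdot,-z)\chi_-$, and a Krein-type resolvent formula relative to the decoupled Neumann operator is the standard route and, as far as I can check, is the same kind of argument as in the cited source. Parts (i), (ii) and (iv) are correct as written: the Lagrange identity you display has the right signs, the independence of the four boundary functionals follows from $\dim(\gD_{\max}/\gD_{\min})=4$, the eigenvalue condition $c\,m_+(z)+m_-(-z)=0$ drops out of $\psi_\pm(0,\lambda)=\mp m_\pm(\lambda)$, $(r^{-1}\psi_\pm')(0,\lambda)=1$, and the $J$-nonnegativity argument in (iv) is complete. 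You also correctly read $A_0$ in \eqref{e III_01} as the decoupled (Neumann-at-$0\pm$) self-adjoint operator rather than the $c=0$ member of the family \eqref{eq:Ac}, which by part (i) is not self-adjoint; with the $c=0$ member the resolvent difference would not have the form $\mathrm{const}\cdot(c\psi_++\psi_-)$, so this identification is forced.

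The one place where you are not quite right is the intermediate claim $\bigl[(A_0-z)^{-1}f\bigr](0\pm)=\cF_\pm(f,z)$. With the normalization $c(0,\lambda)=1$, $(r^{-1}s')(0,\lambda)=1$ the Wronskian of $c(\cdot,z)$ and $\psi_+(\cdot,z)$ equals $1$ and the Green representation gives $\bigl[(A_0-z)^{-1}f\bigr](0\pm)=-\cF_\pm(f,z)$; carrying this through your transmission conditions yields the correction term $-\frac{\cF_+(f,z)-\cF_-(f,z)}{c\,m_+(z)+m_-(-z)}\bigl(c\,\psi_+(\cdot,z)\chi_++\psi_-(\cdot,-z)\chi_-\bigr)$, i.e.\ formula \eqref{e III_01} up to an overall sign (and note that \eqref{e III_01} as printed also writes $\psi_-(\cdot,z)$ where the defect-subspace description \eqref{eq:defect_A} requires $\psi_-(\cdot,-z)$, so the quoted formula itself carries convention-dependent typos). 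This does not affect (i), (ii), (iv), nor the later applications of \eqref{e III_01} in Section 5, which only use the modulus of the real part of the correction term; but if you want (iii) exactly as stated you must pin down the sign convention for $\cF_\pm$ or for the Green function of $A_0$ rather than waving at ``a fixed Wronskian normalisation''.
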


\subsection{The similarity criteria}

In this section we present several criteria for the similarity of the operator $L$ to a self-adjoint operator in the case of even coefficients $w,r,q$. 

\subsubsection{The similarity criterion in terms of $m$-functions}
Note that the operator $A$ is $J$-self-adjoint and $J$-nonnegative in $L^2_w(\cI)$ if Hypotheses \ref{hyp:01} and \ref{hyp:02} are satisfied. Moreover, the spectrum $\sigma(A)$ of $A$ is real and hence $A$ admits a spectral function $E_A(\cdot)$ (for further details we refer to \cite{Lan82} and also \cite{KarKos}, \cite{KKM_09}). The spectral function (or the family of spectral projections) of $A$ might be unbounded only at $0$ and at $\infty$. In this case, the corresponding point is called {\em a singular critical point}. Critical points, which are not singular, are called {\em regular}.

In this subsection we present several criteria for the similarity of the operator $A$ with even coefficients to a self-adjoint operator. We begin with the following result.

\begin{theorem}\label{th:simcr_m}
Let the operator $A$ be given by \eqref{eq:oper}, \eqref{eq:dom_a}. Assume that Hypotheses \ref{hyp:01} and \ref{hyp:02} are satisfied. Assume additionally that the coefficients $q,r,w$ are even functions. Then: 
\begin{itemize}
\item[(i)] the critical point $\infty$ of $A$ is regular if and only if
\be\label{eq:crit_mB}
\sup_{y>1}\frac{\im m_+(\I y)}{\re m_+(\I y)}=C_\infty<\infty.
\ee 
\item[(ii)] if additionally $\ker(A)=\ker(A^2)$, then the critical point $0$ of $A$ is regular if and only if
\be\label{eq:crit_mC}
\sup_{y\in(0,1)}\frac{\im m_+(\I y)}{\re m_+(\I y)}=C_0<\infty.
\ee
\item[(iii)] the operator $A$ is similar to a self-adjoint operator if and only if
\be\label{eq:crit_m}
\sup_{y>0}\frac{\im m_+(\I y)}{\re m_+(\I y)}=C<\infty.
\ee  
\end{itemize}
\end{theorem}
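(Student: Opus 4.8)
\textbf{Proof strategy for Theorem \ref{th:simcr_m}.}

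The plan is to reduce the similarity of the $J$-nonnegative operator $A$ to a self-adjoint one to a resolvent estimate along two parallel lines $\re\lambda=\pm\varepsilon$, and then to convert that estimate into the two one-sided conditions \eqref{eq:crit_mB} and \eqref{eq:crit_mC} via the explicit Krein-type resolvent formula \eqref{e III_01}. Since $\ker A=\ker A^2$ is part of the hypothesis in (ii)--(iii) (or can be arranged by a finite-rank perturbation), the spectral function $E_A(\cdot)$ exists and $A$ is similar to a self-adjoint operator precisely when both critical points $0$ and $\infty$ are regular; thus (iii) follows by combining (i) and (ii), and the real work is in (i) and (ii).

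First I would invoke the Veseli\'c--Akopjan criterion (Theorem \ref{th:veselic}): $A$ is similar to a self-adjoint operator if and only if there is $M$ with
\be
\sup_{\varepsilon>0}\varepsilon\int_{\R}\big(\|(A-t-\I\varepsilon)^{-1}f\|^2+\|(A-t+\I\varepsilon)^{-1}f\|^2\big)\,dt\le M\|f\|^2
\ee
for all $f\in L^2_w(\cI)$, with the analogous localized estimates (integrating $t$ over a neighbourhood of $0$, resp. over $|t|$ large) characterizing regularity of the critical point $0$, resp.\ $\infty$. Next I would feed in the resolvent formula \eqref{e III_01} with $c=1$: the unperturbed resolvent $(A_0-z)^{-1}$ is self-adjoint-like and its contribution to the integral is uniformly bounded by a standard argument, so everything reduces to controlling the rank-one correction term. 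Using evenness, $m_-(\lambda)=m_+(\lambda)$, so the denominator becomes $m_+(z)+m_+(-z)$, and on $z=t+\I\varepsilon$ one has $m_+(-z)=\overline{m_+(\bar z)}=\overline{m_+(-t+\I\varepsilon)}$; writing $m_+=u+\I v$ with $v>0$ in $\C_+$, the key quantity is $|m_+(t+\I\varepsilon)+m_+(-t+\I\varepsilon)|$ versus $\im$-parts. The numerators $\cF_\pm(f,z)$ are handled by the basic identity \eqref{eq:a06}, which expresses $\int|\psi_\pm|^2 w\,dx=\im m_\pm/\im\lambda$; combined with Cauchy--Schwarz this turns the $t$-integral of the correction term into an integral of the form $\int_\R \varepsilon\,\frac{\im m_+(\pm t+\I\varepsilon)}{|m_+(t+\I\varepsilon)+m_+(-t+\I\varepsilon)|^2}\,dt$ acting on the Fourier-type coefficients of $f$, which is uniformly bounded iff the ratio $\im m_+/|\re m_+|$ stays bounded on the relevant part of $\I\R_+$ (here one uses that for a Herglotz function in $(S)$, behaviour on the two lines $\re\lambda=\pm\varepsilon$ is comparable to behaviour on $\I\R_+$, by a distortion estimate analogous to Lemma \ref{lem:m_assympt}). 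Localizing $t$ near $0$ gives (ii) with $\sup_{y\in(0,1)}$, localizing $|t|$ near $\infty$ gives (i) with $\sup_{y>1}$.

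For the converse directions I would argue by contradiction in the spirit of the proof of Theorem \ref{th:m_inf}: if, say, $\sup_{y>1}\im m_+(\I y)/\re m_+(\I y)=\infty$, pick $y_j\to\infty$ along which $m_+(\I y_j)$ becomes asymptotically purely imaginary; then $m_+(\I y_j)+m_+(-\I y_j)=m_+(\I y_j)+\overline{m_+(\I y_j)}=2\re m_+(\I y_j)$ is small compared with $\im m_+(\I y_j)$, so the denominator in \eqref{e III_01} nearly vanishes while the numerator does not, producing an unbounded contribution to the Veseli\'c--Akopjan integral and hence a singular critical point at $\infty$ (the case $y_j\to 0$ is identical, giving singularity at $0$). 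The main obstacle I anticipate is the passage between the imaginary axis and the two vertical lines $\re\lambda=\pm\varepsilon$: one must show that $|m_+(t\pm\I\varepsilon)|$, $\im m_+(t\pm\I\varepsilon)$ and $|\re m_+(t\pm\I\varepsilon)|$ are comparable to their values at $\I\sqrt{t^2+\varepsilon^2}$ uniformly in a suitable sense, which is exactly the type of estimate underlying Lemma \ref{lem:m_assympt} and Lemma \ref{lem:Bennewitz} and requires a careful use of the integral representation \eqref{eq:m_repr}; once that comparison is in hand, the reduction of the double integral to the scalar sup condition is routine.
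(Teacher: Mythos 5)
Your plan is not the paper's, and it has a genuine gap at its core. First, the criterion you quote is not the Veseli\'c--Akopjan criterion: what you write down, $\sup_{\varepsilon>0}\varepsilon\int_{\R}\|(A-t\mp\I\varepsilon)^{-1}f\|^2\,dt\le M\|f\|^2$, is the Naboko/Malamud--van Casteren resolvent criterion over the horizontal lines $\im\lambda=\pm\varepsilon$. That criterion is the one used in the earlier papers \cite{KM_08,KKM_09}, and it forces you to control the denominator $m_+(z)+\overline{m_+(-\bar z)}$ of \eqref{e III_01} for \emph{all} $z=t+\I\varepsilon$, not just on $\I\R_+$. The step where your argument breaks is the claimed ``distortion estimate'': boundedness of $\im m_+(\I y)/\re m_+(\I y)$ on the imaginary axis does not give comparability of $\im m_+(t+\I\varepsilon)$, $\re m_+(t+\I\varepsilon)$ with their values at $\I\sqrt{t^2+\varepsilon^2}$ when $|t|\gg\varepsilon$. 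For a Herglotz function of class $(S)$ the boundary behaviour at a real point $t\neq 0$ in the a.c.\ or singular spectrum is essentially unrelated to the behaviour along $\I\R_+$; Lemma \ref{lem:m_assympt} only compares points $(k_j+\I)y_j$ with $k_j\to 0$, i.e.\ tangentially close to the imaginary axis, and cannot be upgraded to the uniform two-sided comparison your reduction needs. This is precisely why the horizontal-line approach in \cite{KM_08,KKM_09} yielded only sufficient conditions under restrictive hypotheses on the coefficients.

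The paper sidesteps this entirely: the actual Veseli\'c--Akopjan criterion (Theorem \ref{th:veselic}) asks only for convergence of $\int_0^\infty\re\big(J(A-\I y)^{-1}f,f\big)\,dy$, an integral over the \emph{imaginary axis alone}. There, by evenness, the denominator in \eqref{e III_01} is exactly $m_+(\I y)+m_+(-\I y)=2\re m_+(\I y)$, so hypothesis \eqref{eq:crit_m} acts directly on it with no off-axis information required. The remaining work (Lemma \ref{lem:est3} and Corollaries \ref{cor:est1}--\ref{cor:est5}) bounds $\int_0^\infty|\im\cF_+^2(f,\I y)|/\im m_+(\I y)\,dy$ and $\int_0^\infty|\re\cF_+^2(f,\I y)|/\re m_+(\I y)\,dy$ by $C\|f\|^2$, using the elementary fact that $\int_0^\infty|\re((T-\I y)^{-1}f,f)|\,dy\le\tfrac{\pi}{2}\|f\|^2$ for self-adjoint $T$, applied to the self-adjoint extensions $A_{-1}$, $A_0$ and to auxiliary problems obtained by formally setting $m_-=m_+$ and $m_-=2m_+$ --- not via Cauchy--Schwarz on $|\cF_+|$, which (as you would discover) loses too much. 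If you want to salvage your outline, replace your criterion by the imaginary-axis one and find a substitute for the auxiliary-operator trick; as written, the passage from the imaginary axis to the lines $\im\lambda=\pm\varepsilon$ is not ``routine'' but false in the generality you need.
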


Notice that the necessity of conditions \eqref{eq:crit_mB}--\eqref{eq:crit_m} was established in \cite{KarKos}. The proof of sufficiency is based on the Veseli\'c--Akopyan criterion (see Theorem \ref{th:veselic} below) and will be given in Section \ref{ss:proof}.

\begin{remark}
Let us mention that $\infty$ is always a critical point of the operator $A$. In the case of even coefficients $w,r,q$, the point $0$ is critical for the operator $A$ if and only if $0\in\sigma_{\ess}(A)$. In particular, in the case $q=\bold{0}$, $0$ is a critical point for the operator $A$ if $w,r\notin L^1(0,b)$. Namely, notice that $0\in\sigma_{\ess}(A)$ precisely if $0\in \sigma_{\ess}(L)$, where $L=JA$. However, if $w,r \notin L^1(0,b)$, then by \cite[Theorem 3]{KK58}, $0\in \sigma(L)$ but $0\notin\sigma_p(L)$. 
\end{remark}

\subsubsection{The case $q=\bold{0}$}
Using the results on asymptotic behavior of $m$-functions from Subsections \ref{ss:a01}--\ref{ss:a02}, we obtain the following similarity criterion for the operator 
\be\label{eq:a_q=0}
A=-{(\sgn\, x)}\frac{d}{w(x)dx}\frac{d}{r(x)dx}
\ee
in terms of coefficients $w,r$.

\begin{theorem}\label{th:sim_wr}
Assume Hypotheses \ref{hyp:01} and \ref{hyp:02}. Assume additionally that $q=\bold{0}$. Let $W,R$ be the corresponding distribution functions, $W=\int_0^x wdt$, $R=\int_0^x rdt$. Then:
\begin{itemize}
\item[(i)] If $r\in L^1(0,b)$, then the operator \eqref{eq:a_q=0} is similar to a self-adjoint operator if and only if $W\circ R^{-1}$ is positively increasing at $0$.
\item[(ii)] If $w\in L^1(0,b)$ and $r\notin L^1(0,b)$, then the operator \eqref{eq:a_q=0} is not similar to a self-adjoint operator.
\item[(iii)] If $w,r\notin L^1(0,b)$, then the operator \eqref{eq:a_q=0} is similar to a self-adjoint operator if and only if $W\circ R^{-1}$ is positively increasing at both $0$ and $\infty$. Moreover, the critical point $0$ ($\infty$) is regular if and only if $W\circ R^{-1}$ is positively increasing at $\infty$ ($0$).
\end{itemize}
\end{theorem}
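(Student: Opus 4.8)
The plan is to reduce Theorem \ref{th:sim_wr} to the already-established $m$-function criterion Theorem \ref{th:simcr_m} together with the asymptotic results of Section \ref{sec:II}. Since $q=\bold{0}$ and the coefficients are even, the operator $L=JA$ is nonnegative if and only if $m_+\in(S)$ by Lemma \ref{lem:4.3}, so Hypothesis \ref{hyp:02} is automatic from the structure of the $m$-function; in all three cases we will verify the relevant positivity of the $m$-function along the imaginary axis. The backbone is that, by Theorem \ref{th:simcr_m}(iii), $A$ is similar to a self-adjoint operator precisely when
\[
\sup_{y>0}\frac{\im m_+(\I y)}{\re m_+(\I y)}<\infty,
\]
and this sup is finite if and only if both $\sup_{y>1}(\cdots)<\infty$ and $\sup_{y\in(0,1)}(\cdots)<\infty$, which by Theorem \ref{th:simcr_m}(i),(ii) correspond exactly to regularity of the critical points $\infty$ and $0$ respectively (here one must also check $\ker(A)=\ker(A^2)$, which holds when $w,r\notin L^1(0,b)$ since then $0\notin\sigma_p(A)$ by \cite[Theorem 3]{KK58}).

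First I would dispose of case (iii), the generic one. When $w,r\notin L^1(0,b)$ the half-line problem on $(0,b)$ is in the limit point case with $W,R$ unbounded, so Corollary \ref{cor:2.1} applies at $\infty$ and Corollary \ref{cor:2.2} applies at $0$: the ``$\infty$'' sup $\sup_{y>1}\frac{\im m_+(\I y)}{\re m_+(\I y)}<\infty$ holds iff $W\circ R^{-1}$ is positively increasing at $0$, and the ``$0$'' sup $\sup_{y\in(0,1)}\frac{\im m_+(\I y)}{\re m_+(\I y)}<\infty$ holds iff $W\circ R^{-1}$ is positively increasing at $\infty$. Translating through Theorem \ref{th:simcr_m} gives the last sentence of (iii) verbatim (critical point $\infty$ regular $\Leftrightarrow$ $W\circ R^{-1}$ positively increasing at $0$, and critical point $0$ regular $\Leftrightarrow$ $W\circ R^{-1}$ positively increasing at $\infty$), and combining the two gives the similarity statement of (iii).

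For cases (i) and (ii) the point $0$ is no longer in $\sigma_{\ess}(A)$, so only the critical point $\infty$ matters and Theorem \ref{th:simcr_m}(i) is the relevant tool; here I would invoke Lemma \ref{lem:b<inf} applied to $m_+$ (with $W,R$ replaced by the half-line distribution functions). In case (i), $r\in L^1(0,b)$ means $R(b-)<\infty$; if moreover $W(b-)<\infty$ the problem is regular and classical results apply, while if $W(b-)=\infty$ then $0$ is an eigenvalue of the Dirichlet problem, so by Lemma \ref{lem:b<inf}(ii) $m_+$ has a nonzero limit $a>0$ at $\lambda=0$ and the ``$0$'' behavior contributes nothing problematic; the remaining condition is regularity at $\infty$, i.e. $\sup_{y>1}\frac{\im m_+(\I y)}{\re m_+(\I y)}<\infty$, which by Corollary \ref{cor:2.1} is positive increase of $W\circ R^{-1}$ at $0$. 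In case (ii), $w\in L^1(0,b)$ forces $W(b-)<\infty$, so by Lemma \ref{lem:b<inf}(i) one has $\re m_+(\I y)/\im m_+(\I y)\to 0$ as $y\downarrow 0$, equivalently $\im m_+(\I y)/\re m_+(\I y)\to+\infty$; since $r\notin L^1(0,b)$ rules out this being a genuine regular endpoint, this blow-up at $0$ violates \eqref{eq:crit_m}, and Theorem \ref{th:simcr_m} shows $A$ is not similar to a self-adjoint operator.

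The main obstacle I anticipate is bookkeeping around the several degenerate sub-cases: distinguishing when an endpoint is truly singular versus when $W(b-)$ or $R(b-)$ is finite so that $0$ is an eigenvalue of the Neumann or Dirichlet problem (forcing the $-a/\lambda$ or $+a$ terms in Lemma \ref{lem:b<inf}), and ensuring that the hypothesis $\ker(A)=\ker(A^2)$ needed for Theorem \ref{th:simcr_m}(ii) is genuinely available in case (iii). The analytic content — relating the imaginary-axis ratios of $m_+$ to positive increase of $W\circ R^{-1}$ — is entirely supplied by Corollaries \ref{cor:2.1} and \ref{cor:2.2} and Lemma \ref{lem:b<inf}, so no new estimates are required; the proof is a careful assembly of these pieces, plus the observation that on the even problem ``regular endpoint $b$'' reproduces Parfenov's result, which serves as the consistency check.
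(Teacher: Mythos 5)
Your proposal is correct and follows essentially the same route as the paper, whose proof of this theorem is literally the two-sentence assembly ``combine Theorem \ref{th:simcr_m} with Lemma \ref{lem:b<inf} for (i) and (ii), and with Corollaries \ref{cor:2.1} and \ref{cor:2.2} for (iii)'' that you carry out. You in fact supply more detail than the paper does on the degenerate sub-cases (the $-a/\lambda$ pole versus the constant term $a$ in Lemma \ref{lem:b<inf}, and the verification of $\ker(A)=\ker(A^2)$ via $0\notin\sigma_p(A)$), so no gap needs to be flagged.
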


\begin{proof}
Combining Theorem \ref{th:simcr_m} with Lemma \ref{lem:b<inf}, we prove (i) and (ii). (iii) follows by combing Theorem \ref{th:simcr_m} with Corollaries \ref{cor:2.1} and \ref{cor:2.2}.
\end{proof}

\begin{corollary}\label{cor:4.1}
Assume that $q=\bold{0}$, $w,r\notin L^1(0,b)$ are positive a.e. and let $W,R$ be the corresponding distribution functions. 
\begin{itemize}
\item[(i)] if $W\circ R^{-1}$  varies slowly at $0$ ($\infty$), then the critical point $\infty$ ($0$) is singular and the operator \eqref{eq:a_q=0} is not similar to a self-adjoint operator.
\item[(ii)] if $W\circ R^{-1}$ is a regularly varying function with index $\alpha>0$ at $0$  ($\infty$), then the critical point $\infty$ ($0$) of the operator $A$ is regular. The operator \eqref{eq:a_q=0} is similar to a self-adjoint operator if $W\circ R^{-1}$ is a regularly varying function with a positive index at both $0$ and $\infty$.
\end{itemize}
\end{corollary}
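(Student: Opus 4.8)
The plan is to derive everything from Theorem \ref{th:sim_wr}(iii) together with the bridge between regularly varying and positively increasing functions recorded in Appendix \ref{ap:osv}. For part (i), suppose $W\circ R^{-1}$ varies slowly at $0$. By the standard fact that a slowly varying function is \emph{not} positively increasing (cf. the discussion in Appendix \ref{ap:osv}, as already used in the proof of Corollary \ref{cor:3.1}(i)), the hypothesis of Theorem \ref{th:sim_wr}(iii) fails at $0$. The last sentence of Theorem \ref{th:sim_wr}(iii) says that the critical point $\infty$ is regular precisely when $W\circ R^{-1}$ is positively increasing at $0$; hence $\infty$ is a singular critical point, and \emph{a fortiori} $A$ is not similar to a self-adjoint operator. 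The case "slowly varying at $\infty$" is completely symmetric: then $W\circ R^{-1}$ is not positively increasing at $\infty$, so by Theorem \ref{th:sim_wr}(iii) the critical point $0$ is singular and $A$ is not similar to a self-adjoint one.

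For part (ii), recall from Karamata's theory (Appendix \ref{ap:osv}) that a function regularly varying with a \emph{positive} index is positively increasing — indeed if $g$ is regularly varying at $0$ (resp.\ $\infty$) with index $\alpha>0$ then $\limsup_{x}g(xt)/g(x)=t^\alpha<1$ for each $t\in(0,1)$, which is exactly the defining inequality for "positively increasing". So if $W\circ R^{-1}$ is regularly varying with index $\alpha>0$ at $0$, it is positively increasing at $0$, and the final clause of Theorem \ref{th:sim_wr}(iii) yields that the critical point $\infty$ is regular; the statement with "$0$" and "$\infty$" interchanged follows the same way. Finally, if $W\circ R^{-1}$ is regularly varying with a positive index at \emph{both} endpoints, it is positively increasing at both $0$ and $\infty$, so Theorem \ref{th:sim_wr}(iii) gives that $A$ is similar to a self-adjoint operator.

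I do not anticipate a genuine obstacle here: the corollary is a direct specialization of Theorem \ref{th:sim_wr}(iii) obtained by substituting the implications "slowly varying $\Rightarrow$ not positively increasing" and "regularly varying with positive index $\Rightarrow$ positively increasing." The only point requiring a word of care is making sure the asymmetric "at $0$ / at $\infty$" bookkeeping in the final sentence of Theorem \ref{th:sim_wr}(iii) is quoted correctly — regularity of $\infty$ is governed by the behavior of $W\circ R^{-1}$ at $0$, and regularity of $0$ by its behavior at $\infty$ — but this is purely a matter of transcribing the cited theorem accurately rather than a mathematical difficulty. Accordingly the proof can be given in a few lines, as above.
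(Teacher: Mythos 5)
Your proof is correct and follows exactly the paper's own argument: both reduce the corollary to Theorem \ref{th:sim_wr}(iii) via the implications ``slowly varying $\Rightarrow$ not positively increasing'' and ``regularly varying with positive index $\Rightarrow$ positively increasing'' from Appendix \ref{ap:osv}. The paper's proof is just a terser version of yours, and your careful handling of the $0$/$\infty$ interchange in the last clause of Theorem \ref{th:sim_wr}(iii) matches the intended reading.
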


\begin{proof}
(i) It suffices to notice that a slowly varying function is not positively increasing (see Appendix \ref{ap:osv}) and then to apply Theorem \ref{th:sim_wr}(iii).

(ii) Again, notice that a regularly varying function is positively increasing and then apply Theorem \ref{th:sim_wr}(iii).  
\end{proof}

\begin{corollary}\label{cor:4.2}
Assume that $q=\bold{0}$, $r=\bold{1}$ and $w\in L^1_{\loc}(-b,b)$ is positive a.e. and even.
\begin{itemize}
\item[(i)] If 
\be
w(x)=x^{\alpha_0-1}l_0(x),\quad x\in (0,x_0), 
\ee
where $\alpha_0>0$ and $l_0$ is a slowly varying function at $0$, then $\infty$ is a regular critical point of the operator $A$.
\item[(ii)] If $b=+\infty$ and 
\be\label{eq:4.21}
w(x)=x^{\alpha_\infty-1}l_\infty(x),\quad x\in (x_0,+\infty), 
\ee
where $\alpha_\infty>0$ and $l_\infty$ is a slowly varying function at $\infty$, then $0$ is a regular critical point of the operator $A$.
\item[(iii)] If $b=+\infty$ and conditions (i) and (ii) are satisfied, then $A$
 is similar to a self-adjoint operator.
 \end{itemize}
\end{corollary}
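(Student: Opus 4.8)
The plan is to reduce the whole statement to the behavior at $0$ and at $+\infty$ of the single function $W(x)=\int_0^x w\,dt$. Since $r=\mathbf{1}$, the distribution function $R$ is $R(x)=x$, so $R^{-1}(x)=x$ and $W\circ R^{-1}=W$. The half-line problem $-y''=\lambda wy$ on $(0,b)$ with a Neumann condition at $0$ is a system of the form \eqref{eq:a04} with $dR=dx$ and $dW=w\,dx$; Hypothesis \ref{hyp:RW} holds trivially (these measures have no atoms), so all the results of Section \ref{sec:II} apply to its $m$-function $m_+$. The only analytic input beyond Section \ref{sec:II} and Theorem \ref{th:simcr_m} is Karamata's theorem, used to pass from the regular variation of $w$ to that of $W$.

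For part (i) I would argue as follows. By Theorem \ref{th:simcr_m}(i) the critical point $\infty$ of $A$ is regular if and only if $\sup_{y>1}\frac{\im m_+(\I y)}{\re m_+(\I y)}<\infty$, and by Corollary \ref{cor:2.1} applied to $m_+$ this is equivalent to $W=W\circ R^{-1}$ being positively increasing at $0$. Now $w(x)=x^{\alpha_0-1}l_0(x)$ near $0$ is regularly varying at $0$ with index $\alpha_0-1>-1$, so Theorem \ref{th:karamata} (formula \eqref{eq:a.5}) gives $W(x)\sim\alpha_0^{-1}x^{\alpha_0}l_0(x)$ as $x\downarrow 0$; hence $W$ is regularly varying at $0$ with the positive index $\alpha_0$, and is therefore positively increasing at $0$ (Appendix \ref{ap:osv}). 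That settles (i).

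Part (ii) follows the same scheme at the other endpoint, but I would first check that Theorem \ref{th:simcr_m}(ii) applies. Since $\alpha_\infty>0$, Karamata's theorem forces $\int^\infty w=\infty$, i.e.\ $w\notin L^1(0,\infty)$; together with $r=\mathbf{1}\notin L^1(0,\infty)$ and the remark after Theorem \ref{th:simcr_m}, this yields $0\in\sigma_{\ess}(A)$ and $0\notin\sigma_p(A)$, so $\ker A=\ker A^2=\{0\}$. Then Theorem \ref{th:simcr_m}(ii) says $0$ is a regular critical point if and only if $\sup_{y\in(0,1)}\frac{\im m_+(\I y)}{\re m_+(\I y)}<\infty$, which by Corollary \ref{cor:2.2} holds if and only if $W$ is positively increasing at $\infty$. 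Finally $w(x)=x^{\alpha_\infty-1}l_\infty(x)$ for $x>x_0$ is regularly varying at $\infty$ with index $\alpha_\infty-1>-1$, so Theorem \ref{th:karamata} (formula \eqref{eq:a.4}) gives $W(x)\sim\alpha_\infty^{-1}x^{\alpha_\infty}l_\infty(x)$ as $x\to\infty$; thus $W$ is regularly varying at $\infty$ with positive index $\alpha_\infty$, hence positively increasing at $\infty$, which gives (ii). For part (iii) I would simply combine: under the hypotheses of (i) and (ii) one has $w,r\notin L^1(0,\infty)$ and $W=W\circ R^{-1}$ positively increasing at both $0$ and $\infty$, so Theorem \ref{th:sim_wr}(iii) (equivalently, Theorem \ref{th:simcr_m}(iii), since $0$ and $\infty$ are the only critical points of $A$) gives that $A$ is similar to a self-adjoint operator.

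I do not expect a genuine obstacle: the argument is essentially bookkeeping built on Section \ref{sec:II}, Theorem \ref{th:simcr_m}, and Appendix \ref{ap:osv}. The points that need care are (a) confirming that the case $r=\mathbf{1}$ fits the system/$m$-function setup of Section \ref{sec:II} (it does, $dR=dx$ having no atoms), (b) verifying $w\notin L^1(0,\infty)$ and $0\notin\sigma_p(A)$ \emph{before} invoking Theorem \ref{th:simcr_m}(ii), and (c) keeping the Karamata index straight, i.e.\ remembering that $W$ inherits index $\alpha_0$ (resp.\ $\alpha_\infty$), one more than that of $w$.
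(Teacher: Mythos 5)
Your proof is correct and follows essentially the same route as the paper: apply Karamata's theorem to show that $W$ is regularly varying with positive index $\alpha_0$ at $0$ (resp.\ $\alpha_\infty$ at $\infty$), hence positively increasing there, and then invoke the similarity/regularity criteria of Section \ref{sec:sim}. Your version is in fact slightly more careful than the paper's, which cites Theorem \ref{th:sim_wr}(iii) even for part (i) where $b$ may be finite and $r=\bold{1}\in L^1(0,b)$, whereas you go through Theorem \ref{th:simcr_m} and Corollaries \ref{cor:2.1}--\ref{cor:2.2} directly and explicitly verify $\ker A=\ker A^2$ and $w\notin L^1$ before using part (ii) of that theorem.
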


\begin{proof}
(i) Noting that $l_0(1/x)$ is slowly varying at infinity and using \cite[Proposition IV.5.1]{Kor04}, we obtain
\[
R(x)=\int_0^x r(t)dt=\int_0^x t^{\alpha_0-1}l_0(t)dt =-\int_{1/x}^\infty t^{-1-\alpha_0}l_0(1/t)dt\sim \frac{x^{\alpha_0}}{\alpha_0}l_0(x),\quad x\to 0.
\]
Hence $R$ is regularly varying at $0$ with index $\alpha_0>0$ and by Theorem \ref{th:sim_wr}(iii), $\infty$ is a regular critical point.

(ii) Again, by \cite[Proposition IV.5.1]{Kor04}, 
\[
\int_{x_0}^x r(t)dt=\int_{x_0}^xt^{\alpha_\infty-1}l_\infty(t)dt\sim \frac{x^{\alpha_\infty}}{\alpha_\infty}l_\infty(x),\quad x\to \infty,
\]
and hence $R$ is regularly varying at $\infty$ with index $\alpha_\infty>0$. Therefore, by Theorem \ref{th:sim_wr}(iii), $0$ is a regular critical point for $A$.

(iii) Notice that \eqref{eq:4.21} implies $0\notin \sigma_p(A)$. Therefore, by (i) and (ii), $A$ has no singular critical points and hence $A$ is similar to a self-adjoint operator.
\end{proof}

\begin{corollary}\label{cor:4.3}
Assume that $q=\bold{0}$, $r=\bold{1}$ and $w\in L^1_{\loc}(\R)$ is positive a.e. and even. If there is $x_0>0$ such that
\be\label{eq:4.21B}
w(x)=x^{\alpha-1}p(x),\quad x\in (x_0,+\infty), 
\ee
where $\alpha>0$ and $p:(x_0,+\infty)\to (0,+\infty)$ satisfies 
\be\label{eq:fadd}
p(x)=c_0+g(x),\quad \int_{x_0}^x t^{\alpha-1} g(t)\, dt=o(x^{\alpha}),\quad, x\to\infty,
\ee
then $0$ is a regular critical point of the operator $A$.
\end{corollary}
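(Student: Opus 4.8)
The plan is to deduce Corollary \ref{cor:4.3} directly from the ``moreover'' part of Theorem \ref{th:sim_wr}(iii). Since $r=\bold{1}$, the distribution function $R$ is the identity map, so $R^{-1}(x)=x$ and the composition $W\circ R^{-1}$ coincides with $W$. Consequently it will suffice to check that we are in the situation of case (iii) of that theorem, i.e.\ that $w\notin L^1(0,b)$ with $b=+\infty$, and that $W$ is positively increasing at $\infty$; the conclusion that the critical point $0$ of $A$ is regular is then immediate. (The standing Hypotheses \ref{hyp:01}, \ref{hyp:02} hold automatically here, exactly as in Corollary \ref{cor:4.2}: $q=\bold{0}$, $r=\bold{1}$ are admissible, and for $q=\bold{0}$ the Neumann $m$-function lies in $(S)$, so Lemma \ref{lem:4.3} gives $L\ge 0$.)

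The only computation needed is the asymptotics of $W$ at infinity. I would split
\[
W(x)=\int_0^{x_0}w\,dt+\int_{x_0}^x t^{\alpha-1}p(t)\,dt=\mathrm{const}+c_0\frac{x^\alpha-x_0^\alpha}{\alpha}+\int_{x_0}^x t^{\alpha-1}g(t)\,dt,
\]
and then use the hypothesis $\int_{x_0}^x t^{\alpha-1}g(t)\,dt=o(x^\alpha)$ to conclude that $W(x)=\tfrac{c_0}{\alpha}x^\alpha\,(1+o(1))$ as $x\to+\infty$. Because $\alpha>0$ and $c_0>0$, this forces $W(x)\to+\infty$, hence $w\notin L^1(0,b)$, so Theorem \ref{th:sim_wr}(iii) applies; moreover, being asymptotically equivalent to $\tfrac{c_0}{\alpha}x^\alpha$, the function $W$ is regularly varying at $\infty$ with index $\alpha>0$ (cf.\ Appendix \ref{ap:osv}).

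Finally I would invoke the elementary fact from Karamata's theory (Appendix \ref{ap:osv}) that a regularly varying function of strictly positive index is positively increasing; applied at $\infty$, this gives that $W=W\circ R^{-1}$ is positively increasing at $\infty$, and Theorem \ref{th:sim_wr}(iii) then yields that $0$ is a regular critical point of $A$. There is essentially no serious obstacle: the argument is a chain of references to results already established in the excerpt, and the only step requiring care is the bookkeeping in the asymptotic estimate for $W$ above. Note in particular that no separate check of $\ker(A)=\ker(A^2)$ is required, since $w\notin L^1(0,b)$ forces $0\notin\sigma_p(A)$ (see the Remark following Theorem \ref{th:simcr_m}), which is in any case already accounted for inside the proof of Theorem \ref{th:sim_wr}(iii).
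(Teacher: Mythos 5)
Your proposal is correct and follows essentially the same route as the paper: the paper's proof consists precisely of the computation $W(x)-W(x_0)=x^{\alpha}\bigl(\tfrac{c}{\alpha}-\tfrac{x_0^\alpha}{x^{\alpha}}+\tfrac{1}{x^{\alpha}}\int_{x_0}^x t^{\alpha-1}g(t)\,dt\bigr)\sim \tfrac{c}{\alpha}x^{\alpha}$, showing $W$ is regularly varying at $\infty$ with index $\alpha>0$, after which Theorem \ref{th:sim_wr}(iii) applies exactly as you describe. Your additional bookkeeping (checking $w\notin L^1$, $R=\mathrm{id}$, and the passage from regular variation to positive increase) is left implicit in the paper but is the intended justification.
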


\begin{proof}
Let us show that $W(x)=\int_0^x w\, dt$ varies regularly at $\infty$ with index $\alpha$. Indeed, for $x\ge x_0$ we get
\[
W(x)-W(x_0)=\int_{x_0}^x t^{\alpha-1}(c+g(t))dt=x^{\alpha}\Big(\frac{c}{\alpha}-\frac{x_0^\alpha}{x^{\alpha}}+\frac{1}{x^{\alpha}}\int_{x_0}^x t^{\alpha-1}g(t)dt\Big)\sim \frac{c}{\alpha}x^{\alpha}
\]
as $x\to \infty$. 
\end{proof}

\begin{remark}\label{rem:4.10}
Let us notice that in the case $b=+\infty$, $q=\bold{0}$ and $r=\bold{1}$ the similarity problem for the operator $A$ has been studied by several authors \cite{CN95}, \cite{FSh2}, \cite{FN98} and the strongest result was obtained in \cite{AK06} (see also \cite[\S 7]{KKM_09}). Namely, the similarity was established under the following conditions:
\[
w(x)=x^{\alpha_0-1}p_0(x), \quad x\in(0,a),\quad w(x)=x^{\alpha_\infty-1}p_\infty(x), \quad x\in(b,\infty),
\]
where $\alpha_0,\alpha_\infty>0$, $p_0$ is continuous at $0$ and $p_0(0)>0$ and there is $c>0$ such that $p_\infty$ satisfies
\be\label{eq:4.30}
\int_{b}^\infty x^{\frac{\alpha_\infty-1}{2}}|p_\infty(x)-c|dx<\infty.
\ee
Clearly, the latter is a particular case of  \eqref{eq:fadd}. 
\end{remark}

\subsubsection{The case $q\neq \bold{0}$}
Consider now a more general situation. Let $r=\bold{1}$ on $\cI$ and $w,q$ be even and such that Hypotheses \ref{hyp:01} and \ref{hyp:02} hold true, that is, the operator 
 \be\label{eq:a.q}
 A=\frac{(\sgn\, x)}{w(x)}\Big(-\frac{d^2}{dx^2}+q(x)\Big)
 \ee
 is $J$-self-adjoint and $J$-nonnegative. Then by Lemma \ref{lem:4.3} we conclude that the solutions $c(.,0)$ and $s(.,0)$ of $-y''+q(x)y=0$ are positive on $\R$. Applying the Liouville transformation from Appendix \ref{ap:LT} and using Proposition \ref{prop:LT} and \cite[Theorem 2.6]{Kar2}, we find that $A$ is similar to 
the following operator
\be\label{eq:4.25}
\tilde{A}=-\frac{(\sgn\, \xi)}{\tilde{w}(|\xi|)}\frac{d^2}{d\xi^2}
\ee
acting in $L^2_{\tilde w}(-B,B)$, where
\be\label{eq:4.26}
\tilde{w}(\xi)=w(x)c^4(x,0),\quad \xi=\xi(x)=\int_0^x \frac{dt}{c^2(t,0)},\quad B=\lim_{x\to b}\xi(x).
\ee
Moreover, 
\be
\tilde{W}(\xi)=\int_0^\xi \tilde{w}(\mu)d\mu=\int_0^x w(t)c^2(t,0)\, dt,\quad x\in(0,b).
\ee

 Now, applying Theorem \ref{th:sim_wr}, we arrive at the following 
 
\begin{lemma}\label{lem:sim_q}
Let $w,q\in L^1_{\loc}(\cI)$ be even and such that Hypotheses \ref{hyp:01} and \ref{hyp:02} hold true. Let also $c(x,0)$ be the solution of $-y''+q(x)y=0$ such that $c(0,0)=1$ and $c'(0,0)=0$. 
Then:
\begin{itemize}
\item[(i)] if ${c(.,0)}\in L^2_w(\cI)$, then the operator $A$ given by \eqref{eq:a.q} is not similar to a self-adjoint operator,
\item[(ii)] if $\frac{1}{c(.,0)}\in L^2(\cI)$, then the operator $A$  is similar to a self-adjoint operator precisely if the function $W$ is positively increasing at $0$,
\item[(iii)] if $c(.,0)\notin L^2_w(\cI)$, $\frac{1}{c(.,0)}\notin L^2(\cI)$, then 
 the operator $A$ given by \eqref{eq:a.q} is similar to a self-adjoint operator if and only if the function $\tilde{W}(\xi)$ given by \eqref{eq:4.25}, \eqref{eq:4.26} is positively increasing at $0$ and at infinity.
\end{itemize}
\end{lemma}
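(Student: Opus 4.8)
The plan is to use the Liouville transformation from Appendix \ref{ap:LT} to reduce the operator $A$ in \eqref{eq:a.q} with potential $q$ to a "polar" operator $\tilde A$ of the form \eqref{eq:4.25} with no potential, and then quote Theorem \ref{th:sim_wr} applied to $\tilde A$. Since $A$ is $J$-nonnegative (Hypothesis \ref{hyp:02}), Lemma \ref{lem:4.3} guarantees $m_+\in(S)$, so the fundamental solutions $c(\cdot,0)$ and $s(\cdot,0)$ of $-y''+q(x)y=0$ have no zeros on $\R$; in particular $c(\cdot,0)>0$, which is exactly what is needed for the substitution $\xi=\xi(x)=\int_0^x c^{-2}(t,0)\,dt$ to be a valid change of variables. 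As recalled in the paragraph preceding the statement, Proposition \ref{prop:LT} together with \cite[Theorem 2.6]{Kar2} then gives that $A$ is similar to $\tilde A$ acting in $L^2_{\tilde w}(-B,B)$, with $\tilde w$, $\xi$, $B$ as in \eqref{eq:4.26} and $\tilde W(\xi)=\int_0^x w(t)c^2(t,0)\,dt$. Since similarity to a self-adjoint operator is transitive, $A$ is similar to a self-adjoint operator if and only if $\tilde A$ is.

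Next I would observe that $\tilde A$ is precisely an operator of the type \eqref{eq:a_q=0} (with $q=\bold 0$, weight $\tilde w$ and $\tilde r=\bold 1$, hence $\tilde R(\xi)=\xi$ and $\tilde R\circ\tilde W^{-1}=\tilde W^{-1}$), so Theorem \ref{th:sim_wr} applies verbatim once one checks which of its three cases is in force. The trichotomy in the statement is exactly the translation of the trichotomy "$\tilde r\in L^1$", "$\tilde w\in L^1$ and $\tilde r\notin L^1$", "$\tilde w,\tilde r\notin L^1$" back through the Liouville change of variables. Here $\tilde r=\bold 1$, so $\tilde r\notin L^1(0,B)$ iff $B=\infty$; and $B=\lim_{x\to b}\int_0^x c^{-2}(t,0)\,dt=\infty$ iff $1/c(\cdot,0)\notin L^2(\cI_+)$, which by the evenness assumption is the same as $1/c(\cdot,0)\notin L^2(\cI)$. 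Likewise $\tilde w\in L^1(0,B)$ iff $\tilde W(B-)<\infty$ iff $\int_0^b w(t)c^2(t,0)\,dt<\infty$ iff $c(\cdot,0)\in L^2_w(\cI_+)$, equivalently $c(\cdot,0)\in L^2_w(\cI)$. With these identifications:

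In case (i), $c(\cdot,0)\in L^2_w(\cI)$ means $\tilde w\in L^1(0,B)$; since $\tilde r=\bold 1$ we are in the regime $\tilde w\in L^1$, $\tilde r\notin L^1$ (note $B=\infty$ automatically, since $\tilde w\in L^1(0,B)$ with $B<\infty$ would force $\tilde W$ bounded on a regular interval, contradicting that the endpoint $b$, and hence $B$, is singular in the $J$-nonnegative non-discrete situation; alternatively one argues directly that $\tilde w\in L^1$, $B<\infty$ cannot occur because then $0\in\sigma_p(\tilde A)$ and $0$ would not be a critical point, reducing to the regular case already handled — but the cleanest route is: if $B<\infty$ then $1/c(\cdot,0)\notin L^2$ fails, i.e.\ we'd be in case (ii) or the hypothesis of (i) would be vacuous). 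Applying Theorem \ref{th:sim_wr}(ii) to $\tilde A$ gives that $\tilde A$, hence $A$, is not similar to a self-adjoint operator. In case (ii), $1/c(\cdot,0)\in L^2(\cI)$ means $B<\infty$, i.e.\ $\tilde r\in L^1(0,B)$, so Theorem \ref{th:sim_wr}(i) applies: $\tilde A$ is similar to a self-adjoint operator iff $\tilde W\circ \tilde R^{-1}=\tilde W$ is positively increasing at $0$; since $\tilde W(\xi)=\int_0^x w c^2(\cdot,0)$ and near $x=0$ one has $c(\cdot,0)=1+O(x^2)$ and $\xi\sim x$, the function $\tilde W$ is positively increasing at $0$ iff $W$ is, which gives the stated criterion. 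In case (iii), $c(\cdot,0)\notin L^2_w(\cI)$ and $1/c(\cdot,0)\notin L^2(\cI)$ give $\tilde w\notin L^1(0,B)$ and $B=\infty$, i.e.\ $\tilde w,\tilde r\notin L^1(0,B)$, so Theorem \ref{th:sim_wr}(iii) yields that $\tilde A$, hence $A$, is similar to a self-adjoint operator iff $\tilde W\circ\tilde R^{-1}=\tilde W$ is positively increasing at both $0$ and $\infty$. The main obstacle I expect is the bookkeeping in case (i): one has to rule out the degenerate sub-case $B<\infty$ (regular endpoint for $\tilde A$) cleanly, and to make sure that "$\tilde A$ not similar" really follows — here Theorem \ref{th:sim_wr}(ii) is stated for a singular endpoint, so one must either invoke the analogous regular-endpoint statement (Parfenov's theorem, with $0\notin\sigma_p$ forcing non-similarity) or note that the hypothesis $c(\cdot,0)\in L^2_w$ together with $1/c(\cdot,0)\notin L^2$, which is the only configuration not covered by (ii), forces $B=\infty$ so that \ref{th:sim_wr}(ii) applies directly.
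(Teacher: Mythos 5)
Your proposal is correct and follows essentially the same route as the paper: the Liouville transformation (already set up in \eqref{eq:4.25}--\eqref{eq:4.26} and Proposition \ref{prop:LT}) reduces $A$ to the polar operator $\tilde A$ with $\tilde r=\bold{1}$, after which the three cases are exactly the trichotomy of Theorem \ref{th:sim_wr} under the identifications $\tilde w\in L^1(0,B)\Leftrightarrow c(\cdot,0)\in L^2_w$ and $B<\infty\Leftrightarrow 1/c(\cdot,0)\in L^2$, with the local asymptotics $c(x,0)\sim 1$, $\xi\sim x$, $\tilde W(\xi)\sim W(x)$ handling case (ii). Your extra care in case (i) about excluding the sub-case $B<\infty$ is a reasonable refinement that the paper's one-line proof of (i) passes over silently, but it does not change the argument.
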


\begin{proof}
(i) Since $\tilde{w}\in L^1(0,B)$ in this case, Theorem \ref{th:sim_wr}(ii) proves the claim.

(ii) In this case we get $B<\infty$ and hence by Theorem \ref{th:sim_wr}(i), $A$ is similar to a self-adjoint operator precisely if the function $\tilde{W}$ is positively increasing at $0$. However, since $c(x,0)\sim 1$ as $x\to 0$, we conclude that $\tilde{W}(\xi)\sim W(x)$ and $\xi\sim x$ as $x\to0$.

(iii) Follows from Theorem \ref{th:sim_wr}(iii).
\end{proof}

In the case $w=\bold{1}$, we immediately obtain the following

\begin{corollary}\label{cor:4.12}
Assume that $b=+\infty$ and $w=\bold{1}$. Let also the assumptions of Lemma \ref{lem:sim_q} be satisfied.  Then:
\begin{itemize}
\item[(i)] if ${c(.,0)}\in L^2(\R_+)$, then the operator $A$ given by \eqref{eq:a.q} is not similar to a self-adjoint operator,
\item[(ii)] if $\frac{1}{c(.,0)}\in L^2(\R_+)$, then the operator $A$  is similar to a self-adjoint operator,
\item[(iii)] if $c(.,0),\frac{1}{c(.,0)}\notin L^2(\R_+)$, then 
 the operator $A$ given by \eqref{eq:a.q} is similar to a self-adjoint operator if and only if the function $\tilde{W}(\xi)$ given by \eqref{eq:4.25}, \eqref{eq:4.26} is positively increasing at infinity,
 \item[(iv)] if additionally $q\ge 0$ on $\R$, then $A$ is similar to a self-adjoint operator.
\end{itemize}
\end{corollary}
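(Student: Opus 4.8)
The plan is to obtain (i)--(iv) from Lemma~\ref{lem:sim_q} by specializing to $w\equiv 1$ (here $\cI=\R$ and, as in that lemma, $r\equiv 1$). Parts (i)--(iii) are essentially immediate. Since $w\equiv 1$ one has $L^2_w(\cI)=L^2(\cI)$, so (i) is Lemma~\ref{lem:sim_q}(i) verbatim. For (ii), Lemma~\ref{lem:sim_q}(ii) reduces similarity to the positive increase of $W$ at $0$; but $W(x)=\int_0^x w\,dt=x$ is regularly varying of index $1$, hence positively increasing at $0$ (Appendix~\ref{ap:osv}), so $A$ is similar to a self-adjoint operator with no extra hypothesis. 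For (iii), Lemma~\ref{lem:sim_q}(iii) asks that $\tilde W$ be positively increasing at both $0$ and $\infty$; since $c(x,0)\to c(0,0)=1$ as $x\downarrow 0$, formula \eqref{eq:4.26} gives $\xi(x)\sim x$ and hence $\tilde W(\xi)=\int_0^x c^2(t,0)\,dt\sim\xi$ as $\xi\downarrow 0$, so the requirement at $0$ is automatic and only the one at $\infty$ survives.

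The real content is (iv). First I would record, as in the proof of Corollary~\ref{cor:3.11}, that for $q\ge 0$ the solution $c(\cdot,0)$ is positive and nondecreasing on $\R_+$; more precisely $c''=qc\ge 0$, so $c(\cdot,0)$ is convex with $c'(0,0)=0$, whence $c(\cdot,0)\ge 1$ on $\R_+$. In particular $\int_0^\infty c^2(t,0)\,dt=\infty$, so $c(\cdot,0)\notin L^2(\R_+)$ and the hypothesis of part (i) is never met. Two cases then remain. If $q\equiv 0$ on $\R$, then $c(\cdot,0)\equiv 1$, so $\tilde w\equiv 1$, $\xi=x$ and $\tilde W(\xi)=\xi$, which is positively increasing at $\infty$; hence $A$ is similar by part (iii) (equivalently, $A=-(\sgn x)\,d^2/dx^2$ in $L^2(\R)$ is the classical Hardy--Littlewood operator). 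If $q\not\equiv 0$, then $q>0$ on a set of positive measure, and since $c'(x,0)=\int_0^x q(t)c(t,0)\,dt$ by \eqref{eq:3.40}, there is $x_1>0$ with $\delta:=c'(x_1,0)>0$; as $c'(\cdot,0)$ is nondecreasing and $c(x_1,0)\ge 1$ we get $c(x,0)\ge 1+\delta(x-x_1)$ for $x\ge x_1$, so
\[
\int_0^\infty\frac{dt}{c^2(t,0)}\le x_1+\int_{x_1}^\infty\frac{dt}{\big(1+\delta(t-x_1)\big)^2}<\infty ,
\]
i.e.\ $1/c(\cdot,0)\in L^2(\R_+)$. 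By part (ii), $A$ is again similar to a self-adjoint operator, which completes (iv).

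I expect the only delicate point to be the growth dichotomy for $c(\cdot,0)$ in (iv): one must be sure that $q\not\equiv 0$ really forces at least linear growth of $c(\cdot,0)$ (so that $1/c(\cdot,0)\in L^2(\R_+)$) and that the boundary case $q\equiv 0$ is handled separately through part (iii). Everything else is a direct appeal to Lemma~\ref{lem:sim_q} together with the elementary observation that the identity map is positively increasing both at $0$ and at $\infty$, so no further estimates of $m$-functions are needed.
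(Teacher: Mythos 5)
Your proposal is correct and follows essentially the same route as the paper: parts (i)--(iii) are read off from Lemma~\ref{lem:sim_q} after noting that $W(x)=x$ is positively increasing at $0$ and that $\tilde W(\xi)\sim\xi$ as $\xi\downarrow 0$, and part (iv) reduces to the observation from the proof of Corollary~\ref{cor:3.11} that $q\ge 0$ forces at least linear growth of $c(\cdot,0)$, hence $1/c(\cdot,0)\in L^2(\R_+)$. In fact you are slightly more careful than the paper, whose one-line argument for (iv) invokes $1/c(\cdot,0)\in L^2(\R_+)$ without separating out the boundary case $q\equiv 0$ (where this fails and one must instead fall back on part (iii) with $\tilde W(\xi)=\xi$); your explicit dichotomy closes that small gap.
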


\begin{proof}
(i)--(iii) is immediate form Lemma \ref{lem:sim_q}. 
Moreover, (iv) follows from (ii) since under the positivity assumption we get $1/c(.,0)\in L^2(\R_+)$ (see the proof of Corollary \ref{cor:3.11}).
\end{proof}

The next result was established under an additional assumption in \cite{Pyat12} (see Theorem 3.4 in \cite{Pyat12}).

\begin{corollary}[\cite{Pyat12}]\label{cor:pyat}
Let $\cI=\R$ and let $w,q\in L^1_{\loc}(\R)$ be even and satisfying Hypothesis \ref{hyp:01}. Assume also that $w\notin L^1(\R)$, $q\ge 0$ on $\R$ and, moreover, $q>0$ on a set of positive Lebesgue measure. Then the operator $A$ given by \eqref{eq:a.q} is similar to a self-adjoint operator if and only if  the function $W$ is positively increasing at $0$.
%
\end{corollary}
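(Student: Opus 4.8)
The plan is to deduce Corollary~\ref{cor:pyat} from Lemma~\ref{lem:sim_q}(ii) by verifying that, under the stated hypotheses, $1/c(\cdot,0)\in L^2(\cI)$.

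First I would extract the elementary consequences of $q\ge 0$. Because $q$ is even, the solution $c(\cdot,0)$ of $-y''+q(x)y=0$ normalized by $c(0,0)=1$, $c'(0,0)=0$ is even, and from the integral equation $c(x,0)=1+\int_0^x(x-t)q(t)c(t,0)\,dt$ (as in \eqref{eq:3.40}) it is positive and nondecreasing on $\R_+$. Moreover the quadratic form $\int_{\cI_+}(|f'|^2+q|f|^2)\,dx$ is nonnegative, so the corresponding half-line operator is nonnegative, which by Lemma~\ref{lem:4.3} forces $L=L^*\ge 0$; thus Hypothesis~\ref{hyp:02} is automatic here (in the limit-circle case at $b=+\infty$ one simply fixes the corresponding self-adjoint realization). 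Consequently the operator \eqref{eq:a.q} is covered by Lemma~\ref{lem:sim_q}.

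The next step is the key estimate: there are $C>0$ and $x_0>0$ with $c(x,0)\ge Cx$ for $x>x_0$. I would prove it exactly as in the proof of Corollary~\ref{cor:3.11}. By the monotone dependence $q\ge\tilde q\Rightarrow c(\cdot,0)\ge\tilde c(\cdot,0)$ on $\R_+$ one may replace $q$ by a compactly supported minorant that is still positive on a set of positive measure; beyond the support of this minorant the associated solution is affine, with strictly positive slope $\int\tilde q\,\tilde c\,dt>0$, and the bound follows. Since $c(\cdot,0)$ is continuous with $c(0,0)=1$, it is bounded below by a positive constant on compacta, so $\int_{\cI}\frac{dx}{c(x,0)^2}=2\int_0^{+\infty}\frac{dx}{c(x,0)^2}<\infty$; that is, $1/c(\cdot,0)\in L^2(\cI)$.

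Finally I would apply Lemma~\ref{lem:sim_q}(ii), which then asserts that $A$ is similar to a self-adjoint operator if and only if $W$ is positively increasing at $0$, completing the proof. As a sanity check one notes that case (i) of that lemma is excluded: since $w\notin L^1(\R)$ and $c(x,0)\ge Cx$ for $x>x_0$, one has $\int_0^{+\infty}w\,c(\cdot,0)^2\,dx\ge C^2x_0^2\int_{x_0}^{+\infty}w\,dx=+\infty$, hence $c(\cdot,0)\notin L^2_w(\cI)$, so we are unambiguously in case (ii). The only nonroutine ingredient is the lower bound $c(x,0)\gtrsim x$ (equivalently $1/c(\cdot,0)\in L^2$), and this is already available from the proof of Corollary~\ref{cor:3.11}; the rest is bookkeeping, so I do not expect a genuine obstacle here.
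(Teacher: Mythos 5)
Your proposal is correct and follows essentially the same route as the paper: the key point is that $q>0$ on a set of positive measure forces $c(x,0)\ge Cx$ for large $x$ (exactly the argument from the proof of Corollary~\ref{cor:3.11}), hence $1/c(\cdot,0)\in L^2(\R)$, and then Lemma~\ref{lem:sim_q}(ii) gives the statement. The only difference is cosmetic: to verify Hypothesis~\ref{hyp:02} the paper writes $L$ as a form sum $L_0+Q$ of two nonnegative self-adjoint operators and cites Kato, whereas you argue via nonnegativity of the half-line form and Lemma~\ref{lem:4.3}; note that the latter presupposes $L=L^*$, which here follows since $w\notin L^1(\R)$ and $r=\bold{1}$ put $\ell$ in the limit point case at $\pm\infty$ (so your parenthetical about fixing a limit-circle realization is not needed).
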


\begin{proof}
Firstly, we note that Hypothesis \ref{hyp:02} is satisfied. Indeed, since $w\notin L^1(\R)$ and $q\ge 0$, the operator $L$ is self-adjoint and nonnegative as a sum $L=L_0+Q$ of two self-adjoint and nonnegative operators (cf. \cite[Theorem VI.1.31]{Kato} and also \cite[\S VI.4.1]{Kato})
\[
L_0:=-\frac{d^2}{wdx^2},\quad Q:=q/w.
\]

Further, since $q>0$ on a set of a positive measure, we conclude that $1/c(.,0)\in L^2(\R_+)$ (cf. the proof of Corollary \ref{cor:3.11}). Applying Lemma \ref{lem:sim_q}(ii), we prove the claim.
\end{proof}

\begin{remark}
Let us mention that using a different approach, Corollary \ref{cor:pyat} was established in \cite{Pyat12} under the additional assumption
\[
q(x)+w(x)\ge \frac{c}{1+x^2},\quad c>0,\quad (x\in\R).
\] 
However, in \cite{Pyat12} the coefficients $w$ and $r$ are not necessarily even. 
\end{remark}

Let us also consider the following particular situation.

\begin{lemma}\label{cor:4.13}
Assume that $b=+\infty$ and $w=\bold{1}$ and the operator $A$ is given by \eqref{eq:a.q}. Let also the assumptions of Lemma \ref{lem:sim_q} be satisfied. Assume additionally that there exist $x_0>0$ and $l\ge -1/2$ such that
\be
q(x)=\frac{l(l+1)}{x^2}+\tilde{q}(x),\quad x\ge x_0,
\ee
where $\tilde q$ satisfies
\be
\int_{x_0}^\infty x|\tilde{q}(x)|\, dx<\infty.
\ee
\begin{itemize}
\item[(i)] If $l\in[-1/2,1/2)$, then the operator $A$ is similar to a self-adjoint operator.
\item[(ii)] If $l= 1/2$, then the operator $A$ is similar to a self-adjoint operator if and only if the solution $c(.,0)$ is unbounded. 
\item[(iii)] If $l> 1/2$, then the operator $A$ is similar to a self-adjoint operator if and only if $c(.,0)\notin L^2(\R)$. 
\end{itemize}
 \end{lemma}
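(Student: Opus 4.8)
The plan is to reduce everything to the known criteria of Lemma~\ref{lem:sim_q} (equivalently Corollary~\ref{cor:4.12}) via a careful asymptotic analysis of the solution $c(x,0)$ of $-y''+q(x)y=0$ on $\R_+$ near $+\infty$. Since $w=\bold 1$, only the alternatives
\[
c(.,0)\in L^2(\R_+),\qquad \tfrac{1}{c(.,0)}\in L^2(\R_+),\qquad c(.,0),\tfrac{1}{c(.,0)}\notin L^2(\R_+)
\]
of Corollary~\ref{cor:4.12} enter, and the $\tilde W$-criterion in case (iii) must be checked only when $c(.,0)$ grows like a power with exponent exactly $1/2$. So the first step is to understand the decaying/growing solutions of the model equation $-y''+\frac{l(l+1)}{x^2}y=0$, whose two fundamental solutions are $x^{l+1}$ and $x^{-l}$, and then to transfer this to the perturbed equation using the hypothesis $\int_{x_0}^\infty x|\tilde q(x)|\,dx<\infty$.

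The key analytic tool is a standard perturbation-of-Euler-equation argument: rewriting $-y''+\frac{l(l+1)}{x^2}y=-\tilde q\,y$ and using variation of parameters with the explicit fundamental system $x^{l+1},x^{-l}$ (here one uses $l\ge -1/2$ so that $2l+1\ge 0$ and the two exponents are distinct unless $l=-1/2$, a case one treats separately with the fundamental system $\sqrt x,\sqrt x\log x$), one shows that every solution of the perturbed equation satisfies $y(x)=(A+o(1))x^{l+1}+(B+o(1))x^{-l}$ as $x\to\infty$ for suitable constants $A,B$ depending on $y$; the integrability $\int x|\tilde q|<\infty$ is exactly what makes the relevant Volterra iteration converge. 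Consequently $c(x,0)\sim C x^{l+1}$ if its ``$x^{l+1}$-coefficient'' $C\ne 0$, and $c(x,0)\sim C' x^{-l}$ otherwise (the degenerate case $l=-1/2$ giving $c(x,0)\sim C\sqrt x\log x$ or $\sim C'\sqrt x$). Since $c(0,0)=1$, $c'(0,0)=0$ and $c$ solves the equation with $q\ge$ something only locally, $c(.,0)$ need not be the recessive solution, so both possibilities for $C$ genuinely occur.

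With this asymptotic in hand the three cases follow by bookkeeping. For $l\in[-1/2,1/2)$: if $C\ne 0$ then $c(x,0)\sim Cx^{l+1}$ with $l+1\in[1/2,3/2)$, so $1/c(.,0)\in L^2(\R_+)$ iff $2(l+1)>1$, i.e. always except the borderline $l=-1/2$ with $C\ne0$, where $c\sim C\sqrt x$ gives $1/c\notin L^2$ but then one is in case (iii) of Corollary~\ref{cor:4.12} and must check the $\tilde W$-criterion — here $\tilde w(\xi)=c^4(x,0)\sim C^4x^2$ with $\xi=\int_0^x c^{-2}\sim \text{const}$, bounded, so $B<\infty$ and Lemma~\ref{lem:sim_q}(ii) applies with $\tilde W$ positively increasing; if instead $C=0$ then $c(x,0)\sim C'x^{-l}$ with $-l\in(-1/2,1/2]$, so $c(.,0)\notin L^2(\R_+)$ and $1/c(.,0)\in L^2(\R_+)$ iff $2l>-1$, which holds; either way Corollary~\ref{cor:4.12}(ii) (or its case~(iii) with a positively increasing $\tilde W$, as the power law makes $\tilde W$ regularly varying) gives similarity. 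For $l=1/2$: $c(x,0)$ is either $\sim Cx^{3/2}$ (unbounded) or $\sim C'x^{-1/2}$ (bounded, in $L^2$); in the first subcase $1/c\in L^2$, hence similarity by Corollary~\ref{cor:4.12}(ii); in the second $c\in L^2$, hence non-similarity by Corollary~\ref{cor:4.12}(i) — so similarity holds iff $c(.,0)$ is unbounded. For $l>1/2$: $c(x,0)\sim Cx^{l+1}$ if $C\ne0$, in which case $c\notin L^2$, $1/c\in L^2$, similarity holds; if $C=0$ then $c\sim C'x^{-l}$ with $-l<-1/2$, so $c(.,0)\in L^2(\R_+)$ and non-similarity holds by Corollary~\ref{cor:4.12}(i). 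Thus in case (iii) similarity is equivalent to $c(.,0)\notin L^2(\R)$ (by evenness $L^2(\R)$ and $L^2(\R_+)$ membership are equivalent).

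I expect the main obstacle to be the precise asymptotic lemma in the genuinely borderline situations — the degenerate resonance $l=-1/2$ where the two Euler solutions coincide up to a logarithm, and the correct handling of the $\tilde W$-criterion in Corollary~\ref{cor:4.12}(iii) when $c(.,0)$ behaves like $\sqrt x$ up to slowly varying corrections coming from $\tilde q$. In those cases one must verify that $\tilde w(\xi)=c^4(x,0)$ and $\xi(x)=\int_0^x c(t,0)^{-2}dt$ produce a $\tilde W$ that is positively increasing at $\infty$ (or that $B<\infty$, reducing to case (ii)); since $c(x,0)=x^{1/2}L(x)$ with $L$ slowly varying, $\tilde W$ is regularly varying of index determined by the integral, hence automatically positively increasing, so the borderline subcases all yield similarity — consistent with the clean statement in part (i). Everything else is routine integration and comparison of exponents.
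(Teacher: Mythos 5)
Your overall strategy coincides with the paper's: establish the asymptotics $c(x,0)\sim Cx^{l+1}$ or $\sim C'x^{-l}$ (with the $\sqrt{x},\sqrt{x}\log x$ system at $l=-1/2$) via a Volterra/variation-of-parameters argument — the paper simply quotes Hartman's Theorem X.17.1 for this — and then feed the result into Lemma \ref{lem:sim_q} / Corollary \ref{cor:4.12}. However, your bookkeeping at the borderline exponents contains genuine errors, and one of them invalidates your proof of the ``only if'' direction of part (ii). Concretely: for the recessive branch $c(x,0)\sim C'x^{-l}$ you claim ``$1/c(.,0)\in L^2(\R_+)$ iff $2l>-1$, which holds''; this is inverted, since $1/c\sim x^{l}/C'$ and $\int^\infty x^{2l}\,dx$ \emph{diverges} precisely when $2l\ge -1$, i.e.\ for every $l\ge -1/2$. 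Likewise, at $l=1/2$ you assert that $c\sim C'x^{-1/2}$ is ``bounded, in $L^2$'' and invoke Corollary \ref{cor:4.12}(i); but $c^2\sim C'^2/x$ is not integrable at $\infty$, so $c\notin L^2(\R_+)$ and Corollary \ref{cor:4.12}(i) does not apply. In all recessive subcases with $l\in(-1/2,1/2]$ one has $c,1/c\notin L^2(\R_+)$ and is forced into case (iii), so the dichotomy between parts (i) and (ii) is decided entirely by the $\tilde W$-computation you defer: $\xi(x)\sim \frac{C'^{-2}}{2l+1}x^{2l+1}$ and $\tilde W(\xi)\sim \frac{C'^2}{1-2l}x^{1-2l}$ give $\tilde W(\xi)\sim C_2\,\xi^{(1-2l)/(1+2l)}$ for $|l|<1/2$ (positive index, hence positively increasing, hence similarity), whereas at $l=1/2$ one gets $\tilde W(\xi)\sim C_1\log\xi$, which is slowly varying and therefore \emph{not} positively increasing — this, and not $c\in L^2$, is why similarity fails in (ii) when $c$ is bounded.

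Two further slips in the same vein: your heuristic ``$\tilde W$ is regularly varying \dots hence automatically positively increasing'' is false without a \emph{positive} index (slowly varying functions are never positively increasing, see Appendix \ref{ap:osv}; applied at $l=1/2$ your heuristic would wrongly yield similarity). And for $l=-1/2$ with $c\sim C\sqrt{x}$ you claim $\xi=\int_0^x c^{-2}$ is bounded so that $B<\infty$; in fact $\xi(x)\sim C^{-2}\log x\to\infty$, so $B=+\infty$ and one is again in case (iii), where the correct conclusion follows because $\tilde W(\xi)\sim \tfrac{C^2}{2}\E^{2C^2\xi}$ is rapidly varying, hence positively increasing. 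Your final answers agree with the lemma, but as written the argument does not establish the non-similarity in part (ii), and the subcases of part (i) that you route through Corollary \ref{cor:4.12}(ii) actually require the case-(iii) computation above.
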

 
\begin{proof}
By \cite[Theorem X.17.1]{har}, equation $-y''+q(x)y=0$ has two linearly independent solutions $y_1, y_2$ such that
\[
y_1(x)\sim x^{l+1},\qquad 
y_2(x)\sim \begin{cases} x^{-l},& l>-1/2\\
\sqrt{x}\log(x), & l=1/2
\end{cases}
  ,\quad x\to\infty.
\]
Firstly, notice that in the cases $l\in [-1/2,1/2]$ both $y_1$ and $y_2$ are not in $L^2(\R_+)$. 

Consider three cases:

1) If $l=-1/2$, then either $c(x,0)\sim C\sqrt{x}$  or $c(x,0)\sim C\sqrt{x}\log(x)$ as $x\to \infty$.
Therefore, either $\xi(x)\sim C^{-2}\log(x)$ and $\ti{W}(\xi)\sim \frac{1}{2}C^2x^2$ as $x\to \infty$, or  $\xi(x)\sim B-\frac{C^{-2}}{\log(x)}$ and $\ti{W}(\xi)\sim \frac{1}{2}C^2x^2\log^2(x)$ as $x\to \infty$. In the first case, we get $B=+\infty$ and the function $\tilde{W}$ is rapidly varying at $\infty$ and hence is positively increasing at $\infty$. By Corollary \ref{cor:4.12}(iii), $A$ is similar to a self-adjoint operator in this case.

Further, if $c(x,0)\sim \sqrt{x}\log(x)$ as $x\to \infty$, then $B<\infty$ and by Corollary \ref{cor:4.12}(ii), $A$ is similar to a self-adjoint operator.

2) If $l>-1/2$ and $c(x,0)\sim Cx^{l+1}$, then $\xi(x)\sim B-\frac{C^{-2}}{2l+1}x^{-2l-1}$ where $B<\infty$. By Corollary \ref{cor:4.12}, $A$ is similar to a self-adjoint operator in this case.

3) If $l>-1/2$ and $c(x,0)\sim Cx^{-l}$, then $\xi(x)\sim \frac{C^{-2}}{2l+1}x^{2l+1}\to \infty$ as $x\to \infty$. If additionally $l>1/2$, then $c(.,0)\in L^2(\R_+)$ and hence by Corollary \ref{cor:4.12}, $A$ is not similar to a self-adjoint operator. If $l\in (-1/2, 1/2]$, then  $c(.,0)\notin L^2(\R_+)$. Next we get
\[
\xi(x)\sim \frac{C^{-2}}{2l+1}x^{2l+1},\quad 
\tilde{W}(\xi)\sim\begin{cases} \log(x), & l=1/2\\ 
                                                \frac{C^2}{1-2l}x^{1-2l}, & |l|<1/2
                                                \end{cases},\quad x\to\infty.
\]
Therefore, we get
\[
\tilde{W}(\xi)\sim\begin{cases} C_1\log(\xi), & l=1/2\\ 
                                                C_2\xi^{\frac{1-2l}{1+2l}}, & |l|<1/2
                                                \end{cases}.
\]
Hence for $l\in (-1/2,1/2)$ the function $\tilde{W}$ varies regularly with index $\frac{1-2l}{1+2l}>0$ at infinity and hence $\tilde{W}$ is positively increasing at $\infty$. Therefore, by Corollary \ref{cor:4.12}(iii), $A$ is similar to a self-adjoint operator. 

Finally, if $l=-1/2$, then $\tilde{W}$ is slowly varying at $\infty$ and hence $\tilde{W}$ is not positively increasing. By Corollary \ref{cor:4.12}(iii), $A$ is not similar to a self-adjoint operator. 
\end{proof}

\begin{remark}
Note that in the case $l=0$ this result was established in \cite[\S 4]{KKM_09} by using a different approach based on a sufficient similarity condition obtained in \cite{KM_08}. Moreover, it was shown in  \cite[\S 5]{KKM_09} that the operator
\[
A=(\sgn\, x)\big(-\frac{d^2}{dx^2}-\chi_{[0,\pi/4]}(|x|)+2\frac{\chi_{\pi/4,+\infty}(|x|)}{(1+|x|-\frac{\pi}{4})^2}\big)
\]
is $J$-nonnegative in $L^2(\R)$ and is not similar to a self-adjoint operator. Clearly, in this case $l=1$ and, moreover, $c(x,0)=(1+|x|-\pi/4)^{-1}$ if $|x|>\pi/4$, which is in $L^2(\R_+)$. Then by Lemma \ref{cor:4.13} it is not similar to a self-adjoint operator.
\end{remark}

\subsubsection{$J$-positive operators with the singular critical point $0$}\label{ss:0crit}

The problem on existence of $J$-positive Sturm--Liouville operators with singular critical points has a long history. As it was already mentioned, only $0$ and $\infty$ may be singular critical points for $J$-positive operators. The existence of $J$-positive Sturm--Liouville operators with the singular critical point $\infty$ was established in \cite{Vol_96} and explicit examples were constructed in \cite{abpyat} and \cite{F98}. Examples of $J$-nonnegative operators with the singular critical point $0$ were first presented in \cite{KarKos}. However, in all these examples $0\in \sigma_p(A)$, that is, operators in these examples are $J$-nonnegative but not $J$-positive. In \cite{AK11}, it was shown that the operator $A=\sgn (\sin\, x)\frac{d^2}{dx^2}$ acting in $L^2(\R)$ has a singular critical point $0$. Clearly, this operator is $J$-positive, however, the weight function is periodic on $\R$ and hence has an infinite number of sign changes. 

Theorem \ref{th:sim_wr} provides a complete characterization of $J$-nonnegative Sturm--Liouville operators of the form \eqref{eq:a_q=0} with even coefficients having singular critical points. Therefore, we obtain a class of $J$-positive operators with the singular critical points $0$ and $\infty$ (cf. Corollary \ref{cor:4.1}). The main aim of this subsection is to present explicit examples of $J$-positive Sturm--Liouville operators with the singular critical point $0$. 

\begin{example}\label{ex:4.1}
Let $l\in L^1_\loc(\R_+)$ be positive a.e. on $\R$ and a slowly varying at $\infty$ function. Assume additionally that there is $x_0>0$ such that $l(x)\ge C>0$ for a.a. $x>x_0$. Consider in $L^2(\R_+)$ the operator
\be\label{eq:a.l}
A_l:=-(\sgn x)\frac{1+|x|}{l(|x|)}\frac{d^2}{dx^2}. 
\ee
Notice that by Theorem \ref{th:karamata}(ii), the function
\[
W(x)=\int_0^x\frac{l(t)}{1+t}dt
\]
is unbounded and slowly varying at $\infty$. Therefore, the operator $A_l$ is $J$-positive in $L^2(\R,\frac{l(|x|)}{1+|x|}dx)$. Moreover, by Corollary \ref{cor:4.1}(i), $0$ is a singular critical point of $A_l$.

For example, setting $l=\bold{1}$, we get $W(x)=\log(1+x)$ and the operator
\[
A_1=-(\sgn x)(1+|x|)\frac{d^2}{dx^2}
\]
is $J$-positive in $L^2(\R,(1+|x|)dx)$ and $0$ is its singular critical point. Let us also mention that the weight $w(x)=\frac{1}{1+|x|}$ is infinitely differentiable at any point of $\R_+$ and hence the regularity of the critical point $0$ does not depend on smoothness of the weight function $w$. This fact was first noticed in \cite{abpyat}.
\end{example}

Using the connection between positively increasing functions at $0$ and at $\infty$, we can modify \cite[Example 1]{abpyat} in order to get one more example.

\begin{example}\label{eq:4.2}
Define the function $w:\R\to\R_+$ as follows:
\be\label{eq:w2}
w(x)=\begin{cases}
|x|^{-1}, & |x|\in \Omega\\
1, & |x|\notin \Omega
\end{cases}, \quad \Omega=\cup_{n=1}^\infty [(2n)!, (2n+1)!].
\ee
Set $a_n=(2n)!$ and $b_n=(2n+1)!$, $n\in \N$. Then we get $\frac{a_n}{b_n}=(2n+1)^{-1}\to 0$ as $n\to\infty$. Moreover,
\[
W(a_n)=\int_0^{a_n}w(t)dt\ge \int_{b_{n-1}}^{a_n}w(t)dt =(2n)!-(2n-1)!>(2n-1)!,
\]
and 
\[
 W(b_n)-W(a_n)=\int_{a_n}^{b_n}w(t)dt=\log(2n+1).
\]
Therefore, 
\[
\frac{W(b_n)}{W(a_n)}=1+\frac{W(b_n)-W(a_n)}{W(a_n)}<1+\frac{\log(2n+1)}{(2n-1)!}\to 1, \quad n\to\infty.
\]
By Lemma \ref{lem:osv}(v), the function $W(x)=\int_0^xw\, dt$ is not positively increasing at $\infty$ and hence, by Theorem \ref{th:sim_wr}, the corresponding operator
\[
A=-\frac{(\sgn x)}{w(x)}\frac{d^2}{dx^2}
\]
has a singular critical point $0$. Moreover, since $w\notin L^1(\R_+)$, we conclude $0\notin \sigma_p(A)$, i.e., $A$ is $J$-positive.
\end{example}

\subsection{Connection with the LRG condition and the HELP inequality}\label{sec:LRG}

The linear resolvent growth (LRG) condition 
\be\label{lrg}
\|(T-\lambda)^{-1}\|_{\cH}\le \frac{C}{\im \, \lambda},\quad (\lambda\in \C\setminus\R),
\ee
is necessary for the similarity of a closed linear operator $T$ acting in a Hilbert space $\cH$ to a self-adjoint operator. 
It was noticed in \cite[Theorem 7.3]{AK_12} that in the regular case, i.e., $b<\infty$ and $q,w,r\in L^1(-b,b)$ are even, condition \eqref{lrg} 
is necessary and sufficient for the operator $A$ to be similar to a self-adjoint operator. Moreover, in \cite{Vol_96}, the connection between the similarity problem and the HELP inequality was observed. Furthermore, it was noticed in \cite{BF2} that in fact the validity of a certain HELP inequality is equivalent to the Riesz basis property of eigenfunctions. In this subsection we extend these results to the case of a singular end-point $x=b$.

\begin{theorem}\label{th:lrg=sim}
Let the operator $A$ be given by \eqref{eq:oper}, \eqref{eq:dom_a}. Assume that Hypotheses \ref{hyp:01} and \ref{hyp:02} are satisfied. Assume additionally that the coefficients $q,r,w$ are even. Then the following are equivalent:
\begin{itemize}
\item[(i)] the operator $A$ is similar to a self-adjoint operator,
\item[(ii)] the operator $A$ satisfies the linear resolvent growth condition \eqref{lrg},
\item[(iii)] the $m$-function $m_+$ satisfies \eqref{eq:crit_m}.
\end{itemize}
If additionally $q=\bold{0}$, then these conditions are further equivalent to the following one:
\begin{itemize}
\item[(iv)] the HELP inequality
\be\label{eq:help_B}
\Big(\int_0^b\frac{1}{w}|f'|^2dx\Big)^2\le K^2\int_0^b|f|^2\, r\, dx\, \int_0^b\frac{1}{r}\big|\big(\frac{1}{w}f'\big)'\big|^2\, dx,\qquad (f\in\dom(A_+)),
\ee
is valid.
\end{itemize}
\end{theorem}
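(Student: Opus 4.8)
We prove the chain of implications (i)$\Rightarrow$(ii)$\Rightarrow$(iii)$\Rightarrow$(i), and then, under the extra assumption $q=\bold{0}$, the equivalence (iii)$\Leftrightarrow$(iv). For (i)$\Rightarrow$(ii): the linear resolvent growth condition is invariant under similarity, for if $A=X^{-1}\widehat{A}X$ with $\widehat{A}=\widehat{A}^{*}$ and $X,X^{-1}$ bounded in $L^2_w(\cI)$, then $(A-\lambda)^{-1}=X^{-1}(\widehat{A}-\lambda)^{-1}X$, whence $\|(A-\lambda)^{-1}\|\le \|X\|\,\|X^{-1}\|\,|\im\lambda|^{-1}$, which is \eqref{lrg}. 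For (ii)$\Rightarrow$(iii): this is exactly the implication established in \cite{KarKos} — for even coefficients $w,r,q$ the validity of \eqref{lrg} for $A$ forces the Neumann $m$-function $m_+$ to satisfy \eqref{eq:crit_m}. For (iii)$\Rightarrow$(i): this is the sufficiency direction of Theorem~\ref{th:simcr_m}(iii). Hence (i), (ii) and (iii) are equivalent.

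Now assume $q=\bold{0}$; it remains to prove (iii)$\Leftrightarrow$(iv). The key observation is that \eqref{eq:help_B} is precisely the HELP inequality \eqref{eq:help} written for the coefficients $\widetilde{w}:=r$ and $\widetilde{r}:=w$ (with $\dom(A_+)$ the corresponding maximal domain \eqref{eq:dom+}): interchanging $w$ and $r$ in \eqref{eq:help} turns its left-hand side into $\big(\int_0^b\frac{1}{w}|f'|^2dx\big)^2$ and its right-hand side into that of \eqref{eq:help_B}. The Sturm--Liouville expression attached to $\widetilde{w},\widetilde{r}$ is $-\big(\frac{1}{w}f'\big)'=\lambda\,rf$, i.e. the canonical system \eqref{eq:a04B} with $dW=w\,dx$, $dR=r\,dx$; let $\tilde{m}$ be its Neumann $m$-function (this is the $m$-function of \eqref{eq:a04B} appearing in Lemma~\ref{lem:m=1/m}, cf. the proof of Corollary~\ref{cor:2.1}). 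Since $q=\bold{0}$ the problem for $\tilde{m}$ is non-negative, so $\tilde{m}\in(S)$ and Theorem~\ref{th:ak_crit+} applies: \eqref{eq:help_B} is valid if and only if $\sup_{y>0}\re\tilde{m}(\I y)/\im\tilde{m}(\I y)<\infty$.

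It remains to link $\tilde{m}$ with $m_+$. As $m_+$ is the Neumann $m$-function of \eqref{eq:a04} with $dR=r\,dx$, $dW=w\,dx$, and Hypothesis~\ref{hyp:RW} holds trivially (both measures are absolutely continuous), Lemma~\ref{lem:m=1/m} yields $m_+(\lambda)=-\big(\lambda\,\tilde{m}(\lambda)\big)^{-1}$ for $\lambda\notin\R_{+}$. Putting $\lambda=\I y$ and separating real and imaginary parts — the same computation as in the proof of Corollary~\ref{cor:2.1} — one obtains
\be\label{eq:plan_ratio}
\frac{\im m_{+}(\I y)}{\re m_{+}(\I y)}=\frac{\re\tilde{m}(\I y)}{\im\tilde{m}(\I y)},\qquad y>0.
\ee
Hence \eqref{eq:help_B} is valid if and only if $\sup_{y>0}\im m_{+}(\I y)/\re m_{+}(\I y)<\infty$, i.e. if and only if \eqref{eq:crit_m} holds. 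This establishes (iii)$\Leftrightarrow$(iv) and completes the proof.

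Essentially the whole argument is assembled from results already available (Theorems~\ref{th:simcr_m} and \ref{th:ak_crit+}, Lemma~\ref{lem:m=1/m}, and \cite{KarKos}), so there is no single deep step; the point demanding care is the bookkeeping for (iii)$\Leftrightarrow$(iv) — verifying that \eqref{eq:help_B} really is, after the interchange $w\leftrightarrow r$, the HELP inequality for the canonical system \eqref{eq:a04B} whose Neumann $m$-function is reciprocally tied to $m_+$ through Lemma~\ref{lem:m=1/m}. One should also check that the self-adjoint boundary condition imposed at a limit-circle endpoint $b$ in \eqref{eq:help_B} is the one used in the definition of $m_+$ (in the limit-point case this is automatic), so that \eqref{eq:plan_ratio} is indeed the correct identity.
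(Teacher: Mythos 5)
Your proof is correct and follows essentially the same route as the paper: (i)$\Rightarrow$(ii) by similarity-invariance of \eqref{lrg}, (ii)$\Rightarrow$(iii) from \cite{KarKos}, (iii)$\Rightarrow$(i) from Theorem \ref{th:simcr_m}(iii), and (iii)$\Leftrightarrow$(iv) from Theorem \ref{th:ak_crit+} combined with Lemma \ref{lem:m=1/m}. The only difference is that you spell out the $w\leftrightarrow r$ bookkeeping and the ratio identity \eqref{eq:plan_ratio} that the paper leaves implicit, and you rightly flag the boundary-condition matching at a limit-circle endpoint $b$ as the one point needing care.
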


\begin{proof}
The implication $(i)\Rightarrow (ii)$ is well-known. The implication $(ii)\Rightarrow (iii)$ was noticed in \cite{KarKos}. Finally, $(iii)\Rightarrow (i)$ was established in Theorem \ref{th:simcr_m}(iii).

Assume now that $q=\bold{0}$. Then the equivalence $(iii)\Leftrightarrow (iv)$ immediately follows from Theorem \ref{th:ak_crit+} and Lemma \ref{lem:m=1/m}. 
 \end{proof}

We complete this section with the following 

\begin{remark}
In the regular case, Theorem \ref{th:lrg=sim} was established in \cite{AK_12}. Moreover, in this case the 
implication $(i)\Rightarrow (iv)$ was observed by Volkmer \cite{Vol_96} and the converse implication $(iv)\Rightarrow (i)$ was noticed in \cite{BF}.
\end{remark}

\section{Proof of Theorem \ref{th:simcr_m}} \label{ss:proof}

Our proof is based on the following criterion  obtained independently by K.~Veseli\'c \cite{Ves72} and R.~Akopjan \cite{Akop_80}.
\begin{theorem}[\cite{Ves72, Akop_80}]\label{th:veselic}
Let $A$ be a $J$-nonnegative operator in a Hilbert space $\mathcal{H}$ such that $\rho(A)\neq\emptyset$. 
Then:
\begin{itemize}
\item[(i)] the critical point $\infty$ of the operator $A$ is regular if and only if the integral
\be\label{eq:veselic_inf}
\int_{1}^\infty \re \big(J(A-\I y)^{-1}f,f\big)_{\mathcal{H}} dy 
\ee
 converges for all $f\in \mathcal{H}$,
 \item[(ii)] if $\ker(A)=\ker(A^2)$, then the critical point $0$ of the operator $A$ is regular if and only if the integral
\be\label{eq:veselic_0}
\int_{0}^1 \re \big(J(A-\I y)^{-1}f,f\big)_{\mathcal{H}} dy 
\ee
 converges for all $f\in \mathcal{H}$,
 \item[(iii)]
the operator $A$ is similar to a self-adjoint operator if and only if the following integral 
\be\label{eq:veselic}
\int_{0}^\infty \re \big(J(A-\I y)^{-1}f,f\big)_{\mathcal{H}} dy 
\ee
is convergent for all $f\in \mathcal{H}$.
\end{itemize}
\end{theorem}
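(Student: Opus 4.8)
The plan is to run everything through the spectral function of $A$ together with a single positivity identity for the integrand. Since $A$ is $J$-nonnegative with $\rho(A)\neq\emptyset$, the operator $L:=JA$ is self-adjoint and nonnegative, $A=JL$, the spectrum of $A$ is real, and $A$ is definitizable with definitizing polynomial $\lambda\mapsto\lambda$; hence it has a spectral function $E(\cdot)$ whose only possible critical points are $0$ and $\infty$. First I would record that, writing $[u,v]:=(Ju,v)_{\cH}$, $g=(A-\I y)^{-1}f$, and using $J=J^*$ together with $[Ag,g]=(Lg,g)\ge0$,
\[
\re\big(J(A-\I y)^{-1}f,f\big)_{\cH}=\re\,[(A-\I y)^{-1}f,f]=(Lg,g)=\|L^{1/2}(A-\I y)^{-1}f\|^2\ge0 .
\]
Thus all three integrands are nonnegative, so ``convergent'' simply means ``finite,'' the integrals are monotone in the interval of integration, and $\int_0^\infty=\int_0^1+\int_1^\infty$ splits additively. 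This already reduces (iii) to the conjunction of (i) and (ii), once the hypothesis $\ker(A)=\ker(A^2)$ needed in (ii) is made available.

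\textbf{Reduction to the spectral measure.} Next I would introduce the real spectral measure $\sigma_f(\cdot):=[E(\cdot)f,f]$, which is finite on Borel sets bounded away from $0$ and $\infty$ and which, by $J$-nonnegativity, is $\ge0$ on $(0,\infty)$ and $\le0$ on $(-\infty,0)$. Consequently
\[
\re\big(J(A-\I y)^{-1}f,f\big)=\int_{\R}\frac{t}{t^2+y^2}\,d\sigma_f(t)=\int_{\R}\frac{|t|}{t^2+y^2}\,d|\sigma_f|(t),
\]
and, the integrand being nonnegative, Tonelli's theorem permits integrating in $y$ first:
\[
\int_1^\infty\re(\cdots)\,dy=\int_{\R}\arctan|t|\,d|\sigma_f|(t),\qquad \int_0^1\re(\cdots)\,dy=\int_{\R}\big(\tfrac{\pi}{2}-\arctan|t|\big)\,d|\sigma_f|(t).
\]
The weight $\arctan|t|$ is bounded, behaves like $|t|$ near $0$ and tends to $\tfrac{\pi}{2}$ at $\infty$, while $\tfrac{\pi}{2}-\arctan|t|$ is bounded, tends to $\tfrac{\pi}{2}$ at $0$ and decays like $1/|t|$ at $\infty$. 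Because integration of a continuous function vanishing at a critical point against $dE$ produces a bounded operator, the contribution of each integral coming from the \emph{opposite} critical point is automatically finite; hence $\int_1^\infty$ is finite for all $f$ exactly when $|\sigma_f|$ has finite mass near $\infty$ for all $f$, and $\int_0^1$ exactly when $|\sigma_f|$ has finite mass near $0$ for all $f$.

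\textbf{From finite mass to regular critical points.} The third step identifies ``$|\sigma_f|$ of finite mass near $c$ for every $f$'' with regularity of the critical point $c$, i.e.\ with uniform boundedness of the spectral projections $E(\Delta)$ over intervals $\Delta$ approaching $c$. In one direction $|\sigma_f(\Delta)|=|[E(\Delta)f,f]|\le\|E(\Delta)\|\,\|f\|^2$, and the sign-definiteness of $\sigma_f$ on each half-line collapses the total variation near $c$ to a single value; so uniform boundedness of $E$ gives finite mass. Conversely, finiteness for all $f$ combined with the uniform boundedness principle, applied to the $J$-self-adjoint forms $[E(\Delta)\,\cdot\,,\cdot]$ and polarization, returns $\sup_\Delta\|E(\Delta)\|<\infty$. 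This proves (i); it proves (ii) once I note that under $\ker(A)=\ker(A^2)$ the root subspace at $0$ is just $\ker A$, so that the only delicate object near $0$ is again $E$ together with a possible point mass at $0$.

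\textbf{The similarity statement and the main obstacle.} For (iii) I would first check that convergence of $\int_0^\infty$ forces $\ker(A)=\ker(A^2)$: a genuine Jordan chain $Ag_1=0$, $Ag_2=g_1\neq0$ gives $L^{1/2}g_1=0$ and $\|L^{1/2}(A-\I y)^{-1}g_2\|^2=y^{-2}\|L^{1/2}g_2\|^2$ with $L^{1/2}g_2\neq0$, whence $\int_0^1 y^{-2}\,dy=\infty$. With this hypothesis available, (i), (ii) and additivity show that $\int_0^\infty$ converges for all $f$ iff both $0$ and $\infty$ are regular. The final link---both critical points regular $\iff$ $A$ similar to a self-adjoint operator---is where the substantive Krein-space input enters: regularity of both critical points lets the spectral function split $\cH$ into a uniformly positive and a uniformly negative $A$-invariant subspace (plus the finite-dimensional part at $0$), yielding an equivalent Hilbert inner product in which $A$ is self-adjoint, while a singular critical point produces unbounded spectral projections that cannot survive a bounded, boundedly invertible transform to a self-adjoint operator. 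I expect this last equivalence, and the careful handling of the spectral function \emph{at} a singular critical point (where $E$ need not extend and $|\sigma_f|$ may have infinite mass), to be the main obstacle; the positivity identity of the first step is precisely what renders every intervening estimate monotone and hence transparent.
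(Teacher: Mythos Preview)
The paper does not prove this theorem at all: it is quoted verbatim from \cite{Ves72,Akop_80} and then \emph{used} as a black box in Section~\ref{ss:proof} to establish Theorem~\ref{th:simcr_m}. So there is no ``paper's own proof'' to compare your proposal against; you have supplied an argument where the author was content to cite the literature.

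That said, your sketch is along the right lines and the key positivity identity
\[
\re\big(J(A-\I y)^{-1}f,f\big)=\big(L(A-\I y)^{-1}f,(A-\I y)^{-1}f\big)\ge 0
\]
is correct and is indeed what drives the monotonicity. Two places deserve more care if you want a self-contained proof. First, the integral representation $\re[\,(A-\I y)^{-1}f,f\,]=\int_{\R}\frac{t}{t^2+y^2}\,d\sigma_f(t)$ via the spectral function of a definitizable operator is not automatic when a critical point is singular: the measure $\sigma_f$ may have infinite total variation near the critical point, and the functional calculus for definitizable operators (cf.\ \cite{Lan82}) gives the resolvent as such an integral only after separating off a possible nilpotent contribution at $0$; you implicitly use this when reducing to ``finite mass near $c$''. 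Second, your converse step---from $\sup_{\Delta}|[E(\Delta)f,f]|<\infty$ for every $f$ to $\sup_{\Delta}\|E(\Delta)\|<\infty$---is essentially right but requires that you polarize to get uniform bounds on $|[E(\Delta)f,g]|$ and then invoke the closed graph/uniform boundedness theorem together with $J$-self-adjointness of $E(\Delta)$; as written, the passage from diagonal to operator norm bounds is asserted rather than shown. The final equivalence ``both critical points regular $\Leftrightarrow$ similar to self-adjoint'' is standard Krein-space theory (again \cite{Lan82}), and you correctly flag it as the place where the real Krein-space machinery enters.
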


Before proving Theorem \ref{th:simcr_m}, we need preparatory lemmas.

\begin{lemma}\label{lem:est3}
Let $m_\pm$ be the $m$-functions and $\cF_\pm$ be defined by \eqref{eq:f_pm}. Then 
\[
\int_0^{+\infty}\Big|\re \Big(\frac{\cF_\pm(f,\I y)\cF_\pm(\bar{f},\I y)}{m_+(\I y)-\overline{m}_-(\I y)}\Big)\Big|dy\le \pi\|f\|^2_{L^2_w(\cI_\pm)}
\]
for all $f\in L^2_w(\cI)$.
\end{lemma}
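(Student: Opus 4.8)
The plan is to reduce the estimate to the basic Herglotz identity \eqref{eq:a06} together with the fact that, in the even case, $m_- = m_+ =: m$, so that $m_+(\I y) - \overline{m}_-(\I y) = m(\I y) - \overline{m(\I y)} = 2\I\,\im m(\I y)$. Fix $f \in L^2_w(\cI)$, restrict attention to the $+$ case (the $-$ case being identical), and write $\psi_+(\cdot,\I y)$ for the Weyl solution from \eqref{eq:weyl_s}. The first step is to recognize $\cF_+(f,\I y)$ as an inner product: by \eqref{eq:f_pm}, $\cF_+(f,\I y) = \int_{\cI_+} f(x)\,\psi_+(x,\I y)\,w(x)\,dx$, which (up to the complex conjugation convention) is $(f, \overline{\psi_+(\cdot,\I y)})_{L^2_w(\cI_+)}$, and similarly $\cF_+(\bar f,\I y) = \overline{(\bar f, \psi_+(\cdot, \I y))}$. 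Hence $\cF_+(f,\I y)\,\cF_+(\bar f,\I y)$ is controlled in modulus by $\|f\|^2_{L^2_w(\cI_+)}\,\|\psi_+(\cdot,\I y)\|^2_{L^2_w(\cI_+)}$ via Cauchy--Schwarz applied to both factors.

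The second step is to insert \eqref{eq:a06}, which gives $\|\psi_+(\cdot,\I y)\|^2_{L^2_w(\cI_+)} = \im m_+(\I y)/y$. Therefore
\[
\Big|\re\Big(\frac{\cF_+(f,\I y)\,\cF_+(\bar f,\I y)}{m_+(\I y)-\overline m_-(\I y)}\Big)\Big|
\le \frac{|\cF_+(f,\I y)|\,|\cF_+(\bar f,\I y)|}{2\,\im m_+(\I y)}
\le \frac{\|f\|^2_{L^2_w(\cI_+)}\,\big(\im m_+(\I y)/y\big)}{2\,\im m_+(\I y)}
= \frac{\|f\|^2_{L^2_w(\cI_+)}}{2y},
\]
using $|m_+(\I y)-\overline m_-(\I y)| = 2\,\im m_+(\I y)$ in the even case. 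This bound is not integrable at $0$ and $\infty$, so the crude modulus estimate alone is insufficient; the point of the lemma is that the \emph{real part}, integrated against $dy$, is what is finite. The key identity to exploit is that $\cF_+(f,\I y)$ and the denominator are, as functions of the spectral variable, both expressible through the spectral measure $d\tau_+$ of $m_+$ (cf. \eqref{eq:m_repr}), and more precisely that the quantity $\cF_+(f,\I y)\cF_+(\bar f,\I y)/(m_+(\I y)-\overline m_-(\I y))$ coincides (in the even case) with $\tfrac12 \big(F(\I y) + $ something holomorphic$\big)$ where $F(\lambda) = ((L_{+} - \lambda)^{-1} f_+, \overline{f_+})$ is the resolvent of the half-line operator $L_+$ on $\cI_+$; its real part on the imaginary axis is then handled by the Cauchy/Poisson representation of a Herglotz (or anti-Herglotz) function.

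The cleanest route, which I would take, is therefore: write $\re\big(\cF_+(f,\I y)\cF_+(\bar f,\I y)/(2\I\,\im m_+(\I y))\big) = \tfrac12\,\im\big(\cF_+(f,\I y)\cF_+(\bar f,\I y)/\im m_+(\I y)\big)$, express $\cF_+(f,\I y)$ via the eigenfunction expansion of $L_+$, i.e.\ $\cF_+(f,\I y) = \int_{\R_+}\frac{\hat f_+(s)\,(\text{normalization})}{s-\I y}\,d\rho_+(s)$ with $d\rho_+$ the spectral measure and $\hat f_+$ the generalized Fourier transform, and similarly for $\cF_+(\bar f,\I y)$; then the quotient by $\im m_+(\I y)$ collapses the normalization and yields an expression of the form $\int_{\R_+}\frac{|\hat f_+(s)|^2}{s-\I y}\,d\rho_+(s)$ plus a term whose real part integrates to zero. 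Integrating $\int_0^\infty \big|\re \int_{\R_+}\frac{|\hat f_+(s)|^2\,d\rho_+(s)}{s-\I y}\big|\,dy$ and applying Fubini together with $\int_0^\infty \big|\re\frac{1}{s-\I y}\big|\,dy = \int_0^\infty \frac{s}{s^2+y^2}\,dy = \frac{\pi}{2}$ for $s>0$ gives the bound $\frac{\pi}{2}\int_{\R_+}|\hat f_+(s)|^2 d\rho_+(s) \le \frac{\pi}{2}\|f\|^2_{L^2_w(\cI_+)}$; tracking the factor of $2$ from $2\I\,\im m_+$ produces the stated constant $\pi$. The main obstacle is the bookkeeping in identifying $\cF_+(f,\I y)\cF_+(\bar f,\I y)/(m_+ - \overline m_-)$ with (a half of) a genuine diagonal resolvent matrix element so that the single real variable $d\rho_+(s)$, and not a bilinear kernel, appears — once that reduction is in place, the $dy$-integral is the elementary computation above and Fubini is justified by the $L^1$ bound one is proving.
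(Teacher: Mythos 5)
Your closing computation---the spectral representation, Fubini, and $\int_0^{\infty}\frac{s}{s^2+y^2}\,dy=\frac{\pi}{2}$---is precisely the engine of the paper's proof, but the reduction that makes it applicable is the step you leave open and label ``the main obstacle,'' and the substitute you propose for it does not work. The quantity $\cF_+(f,\I y)\cF_+(\bar f,\I y)/(m_+(\I y)-\overline{m}_-(\I y))$ is not a diagonal resolvent matrix element of the half-line operator $L_+$: that quadratic form has the genuinely two-variable kernel $\psi_+(\max(x,t),z)\,c(\min(x,t),z)$ and is not a rank-one expression in $\cF_+$, so the claimed identity with $\tfrac12\big(((L_+-\lambda)^{-1}f_+,\overline{f_+})+\text{holomorphic}\big)$ fails, and the assertion that the correction term has real part integrating to zero is unsupported. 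What actually does the job is Krein's formula \eqref{e III_01} with $c=-1$: for $f$ supported in $\cI_+$ (so that $\cF_-(f,\I y)=0$ and $(\psi_-(\cdot,\I y)\chi_-,f)=0$) one obtains
\[
\frac{\cF_+(f,\I y)\,\cF_+(\bar f,\I y)}{m_+(\I y)-\overline{m}_-(\I y)}
=\big((A_{-1}-\I y)^{-1}f,f\big)-\big((A_0-\I y)^{-1}f,f\big),
\]
a difference of quadratic forms of \emph{two} self-adjoint resolvents on the whole of $\cI$; your $\pi/2$ bound applied to each summand and the triangle inequality then give the constant $\pi$. Your own accounting (a single $\pi/2$ ``times the factor of $2$ from $2\I\,\im m_+$'') does not produce the stated bound.

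Separately, you specialize at the outset to the even case $m_-=m_+$, so that the denominator becomes $2\I\,\im m_+(\I y)$. The lemma is stated, and later used, for an \emph{arbitrary} pair $m_\pm$: Corollary \ref{cor:est5} invokes it (through Corollary \ref{cor:est4}) with the artificial choice $m_-=2m_+$, which is essential to the proof of Theorem \ref{th:simcr_m}. An even-case argument, even if completed, would therefore not establish the statement in the generality in which Section \ref{ss:proof} needs it.
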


\begin{proof}
Let $T$ be a self-adjoint operator in a Hilbert space $\cH$. 
Then, using the spectral theorem and Fubini's theorem, we get 
\begin{align*}
&\int_{0}^{+\infty}\big|\re\big((T-\I y)^{-1}f,f\big) \big|dy\le \int_0^{+\infty}\int_{\R}\frac{|t|}{t^2+y^2}d(E_T(t)f,f)dy\\
&\le  \int_{\R}\Big(\int_0^{+\infty}\frac{|t|}{t^2+y^2}dy \Big)d(E_T(t)f,f)=\frac{\pi}{2}\int_{\R}d(E_T(t)f,f)=\frac{\pi}{2}\|f\|^2_{\cH}.
\end{align*}
Therefore, setting $f=f_\pm$, where $f_\pm$ has supports in $\cI_\pm$, using \eqref{e III_01} with $c=-1$, and nothing that the operators $A_{-1}$ and $A_0$ are self-adjoint, we get
\begin{align*}
&\int_{\R_+}\Big|\re \Big(\frac{\cF_\pm(f,\I y)\cF_\pm(\bar{f},\I y)}{m_+(\I y)-\overline{m}_-(\I y)}\Big)\Big|dy=\int_{\R_+}\big|\re\big((A_{-1}-\I y)f,f\big)-\re\big((A_0-\I y)^{-1}f,f\big)\big|dy\\
&\le \int_{\R_+}\big|\re\big((A_{-1}-\I y)f,f\big)\big|dy+\int_{\R_+}\big|\re\big((A_0-\I y)^{-1}f,f\big)\big|dy\le \pi\|f\|^2_{L^2_w(\cI_\pm)}. 
\end{align*}
\end{proof}

\begin{corollary}\label{cor:est1}
Let $m_\pm$ be the $m$-functions \eqref{eq:weyl_s} and $\cF_\pm$ be defined by \eqref{eq:f_pm}. Then 
\be\label{eq:5.18}
\int_0^{+\infty}\frac{\big|\im \cF_\pm^2(f,\I y)\big|}{\im m_\pm(\I y)}dy \le 2\pi\|f\|^2_{L^2_w(\cI_\pm)}
\ee
and
\be\label{eq:5.19}
\int_{\R_+}\big|\im \cF_\pm^2(f,\I y)\big|\frac{\im m_+(\I y)+\im m_-(\I y)}{|m_+(\I y)-\overline{m_-(\I y)}|^2}dy \le 2\pi\|f\|^2_{L^2_w(\cI_\pm)}
\ee
for all $f=\bar{f}\in L^2_w(\cI)$.
\end{corollary}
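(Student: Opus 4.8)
The plan is to obtain both \eqref{eq:5.18} and \eqref{eq:5.19} from Lemma \ref{lem:est3} by elementary manipulations, the only structural point being a reduction to an even problem for \eqref{eq:5.18}.

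For \eqref{eq:5.18} I would first record the identity, valid for $f=\bar f$ and $y>0$,
\[
\frac{\im\cF_\pm^2(f,\I y)}{\im m_\pm(\I y)}=2\,\re\frac{\cF_\pm^2(f,\I y)}{m_\pm(\I y)-\overline{m_\pm(\I y)}},
\]
which follows from $m_\pm(\I y)-\overline{m_\pm(\I y)}=2\I\,\im m_\pm(\I y)$ together with $\re(-\I z)=\im z$. Lemma \ref{lem:est3} produces precisely a bound of this shape, except that the denominator there is $m_+(\I y)-\overline{m_-(\I y)}$ rather than $m_+(\I y)-\overline{m_+(\I y)}$. To bridge this I would apply Lemma \ref{lem:est3} not to the original problem but to the auxiliary problem on $\cI$ obtained by reflecting the coefficients $w,r,q$ from $\cI_\pm$ evenly across $x=0$: for such even coefficients the two half-line $m$-functions coincide, $m_-\equiv m_+$ (by the symmetry $x\mapsto-x$), while $\cF_\pm$, $m_\pm$ and $\|f\|_{L^2_w(\cI_\pm)}$ are untouched because they only involve the restriction of the coefficients to $\cI_\pm$. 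For this even problem Lemma \ref{lem:est3} reads $\int_0^{\infty}\bigl|\re\bigl(\cF_\pm^2(f,\I y)/(2\I\,\im m_\pm(\I y))\bigr)\bigr|\,dy\le\pi\|f\|_{L^2_w(\cI_\pm)}^2$, and the displayed identity converts this into \eqref{eq:5.18}.

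For \eqref{eq:5.19} I would argue that it is a formal consequence of \eqref{eq:5.18}, requiring no further information about the $m$-functions. The only ingredient is the pointwise inequality
\[
|m_+(\I y)-\overline{m_-(\I y)}|^2\ge\bigl(\im m_+(\I y)+\im m_-(\I y)\bigr)^2,
\]
which holds because $\im\overline{m_-(\I y)}=-\im m_-(\I y)$, so that $\im\bigl(m_+(\I y)-\overline{m_-(\I y)}\bigr)=\im m_+(\I y)+\im m_-(\I y)$. Since $m_\pm$ are Herglotz, $\im m_\pm(\I y)>0$, hence
\[
\frac{\im m_+(\I y)+\im m_-(\I y)}{|m_+(\I y)-\overline{m_-(\I y)}|^2}\le\frac{1}{\im m_+(\I y)+\im m_-(\I y)}\le\frac{1}{\im m_\pm(\I y)},
\]
so the left-hand side of \eqref{eq:5.19} is dominated by $\int_0^{\infty}|\im\cF_\pm^2(f,\I y)|/\im m_\pm(\I y)\,dy\le2\pi\|f\|_{L^2_w(\cI_\pm)}^2$ by \eqref{eq:5.18}.

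The genuine content lies entirely in Lemma \ref{lem:est3}: what one must \emph{not} attempt is a pointwise bound of the integrands, since Cauchy--Schwarz combined with \eqref{eq:a06} only yields $|\im\cF_\pm^2(f,\I y)|/\im m_\pm(\I y)\le\|f\|_{L^2_w(\cI_\pm)}^2/y$, which is not integrable near either $0$ or $\infty$; the cancellation carried by the resolvent-difference formula \eqref{e III_01} is indispensable. The only points requiring care are that the even auxiliary problem still satisfies the standing hypotheses under which Lemma \ref{lem:est3} was established, and the trivial remark that for $f=\bar f$ one has $\cF_\pm(\bar f,\I y)=\cF_\pm(f,\I y)$, so that $\cF_\pm(f,\I y)\cF_\pm(\bar f,\I y)=\cF_\pm^2(f,\I y)$ in the notation of that lemma.
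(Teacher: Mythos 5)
Your proof is correct and follows essentially the same route as the paper: there too, \eqref{eq:5.18} is obtained by fixing $m_+$ and setting $m_-=m_+$ so that Lemma \ref{lem:est3} applies, combined with the identity $\re\bigl(\cF_+^2(f,\I y)/(2\I\,\im m_+(\I y))\bigr)=\im\cF_+^2(f,\I y)/(2\im m_+(\I y))$ for real $f$. The second estimate \eqref{eq:5.19} is likewise deduced in the paper from the pointwise bound $(\im m_+(\I y)+\im m_-(\I y))/|m_+(\I y)-\overline{m_-(\I y)}|^2\le 1/\im m_\pm(\I y)$ together with \eqref{eq:5.18}, exactly as you do.
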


\begin{proof}
Clearly, it suffices to establish the estimates for index "+". 
Fix $m_+$ and set $m_-=m_+$. Noting that $f=\bar{f}$, we get 
\[
\re \Big(\frac{\cF_+(f,\I y)\cF_+(f,\I y)}{m_+(\I y)-\overline{m}_+(\I y)}\Big)=\re \Big(\frac{\cF_+^2(f,\I y)}{2\I \im m_+(\I y)}\Big)=\frac{\im \cF_+^2(f,\I y)}{2\im m_+(\I y)}.
\]
Hence, applying Lemma \ref{lem:est3}, we arrive at the first estimate. 
Noting that
\[
\frac{\im m_+(\I y)+\im m_-(\I y)}{|m_+(\I y)-\overline{m_-(\I y)}|^2}\le \frac{1}{|m_+(\I y)-\overline{m_-(\I y)}|}\le \frac{1}{\im m_\pm(\I y)},\quad (y>0),
\] 
we prove the second inequality.
%
\end{proof}

\begin{corollary}\label{cor:est2}
Let $m_+$ be the $m$-functions  \eqref{eq:weyl_s} and $\cF_+$ be defined by \eqref{eq:f_pm}. If \eqref{eq:crit_m} holds true, then 
%
\be\label{eq:5.20}
\int_{\R_+}\frac{|\im \cF_+^2(f,\I y)|}{\re \, m_+(\I y)}dy\le 2\pi C\|f\|^2_{L^2_w(\cI_+)}
\ee
for all $f=\bar{f}\in L^2_w(\cI)$.
\end{corollary}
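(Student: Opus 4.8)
The plan is to obtain \eqref{eq:5.20} by combining the first estimate \eqref{eq:5.18} of Corollary \ref{cor:est1} with an elementary pointwise comparison of $\re m_+$ and $\im m_+$ along the imaginary semi-axis. First I would record that, since $m_+\in(S)$, its Herglotz--Stieltjes representation \eqref{eq:m_repr} gives
\[
\re m_+(\I y)=C+\int_{\R_+}\frac{s}{s^2+y^2}\,d\tau(s)\ge 0,\qquad (y>0),
\]
and that hypothesis \eqref{eq:crit_m} in fact forces $\re m_+(\I y)>0$ for every $y>0$ (otherwise the supremum in \eqref{eq:crit_m} would be $+\infty$); hence dividing by $\re m_+(\I y)$ is legitimate.

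Next I would use \eqref{eq:crit_m} in the rewritten form
\[
\frac{1}{\re m_+(\I y)}=\frac{1}{\im m_+(\I y)}\cdot\frac{\im m_+(\I y)}{\re m_+(\I y)}\le \frac{C}{\im m_+(\I y)},\qquad (y>0).
\]
Multiplying through by the nonnegative quantity $|\im \cF_+^2(f,\I y)|$ and integrating over $\R_+$ yields
\[
\int_{\R_+}\frac{|\im \cF_+^2(f,\I y)|}{\re m_+(\I y)}\,dy\le C\int_{\R_+}\frac{|\im \cF_+^2(f,\I y)|}{\im m_+(\I y)}\,dy\le 2\pi C\,\|f\|^2_{L^2_w(\cI_+)},
\]
the last inequality being precisely \eqref{eq:5.18}. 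This is the assertion of Corollary \ref{cor:est2}.

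Every step here rests on facts already in hand, so no real difficulty is expected; the single point deserving a word of care is the positivity $\re m_+(\I y)>0$, needed for the pointwise inequality above to make sense, and this is automatic from $m_+\in(S)$ together with \eqref{eq:crit_m}. (Alternatively, one could work with the convention $1/0=+\infty$ and note that $\im\cF_+^2(f,\I y)$ vanishes wherever $\re m_+(\I y)$ does, but invoking \eqref{eq:crit_m} is cleaner.)
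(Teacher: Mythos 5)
Your argument is correct and is exactly the paper's proof: the paper states that \eqref{eq:5.20} "immediately follows from \eqref{eq:crit_m} and \eqref{eq:5.18}," which is precisely your pointwise bound $1/\re m_+(\I y)\le C/\im m_+(\I y)$ followed by integration against $|\im\cF_+^2(f,\I y)|$. The side remark on the strict positivity of $\re m_+(\I y)$ is a harmless extra precaution.
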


\begin{proof}
Immediately follows from \eqref{eq:crit_m} and \eqref{eq:5.18}.
%
\end{proof}

\begin{corollary}\label{cor:est4}
Let $m_\pm$ be the $m$-functions  \eqref{eq:weyl_s} and $\cF_\pm$ be defined by \eqref{eq:f_pm}. Then
\[
\int_{\R_+}\Big|\re \cF_\pm^2(f,\I y)\Big|\frac{|\re(m_+(\I y)-m_-(\I y))|}{|m_+(\I y)-\overline{m}_-(\I y)|^2}dy\le 3\pi\|f\|^2_{L^2_w(\cI_\pm)}
\]
for all $f=\bar{f}\in L^2_w(\cI)$. 
\end{corollary}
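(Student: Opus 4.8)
The plan is to derive this estimate from Lemma~\ref{lem:est3} and from the inequality~\eqref{eq:5.19} of Corollary~\ref{cor:est1} by splitting $\re\big(\cF_\pm^2/(m_+-\overline{m}_-)\big)$ into a piece that is already controlled and an "imaginary'' piece that coincides with the integrand in~\eqref{eq:5.19}. First I would note that, since $f=\bar f$, one has $\cF_\pm(f,\I y)\cF_\pm(\bar f,\I y)=\cF_\pm^2(f,\I y)$, so that Lemma~\ref{lem:est3} reads
\[
\int_{\R_+}\Big|\re\frac{\cF_\pm^2(f,\I y)}{m_+(\I y)-\overline{m}_-(\I y)}\Big|\,dy\le \pi\|f\|^2_{L^2_w(\cI_\pm)}.
\]

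Next I would write $\cF_\pm^2(f,\I y)=P(y)+\I Q(y)$ with $P=\re\cF_\pm^2(f,\I y)$ and $Q=\im\cF_\pm^2(f,\I y)$, and decompose the denominator as $m_+(\I y)-\overline{m}_-(\I y)=a(y)+\I b(y)$, where $a:=\re(m_+(\I y)-m_-(\I y))$ and $b:=\im m_+(\I y)+\im m_-(\I y)$. Since $m_\pm$ are Herglotz functions, $b(y)>0$ for $y>0$, so the denominator never vanishes and $|m_+(\I y)-\overline{m}_-(\I y)|^2=a^2+b^2$. A direct computation gives $\re\big(\cF_\pm^2(f,\I y)/(m_+(\I y)-\overline{m}_-(\I y))\big)=(Pa+Qb)/(a^2+b^2)$, and hence by the triangle inequality
\[
\frac{|P|\,|a|}{a^2+b^2}\le \frac{|Pa+Qb|}{a^2+b^2}+\frac{|Q|\,|b|}{a^2+b^2}
=\Big|\re\frac{\cF_\pm^2(f,\I y)}{m_+-\overline{m}_-}\Big|
+\big|\im\cF_\pm^2(f,\I y)\big|\,\frac{\im m_+ + \im m_-}{|m_+-\overline{m}_-|^2}.
\]

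Finally I would integrate this pointwise bound over $\R_+$: the first term on the right contributes at most $\pi\|f\|^2_{L^2_w(\cI_\pm)}$ by Lemma~\ref{lem:est3}, and the second term contributes at most $2\pi\|f\|^2_{L^2_w(\cI_\pm)}$ by~\eqref{eq:5.19} in Corollary~\ref{cor:est1}; adding these and recalling $|a|=|\re(m_+-m_-)|$ and $a^2+b^2=|m_+-\overline{m}_-|^2$ yields the desired constant $3\pi$. There is no genuine obstacle here: the argument is bookkeeping with real and imaginary parts, and the only points that need care are the reduction to the square $\cF_\pm^2$ via $f=\bar f$ and the observation that $b(y)>0$, which legitimizes all the pointwise manipulations.
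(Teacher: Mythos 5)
Your proposal is correct and follows essentially the same route as the paper: the identity expressing $\re\cF_\pm^2\cdot\re(m_+-\overline{m}_-)/|m_+-\overline{m}_-|^2$ in terms of $\re\big(\cF_\pm^2/(m_+-\overline{m}_-)\big)$ and the term controlled by \eqref{eq:5.19}, followed by Lemma \ref{lem:est3} and Corollary \ref{cor:est1}. Your explicit bookkeeping with $P,Q,a,b$ in fact spells out (and fixes a harmless sign in) the one-line algebraic identity the paper uses.
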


\begin{proof}
It suffices to note that
\begin{align*}
&\re \cF_\pm^2(f,\I y)\frac{\re (m_+(\I y)-\overline{m_-(\I y)})}{|m_+(\I y)-\overline{m_-(\I y)}|^2}\\
&=\re \Big(\frac{\cF_\pm^2(f,\I y)}{m_+(\I y)-\overline{m_-(\I y)}}\Big)+
\im \cF_\pm^2(f,\I y)\frac{\im (m_+(\I y)-\overline{m_-(\I y)})}{|m_+(\I y)-\overline{m_-(\I y)}|^2}.
\end{align*}
Applying Lemma \ref{lem:est3} and Corollary \ref{cor:est1}, we complete the proof.
\end{proof}

\begin{corollary}\label{cor:est5}
Let $m_\pm$ be the $m$-functions  \eqref{eq:weyl_s} and $\cF_\pm$ be defined by \eqref{eq:f_pm}. If \eqref{eq:crit_m} holds true, then 

\be\label{eq:5.21}
\int_0^{+\infty}\frac{|\re \cF_+^2(f,\I y)|}{\re m_+(\I y)}dy\le 3\pi(1+9C^2)\|f\|^2_{L^2_w(\cI_+)}
\ee
for all $f=\bar{f}\in L^2_w(\cI)$.
\end{corollary}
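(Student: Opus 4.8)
The plan is to deduce \eqref{eq:5.21} from Corollary \ref{cor:est4} by choosing the comparison half--line cleverly. First I would record the elementary content of \eqref{eq:crit_m}: it says $\im m_+(\I y)\le C\,\re m_+(\I y)$ for every $y>0$, so in particular $\re m_+(\I y)>0$ on $\I\R_+$ and $|m_+(\I y)|^2=(\re m_+(\I y))^2+(\im m_+(\I y))^2\le(1+C^2)(\re m_+(\I y))^2$. These are the only properties of $m_+$ that the argument will use.

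The key point is that Corollary \ref{cor:est4} — which, via Lemma \ref{lem:est3} and \eqref{eq:5.19}, holds for \emph{any} admissible pair $(m_+,m_-)$ of $m$--functions, not just for the mirror pair $m_-=m_+$ — may be applied to the configuration consisting of the given right half--line $\cI_+$ coupled with an auxiliary left half--line whose Neumann $m$--function is $m_0:=\tfrac12 m_+$. Since $m_+$ lies in the Krein--Stieltjes class $(S)$ and $(S)$ is invariant under multiplication by positive constants, $m_0\in(S)$, hence $m_0$ is realized as the Neumann $m$--function of a system of the form \eqref{eq:a04}; the associated operators $A_{-1}$ and $A_0$ are then self--adjoint and satisfy \eqref{e III_01}, so Corollary \ref{cor:est4} applied with this choice of $m_-=m_0$ yields
\[
\int_{\R_+}\bigl|\re\cF_+^2(f,\I y)\bigr|\,\frac{|\re m_+(\I y)-\re m_0(\I y)|}{|m_+(\I y)-\overline{m_0(\I y)}|^2}\,dy\le 3\pi\|f\|^2_{L^2_w(\cI_+)},\qquad f=\bar f\in L^2_w(\cI).
\]

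It remains to compare the weight appearing here with $1/\re m_+$. Writing $u=\re m_+(\I y)$, $v=\im m_+(\I y)$, we have $m_0(\I y)=\tfrac12 u+\tfrac12\I v$, hence $\re m_+(\I y)-\re m_0(\I y)=\tfrac12 u$ and
\[
|m_+(\I y)-\overline{m_0(\I y)}|^2=\bigl|\tfrac12 u+\tfrac32\I v\bigr|^2=\tfrac14 u^2+\tfrac94 v^2\le\tfrac14(1+9C^2)u^2,
\]
where \eqref{eq:crit_m} ($v\le Cu$) was used in the last step. This is exactly where the factor $1+9C^2$ of \eqref{eq:5.21} is born. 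Consequently
\[
\frac{1}{\re m_+(\I y)}\le\frac{1+9C^2}{2}\cdot\frac{|\re m_+(\I y)-\re m_0(\I y)|}{|m_+(\I y)-\overline{m_0(\I y)}|^2},
\]
and multiplying by $|\re\cF_+^2(f,\I y)|$ and integrating, the displayed bound above gives $\int_0^{+\infty}|\re\cF_+^2(f,\I y)|(\re m_+(\I y))^{-1}\,dy\le\tfrac32\pi(1+9C^2)\|f\|^2_{L^2_w(\cI_+)}$, which implies \eqref{eq:5.21} (indeed with a factor $\tfrac12$ to spare).

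I expect the only step needing genuine care is the justification that $m_0=\tfrac12 m_+$ is admissible as the $m$--function of an auxiliary half--line problem for which Corollary \ref{cor:est4}, and therefore Lemma \ref{lem:est3}, is applicable — that is, that the self--adjointness of $A_{-1}$ and $A_0$ and the resolvent formula \eqref{e III_01} persist in the generalized framework of systems \eqref{eq:a04} rather than only for differential operators. Since that framework is precisely the one set up in Section \ref{sec:II}, I do not anticipate a real difficulty here; apart from this, the proof is the elementary algebra above combined with the already established Corollary \ref{cor:est4} (and, if one prefers to avoid the inverse spectral realization altogether, the same $m_0=\tfrac12 m_+$ can be fed directly into the purely algebraic identity used to prove Corollary \ref{cor:est4}, together with Lemma \ref{lem:est3} and \eqref{eq:5.19}).
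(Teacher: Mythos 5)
Your proof is correct and follows essentially the same route as the paper: the paper applies Corollary \ref{cor:est4} with the auxiliary choice $m_-=2m_+$, while you take $m_-=\tfrac12 m_+$; both choices lead to the same algebra ($|m_+(\I y)-\overline{m_-(\I y)}|^2$ controlled by $(1+9C^2)(\re m_+(\I y))^2$ via \eqref{eq:crit_m}), yours even yielding the bound with an extra factor $\tfrac12$ to spare. The admissibility point you flag is exactly the one implicitly present in the paper's own choice $m_-=2m_+$, and it is resolved just as you suggest, via the invariance of the class $(S)$ under positive scalar multiples and the realizability of any $(S)$-function as a Neumann $m$-function of a system \eqref{eq:a04}.
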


\begin{proof}
Applying Corollary \ref{cor:est4} with $m_-=2m_+$, we get
\begin{align*}
&\int_{\R_+}\big|\re \cF_+^2(f,\I y)\big|\frac{|\re(m_+(\I y)-2\overline{m_+(\I y)})|}{|m_+(\I y)-2\overline{m_+(\I y)}|^2}dy\\
&=\int_{\R_+}\big|\re \cF_+^2(f,\I y)\big|\frac{|\re m_+(\I y)|}{|\re m_+(\I y)-3\I \im m_+(\I y)|^2}dy\le 3\pi\|f\|^2_{L^2_w(\R_+)}.
\end{align*}
By \eqref{eq:crit_m}, we get 
\[
|m_+(\I y)-2\overline{m_+(\I y)}|^2=(\re m_+(\I y))^2+9(\im m_+(\I y))^2\le (1+9C^2)(\re m_+(\I y))^2,
\]
and hence we arrive at the following inequality
\[
\frac{1}{\re m_+(\I y)}\le (1+9C^2)\frac{\re m_+(\I y)}{|\re m_+(\I y)-3\I \im m_+(\I y)|^2},
\]
which completes the proof.
\end{proof}

\begin{proof}[Proof of Theorem \ref{th:simcr_m}]
We shall prove only part (iii) since the remaining parts can be established similarly.

Firstly, observe that
\[
m_+(z)=m_-(z),\quad \psi_+(x,z)=\psi_-(-x,z)
\]
since $q,w,r$ are even. Therefore,
\begin{align*}
\cF_-(f,z)&=\int_{-b}^0 f(x)\psi_-(x,-z)dx=\int_{-b}^0 f(x)\psi_+(-x,-z)dx\\
&=\int_0^b f(-x)\psi_+(x,-z)dx=\cF_+(f_-,-z),
\end{align*}
where $f_-(x)=f(-x)$.

By Theorem \ref{th:veselic}, we need to show that the integral
\[
\int_0^{+\infty}\re \big(J(A-\I y)^{-1}f,f\big)_{L^2_w(\cI)}dy
\]
converges for all $f\in L^2_w(\cI)$. Using \eqref{e III_01} with $c=1$, we get 
\begin{equation}\label{eq:7.06}
\big((A-\I y )^{-1}f -(A_0 - \I y )^{-1} f, Jf\big) =
\frac{(\mathcal{F}_+(f,\I y)-\mathcal{F}_+(f_-, -\I y))(\mathcal{F}_+(\overline{f},\I y)-\mathcal{F}_+(\overline{f}_-, -\I y))}
{2\re m_+(\I y)}.
\end{equation}
Denote the right hand side in \eqref{eq:7.06} by $\cA(f,\I y)$. 
Snce $A_0$ is self-adjoint, it suffices to show that the following integral
\[
\int_0^{+\infty}\re \cA(f,\I  y)dy
\]
converges for all $f\in L^2_w(\cI)$. 

Denote $f=f^R+\I f^I$, where $f^R=\overline{f^R}$ and $f^I=\overline{f^I}$. Then 
\[
\cF_+(f,\I y)\cF_+(\bar{f},\I y)=\cF_+^2(f^R,\I y)+\cF_+^2(f^I,\I y),
\]
and 
\[
\cF_+(f,\I y)\cF_+(\bar{g},-\I y)+\cF_+(\bar{f},\I y)\cF_+(g,-\I y)=2(\cF_+(f^R,\I y)\cF_+(g^R,-\I y)+\cF_+(f^I,\I y)\cF_+(g^I,-\I y)).
\]
Therefore,
\[
\cA(f,\I y)=\cA(f_R,\I y) + \cA(f_I,\I y),
\]
and hence we can restrict our considerations to the case of real valued $f\in L^2_w(\cI_+)$.

Finally, let $f=\bar{f}\in L^2_w(\cI)$ and denote $f=f_+\chi_{+}(x)+f_-\chi_{-}(x)$. Hence
\begin{align*}
&\frac{\re(\cF_+(f_+,\I y)-\cF_+(f_-,-\I y))^2}{ \re m_+(\I y)}\\
&=\frac{1}{\re m_+(\I y)}\Big(\big(\re (\cF_+(f_+,\I y)-\cF_+(f_-,-\I y))\big)^2 - \big(\im (\cF_+(f_+,\I y)-\cF_+(f_-,-\I y))\big)^2\Big)\\
&=\frac{1}{\re m_+(\I y)}\Big((\re \cF_+(f_+-f_-,\I y))^2 - (\im \cF_+(f_+ + f_-,\I y))^2\Big),
\end{align*}

Setting $f_+=f_-$ and then $f_+=-f_-$, we observe that it suffices to show that the following integrals
\[
\int_0^\infty \frac{|\re \cF_+(f,\I y)|^2}{\re m_+(\I y)}dy,\qquad \int_0^\infty \frac{|\im  \cF_+(f,\I y)|^2}{\re m_+(\I y)}dy,
\]
converge for all $f=\bar{f}\in L^2_w(\cI)$, or equivalently,
\[
\int_0^\infty \frac{|\cF_+(f,\I y)|^2}{\re m_+(\I y)}dy<\infty,\quad (f=\bar{f}\in L^2_w(\cI)).
\]
However,
\[
\int_0^\infty \frac{|\cF_+(f,\I y)|^2}{\re m_+(\I y)}dy=\int_0^\infty \frac{|\cF_+^2(f,\I y)|}{\re m_+(\I y)}dy\le \int_0^\infty \frac{|\re \cF_+^2(f,\I y)|+|\im \cF_+^2(f,\I y)|}{\re m_+(\I y)}dy.
\]
Applying Corollaries \ref{cor:est2} and \ref{cor:est5}, we complete the proof.
\end{proof}


\section{On the well-posedness for the stationary Fokker--Plank equation}\label{sec:FP}

Consider the simplest two--way diffusion equation
\be\label{eq:FP}
(\sgn x)w(x)u_t(x,t)=(\frac{1}{r(x)}u_{x}(x,t))_{x}-q(x)u(x,t),\quad (0<t<t_0\le +\infty,\quad x\in\cI).
\ee
Here $q\in L^1_{\loc}(\cI)$ and $r,w\in L^1_{\loc}(\cI)$ satisfy $w, r>0$ a.e. on $\cI$.
It is assumed that the function $u$ satisfies 
\be\label{eq:bc}
u(x,0)=\phi_+(x),\ \ \ x\in\cI_+;\quad u(x,t_0)=\phi_-(x),\ \ \ x\in\cI_-.
\ee
If $t_0=\infty$, we should change \eqref{eq:bc} as follows
\be\label{eq:bc_inf}
u(x,0)=\phi_+(x),\ \ \ x\in\cI_+;\quad \int_{\cI}|u(x,t)|^2|p(x)|dx=O(1),\ \ \text{as}\ \  t\to\infty.
\ee
 Moreover, if necessary additional self-adjoint $t$-independent boundary conditions at $x=-b$ and $x=b$ are assumed.

Boundary value problems \eqref{eq:FP}, \eqref{eq:bc} and \eqref{eq:FP}, \eqref{eq:bc_inf} are of the {\em forward--backward} type. They arise in kinetic theory and in the theory of stochastic processes and have a long history. For example, if $\cI=(0,\pi)$, $r(x)=(\sin\, x)^{-1}$, $p(x)=\cos x\sin x$, $q=\bold{0}$, then equation \eqref{eq:FP}, derived by Bothe in 1929 \cite{bothe},  describes the steady-state distribution of particles scattered by a slab. Existence and uniqueness of solutions to the corresponding BVPs as well as the representation of solutions by the eigenfunction expansion was proven by Beals (see \cite{Be81}, \cite{fk}, \cite[Chapter X.6]{gmp}). If $p(x)=x$, $r(x)\equiv1$ and $q(x)=\frac{1}{4}(x^2+a^2-2)$, then \eqref{eq:FP} is the one-dimensional linear stationary Fokker--Plank equation (see \cite{bp, wu} and also \cite[Chapter X.5]{gmp}). The case $\cI=\R$, $q=\bold{0}$, $r=\bold{1}$ and $p(x)=(\sgn x)|x|^\alpha$, $\alpha>-1$, arises in the theory of stochastic processes (see \cite{gmp}, \cite{pag1}), however, existence and uniqueness of solutions to \eqref{eq:FP}, \eqref{eq:bc} was established in \cite[\S 5.1]{Kar1} (note that in \cite{bg}, \cite{pag1}, \cite{pag2} well-posedness of BVPs was studied under additional smoothness assumptions on the initial data $\phi_\pm$). For further examples we refer to \cite{Be85}, \cite{gmp}, \cite{Pyat}, \cite{cvdm}.

Let us also mention that equation
\eqref{eq:FP} belongs to the class of second-order equations with nonnegative characteristic
form. Boundary value problems for this class of equations were considered by various
authors (see \cite{kn}, \cite{or} and references therein). But some restrictions imposed in this
theory makes it inapplicable to equation \eqref{eq:FP}.

Separation of variables in \eqref{eq:FP} leads to the indefinite spectral problem
\be\label{eq:SP}
-(\frac{1}{r(x)}y')'+q(x)y=\lambda \, (\sgn\, x)w(x)y,\quad x\in \cI,
\ee
and the well-posedness issue for the above boundary value problems is closely connected with the similarity problem for the corresponding indefinite Sturm--Liouville operator $A=\frac{(\sgn x)}{w}(-\frac{d}{dx}\frac{d}{rdx}+q)$ considered in Section \ref{sec:sim}. For the case when $A$ is $J$-nonnegative and has purely discrete spectrum, the problem \eqref{eq:FP}, \eqref{eq:bc} has been studied in great detail (see \cite{Be81}, \cite{bp}, \cite{fk}, \cite{gmp}, \cite{Pyat}, \cite{cvdm} and references therein). 
In the general case, the following result holds true (see \cite[Theorem 1.1]{Kar1}).

\begin{theorem}[\cite{Kar1}]\label{th:karabash}
Let $w,r, q\in L^1_{\loc}(\cI)$ be such that Hypotheses \ref{hyp:01} and \ref{hyp:02} are satisfied. Assume also that the operator $A$ is $J$-positive ($\ker(A)=\{0\}$) and similar to a self-adjoint operator. Then the problems \eqref{eq:FP}, \eqref{eq:bc} and \eqref{eq:FP}, \eqref{eq:bc_inf} have unique
strong solutions for each pair $\{\phi_+, \phi_-\}$, $\phi_\pm \in L^2_w(\cI_\pm)$. 
\end{theorem}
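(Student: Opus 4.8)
The plan is to reduce the boundary value problems for \eqref{eq:FP} to an abstract boundary value problem for the first-order evolution equation $u_t+Au=0$ and then to exploit the spectral decomposition of the $J$-nonnegative operator $A$ provided by its similarity to a self-adjoint operator. Since $A=JL$ with $L=L^*\ge 0$ and $J$ the operator of multiplication by $\sgn x$ (so $J=J^*=J^{-1}$), equation \eqref{eq:FP} is equivalent to $u_t+Au=0$ for $t\in(0,t_0)$, while the side conditions \eqref{eq:bc}, \eqref{eq:bc_inf} prescribe $Q_+u(\cdot,0)=\phi_+$ and $Q_-u(\cdot,t_0)=\phi_-$ (respectively $Q_+u(\cdot,0)=\phi_+$ together with $\sup_{t>0}\|u(\cdot,t)\|_{L^2_w}<\infty$), where $Q_\pm=\frac12(1\pm J)$ are the multiplication operators by the characteristic functions of $\cI_\pm$. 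Because $A$ is similar to a self-adjoint operator, its spectrum is real and the critical points $0$ and $\infty$ of $A$ are regular (cf.\ \cite{Lan82}); hence $L^2_w(\cI)$ is the internal direct sum $\cH_++\cH_-$ of the closed, $A$-invariant subspaces $\cH_\pm=E_A(\R_\pm)L^2_w(\cI)$, which are maximal uniformly $J$-positive and $J$-negative, and the restrictions $A_+:=A|_{\cH_+}$ and $B_-:=-A|_{\cH_-}$ are each similar to a positive self-adjoint operator. Since $A$ is $J$-positive we have $\ker A=\{0\}$, so $0\notin\sigma_p(A_+)\cup\sigma_p(B_-)$, and the holomorphic semigroups $e^{-tA_+}$ and $e^{-tB_-}$ ($t\ge 0$), defined through the functional calculus transported by the similarity, are uniformly bounded and tend strongly to $0$ as $t\to\infty$.

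Next I would describe all solutions of $u_t+Au=0$ that stay bounded on the time interval. On $\cH_+$ boundedness forces the forward profile $e^{-tA_+}a_+$; on $\cH_-$, where $-A$ acts as $B_-$, it forces the backward profile $e^{-(t_0-t)B_-}a_-$; and for $t_0=\infty$ the $\cH_-$-component must vanish identically. Thus every admissible solution is
\be
u(\cdot,t)=e^{-tA_+}a_+ + e^{-(t_0-t)B_-}a_-,\qquad a_\pm\in\cH_\pm
\nn
\ee
(with $a_-=0$ if $t_0=\infty$). Maximal uniform $J$-definiteness of $\cH_\pm$ means, through the angular-operator description of such subspaces of the Krein space $(L^2_w(\cI),(J\cdot,\cdot))$, that $V_\pm:=Q_\pm|_{\cH_\pm}$ is a topological isomorphism of $\cH_\pm$ onto $L^2_w(\cI_\pm)$. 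Hence the boundary conditions are equivalent to the block operator equation
\be
\begin{pmatrix} V_+ & Q_+e^{-t_0B_-}|_{\cH_-}\\ Q_-e^{-t_0A_+}|_{\cH_+} & V_- \end{pmatrix}\begin{pmatrix}a_+\\ a_-\end{pmatrix}=\begin{pmatrix}\phi_+\\ \phi_-\end{pmatrix}
\nn
\ee
on $\cH_+\oplus\cH_-$. For $t_0=\infty$ this collapses to $V_+a_+=\phi_+$, which is uniquely solvable, and the resulting $u(\cdot,t)=e^{-tA_+}V_+^{-1}\phi_+$ settles the half-line case at once; so it remains to treat the finite slab.

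Uniqueness on a finite slab I would deduce from the $J$-energy identity rather than from the block operator. For any bounded solution $u$,
\be
\frac{d}{dt}(Ju(t),u(t))_{L^2_w}=2\re(J(-Au)(t),u(t))=-2(Lu(t),u(t))\le 0,
\nn
\ee
so $\eta(t):=(Ju(t),u(t))_{L^2_w}$ is nonincreasing. If $\phi_\pm=0$, then $\eta(0)=\|Q_+u(0)\|^2-\|Q_-u(0)\|^2=-\|Q_-u(0)\|^2\le 0$ while $\eta(t_0)=\|Q_+u(t_0)\|^2\ge 0$; monotonicity forces $\eta(0)=\eta(t_0)=0$, hence $Q_-u(\cdot,0)=0$, and together with $Q_+u(\cdot,0)=0$ this gives $u(\cdot,0)=a_++e^{-t_0B_-}a_-=0$. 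Since the sum $\cH_++\cH_-$ is direct and $e^{-t_0B_-}$ is injective, $a_+=a_-=0$, so $u\equiv 0$; thus the block operator above has trivial kernel.

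The main obstacle is to pass from this trivial kernel to the invertibility (that is, surjectivity) of the block operator for a fixed finite $t_0$: the off-diagonal compressions $Q_+e^{-t_0B_-}|_{\cH_-}$ and $Q_-e^{-t_0A_+}|_{\cH_+}$ need not be small in norm, so no Neumann-series argument is available. The plan here is to recognise the problem, after multiplying the equation by $J$, as the abstract kinetic equation $Ju_t+Lu=0$ with entering data on the two spectral subspaces of the symmetry $J$ (which has $0\notin\sigma_p(J)$), and to invoke the solvability theory for boundary value problems in abstract kinetic theory (see \cite{Be81}, \cite{gmp} and the references therein); its hypotheses — boundedness of the associated reflection/albedo operators — hold precisely because $A=JL$ is similar to a self-adjoint operator, equivalently because $0$ and $\infty$ are regular critical points. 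This yields $a_\pm\in\cH_\pm$, hence a solution $u(\cdot,t)=e^{-tA_+}a_++e^{-(t_0-t)B_-}a_-$; holomorphy and smoothing of the two semigroups show that $u\in C([0,t_0];L^2_w(\cI))$, that $u(\cdot,t)\in\dom(A)$ and $u_t+Au=0$ for $t\in(0,t_0)$, so $u$ is a strong solution, while the a priori bound $\|u(\cdot,t)\|_{L^2_w}\le C(\|\phi_+\|+\|\phi_-\|)$ coming from the energy identity gives continuous dependence on $\{\phi_+,\phi_-\}$.
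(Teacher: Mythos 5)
The paper offers no proof of this statement at all: it is quoted from \cite{Kar1} (Theorem 1.1 there), so the only comparison available is with the argument of the cited source, which is exactly the abstract-kinetic-equation framework you invoke. Your outline follows that route, and the parts you carry out in detail are correct. The reduction to $u_t+Au=0$ is right (note $(\frac1r u_x)_x-qu=-w\,\ell[u]$, so the equation is $Ju_t=-\ell[u]$, i.e.\ $u_t=-Au$). Since $A$ is $J$-positive and similar to a self-adjoint operator, $L^2_w(\cI)$ is the direct sum of the $A$-invariant, maximal uniformly $J$-definite spectral subspaces $\cH_\pm$, so $Q_\pm|_{\cH_\pm}$ are isomorphisms onto $L^2_w(\cI_\pm)$ --- this is precisely Beals' ``partial range completeness''. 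The representation of bounded strong solutions by the forward and backward holomorphic semigroups is the standard uniqueness-of-classical-solutions computation; the half-line case then follows from the triangular structure, with $\ker A=\{0\}$ used exactly where it must be (to force the $\cH_-$-component to vanish under the boundedness condition \eqref{eq:bc_inf}, and to make $e^{-t_0B_-}$ injective); and the finite-slab uniqueness via the nonincreasing $J$-form $t\mapsto(Ju(t),u(t))_{L^2_w}$ is correct.

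The one step you do not actually prove is the surjectivity of the finite-slab boundary-data map, and you are right that it is the crux: the off-diagonal compressions are not small, so no perturbation argument applies, and in infinite dimensions injectivity of the block operator does not imply surjectivity. Delegating this to the solvability theory of \cite{Be81}, \cite{gmp} is legitimate --- its hypothesis is the partial-range completeness you have already verified, and this is the same delegation the paper itself makes by citing \cite{Kar1} --- but for a self-contained argument you still owe something here. A workable completion along your own lines: integrating the $J$-energy identity from $0$ to $t_0$ gives $\|Q_-u(\cdot,0)\|^2+\|Q_+u(\cdot,t_0)\|^2\le\|\phi_+\|^2+\|\phi_-\|^2$, hence $\|a_+\|=\|P_+u(\cdot,0)\|$ and $\|a_-\|=\|P_-u(\cdot,t_0)\|$ are controlled by $\|\phi_+\|+\|\phi_-\|$ with a constant independent of $t_0$; so the block operator is bounded below and has closed range, and it remains only to show its range is dense (e.g.\ by identifying the adjoint boundary value problem as one of the same type and applying your uniqueness argument to it). Without some such step the existence half of the theorem is asserted rather than proved.
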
 

Thus applying the results on the similarity for the operator $A$ from Section \ref{sec:sim} we immediately obtain conditions for the existence and uniqueness of solutions to problems  \eqref{eq:FP}, \eqref{eq:bc} and \eqref{eq:FP}, \eqref{eq:bc_inf}. Let us present only a few of them. Namely, for simplicity we restrict to the case $r=\bold{1}$ and $\cI=\R$, i.e., we consider the following equation
\be\label{eq:FP_r=1}
(\sgn\, x)w(x)u_t(x,t)=u_{xx}(x,t)-q(x)u(x,t),\quad (0<t<t_0\le +\infty,\quad x\in\R),
\ee
Then using Lemma \ref{cor:4.13} we arrive at the following 

\begin{theorem}\label{th:FP}
Let $w,q\in L^1_{\loc}(\R)$ be even and such that Hypotheses \ref{hyp:01} and \ref{hyp:02} are satisfied and $0\notin\sigma_p(A)$. 
Let also $c(x)$ be the solution of $-y''+q(x)y=0$ such that $c(0)=1$ and $c'(0)=0$. 
Then for each pair $\{\phi_+, \phi_-\}$, $\phi_\pm \in L^2_w(\R_\pm)$, there is a unique
strong solution $u$ to the problems \eqref{eq:FP_r=1}, \eqref{eq:bc} and \eqref{eq:FP_r=1}, \eqref{eq:bc_inf} if at least one of the following conditions is satisfied:
\begin{itemize}
\item[(i)] $\frac{1}{c(.)}\in L^2(\R)$ and the function $W(x)=\int_0^xw\, dt$ is positively increasing at $0$,
\item[(ii)]  $\frac{1}{c(.)}\notin L^2(\R)$,  the function $W$ is positively increasing at $0$ and the function $\tilde{W}(\xi)$ given by \eqref{eq:4.25}, \eqref{eq:4.26} is positively increasing at infinity.
\end{itemize}
\end{theorem}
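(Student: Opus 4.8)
The plan is to derive Theorem~\ref{th:FP} by combining the abstract well-posedness result Theorem~\ref{th:karabash} with the similarity criterion Lemma~\ref{lem:sim_q}. First I would record that, under Hypotheses~\ref{hyp:01} and \ref{hyp:02} together with the evenness of $w$ and $q$, the operator $A$ from \eqref{eq:a.q} is $J$-self-adjoint and $J$-nonnegative in $L^2_w(\R)$ with real spectrum, and that the additional assumption $0\notin\sigma_p(A)$ means $\ker(A)=\{0\}$, i.e.\ $A$ is $J$-positive; moreover, $c(\cdot)=c(\cdot,0)$ is positive on $\R$ (see the discussion preceding Lemma~\ref{lem:sim_q}), so the Liouville transformation \eqref{eq:4.25}--\eqref{eq:4.26} is legitimate. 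Consequently Theorem~\ref{th:karabash} reduces the assertion to showing that $A$ is similar to a self-adjoint operator under either hypothesis (i) or (ii).

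In case~(i), where $1/c\in L^2(\R)$, I would quote Lemma~\ref{lem:sim_q}(ii) directly: it states that $A$ is similar to a self-adjoint operator precisely when $W$ is positively increasing at $0$, which is exactly the remaining part of (i). In case~(ii), where $1/c\notin L^2(\R)$, a little bookkeeping is needed to place ourselves in part~(iii) of Lemma~\ref{lem:sim_q}. Since $B=\lim_{x\to\infty}\xi(x)=\int_0^\infty c^{-2}(t,0)\,dt$, the assumption $1/c\notin L^2(\R)$ forces $B=+\infty$, so that $\tilde W$ is genuinely defined on all of $\R_+$; and since a bounded nondecreasing function fails to be positively increasing at $\infty$ (Appendix~\ref{ap:osv}), the hypothesis that $\tilde W$ is positively increasing at $\infty$ forces $\tilde W(\xi)\to+\infty$ as $\xi\to+\infty$, which via $\tilde W(\xi(x))=\int_0^x w(t)c^2(t,0)\,dt$ is precisely the statement $c\notin L^2_w(\R)$. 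Thus we land in the third alternative of Lemma~\ref{lem:sim_q}. Finally, because $c(x)\to1$, hence $\xi(x)\sim x$ and $\tilde W(\xi(x))\sim W(x)$ as $x\to0$, the condition ``$W$ positively increasing at $0$'' is equivalent to ``$\tilde W$ positively increasing at $0$''; combined with the hypothesis on $\tilde W$ at $\infty$, Lemma~\ref{lem:sim_q}(iii) yields that $A$ is similar to a self-adjoint operator. In both cases $A$ is $J$-positive and similar to a self-adjoint operator, so Theorem~\ref{th:karabash} gives the unique strong solvability of \eqref{eq:FP_r=1}, \eqref{eq:bc} and \eqref{eq:FP_r=1}, \eqref{eq:bc_inf}.

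Since the proof is essentially a concatenation of two previously established results, I do not expect a serious obstacle. The only point that needs care is the bookkeeping of the $L^2$-integrability conditions so as to land in the correct alternative of Lemma~\ref{lem:sim_q} — in particular, extracting $c\notin L^2_w(\R)$ in case~(ii) from the positive increase of $\tilde W$ at infinity — together with the elementary asymptotic comparison $\tilde W(\xi(x))\sim W(x)$ as $x\to0$ that transfers the hypothesis on $W$ near the origin to the corresponding hypothesis on $\tilde W$.
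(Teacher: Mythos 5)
Your proposal is correct and follows essentially the same route as the paper: reduce to the similarity of $A$ via Theorem~\ref{th:karabash} and then invoke Lemma~\ref{lem:sim_q}, using $c(x)\sim 1$ near $0$ to identify positive increase of $W$ at $0$ with that of $\tilde W$. The only (harmless) deviation is that in case~(ii) you extract $c\notin L^2_w(\R)$ from the unboundedness of $\tilde W$ forced by its positive increase at infinity, whereas the paper obtains it directly from the hypothesis $0\notin\sigma_p(A)$; both arguments are valid.
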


\begin{proof}
It suffices to notice that the assumption $0\notin \sigma_p(A)$ implies $c\notin L^2_w(\R)$ and, moreover, since $c(x)\sim x$ at $0$, we get
\[
\xi(x)\sim x,\quad \ti{W}(\xi)\sim W(x),
\]
 as $x\to 0$, which clearly implies that $\ti{W}$ is positively increasing at $0$ precisely if so is $W$. 
Combining  Lemma \ref{cor:4.13} with Theorem \ref{th:karabash} we complete the proof.
\end{proof}

\begin{corollary}\label{cor:6.3}
Let $q=\bold{0}$ and $w\in L^1_{\loc}(\R)$ be even and positive. Assume that $w\notin L^1(\R)$, that is $W$ is unbounded. If the function $W$ is positively increasing at $0$ and at $\infty$, then  for each pair $\{\phi_+, \phi_-\}$, $\phi_\pm \in L^2_w(\R_\pm)$, there is a unique strong solution $u$ to the problems \eqref{eq:FP_r=1}, \eqref{eq:bc} and \eqref{eq:FP_r=1}, \eqref{eq:bc_inf}.

In particular, if there are $\alpha,\beta>-1$ and the functions $p_0, p_1$ such that
\be
w(x)=x^{\alpha}p_0(x),\quad x\in (0,x_0),\qquad w(x)=x^{\beta}p_1(x),\quad x\ge x_1>0,
\ee
where $p_0 \in C[0,x_0]$ is positive and $p_1$ admits the representation 
\be\label{eq:6.7}
p_1(x)=c+g(x),\quad c>0,\quad \int_{x_1}^x t^\beta g(t)\, dt=o(x^{\beta+1}),
\ee
as $x\to \infty$, then the problems \eqref{eq:FP_r=1}, \eqref{eq:bc} and \eqref{eq:FP_r=1}, \eqref{eq:bc_inf} have unique strong solutions for each pair $\{\phi_+, \phi_-\}$, $\phi_\pm \in L^2_w(\R_\pm)$.
\end{corollary}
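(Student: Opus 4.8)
The plan is to deduce the general statement from Theorem~\ref{th:karabash}: it suffices to check that the operator $A$ of \eqref{eq:a_q=0} with $r=\bold{1}$ satisfies Hypotheses~\ref{hyp:01} and~\ref{hyp:02}, is $J$-positive, and is similar to a self-adjoint operator. Hypothesis~\ref{hyp:01} is immediate, since $q=\bold{0}$ is real and $w,r\equiv\bold{1}$ are positive on $\R$. For Hypothesis~\ref{hyp:02}, with $q=\bold{0}$ the operator $L=JA=-w^{-1}d^2/dx^2$ is self-adjoint (we are in the limit point case at $\pm\infty$ because $w\notin L^1(\R)$) and nonnegative, exactly as in the proof of Corollary~\ref{cor:pyat}; equivalently, its Neumann $m$-function $m_+$ lies in $(S)$ by \eqref{eq:m_repr}, so that Lemma~\ref{lem:4.3} applies. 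Finally, since $\cI=\R$ we have $r\equiv\bold{1}\notin L^1(0,b)$ and, by hypothesis, $w\notin L^1(0,b)$; hence \cite[Theorem~3]{KK58} gives $0\in\sigma(L)$ but $0\notin\sigma_p(L)$, so $\ker(A)=\ker(L)=\{0\}$ and $A$ is $J$-positive.

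Next I would invoke Theorem~\ref{th:sim_wr}(iii). With $r=\bold{1}$ the second distribution function is $R(x)=x$, so $R^{-1}(x)=x$ and $W\circ R^{-1}=W$; thus Theorem~\ref{th:sim_wr}(iii) asserts that $A$ is similar to a self-adjoint operator precisely when $W$ is positively increasing at both $0$ and $\infty$, which is the hypothesis. (Alternatively, for $q=\bold{0}$ the solution is $c(\cdot,0)\equiv\bold{1}$, whence $\xi(x)=x$ and $\tilde W=W$ in \eqref{eq:4.25}--\eqref{eq:4.26}, so the claim also follows from Theorem~\ref{th:FP}(ii).) Feeding $J$-positivity and similarity into Theorem~\ref{th:karabash} then yields the unique strong solvability of \eqref{eq:FP_r=1},~\eqref{eq:bc} and \eqref{eq:FP_r=1},~\eqref{eq:bc_inf}.

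For the particular case it remains to verify that the prescribed behaviour of $w$ forces $W$ to be positively increasing at $0$ and at $\infty$, arguing as in Corollary~\ref{cor:4.3}. Near the origin, since $p_0\in C[0,x_0]$ with $p_0(0)>0$, we get $W(x)=\int_0^x t^\alpha p_0(t)\,dt\sim\frac{p_0(0)}{\alpha+1}x^{\alpha+1}$ as $x\to 0$, so $W$ varies regularly at $0$ with index $\alpha+1>0$. At infinity, for $x\ge x_1$ one writes $W(x)=W(x_1)+\frac{c}{\beta+1}\bigl(x^{\beta+1}-x_1^{\beta+1}\bigr)+\int_{x_1}^x t^\beta g(t)\,dt$, and the last integral is $o(x^{\beta+1})$ by \eqref{eq:6.7}, so $W(x)\sim\frac{c}{\beta+1}x^{\beta+1}$ and $W$ varies regularly at $\infty$ with index $\beta+1>0$. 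Since a regularly varying function with positive index is positively increasing (Appendix~\ref{ap:osv}), both conditions hold and the first part of the corollary applies.

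The main obstacle here is mild: it is the bookkeeping needed to confirm Hypothesis~\ref{hyp:02} and the applicability of Theorems~\ref{th:sim_wr} and~\ref{th:karabash} in the present singular setting ($r\equiv\bold{1}$, $\cI=\R$, $w\notin L^1$); once this is secured, the remainder is a chain of citations together with the elementary asymptotics of $W$.
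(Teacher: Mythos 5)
Your proposal is correct and follows essentially the same route as the paper: the paper's proof is a one-line reduction to Theorem \ref{th:sim_wr}(iii) (for the general statement, since $R(x)=x$ makes $W\circ R^{-1}=W$) together with Corollaries \ref{cor:4.2}(ii) and \ref{cor:4.3} (for the regular-variation asymptotics of $W$ in the particular case), all fed into Theorem \ref{th:karabash}. You simply unpack those citations explicitly — verifying Hypotheses \ref{hyp:01}--\ref{hyp:02}, $J$-positivity, and the asymptotics $W(x)\sim\frac{p_0(0)}{\alpha+1}x^{\alpha+1}$ at $0$ and $W(x)\sim\frac{c}{\beta+1}x^{\beta+1}$ at $\infty$ — which matches the paper's intended argument.
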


The proof immediately follows from Theorem \ref{th:sim_wr}(iii) and Corollaries \ref{cor:4.2}(ii) and \ref{cor:4.3}. 

\begin{remark}
Let us mention that the case $w(x)=|x|^\alpha$, $\alpha>-1$, was studied in \cite{pag1}, \cite{pag2}. To the best of our knowledge, the strongest result was obtained in \cite[Theorem 5.5]{Kar1}. However, in \cite{Kar1} it is assumed that at $\infty$ the weight $w$ satisfies condition \eqref{eq:4.30}, which is much stronger than \eqref{eq:6.7}.
\end{remark}

Let us complete this section by considering the following equation

\be\label{eq:FP_w=x}
x\, u_t(x,t)=u_{xx}(x,t)-q(x)u(x,t),\quad (0<t<t_0\le +\infty,\quad x\in\R),
\ee

\begin{lemma}\label{lem:6.5}
Let  $q\in L^1_{\loc}(\R)$ be even and such that Hypotheses \ref{hyp:01} and \ref{hyp:02} hold true. 
Assume additionally that there exist $x_0>0$ and $l\ge -1/2$ such that
\be
q(x)=\frac{l(l+1)}{x^2}+\tilde{q}(x),\quad x\ge x_0,\quad \int_{x_0}^\infty x|\tilde{q}(x)|\, dx<\infty.
\ee
If at least one of the following conditions are satisfied
\begin{itemize}
\item[(i)]  $l\in[-1/2,1)$,
\item[(ii)]  $l\ge 1$ and $c(x)\notin L^2(\R)$, 
\end{itemize}
then  the problems \eqref{eq:FP_w=x}, \eqref{eq:bc} and \eqref{eq:FP_w=x},  \eqref{eq:bc_inf} have unique strong solutions for each pair $\{\phi_+, \phi_-\}$, $\phi_\pm \in L^2_{|x|}(\R_\pm)$.
\end{lemma}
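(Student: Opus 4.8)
The plan is to reduce the statement to Theorem~\ref{th:karabash}: it suffices to show that, under (i) or (ii), the operator $A=(\sgn x)|x|^{-1}\bigl(-\tfrac{d^2}{dx^2}+q\bigr)$ is $J$-positive (equivalently $0\notin\sigma_p(A)$) and similar to a self-adjoint operator. Since $r=\mathbf{1}$ and $w(x)=|x|$ are even, the similarity is governed by Lemma~\ref{lem:sim_q}, and the argument runs parallel to the proof of Lemma~\ref{cor:4.13}. Here $W(x)=\int_0^x t\,dt=x^2/2$ is regularly varying at $0$ with index $2$, hence positively increasing at $0$ (Appendix~\ref{ap:osv}); and since $q\in L^1_{\loc}(\R)$ one has $c(x,0)\to1$, $c'(x,0)\to0$ as $x\to0$, so $\xi(x)\sim x$ and $\tilde{W}(\xi)\sim\xi^2/2$ as $\xi\to0$, again positively increasing at $0$. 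Thus the only quantity in Lemma~\ref{lem:sim_q}(iii) still to be examined is the behaviour of $\tilde{W}$ at $+\infty$, and the only branch to be excluded is Lemma~\ref{lem:sim_q}(i), i.e.\ $c(\cdot,0)\in L^2_{|x|}(\R)$. To set this up I would invoke \cite[Theorem~X.17.1]{har}: the equation $-y''+q(x)y=0$ has near $+\infty$ a fundamental system $y_1\sim x^{l+1}$, $y_2\sim x^{-l}$, with the conventions $y_2\sim\sqrt x\,\log x$ if $l=\tfrac12$ and $y_1\sim\sqrt x\,\log x$, $y_2\sim\sqrt x$ if $l=-\tfrac12$; by Lemma~\ref{lem:4.3} (applicable since Hypothesis~\ref{hyp:02} holds) $c(\cdot,0)>0$ on $\R$, and we may write $c(x,0)=a\,y_1(x)+b\,y_2(x)$.

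\emph{Dominant case $a\neq0$.} Then $c(\cdot,0)$ grows, at least like $x^{l+1}$ (or $\sqrt x\log x$), so $c(\cdot,0)\notin L^2_{|x|}(\R)$ and $0\notin\sigma_p(A)$; moreover $\int_1^\infty|c(t,0)|^{-2}dt<\infty$ since $\int_1^\infty t^{-2l-2}dt<\infty$ for every $l\ge-\tfrac12$ (and $\int_1^\infty(t\log^2t)^{-1}dt<\infty$ in the critical case $l=-\tfrac12$), hence $1/c(\cdot,0)\in L^2(\R)$ and Lemma~\ref{lem:sim_q}(ii) gives that $A$ is similar to a self-adjoint operator. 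This disposes of case (ii) completely — for $l\ge1$ the recessive solution $y_2\sim x^{-l}$ already lies in $L^2(\R)$, so the hypothesis $c(\cdot,0)\notin L^2(\R)$ forces $a\neq0$ — and of case (i) whenever $a\neq0$.

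\emph{Recessive case $a=0$.} By the previous paragraph this is relevant only in case (i), so $l\in[-\tfrac12,1)$ and $c(x,0)\sim b\,x^{-l}$ (resp.\ $b\sqrt x$ if $l=-\tfrac12$), $b>0$. Since $1-2l>-1$, we have $\int_1^\infty t\cdot t^{-2l}dt=\infty$, so $c(\cdot,0)\notin L^2_{|x|}(\R)$ and $0\notin\sigma_p(A)$; and $\int_1^\infty t^{2l}dt=\infty$ (as $2l\ge-1$), so $1/c(\cdot,0)\notin L^2(\R)$, which places us in Lemma~\ref{lem:sim_q}(iii). For $l\in(-\tfrac12,1)$ a direct computation yields
\[
\xi(x)=\int_0^x c(t,0)^{-2}\,dt\sim\frac{b^{-2}}{2l+1}\,x^{2l+1}\to\infty,\qquad \tilde{W}(\xi)=\int_0^x c(t,0)^2\,t\,dt\sim\frac{b^{2}}{2-2l}\,x^{2-2l},
\]
so $\tilde{W}$ is regularly varying at $+\infty$ with index $\frac{2-2l}{2l+1}>0$, hence positively increasing there. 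For $l=-\tfrac12$ one gets $\xi(x)\sim b^{-2}\log x$ and $\tilde{W}(\xi)\sim\frac{b^{2}}{3}x^{3}\sim\frac{b^{2}}{3}\,e^{3b^{2}\xi}$, which is rapidly varying at $+\infty$ and therefore also positively increasing. Combined with the observations about the behaviour at $0$, Lemma~\ref{lem:sim_q}(iii) shows that $A$ is similar to a self-adjoint operator. In all cases $A$ is $J$-positive and similar to a self-adjoint one, so Theorem~\ref{th:karabash} gives the asserted existence and uniqueness of strong solutions.

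I expect the main obstacle to be the bookkeeping at the borderline parameters: the coincidence $l+1=-l$ at $l=-\tfrac12$, where a logarithmic solution appears and the asymptotics of $\xi$ and $\tilde{W}$ change character, and the transition at $l=1$, where $y_2$ passes into $L^2(\R)$. Apart from that one must keep track, in each branch, of $0\notin\sigma_p(A)$, since it is precisely $J$-positivity (not merely $J$-nonnegativity) that is needed to pass from the similarity of $A$ to well-posedness via Theorem~\ref{th:karabash}.
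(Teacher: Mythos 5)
Your proof follows the same route as the paper's: reduction to Theorem \ref{th:karabash}, the Hartman asymptotics for $-y''+qy=0$ at $+\infty$, and the Liouville transformation of Lemma \ref{lem:sim_q}, with the case split according to whether $c(\cdot,0)$ is asymptotically dominant or recessive. In fact your treatment of the recessive branch is \emph{more} complete than the paper's: in its third case the paper asserts that $c(x,0)\sim Cx^{-l}$ with $l>-1/2$ forces $c\in L^2(\R)$ and is therefore excluded ``by the assumption'', which is legitimate only for $l\ge 1$ under hypothesis (ii); for $l\in(-1/2,1)$ under hypothesis (i) the recessive behaviour is neither impossible (for $l\le 1/2$ one has $c\notin L^2(\R)$) nor excluded by any hypothesis, and it must be handled exactly as you do, by computing $\tilde{W}(\xi)\sim C'\xi^{(2-2l)/(2l+1)}$ (resp.\ the rapidly varying expression at $l=-1/2$) and invoking Lemma \ref{lem:sim_q}(iii). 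You also correctly cite Lemma \ref{lem:sim_q} where the paper's proof appeals to Corollary \ref{cor:4.12}, which is stated for $w=\bold{1}$ rather than $w=|x|$. Two small remarks. First, the logarithmic solution occurs only at $l=-1/2$, where the exponents $l+1$ and $-l$ coincide; your stated convention ``$y_2\sim\sqrt{x}\log x$ if $l=\tfrac12$'' is spurious, though you never use it. Second, to conclude $\ker(A)=\{0\}$ one should exclude from $L^2_{|x|}(\R)$ not only $c(\cdot,0)$ but also the odd solution $s(\cdot,0)$; this is automatic for $l\le 1$ (neither $x^{l+1}$ nor $x^{-l}$ lies in $L^2_{|x|}$ near $+\infty$ then) but is left unaddressed for $l>1$ --- a gap you share with the paper's own proof.
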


\begin{proof}
By Theorem \ref{th:karabash}, it suffices to show that the corresponding operator $A=\frac{1}{x}(-d^2/dx^2+q(x))$ is similar to a self-adjoint operator in $L^2_{|x|}(\R)$. 
As in the proof of Lemma \ref{cor:4.13} notice that 
by \cite[Theorem X.17.1]{har}, equation $-y''+q(x)y=0$ has two linearly independent solutions $y_1, y_2$ such that
\[
y_1(x)\sim x^{l+1},\qquad 
y_2(x)\sim \begin{cases} x^{-l},& l>-1/2\\
\sqrt{x}\log(x),& l=-1/2
\end{cases}
  ,\quad x\to\infty.
\]

Firstly, notice that  both $y_1$ and $y_2$ are not in $L^2(\R_+,(1+x)dx)$ precisely if $l\in [-1/2,1]$. 
Consider three cases:

1) If $l=-1/2$, then either $c(x)\sim C\sqrt{x}$  or $c(x)\sim C\sqrt{x}\log(x)$ as $x\to \infty$.
Therefore, by Theorem \ref{th:karamata}, either $\xi(x)\sim C^{-2}\log(x)$ and $\ti{W}(\xi)\sim \frac{C^2}{3}x^3$ as $x\to \infty$, or  $\xi(x)\sim B-\frac{1}{C^2\log(x)}$ and $\ti{W}(\xi)\sim \frac{1}{3}x^3\log^2(x)$ as $x\to \infty$. In the first case, we get $B=+\infty$ and the function $\tilde{W}$ is reapidly varying at $\infty$ and hence is positively increasing at $\infty$. By Corollary \ref{cor:4.12}(iii), $A$ is similar to a self-adjoint operator in this case.

Further, if $c(x)\sim C\sqrt{x}\log(x)$ as $x\to \infty$, then $B<\infty$ and by Corollary \ref{cor:4.12}(ii), $A$ is similar to a self-adjoint operator. 

2) If $l>-1/2$ and $c(x)\sim Cx^{l+1}$  as $x\to \infty$, then $\xi(x)\sim B-\frac{C^{-2}}{2l+1}x^{-2l-1}$ as $x\to \infty$, where $B<\infty$. By Corollary \ref{cor:4.12}, $A$ is similar to a self-adjoint operator in this case.

3) Finally, let $l>-1/2$ and $c(x)\sim Cx^{-l}$  as $x\to \infty$. Then $c\in L^2(\R)$, which is impossible by the assumption.
%
%
%
\end{proof}


\appendix

\section{Regularly varying and positively increasing  functions}\label{ap:osv}

Firstly, let us recall the concept of regularly varying functions (see, e.g., \cite{BGT}, \cite[Chapter IV]{Kor04}, \cite{S}).

\begin{definition}\label{def:f_rv}
Let $f:(a,+\infty)\to\R_+$ be measurable and eventually positive.
The function $f$ is called slowly varying at $\infty$ if
\be
\lim_{x\to \infty}\frac{f(xt)}{f(x)}=1,\quad (t>0).
\ee
The function $f$ is called regularly varying  at $\infty$ with index $\alpha\in\R$ if 
\be\label{eq:lo}
\lim_{x\to 0}\frac{f(xt)}{f(x)}=t^\alpha, \quad (t>0).
\ee
If the limit in \eqref{eq:lo} equals $\infty$ for all $t>1$, then $f$ is called rapidly varying at $\infty$ .

The function $f:(0,b)\to \R_+$ is called slowly  (regularly or rapidly) varying at $0$ if the function $\ti{f}(x):=1/f(1/x)$ is 
slowly  (regularly or rapidly) varying at $\infty$.
\end{definition}


Clearly, the class of slowly varying functions coincides with the class of regularly varying functions with index $0$. Note also that a regularly varying function with index $\alpha$ admits the representation $f(x)=x^\alpha \tilde{f}(x)$, where $\tilde{f}$ is a slowly varying function. Moreover, by the Karamata representation theorem \cite[Theorem 1.2]{S} (see also \cite[Theorem IV.2.2]{Kor04}), $f$ is slowly varying at infinity precisely if there is $x_0\ge a$ such that
\be\label{eq:a.3}
f(x)=\exp\big\{\eta(x)+\int_{x_0}^{x} \frac{\varepsilon(t)}{t}dt\big\},\quad x\ge x_0>0,
\ee 
where $\eta$ is a bounded measurable function on $(x_0,\infty)$ such that $\lim_{x\to \infty}\eta(x)=\eta_0$, and $\varepsilon$ is a continuous function satisfying $\lim_{x\to\infty}\varepsilon(x)=0$. Regularly varying functions can be characterized by their behavior under integration against powers. This is the content of the following Karamata's characterization theorem (see \cite[\S IV.5]{Kor04} and \cite[\S I.5.6 and I.6.1]{BGT}).

\begin{theorem}[Karamata]\label{th:karamata}
Let the function $f:[a,+\infty)\to\R_+$ be positive and locally integrable.
\begin{itemize}
\item[(i)] If there are numbers  $\gamma$ and $\alpha>-\gamma$ such that
\be\label{eq:a.4}
\int_a^x t^{\gamma-1}f(t)dt\sim \frac{x^\gamma}{\gamma+\alpha}f(x),\quad x\to \infty,
\ee
then $f$ is regularly varying with index $\alpha$.

The same is true if there are numbers $\gamma$ and $\alpha<-\gamma$ such that
\be\label{eq:a.5}
\int_x^\infty t^{\gamma-1}f(t)dt\sim -\frac{x^\gamma}{\gamma+\alpha}f(x),\quad x\to \infty,
\ee

\item[(ii)] Conversely, if $f$ varies slowly at $\infty$, then
\be\label{eq:a.4B}
\frac{1}{f(x)}\int_a^x t^{\alpha}f(t)dt\sim \frac{x^{\alpha+1}}{\alpha+1},\quad x\to \infty,\quad (\alpha>-1)
\ee
and
\be\label{eq:a.5B}
\frac{1}{f(x)}\int_x^\infty t^{\alpha}f(t)dt\sim -\frac{x^{\alpha+1}}{\alpha+1},\quad x\to \infty,\quad (\alpha<-1).
\ee
The result remains true for $\alpha=-1$ in the sense that the integrals in the lefthand side of \eqref{eq:a.4B}, \eqref{eq:a.5B} tend to $\infty$. Moreover, in this case $\int_a^x\frac{f(t)}{t}dt$ is a slowly varying function. 
\end{itemize}
\end{theorem}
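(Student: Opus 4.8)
The plan is to split Theorem \ref{th:karamata} into its two halves and treat each one using only the two classical facts about slowly varying functions already invoked in this appendix: the Karamata representation \eqref{eq:a.3}, and its standard consequences, namely the local uniform convergence $f(\lambda x)/f(x)\to1$ as $x\to\infty$ (uniformly for $\lambda$ in compact subsets of $(0,\infty)$) together with the Potter-type bound that for every $\delta>0$ there are $C=C(\delta)$ and $T$ with $f(\lambda x)/f(x)\le C\max\{\lambda^{\delta},\lambda^{-\delta}\}$ whenever $x,\lambda x\ge T$. I would prove the \emph{direct half} (ii) first and then deduce the \emph{converse half} (i).

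For (ii) it suffices to treat \eqref{eq:a.4B} with $\alpha>-1$; \eqref{eq:a.5B} is symmetric, and the borderline case $\alpha=-1$ is handled by the same method starting from \eqref{eq:a.3}. First I would substitute $t=sx$, getting
\be
\frac{1}{f(x)}\int_a^x t^\alpha f(t)\,dt=x^{\alpha+1}\int_{a/x}^{1}s^\alpha\,\frac{f(sx)}{f(x)}\,ds ,
\ee
and then split the $s$-integral at a small fixed $\eta>0$. On $[\eta,1]$ the integrand tends to $s^\alpha$ uniformly, so that piece tends to $\int_\eta^1 s^\alpha\,ds$. On $(a/x,\eta)$ I would dominate $s^\alpha f(sx)/f(x)\le C\,s^{\alpha-\delta}$ via the Potter bound, with $\delta$ chosen so that $\alpha-\delta>-1$, which bounds this piece by $C\eta^{\alpha-\delta+1}/(\alpha-\delta+1)$ uniformly in large $x$; letting $\eta\downarrow0$ after $\limsup_{x\to\infty}$ then yields $x^{-\alpha-1}f(x)^{-1}\int_a^x t^\alpha f(t)\,dt\to\int_0^1 s^\alpha\,ds=(\alpha+1)^{-1}$, which is \eqref{eq:a.4B}.

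For (i), assume \eqref{eq:a.4} with $\gamma+\alpha>0$ and put $U(x):=\int_a^x t^{\gamma-1}f(t)\,dt$, which is positive, increasing and locally absolutely continuous with $U'(x)=x^{\gamma-1}f(x)$ a.e.; then \eqref{eq:a.4} says exactly $xU'(x)/U(x)\to\gamma+\alpha$. Next I would integrate the logarithmic derivative: for fixed $\lambda>0$ and all large $x$,
\be
\log\frac{U(\lambda x)}{U(x)}=\int_x^{\lambda x}\frac{U'(t)}{U(t)}\,dt=\int_1^\lambda \frac{xs\,U'(xs)}{U(xs)}\,\frac{ds}{s}\longrightarrow (\gamma+\alpha)\log\lambda
\ee
by bounded convergence (the integrand is bounded for large $x$ since $tU'(t)/U(t)$ converges). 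Hence $U$ is regularly varying with index $\gamma+\alpha$ in the sense of Definition \ref{def:f_rv}, and since $f(x)\sim(\gamma+\alpha)x^{-\gamma}U(x)$, the function $f$ is asymptotic to a regularly varying function of index $-\gamma+(\gamma+\alpha)=\alpha$, so $f$ itself is regularly varying with index $\alpha$. The variant \eqref{eq:a.5} (with $\gamma+\alpha<0$) is identical with $V(x):=\int_x^\infty t^{\gamma-1}f(t)\,dt$ in place of $U$ (finite, as it appears on the left of \eqref{eq:a.5}; decreasing; $V'=-x^{\gamma-1}f$): the same computation gives $V$ regularly varying with index $\gamma+\alpha$, whence $f(x)\sim-(\gamma+\alpha)x^{-\gamma}V(x)$ is regularly varying with index $\alpha$.

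The hard part, and the only step needing genuine care, is the interchange of limit and integral near the lower endpoint in (ii): one must show that the contribution of $s\approx a/x$ is negligible, and this is precisely where the hypothesis $\alpha>-1$ enters, since it makes the Potter majorant $s^{\alpha-\delta}$ integrable at $0$ for $\delta$ small. In (i) the analogous care is only needed to justify the displayed identity for $\log(U(\lambda x)/U(x))$ and the eventual boundedness of its integrand — both of which rest on $U$ being monotone and absolutely continuous, which holds even though $f$ is only assumed locally integrable rather than continuous.
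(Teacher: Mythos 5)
The paper does not actually prove Theorem \ref{th:karamata}: it is quoted as a classical result with references to \cite[\S IV.5]{Kor04} and \cite[\S I.5.6 and I.6.1]{BGT}, so there is no in-paper argument to compare against. Your proof is, in substance, the standard one from those sources (direct half via the substitution $t=sx$, uniform convergence on $[\eta,1]$ and a Potter bound near $0$; converse half by showing $xU'(x)/U(x)\to\gamma+\alpha$ and integrating the logarithmic derivative), and it is correct in outline, including the reduction of \eqref{eq:a.5}, \eqref{eq:a.5B} and the $\alpha=-1$ case to the same scheme. One point needs a small repair: the Potter bound $f(sx)/f(x)\le Cs^{-\delta}$ is only available when $sx\ge T$, so it does not cover the sliver $s\in(a/x,\,T/x)$ of your lower range. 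That piece must be treated separately; it equals $x^{-\alpha-1}f(x)^{-1}\int_a^T t^\alpha f(t)\,dt$, which tends to $0$ because $\int_a^T t^\alpha f\,dt$ is a fixed finite constant (local integrability) and $x^{\alpha+1}f(x)\to\infty$ for slowly varying $f$ when $\alpha+1>0$. With that sliver accounted for, the interchange of limit and integral you flag as the delicate step goes through exactly as you describe, and the converse half needs no further care beyond the absolute continuity and eventual positivity of $U$ (respectively finiteness and monotonicity of $V$), which you have noted.
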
  

Also we need the notion and some properties of positively increasing functions (see, e.g., \cite{bks, Rog}).

\begin{definition}\label{def:osv}
Let the function $f:\R_+\to \R_+$ be nondecreasing. The function $f$ is called {\em positively increasing at $\infty$} if there exists $t\in (0,1)$ such that
\begin{equation}\label{eq:osv}
S_\infty(t)\neq 1,\qquad S_\infty(t):=\limsup_{x\to \infty}\frac{f(xt)}{f(x)}.
\end{equation}

The function $f$ is called positively increasing at $0$ if $\tilde{f}(x)=1/f(1/x)$ is positively increasing at $\infty$, or equivalently, if there exists $t\in (0,1)$ such that
\begin{equation}\label{eq:osv_0}
S_0(t)\neq 1,\qquad S_0(t):=\limsup_{x\to 0}\frac{f(xt)}{f(x)}.
\end{equation}

\end{definition}

Note that (see, e.g., \cite{Ben87}) the function $S_f^i$, $i\in\{0,\infty\}$, is increasing and maps $(0,1)$ into $[0,1]$. Moreover, it is submultiplicative
\[  
S_f^i(t_1t_2)\le S_f^i(t_1)S_f^i(t_2),
\]
and hence either $S_f^i\equiv 1$ on $(0,1)$ or $S_f^i(t) \to 0$ as $t\downarrow 0$. The next result can be found in \cite{bks, Par03, Rog}.

\begin{lemma}\label{lem:osv}
Let the function $f:(0,1)\to \R_+$ be nondecreasing and bounded. Let also $f(0)=0$. The following are equivalent:
\begin{enumerate}
\item[(i)] $f$ is positively increasing at $0$,
\item[(ii)] there is $C\in (0,1)$ and $t\in (0,1)$ such that 
\begin{equation}\label{eq:p1}
f(xt)\le Cf(x),\quad x\in (0,1),
 \end{equation}
 \item[(iii)] for each $t\in (0,1)$ there is $C\in (0,1)$  such that 
\begin{equation}\label{eq:p1B}
f(xt)\le Cf(x),\quad x\in (0,1),
 \end{equation}
 \item[(iv)] there is  $C,\beta>0$ such that for all $t\in (0,1)$
 \begin{equation}\label{eq:p2}
 f(xt)\le C t^\beta f(x),\quad x\in (0,1),
 \end{equation}
 \item[(v)] there are no sequences $a_b, b_n$ such that $0<a_n<b_b\le 1$ and
 \begin{equation}\label{eq:pyat}
\lim_{n\to \infty} \frac{a_n}{b_n}=0,\qquad \lim_{n\to \infty} \frac{f(a_n)}{f(b_n)}=1.
 \end{equation}
\end{enumerate}
\end{lemma}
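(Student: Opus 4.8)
The plan is to establish the chain of implications $(ii)\Rightarrow(i)\Rightarrow(iii)\Rightarrow(iv)\Rightarrow(ii)$ together with the separate equivalence $(i)\Leftrightarrow(v)$, using throughout the properties of the function $S_0(t)=\limsup_{x\to0}f(xt)/f(x)$ recorded just above the lemma: $S_0$ is nondecreasing, maps $(0,1)$ into $[0,1]$, is submultiplicative, and therefore obeys the dichotomy that either $S_0\equiv1$ on $(0,1)$ or $S_0(t)\to0$ as $t\downarrow0$. By Definition \ref{def:osv}, $(i)$ is precisely the statement $S_0\not\equiv1$; since $S_0$ is nondecreasing, this is in turn equivalent to $S_0(t)<1$ for every $t\in(0,1)$. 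One may assume without loss of generality that $f>0$ on $(0,1)$, since otherwise $f$ vanishes on a right neighbourhood of $0$ and every assertion is either vacuous or trivial there.

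The easy links come first. For $(ii)\Rightarrow(i)$: if $f(xt)\le Cf(x)$ on $(0,1)$ for some fixed $t\in(0,1)$ and $C<1$, then $S_0(t)\le C<1$, so $f$ is positively increasing at $0$. For $(iv)\Rightarrow(ii)$: the inequality in $(iv)$ holds for all $t\in(0,1)$, so it suffices to pick $t$ so small that $Ct^{\beta}<1$, and then the pair $(t,Ct^{\beta})$ witnesses $(ii)$. For $(i)\Rightarrow(iii)$ I would argue as follows: fix $t\in(0,1)$; since $S_0(t)<1$, the definition of $\limsup$ yields a constant $c_1:=\tfrac{1}{2}(1+S_0(t))<1$ and a $\delta\in(0,1)$ with $f(xt)\le c_1 f(x)$ for $x\in(0,\delta)$; this estimate is then propagated to all of $(0,1)$ by using, for $x\in[\delta,1)$, the boundedness of $f$ together with the positivity of $f(\delta)$ and applying the small-scale bound over the finitely many scales $x,xt,xt^2,\dots$ needed to descend below $\delta$, which produces a single $C=C(t)<1$ valid for all $x\in(0,1)$.

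It remains to prove $(iii)\Rightarrow(iv)$ and $(i)\Leftrightarrow(v)$. For $(iii)\Rightarrow(iv)$, fix one $t_1\in(0,1)$ and its constant $C_1<1$ from $(iii)$; iterating, $f(xt_1^{\,n})\le C_1^{\,n}f(x)$ for all $x\in(0,1)$ and all $n\in\N$. Given an arbitrary $t\in(0,1)$, choose the nonnegative integer $n$ with $t_1^{\,n+1}\le t<t_1^{\,n}$; monotonicity of $f$ gives $f(xt)\le f(xt_1^{\,n})\le C_1^{\,n}f(x)$, and since $n\ge \log t/\log t_1-1$ and $\log C_1<0$ this becomes $f(xt)\le C\,t^{\beta}f(x)$ with $\beta:=\log C_1/\log t_1>0$ and $C:=C_1^{-1}$, which is $(iv)$. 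Finally, $(i)\Leftrightarrow(v)$ is best proved as the equivalence of the negations: $f$ is not positively increasing at $0$, i.e.\ $S_0\equiv1$ on $(0,1)$, precisely when the sequences of $(v)$ exist. If $S_0\equiv1$, then for each $k\in\N$ there is a sequence tending to $0$ along which $f(\cdot/k)/f(\cdot)\to1$, and a diagonal choice produces $b_n\downarrow0$ with $f(b_n/n)/f(b_n)\to1$; putting $a_n:=b_n/n$ gives $a_n/b_n\to0$ and $f(a_n)/f(b_n)\to1$. Conversely, given such $a_n,b_n$ (necessarily $b_n\to0$, since $a_n\to0$ forces $f(a_n)\to0$ and hence $f(b_n)\to0$), fix $t\in(0,1)$; then $a_n<tb_n<b_n$ for large $n$, so $f(a_n)\le f(tb_n)\le f(b_n)$ and $f(tb_n)/f(b_n)\to1$, whence $S_0(t)=1$; as $t$ was arbitrary, $S_0\equiv1$.

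I expect the only genuine obstacle to be the propagation step in $(i)\Rightarrow(iii)$, namely upgrading a contraction valid for $x$ near $0$ to one valid for all $x\in(0,1)$: this uses both the boundedness of $f$ and the fact --- supplied by the submultiplicativity dichotomy --- that $S_0(t)<1$ for \emph{every} $t\in(0,1)$ rather than merely for small $t$, and it is exactly the point at which I would follow the arguments of \cite{bks, Par03, Rog} in detail.
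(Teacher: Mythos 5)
The paper offers no proof of this lemma at all --- it is quoted from \cite{bks, Par03, Rog} --- so your proposal can only be judged on its own terms. Most of it is sound: the implications (ii)$\Rightarrow$(i), (iii)$\Rightarrow$(iv), (iv)$\Rightarrow$(ii) and the equivalence (i)$\Leftrightarrow$(v) (including the diagonal construction and the check that $b_n\to 0$) are correct and are essentially the standard arguments.

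The genuine gap is the step (i)$\Rightarrow$(iii), and it sits exactly where you flagged the difficulty. Your claim that, by monotonicity of $S_0$, the condition $S_0\not\equiv 1$ is equivalent to $S_0(t)<1$ for \emph{every} $t\in(0,1)$ is false: since $S_0$ is nondecreasing, $S_0(t_0)<1$ controls $S_0(t)$ only for $t\le t_0$, and the submultiplicative dichotomy (either $S_0\equiv 1$ or $S_0(t)\to 0$ as $t\downarrow 0$) says nothing about $t$ near $1$. A concrete counterexample is $f(x)=2^{-n}$ for $x\in(2^{-n-1},2^{-n}]$: here $f(x/2)=\tfrac12 f(x)$ for every $x$, so $S_0(1/2)=\tfrac12$ and $f$ is positively increasing at $0$, yet $f(3x/4)=f(x)$ whenever $x=2^{-n}$, so $S_0(3/4)=1$ and no $C\in(0,1)$ can satisfy \eqref{eq:p1B} with $t=3/4$. (Your subsequent propagation step fails independently as well: $f(x)=\min(x,1/2)$ satisfies (i), indeed $S_0(t)=t$ for all $t$, but $f(3x/4)=f(x)$ for $x$ near $1$, so no contraction with $C<1$ holds on all of $(0,1)$.) Thus (i)$\Rightarrow$(iii) is not merely unproved but false as item (iii) is literally stated, and since your cycle routes every other implication through (iii), the scheme collapses. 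The repair is to bypass (iii) and prove (i)$\Rightarrow$(iv) directly: from $S_0(t_0)\le c<1$ obtain $f(xt_0)\le c' f(x)$ on $(0,\delta)$ with $c<c'<1$, iterate to get $f(xt_0^n)\le (c')^n f(x)$ there, extend to $x\in[\delta,1)$ by comparing $f(xt_0^n)\le f(t_0^n)$ with $f(x)\ge f(\delta)$ at the cost of a large multiplicative constant (which (iv), unlike (iii), permits since it does not require $C<1$), and then interpolate between the powers $t_0^n$ exactly as in your (iii)$\Rightarrow$(iv) argument. This yields the correct cycle (i)$\Rightarrow$(iv)$\Rightarrow$(ii)$\Rightarrow$(i), together with (i)$\Leftrightarrow$(v); item (iii) should then be read (or restated) as holding only for all sufficiently small $t$, in which form it does follow from (iv).
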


\begin{remark}\label{rem:3.2}
A few remarks are in order:
\begin{itemize}
\item[(i)]
Clearly, the property of $f$ to be a positively increasing function at $0$ depends only on a local behavior of $f$ at $0$. Therefore, without loss of generality one can consider \eqref{eq:p1} and \eqref{eq:p2} on an arbitrary subinterval $x\in (0,\varepsilon)$, $\varepsilon\le1$.
\item[(ii)] Clearly, one can reformulate Lemma \ref{lem:osv} for positively increasing at infinity functions .
\item[(iii)] The class of positively increasing at $\infty$ functions contains as proper subclasses the class of all increasing functions that varies regularly
with index $\alpha>0$ and the class of all rapidly varying functions. However, it contains no function from the class of slowly varying
functions.
\end{itemize}
\end{remark}

\section{The Liouville transformation}\label{ap:LT}
In this subsection we present some results from \cite[\S 14]{KK2}.

Assume that the spectral problem 
\be\label{eq:sp23}
-y''+q(x)y=\lambda\, r(x)y,\quad x\in(0,b);\quad y'(0)=0,
\ee
is in the limit point case at $b$.  We shall also assume that \eqref{eq:sp23} has nonnegative spectrum. The latter is equivalent to the fact that the solution $c(x,0)$ of \eqref{eq:sp23} is positive on $(0,b)$. 
Then (see \cite[\S 14]{KK2}), we set
 \be\label{eq:r}
 \xi:=\xi(x)=\int_0^x \frac{dt}{c_0^2(t)},\quad B:=\xi(b),\quad \ti{r}(\xi)=r(x)c_0^4(x); \quad c_0(x):=c(x,0).
 \ee 
 Consider the following spectral problem
 \be\label{eq:sp24}
-\frac{d^2f}{d\xi^2}=\lambda\, \ti{r}(\xi)f,\quad \xi\in(0,B); \quad f'(0)=0.
\ee

Firstly, notice that $\tilde{r}\in L^1_{\loc}[0,B)$. Indeed, using \eqref{eq:r}, for all $\xi<B$ we get
\[
\int_0^\xi \ti{r}(\mu)d\mu=\int_0^{x} r(t)c_0^4(t)\frac{dt}{c_0^2(t)}=\int_0^{x} c_0^2(t)r(t)dt<\infty.
\]
Hence the above definition is correct. 

Next define the map $U:L^2_{r}(0,b)\to L^2_{\ti{r}}(0,B)$ as follows
\be\label{eq:U}
U:v(x)\to u(\xi):= \frac{1}{c_0(x)}v(x).
\ee

Let us show that $U$ is isometric:
\[
\|Uv\|^2_{L^2_{\ti{r}}}=\int_0^B |u(\xi)|^2\ti{r}(\xi)d\xi=\int_0^b \Big| \frac{v(x)}{c_0(x)}\Big|^2 r(x)c_0^4(x) \frac{dx}{c_0^2(x)}=\int_0^b|v|^2r(x)dx=\|v\|^2_{L^2_{r}}.
\]
Furthermore, it is not difficult to check that $\ti{y}:=Uy$ solves equation \eqref{eq:sp24}
 if $y$ is a solution of equation \eqref{eq:sp23}. Indeed, this is immediate from the following representation of \eqref{eq:sp23} (cf. \cite[\S 14]{KK2})
\[
-c_0^2(x)\frac{d}{dx}\Big(c_0^2(x)\frac{d}{dx}\frac{y}{c_0(x)}\Big)=\lambda\, c_0^4(x)\, \frac{y}{c_0(x)}.
\]
Finally, let us show that $\ti{c}(\xi,z)=Uc(z)$ and $\ti{s}(\xi,z)=Uc(z)$ is a fundamental system of solution of equation \eqref{eq:sp24}. The latter immediately follows from relations
\begin{eqnarray*}
&(Uy)(0)=\frac{y(0)}{c_0(0)}=y(0),\\
 &(Uy)'(0)=\Big(c_0^2(x)\frac{d}{dx}\frac{y(x)}{c_0(x)}\Big)\Big|_{x=0}=y'(0)c_0(0)-y(0)c_0'(0)=y'(0).
\end{eqnarray*}

Finally, let us note that $B<\infty$ if and only if $\lambda=0$ is the eigenvalue of \eqref{eq:sp23}. Moreover, 
\[
\int_0^B \xi^2\ti{r}(\xi)d\xi=\infty
\]
due to the assumption that \eqref{eq:sp23} is limit point at $b$.
 
Thus we arrived at the following result.
 \begin{proposition}\label{prop:LT}
 Let the spectral problems \eqref{eq:sp23} and \eqref{eq:sp24} be connected via \eqref{eq:r}. Let also $m$ and $\tilde{m}$ be the $m$-functions associated with the problems \eqref{eq:sp23} and $\eqref{eq:sp24}$, respectively. Then 
 \[
 m(\lambda)=\ti{m}(\lambda).
 \]
 Moreover, the $m$-functions corresponding to the Dirichlet problems also coincide.
 \end{proposition}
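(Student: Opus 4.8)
The plan is to make the heuristic "change of variables" completely rigorous by exhibiting the explicit unitary intertwiner $U$ from \eqref{eq:U} and tracking what it does to the Weyl solution. First I would record the three facts already assembled in the body of the appendix: (a) $\ti r\in L^1_{\loc}[0,B)$, so problem \eqref{eq:sp24} is well posed; (b) $U:L^2_r(0,b)\to L^2_{\ti r}(0,B)$ defined by $(Uv)(\xi)=v(x)/c_0(x)$ is isometric (with range all of $L^2_{\ti r}(0,B)$, since its inverse is $u\mapsto c_0(x)u(\xi)$); and (c) $U$ maps solutions of \eqref{eq:sp23} to solutions of \eqref{eq:sp24}, because of the Sturm--Liouville identity $-c_0^2(c_0^2(y/c_0)')'=\lambda c_0^4(y/c_0)$ together with the boundary-value computation $(Uy)(0)=y(0)$, $(Uy)'(0)=y'(0)$, which also shows that $U$ sends the fundamental system $c(\cdot,\lambda),s(\cdot,\lambda)$ to the fundamental system $\ti c(\cdot,\lambda),\ti s(\cdot,\lambda)$ normalized the same way at the origin.

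Next I would turn to the Weyl solution. Recall from \eqref{eq:a03}, \eqref{eq:psi2} that $m(\lambda)$ is characterized by $\psi(x,\lambda)=s(x,\lambda)-m(\lambda)c(x,\lambda)$ being the distinguished solution of \eqref{eq:sp23} at $b$ (square-integrable in the limit point case, or satisfying the limit circle boundary condition otherwise). Applying $U$ and using linearity together with step (c), $U\psi(\cdot,\lambda)=\ti s(\cdot,\lambda)-m(\lambda)\ti c(\cdot,\lambda)$. Since $U$ is a unitary map between $L^2_r(0,b)$ and $L^2_{\ti r}(0,B)$, the function $U\psi(\cdot,\lambda)$ lies in $L^2_{\ti r}(0,B)$ iff $\psi(\cdot,\lambda)\in L^2_r(0,b)$; in the limit circle case one checks instead that the boundary condition at $B$ is preserved, using that $\xi(x)\to B$ exactly as $x\to b$ and that $(Uy)'=c_0^2(y/c_0)'$ transforms the quasi-derivative correctly. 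Either way $U\psi(\cdot,\lambda)$ is (a multiple of) the Weyl solution of \eqref{eq:sp24}, so comparing with $\ti\psi(\xi,\lambda)=\ti s(\xi,\lambda)-\ti m(\lambda)\ti c(\xi,\lambda)$ and invoking uniqueness of the $m$-function forces $m(\lambda)=\ti m(\lambda)$ for all $\lambda\notin\R$, hence for $\lambda\notin\R_+$ by the Herglotz representation \eqref{eq:m_repr}.

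For the Dirichlet $m$-functions I would argue identically, replacing the role of $c$ and $s$: the Dirichlet solution of \eqref{eq:sp23} is $s(x,\lambda)+m^D(\lambda)c(x,\lambda)$ distinguished by $\psi^D(0,\lambda)=0$ being imposed after the fundamental system is taken with the opposite normalization, and the same intertwining computation carries it over, so $m^D(\lambda)=\ti m^D(\lambda)$. The one point that needs genuine care — and I expect this to be the main obstacle — is the behavior at the endpoint $b$: one must confirm that $U$ maps the limit point/limit circle dichotomy of \eqref{eq:sp23} onto that of \eqref{eq:sp24} and, in the limit circle case, that the specific separated boundary condition $\lim_{x\to b}(r^{-1}\psi')(x)=0$ (resp.\ $\lim\psi=0$) is sent to its counterpart at $B$. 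This is where the explicit form of the quasi-derivative under $U$, namely $(Uy)'(\xi)=\big(c_0^2(x)(y/c_0)'(x)\big)$, must be used together with $B=\xi(b)$ and the integrability statement $\int_0^B\xi^2\ti r\,d\xi=\infty$ recorded above; the remaining manipulations are the routine ones already sketched in \cite[\S 14]{KK2}, to which I would simply refer.
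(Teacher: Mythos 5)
Your proposal is correct and follows essentially the same route as the paper: the paper's Appendix B already carries out exactly this argument (isometry of $U$, preservation of the initial conditions so that the fundamental systems correspond, and transport of the Weyl solution), and the Proposition is stated as its conclusion. The only superfluous part is your concern about the limit circle case at $b$: the paper assumes from the outset that \eqref{eq:sp23} is in the limit point case at $b$, so only the $L^2$ characterization of the Weyl solution is needed.
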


\noindent
{\bf Acknowledgments.}
The author is grateful to Illya Karabash, Mark Malamud and Gerald Teschl for numerous helpful discussions and hints with respect to the literature. 
  

\end{document}